\newcommand{\mynewtheorem}[2]{
  \newaliascnt{#1}{rubric}
  \newtheorem{#1}[#1]{#2}
  \aliascntresetthe{#1}
  \expandafter\def\csname #1autorefname\endcsname{#2}
}
\theoremstyle{plain}
\theoremstyle{definition}
\theoremstyle{remark}
\renewcommand{\eqp}{\sim_{\infhyp}}
\newcommand{\barEx}{\textnormal{E}\bar{\textnormal{x}}}
\title{A Correspondence between Maximal Abelian Sub-Algebras and Linear Logic Fragments}
\shorttitle{A Correspondence between MASAs and Linear Logic Fragments}
\author[Thomas Seiller]
    {T\ls H\ls O\ls M\ls A\ls S\ns S\ls E\ls I\ls L\ls L\ls E\ls R$^{\dagger}$ \thanks{This work was partly supported by the ANR-10-BLAN-0213 Logoi.}\\
      \addressbreak $^{\dagger}$ I.H.E.S., Le Bois-Marie, 35, Route de Chartres, 91440 Bures-sur-Yvette, France\\ \href{mailto:seiller@ihes.fr}{seiller@ihes.fr}
      }
\date{December 15th, 2014; Revised September 23rd, 2015}
\begin{document}

\maketitle

\begin{abstract}
We show a correspondence between a classification of maximal abelian sub-algebras (MASAs) proposed by Jacques Dixmier \cite{dixmier} and fragments of linear logic. We expose for this purpose a modified construction of Girard's hyperfinite geometry of interaction \cite{goi5}. The expressivity of the logic soundly interpreted in this model is dependent on properties of a MASA which is a parameter of the interpretation. We also unveil the essential role played by MASAs in previous geometry of interaction constructions.
\end{abstract}

\tableofcontents

\section{Introduction}

\subsection{Geometry of Interaction.} Geometry of Interaction is a research program initiated by Girard \cite{towards} a year after his seminal paper on linear logic \cite{ll}. Its aim is twofold: define a semantics of proofs that accounts for the dynamics of cut-elimination, and then construct realisability models for linear logic around this semantics of cut-elimination. 

The first step for defining a \emph{GoI model}, i.e. a construction that fulfills the geometry of interaction program, is to describe the set of mathematical objects $\mathcal{O}$ that will represent the proofs, together with a binary operation $\plug$ on this set of objects that will represent the cut-elimination procedure. This function should satisfy one key property: associativity, i.e. $(a\plug f)\plug b=a\plug(f\plug b)$. This property is a reformulation of the Church-Rosser property: if $f$ represents a function taking two arguments $a,b$, one can evaluate the function $f\plug a$ on the argument $b$ or equivalently evaluate the function $f\plug b$ on the argument $a$. Both these evaluation strategies should yield the same result, i.e. the operation $\plug$ is associative. The data of the set $\mathcal{O}$ together with the associative binary operation $\plug$ constitute the first step in defining a GoI model, as it provides the semantics of cut-elimination\footnote{Obviously, one needs this setting to satisfy additional properties in order to interpret full linear logic. We describe here the minimal requirements for obtaining a GoI model, which could be a model of multiplicative linear logic only.}. The majority of works dealing with geometry of interaction content themselves with this part of the geometry of interaction program. It has been a rich source of dynamical interpretation of proofs \cite{DanosRegnierIAM,LaurentTokenGoI,goi3} leading to numerous results about the execution of programs \cite{AbadiGonthierLevy92b,danosregnierlocalasynchronous} and computational complexity \cite{Lago,baillotpedicini,seiller-conl,seiller-lsp,lics-ptime}. 

It is however a crucial mistake to think the second part unimportant.  On the contrary, it is the second part of the program -- the reconstruction of logic from the dynamical interpretations of proofs -- that makes it a very innovative and strong tool, both from a technical and a philosophical point of view. This reconstruction of logic is a step further into the Curry-Howard correspondence: one reconstructs the logic of programs. This part of the GoI program is reminiscent of classical realisability \cite{krivine1,krivine2,streicher-kr,Miquel}. Indeed, the set $\mathcal{O}$ represents a set of programs and the operation $\plug$ describes how these programs compose and evaluate. By choosing a set $\Bot$ of \enquote{bad programs}, such as infinite loops, one can then build the logic that naturally arises from the notion of programs described by $\mathcal{O}$, $\plug$ and the set $\Bot$. This is done in two steps, the first being the definition the notion of types -- subsets of $\mathcal{O}$. Then every operation $\odot$ which allows one to construct new programs $\de{a\odot b}$ from two programs $\de{a}$ and $\de{b}$, lifts to an operation on types: $A\odot B$ is defined as the set of $\de{a\odot b}$ for all $\de{a}$ in $A$ and all $\de{b}$ in $B$. The set of bad programs $\Bot$ defines a notion of \emph{negation}: a program $\de{a'}$ has type $A^{\bot}$ if and only if for all program $\de{a}$ of type $A$ one has $\de{a\plug a'}\in\Bot$. As a consequence, the types and connectives are only descriptions of the structure of the set of programs considered. Let us notice that work in this direction has been directed at obtaining models of linear logic and therefore the notion of types is restricted to biorthogonally closed sets because linear negation is involutive; this is however a choice of design and not a requirement. For the same reasons, the connectives defined and studied in these models are those of linear logic, although many others may be considered.

\subsection{Geometry of Interaction and Maximal Abelian Sub-Algebras.} A major result in the geometry of interaction program was obtained by Girard about ten years ago. In previous work, he had described a way of representing cut-elimination by the so-called \emph{execution formula} $\Ex(A,B)$ between two operators $A,B$. This formula is actually an explicit solution to a functional equation involving $A$ and $B$, the \emph{feedback equation}, but this explicit solution is defined as an infinite series whose convergence can be insured only when the product $AB$ is nilpotent. Using techniques of operator algebras, Girard showed that this explicit solution admits a \enquote{continuation} $\barEx(\cdot,\cdot)$, i.e. can be extended so as to be defined on the whole unit ball of a von Neumann algebra. More precisely, the solution to the feedback equation, noted $\barEx(A,B)$, exists and is unique even when the product $AB$ is not nilpotent as soon as $A,B$ are operators of norm at most $1$; moreover, $\barEx(A,B)$ is an operator of norm at most $1$ in the von Neumann algebra generated by $A,B$. This implies that for every von Neumann algebra $\vn{M}$, there exists an operation $\barEx(\cdot,\cdot)$ which extends the execution formula on the unit ball $\vn{M}_{1}$ of $\vn{M}$. This means that the pair $(\vn{M}_{1},\barEx(\cdot,\cdot))$ fulfills the first part of the GoI program as it yields the set $\mathcal{O}=\vn{M}_{1}$ and an associative operation $\barEx(\cdot,\cdot):\mathcal{O}\times\mathcal{O}\rightarrow\mathcal{O}$. The first constructions proposed by Girard were therefore based on a smaller set of operators, a subset of the unit ball of $\B{\hil{H}}$, the algebra of all continuous linear maps on a Hilbert space $\hil{H}$. The \enquote{pole} $\Bot$ was then defined as the set of nilpotent operators, i.e. two operators are \emph{orthogonal} if and only if their product is nilpotent. After he worked out the general solution to the feedback equation, Girard defined a geometry of interaction in the hyperfinite factor of type {II}$_{1}$ -- the hyperfinite GoI model, where the pole $\Bot$ is chosen based on the determinant: two operators $A,B$ are orthogonal when $\det(1-AB)\neq0,1$. 

The present work stems from an attempt to obtain soundness results for the hyperfinite GoI model. As one can define in this model an exponential connective satisfying the \enquote{functorial promotion rule}, one would expect a soundness result for at least \emph{Elementary Linear Logic}, an exponentially-constrained fragment of linear logic which characterises elementary time functions. It turns out however that the interpretation of proofs depends on the choice of a \emph{Maximal Abelian Sub-Algebra (MASA)} $\vn{A}$ of the algebra $\vn{M}$, and that one can interpret more or less expressive fragments of linear logic according to the \enquote{size} of the algebra generated by the normaliser of $\vn{A}$: if this algebra is $\vn{A}$ itself (the minimal case) then no non-trivial interpretation of proofs exists, if this algebra is $\vn{M}$ (the maximal case) we can interpret soundly elementary linear logic. Although it might seem at first glance a specific feature of the hyperfinite GoI model, it turns out that a choice of MASA for the interpretation of proofs was already made in earlier GoI models \cite{goi1,goi2,goi3}. However, this choice was done implicitly in the definition of the models, and did not affect the expressivity of the interpretation: the algebra generated by the normaliser of any MASA in a type {I} algebra $\vn{M}$ is equal to $\vn{M}$. The passage from a type {I} algebra to a type {II} algebra thus only made clear the crucial role played by the MASAs, thanks to the rich variety of such sub-algebras in type {II} factors.

\subsection{Outline of the Paper}

We will first give an overview of some of the theory of von Neumann algebras. Although we do not expect the reader to learn and feel familiar with the theory from reading this short section, we hope it will give her a broad idea of the richness of the subject and a few intuitions about the objects it studies. This also gives us the opportunity to define notions and state results that will be used in the following sections.

In a second section, we offer a historical overview of the various GoI models defined by Girard. This overview gives us the opportunity to provide a homogeneous point of view on those since we define all the models using operator theory. It is moreover the occasion to pinpoint the implicit choice of a MASA which was made in earlier constructions; this choice, which was unimportant in these constructions but is crucial in the hyperfinite GoI model, has seemingly never been noticed before.

We then motivate the notion of \emph{subjective truth} which appears in Girard's hyperfinite GoI. We try to explain why it is necessary to have a notion of truth that depends on the choice of a MASA. We then define a variant of Girard's hyperfinite GoI model, which we call the \emph{matricial GoI model}. This variant of Girard's model makes a more explicit connection with MASAs and will be used to prove the main theorem of the paper. We show how a satisfying notion of truth depending on a MASA can be defined for this model and relate it with the notion of truth defined by Girard in his hyperfinite GoI model.

The last section is then concerned with the proof of the main theorem of the paper. We show that the expressivity of the fragment of linear logic one can soundly interpret is in direct correlation with the classification of the MASA proposed by Dixmier \cite{dixmier}. This shows, in particular, that no non-trivial interpretation exists if the MASA is \emph{singular}, and that any exponential connective can be interpreted if the MASA is \emph{regular} -- therefore one can soundly interpret elementary linear logic. This section shows also that in the intermediate case -- that of \emph{semi-regular} MASAs -- one can at least interpret multiplicative-additive linear logic but no general statement can be made concerning the interpretation of exponential connectives.


\section{von Neumann Algebras and MASAs}

\subsection{First Definitions and Results}

The theory of von Neumann algebras, under the name of \enquote{rings of operators}, was first developed by Murray and von Neumann in a series of seminal papers \cite{vonneumannbicommutant,murrayvn1,murrayvn2,vn1938,vn1940,murrayvn3,vn1943,vn1949}.

\subsubsection{The double commutant theorem.} A normed $\ast$-algebra is a normed algebra endowed with an antilinear isometric involution $(\cdot)^{\ast}$ which reverses the product:
$$(t^{\ast})^{\ast}=t~~~~\norm{t^{\ast}}=\norm{t}~~~~(t+u)^{\ast}=t^{\ast}+u^{\ast}~~~~ (\lambda t)^{\ast}=\bar{\lambda} t^{\ast}~~~~(tu)^{\ast}=u^{\ast}t^{\ast}$$
A normed $\ast$-algebra is a C$^{\ast}$-algebra when it is complete (i.e.\ it is a Banach algebra) and satisfies the \emph{C$^{\ast}$-identity} $\norm{t^{\ast}t}=\norm{t}^{2}$.

We denote by $\B{\hil{H}}$ the $\ast$-algebra of continuous (or equivalently, bounded) linear maps from the Hilbert space $\hil{H}$ to itself. This algebra can be endowed with the following three topologies:
\begin{itemize}
\item The norm topology, for which a net $(T_{\lambda})$ converges toward $0$ when the net $\norm{T_{\lambda}}$ converges to $0$ in $\complexN$;
\item The strong operator topology (SOT) which is the topology of pointwise convergence when $\hil{H}$ is considered endowed with its norm topology: a net $(T_{\lambda})$ converges toward $0$ when for all $\zeta\in \hil{H}$, the net $(\norm{T_{\lambda}\zeta})$ converges towards $0$ in $\complexN$;
\item The weak operator topology (WOT) which is the topology of pointwise convergence when $\hil{H}$ is considered endowed with its weak topology: a net $(T_{\lambda})$ converges toward $0$ when for all $\zeta,\eta \in \hil{H}$, the net $(\inner{T_{\lambda}\zeta}{\eta})$ converges towards $0$ in $\complexN$;
\end{itemize}

\begin{definition}[von Neumann algebra]
A von Neumann algebra is a $\ast$-sub-algebra of $\B{\hil{H}}$ which is closed for the strong operator topology (SOT).
\end{definition}

We now explain Murray and von Neumann's fundamental \enquote{double commutant theorem}. Pick $M\subset \B{\hil{H}}$. We define the commutant of $M$ (in $\B{\hil{H}}$) as the set $M'^{\B{\hil{H}}}=\{x\in\B{\hil{H}}~|~\forall m\in M, mx=xm\}$. We will in general omit to precise the ambient algebra and denote abusively $M'$ the commutant of $M$ if the context is sufficiently clear. We will denote by $M''$ the bi-commutant $(M')'$ of $M$.

The following theorem is the keystone of the von Neumann algebras theory. It is particularly elegant, since it shows an equivalence between a purely algebraic notion -- being equal to its bi-commutant -- and a purely topological notion -- being closed for the strong operator topology.

\begin{theorem}[Double Commutant Theorem, von Neumann \cite{vonneumannbicommutant}]
Let $M$ be a $\ast$-sub-algebra of $\B{\hil{H}}$ such that $1_{\B{\hil{H}}}\in M$. Then $M$ is a von Neumann algebra if and only if $M=M''$.
\end{theorem}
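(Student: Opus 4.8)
The plan is to prove the Double Commutant Theorem by establishing the two implications separately, the nontrivial one being that $M = M''$ whenever $M$ is a unital $\ast$-subalgebra of $\B{\hil{H}}$ closed in the SOT. One direction is almost immediate: since the bi-commutant $M''$ is, for any subset of $\B{\hil{H}}$, a von Neumann algebra --- commutants are always SOT-closed unital $\ast$-subalgebras because the defining equations $mx = xm$ are preserved under the algebra operations, the adjoint (using that $M$ is $\ast$-closed), and SOT-limits (via joint continuity of multiplication on bounded sets, or directly since $(\inner{(mx - xm)\zeta}{\eta})$ is a WOT-limit) --- the equality $M = M''$ forces $M$ to be a von Neumann algebra. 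So the content is in the converse.

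For the converse, assume $M$ is SOT-closed. The inclusion $M \subseteq M''$ is trivial. For the reverse inclusion, fix $t \in M''$; I must approximate $t$ in the SOT by elements of $M$, i.e. show that for every finite set $\zeta_1, \dots, \zeta_n \in \hil{H}$ and every $\varepsilon > 0$ there is $m \in M$ with $\norm{(t - m)\zeta_i} < \varepsilon$ for all $i$. I would carry this out in two steps. First the case $n = 1$: given $\zeta \in \hil{H}$, let $p$ be the orthogonal projection onto the closure of the subspace $M\zeta = \{m\zeta : m \in M\}$. The key observation is that $p \in M'$: the subspace $\overline{M\zeta}$ is invariant under every $m \in M$ (here using that $1 \in M$, so $\zeta \in \overline{M\zeta}$), and a $\ast$-algebra that leaves a closed subspace invariant also leaves its orthogonal complement invariant, so $p$ commutes with $M$. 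Since $t \in M'' $ and $p \in M'$, we get $tp = pt$, hence $t\zeta = tp\zeta \in \overline{M\zeta}$, so $t\zeta$ can be approximated by $m\zeta$ for $m \in M$.

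The second step upgrades this to the general $n$: apply the $n = 1$ argument to the amplification. Consider the Hilbert space $\hil{H}^n = \hil{H} \oplus \cdots \oplus \hil{H}$ and the diagonal embedding $\rho \colon \B{\hil{H}} \to \B{\hil{H}^n}$ sending $x$ to $\mathrm{diag}(x, \dots, x)$. Set $N = \rho(M)$, a unital $\ast$-subalgebra of $\B{\hil{H}^n}$. One checks that the commutant $N'$ in $\B{\hil{H}^n}$ consists exactly of the $n \times n$ matrices with entries in $M'$, and consequently $N'' = \rho(M'')$; in particular $\rho(t) \in N''$. Applying the single-vector case to $N$ and the vector $\zeta = (\zeta_1, \dots, \zeta_n) \in \hil{H}^n$ yields some $m \in M$ with $\norm{(\rho(t) - \rho(m))\zeta}^2 = \sum_i \norm{(t - m)\zeta_i}^2 < \varepsilon^2$, which is exactly what is needed. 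Thus $t$ lies in the SOT-closure of $M$, which equals $M$ by hypothesis, and so $M'' \subseteq M$.

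The main obstacle --- and the only step requiring genuine care --- is the verification that $p \in M'$ in the single-vector case, i.e. that the adjoint-closedness of $M$ is precisely what makes the invariant subspace $\overline{M\zeta}$ \emph{reducing} rather than merely invariant: for $m \in M$ and $\xi \in \overline{M\zeta}^{\perp}$ one writes, for any $\eta$, $\inner{m\xi}{m'\zeta} = \inner{\xi}{m^{\ast}m'\zeta} = 0$ since $m^\ast m' \in M$. Everything else (the amplification bookkeeping, the description of $N'$ as matrices over $M'$, and the elementary closure properties of commutants) is routine linear algebra and topology.
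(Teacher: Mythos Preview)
Your proof is correct and is in fact the standard textbook argument for the Double Commutant Theorem. The paper, however, does not prove this theorem at all: it is stated with a citation to von Neumann's original article \cite{vonneumannbicommutant} and then used as background, so there is no ``paper's own proof'' to compare against. Your write-up supplies exactly the classical proof one would find in any reference on operator algebras.
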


\begin{remark} Since the strong operator topology (SOT) is weaker than the norm topology, a von Neumann algebra $\vn{M}$ is also closed for the norm topology, and is also a C$^{\ast}$-algebra. Moreover, since $\vn{M}$, as a von Neumann algebra, is the commutant of a set of operators, it necessarily contains the identity operator in $\B{\hil{H}}$, and consequently is a unital C$^{\ast}$-algebra. One can therefore define the continuous spectral calculus for operators in $\vn{M}$.
\end{remark}

\subsubsection{Direct Integrals.}

Let $\vn{M}$ be a von Neumann algebra. We define the \emph{center} of $\vn{M}$ as the von Neumann algebra $\vn{Z(M)}=\vn{M\cap M}'$. 
\begin{definition}[Factor]
A \emph{factor} is a von Neumann algebra $\vn{M}$ whose center is trivial, i.e. such that $\vn{Z(M)}=\complexN.1_{\B{\hil{H}}}$.
\end{definition}

The study of von Neumann algebras can be reduced to the study of factors. This is one of the most important results of the theory, which is due to von Neumann \cite{vn1949}: he showed that every von Neumann algebra can be written as a \emph{direct integral} of factors. A direct integral is a direct sum over a continuous index set, in the same way an integral is a sum over a continuous index set. A complete exposition of this result can be found in the first book of the Takesaki series  \cite{takesaki1}, Section $\text{IV}$.8, page 264.

Here are the main ideas. If $\vn{A}$ is not a factor, its center $\vn{Z(A)}$ is a non-trivial commutative von Neumann algebra (i.e. different from $\complexN$). Suppose now that $\vn{Z(A)}$ is a \emph{diagonal} algebra, i.e. that there exists a countable set $I$ (which could be finite) and a family $(p_{i})_{i\in I}$ of pairwise disjoint minimal projections such that $\sum_{i\in I}p_{i}=1$. Then the algebras $p_{i}\vn{A}p_{i}$ are factors, and one has $\vn{A}=\bigoplus_{i\in I}p_{i}\vn{A}p_{i}$. However, in the general case, the center $\vn{Z(A)}$ does not need to be a diagonal algebra, and it can contain a \emph{diffuse} sub-algebra, i.e. a sub-algebra that does not have minimal projections. Then it is necessary to consider a continuous version of the direct sum: the direct integral.

\begin{definition}
Let $(X,\mathcal{B},\mu)$ be a measured space. A family $(\hil{H}_{x})_{x\in X}$ of Hilbert spaces is \emph{measurable} over $(X,\mathcal{B},\mu)$ when there exists a countable partition $(X_{i})_{i\in I}$ of $X$ such that for all $i\in I$:
\begin{equation*}
\exists \hil{K},~ \forall x\in X_{i}, \hil{H}_{x}=\hil{K}
\end{equation*}
where $\hil{K}$ is either equal to $\complexN^{n}$ ($n\in\naturalN$) or equal to $\ell^{2}(\naturalN)$.

A section $(\xi_{x})_{x\in X}$ ($\xi_{x}\in\hil{H}_{x}$) is measurable when its restriction to each element $X_{n}$ of the partition is measurable.
\end{definition}

\begin{definition}
Let $(\hil{H}_{x})_{x\in X}$ be a measurable family of Hilbert spaces over a measured space $(X,\mathcal{B},\lambda)$. The \emph{direct integral} $\dint_{X} \hil{H}_{x}d\lambda(x)$ is the Hilbert space whose elements are equivalence classes of measurable sections modulo almost everywhere equality, and the scalar product is defined by:
\begin{equation*}
\inner{(\xi_{x})_{x\in X}}{(\zeta_{x})_{x\in X}}=\dint_{X}\inner{\xi_{x}}{\zeta_{x}}d\lambda(x)
\end{equation*}
\end{definition}

In the same way commutative C$^{\ast}$-algebras are exactly the algebras of continuous functions from locally compact Hausdorff spaces to $\complexN$ (this is Gelfand's theorem, \cite{gelfand}), one can show that every commutative von Neumann algebra can be identified with the algebra $L^{\infty}(X,\mathcal{B},\lambda)$ of essentially bounded measurable functions on a measured space $(X,\mathcal{B},\lambda)$.

\begin{theorem}
Let $\vn{A}$ be a commutative von Neumann algebra. There exists a measurable family of Hilbert spaces $(\hil{H}_{x})_{x\in X}$ over a measured space $(X,\mathcal{B},\lambda)$ such that $\vn{A}$ is unitarily equivalent to the algebra $L^{\infty}(X)$ acting on the Hilbert space $\dint_{X} \hil{H}_{x}d\lambda(x)$.
\end{theorem}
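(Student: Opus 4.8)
The plan is to combine the classical structure theory of commutative von Neumann algebras with the disintegration result already quoted. First I would invoke the identification of $\vn{A}$ with some $L^{\infty}(Y,\mathcal{B}_{Y},\nu)$ acting by multiplication on $L^{2}(Y,\nu)$ — this is the statement recalled just before the theorem, which itself rests on Gelfand-type duality and the spectral theorem. However, $\vn{A}$ is not given to us abstractly but as a concrete algebra on some Hilbert space $\hil{H}$, so the real content is to realise $\hil{H}$ itself as a direct integral in a way compatible with that multiplication action; the multiplicity theory of commutative algebras is what produces the fibres $\hil{H}_{x}$.

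The key steps, in order, would be: (1) reduce to the case where $\vn{A}$ is \emph{maximal abelian} in $\B{\hil{H}}$, by first handling a cyclic vector. If $\xi$ is cyclic for $\vn{A}$, then $\hil{H}\cong L^{2}(Y,\nu_{\xi})$ with $\vn{A}$ acting by multiplication, i.e. all fibres are one-dimensional; here $\nu_{\xi}$ is the spectral measure of $\xi$. (2) In general $\hil{H}$ decomposes as an at most countable orthogonal sum of cyclic subspaces $\hil{H}_{n} = \overline{\vn{A}\xi_{n}}$, each isomorphic to an $L^{2}(Y,\nu_{n})$ with $\nu_{n}$ mutually absolutely continuous decreasing in support (the standard maximal-vector / exhaustion argument, using that the commutant is also commutative or at least that $\vn{A}$ has a separable predual when $\hil{H}$ is separable). (3) Glue a single base measure $\lambda$ on $X$ (take $X=Y$, $\lambda=\nu_{1}$ say, or the sum) and set $\hil{H}_{x}=\complexN^{m(x)}$ or $\ell^{2}(\naturalN)$ where $m(x)=\#\{n : x\in\operatorname{supp}\nu_{n}\}$ is the multiplicity function; checking $m$ is measurable and piecewise constant on a countable partition gives exactly the measurability condition in the first \texttt{definition}. (4) Verify that the obvious unitary $\hil{H}\to\dint_{X}\hil{H}_{x}d\lambda(x)$ intertwines $\vn{A}$ with the diagonal copy of $L^{\infty}(X)$, i.e. $f\in\vn{A}$ goes to the section $x\mapsto f(x)\mathrm{id}_{\hil{H}_{x}}$.

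The main obstacle is step (2)–(3): producing the \emph{globally defined} multiplicity function and the single dominating measure so that all the local $L^{2}$ pictures fit into one measurable field. This is where one needs either separability of $\hil{H}$ (so the exhaustion by cyclic subspaces is countable) or, in the non-separable case, a more careful transfinite argument; it is also where the measure-theoretic bookkeeping — choosing Radon–Nikodym derivatives, disjointifying supports, showing the resulting partition of $X$ is countable — actually lives. Everything after that (the scalar product formula, that the map is isometric and onto, that it carries $\vn{A}$ to $L^{\infty}(X)$ acting diagonally) is a routine computation once the field is in place. I would remark that the measured space is not unique — only the pair consisting of the measure class of $\lambda$ and the multiplicity function $m$ is an invariant — and that for the applications in this paper one may always assume $\hil{H}$ separable, which removes the transfinite subtlety entirely; I would then simply cite Takesaki, Chapter IV.8, for the full proof rather than reproduce it.
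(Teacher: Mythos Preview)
The paper does not prove this theorem at all: it is stated as a classical structural result, with the surrounding discussion pointing to Takesaki's treatment of direct integrals (Section IV.8) for the details. Your sketch via cyclic vectors and multiplicity theory is the standard route and is correct in outline; in fact your closing remark --- that you would ultimately cite Takesaki IV.8 rather than reproduce the argument --- is precisely what the paper does, so your proposal is strictly more detailed than the paper's own treatment.
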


We will not define here neither the notion of measurable family of von Neumann algebras, nor the one of direct integrals of von Neumann algebras. We only state the fundamental theorem mentioned above.

\begin{theorem}[von Neumann \cite{vn1949}, Takesaki \cite{takesaki2}, Theorem $\text{IV}$.8.21 page 275]
Every von Neumann algebra can be written as a direct integral of factors.
\end{theorem}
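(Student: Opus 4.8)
The plan is to disintegrate $\vn{M}$ along its center $\vn{Z(M)}=\vn{M}\cap\vn{M}'$, which is a commutative von Neumann algebra. Applying the representation theorem for commutative von Neumann algebras stated above to $\vn{Z(M)}$, there is a measured space $(X,\mathcal{B},\lambda)$ and a measurable family $(\hil{H}_{x})_{x\in X}$ such that $\hil{H}$ is unitarily equivalent to $\dint_{X}\hil{H}_{x}\,d\lambda(x)$ and, under this identification, $\vn{Z(M)}$ becomes the algebra of \emph{diagonalisable} operators, those of the form $(\xi_{x})_{x}\mapsto(f(x)\xi_{x})_{x}$ with $f\in L^{\infty}(X)$. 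As is standard for this theorem I work under the assumption that $\hil{H}$ is separable; the separable case already carries all of the content, and the general statement follows by an additional reduction.

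The technical core lies in the theory of \emph{decomposable} operators. One shows that an operator $T\in\B{\hil{H}}$ lies in $\vn{Z(M)}'$ --- i.e.\ commutes with every diagonalisable operator --- if and only if there is an essentially unique bounded measurable field $(T_{x})_{x\in X}$ with $T_{x}\in\B{\hil{H}_{x}}$, the essential supremum of $\norm{T_{x}}$ being finite, such that $T(\xi_{x})_{x}=(T_{x}\xi_{x})_{x}$; one then writes $T=\dint_{X}T_{x}\,d\lambda(x)$. The nontrivial direction requires a measurable-selection argument producing the field $(T_{x})_{x}$ from $T$. Now $\vn{Z(M)}\subseteq\vn{M}$ gives $\vn{M}\subseteq\vn{M}''\subseteq\vn{Z(M)}'$, and $\vn{Z(M)}\subseteq\vn{M}'$ gives $\vn{M}'\subseteq\vn{Z(M)}'$; hence every operator of $\vn{M}$ and of $\vn{M}'$ is decomposable. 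Since $\hil{H}$ is separable, $\vn{M}$ is generated as a von Neumann algebra by a countable family $(a_{n})_{n\in\naturalN}$ and $\vn{M}'$ by a countable family $(b_{n})_{n\in\naturalN}$; decomposing, $a_{n}=\dint_{X}(a_{n})_{x}\,d\lambda(x)$ and $b_{n}=\dint_{X}(b_{n})_{x}\,d\lambda(x)$. Set, for $\lambda$-almost every $x$, $\vn{M}_{x}=\{(a_{n})_{x}\mid n\in\naturalN\}''\subseteq\B{\hil{H}_{x}}$. One checks that $(\vn{M}_{x})_{x}$ is a measurable field of von Neumann algebras, that it is independent (modulo null sets) of the chosen generating family, and that $\vn{M}=\dint_{X}\vn{M}_{x}\,d\lambda(x)$, the direct integral being by definition the set of decomposable operators whose field belongs to $\vn{M}_{x}$ almost everywhere. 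Symmetrically $\vn{M}'=\dint_{X}\vn{N}_{x}\,d\lambda(x)$ with $\vn{N}_{x}=\{(b_{n})_{x}\mid n\in\naturalN\}''$, and the commutation theorem for direct integrals gives $\vn{N}_{x}=(\vn{M}_{x})'$ for almost every $x$.

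It then remains to check that $\vn{M}_{x}$ is a factor for almost every $x$, which is now immediate. Centers disintegrate: from the two decompositions above,
\[
\vn{Z(M)}=\vn{M}\cap\vn{M}'=\dint_{X}\bigl(\vn{M}_{x}\cap(\vn{M}_{x})'\bigr)\,d\lambda(x).
\]
On the other hand, by construction $\vn{Z(M)}=\dint_{X}\complexN 1_{\hil{H}_{x}}\,d\lambda(x)$ (the diagonalisable algebra). Comparing these two direct-integral expressions for $\vn{Z(M)}$ forces $\vn{M}_{x}\cap(\vn{M}_{x})'=\complexN 1_{\hil{H}_{x}}$ for $\lambda$-almost every $x$; that is, $\vn{M}_{x}$ has trivial center and is therefore a factor for almost every $x$. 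This exhibits $\vn{M}$ as a direct integral of factors.

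The main obstacle is the groundwork on direct integrals underlying the second paragraph: proving that every operator commuting with the diagonalisable algebra is decomposable (a measurability and selection issue), setting up measurable fields of von Neumann algebras so that $\vn{M}=\dint_{X}\vn{M}_{x}\,d\lambda(x)$ is well defined and independent of choices, and establishing the commutation theorem $\bigl(\dint_{X}\vn{M}_{x}\,d\lambda(x)\bigr)'=\dint_{X}(\vn{M}_{x})'\,d\lambda(x)$. Once this machinery --- the content of the sections of Takesaki cited above --- is available, the identification of the center and the \enquote{factor almost everywhere} conclusion cost almost nothing.
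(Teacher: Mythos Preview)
The paper does not prove this theorem: it explicitly states ``We will not define here neither the notion of measurable family of von Neumann algebras, nor the one of direct integrals of von Neumann algebras. We only state the fundamental theorem mentioned above,'' and simply cites von Neumann and Takesaki for the result. Your sketch is the standard disintegration-along-the-center argument (essentially the one in Takesaki, Chapter~IV.8), and it is correct as an outline; you have also accurately identified where the real work lies, namely in the measurable-selection machinery and the commutation theorem for direct integrals.
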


\subsubsection{Classification of factors.} The study of factors led to a classification based on the study of the set of projections and their isomorphisms (partial isometries). We recall that a projection is an operator $p$ such that $p=p^{\ast}=p^{2}$ (this is sometimes referred to as an \enquote{orthogonal projection}). If $\vn{M}$ is a von Neumann algebra, we will denote by $\Pi(\vn{M})$ the set of projections in $\vn{M}$. Since $\vn{M}$ is a sub-algebra of $\B{\hil{H}}$ for a given Hilbert space $\hil{H}$, the projections in $\Pi(\vn{M})$ are in particular projections in $\B{\hil{H}}$. As such, they are in correspondence with subspaces of $\hil{H}$: the projection $p$ corresponds to the closed subspace $p\hil{H}$. Two projections $p,q$ are \emph{disjoint} when $pq=0$, translating the fact that the two corresponding closed subspaces $p\hil{H}$ and $q\hil{H}$ are disjoint. Moreover, the set $\Pi(\vn{M})$ is endowed with a partial ordering inherited from the inclusion of subspaces: $p\preceq q$ if and only if $pq=p$ if and only if $p\hil{H}\subset q\hil{H}$.

Now, the idea of Murray and von Neumann \cite{murrayvn1} was to consider an equivalence relation on the set of projections. This equivalence relation depends on the algebra $\vn{M}$ and translates the fact that $\vn{M}$ contains an isomorphism between the corresponding subspaces. Namely, they define the equivalence as follows: two projections $p,q$ are \emph{Murray von Neumann equivalent in $\vn{M}$}, noted $p\sim_{\vn{M}} q$, when there exists an element $u\in \vn{M}$ such that $uu^{\ast}=p$ and $u^{\ast}u=q$. Notice that this implies that $u$ is a partial isometry.

The partial ordering $\preceq$ then induces a partial ordering $\precsim_{\vn{M}}$ on the equivalence classes of projections in $\vn{M}$, i.e. on the set $\Pi(\vn{M})/\sim_{\vn{M}}$.

\begin{remark}\label{remarqueprecalgopvn}
As we explained above, $p\preceq q$ means that $p\hil{H}$ is a closed subspace of $q\hil{H}$. The fact that $p\sim_{\vn{M}} q$ translates the fact that $p\hil{H}$ and $q\hil{H}$ are \emph{inner (w.r.t $\vn{M}$) isomorphic}, i.e. there exists an isomorphism between them which is an element of $\vn{M}$, or in other terms, the fact that they are isomorphic is witnessed by an element of $\vn{M}$. Consequently, the fact that $p\precsim_{\vn{M}} q$ translates the idea that $p\hil{H}$ is \emph{inner isomorphic} to a closed subspace of $q\hil{H}$, and therefore that $p\hil{H}$ is somehow \emph{smaller} than $q\hil{H}$ in the sense that an element of $\vn{M}$ witnesses the fact that it is smaller.
\end{remark}

\begin{definition}
A projection $p$ in a von Neumann algebra $\vn{M}$ is \emph{infinite} (in $\vn{M}$) when there exists $q\prec p$ (i.e. a proper sub-projection) such that $q\sim_{\vn{M}} p$. A projection is \emph{finite} (in $\vn{M}$) when it is not infinite (in $\vn{M}$).
\end{definition}

\begin{proposition}[Takesaki \cite{takesaki1}, Proposition $\text{V}$.1.3 page 291 and Theorem $\text{V}$.1.8 page 293]
Let $\vn{M}$ be a von Neumann algebra. Then $\vn{M}$ is a factor if and only if the relation $\precsim_{\vn{M}}$ is a total ordering.
\end{proposition}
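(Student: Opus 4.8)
The plan is to prove the two implications separately; the only non-trivial ingredient, needed for the direction ``factor $\Rightarrow$ total'', is the \emph{comparison theorem} of Murray and von Neumann (Takesaki \cite{takesaki1}, Theorem V.1.8): given any two projections $p,q\in\Pi(\vn{M})$, there is a central projection $z\in\vn{Z(M)}$ with $zp\precsim_{\vn{M}}zq$ and $(1-z)q\precsim_{\vn{M}}(1-z)p$. If $\vn{M}$ is a factor then $\vn{Z(M)}=\complexN.1_{\B{\hil{H}}}$, so $z$ equals $0$ or $1$: when $z=0$ the second relation reduces to $q\precsim_{\vn{M}}p$, and when $z=1$ the first reduces to $p\precsim_{\vn{M}}q$. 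Since $\precsim_{\vn{M}}$ is already known to be a partial ordering on $\Pi(\vn{M})/\sim_{\vn{M}}$, this shows it is total.

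For the converse I would argue contrapositively. Suppose $\vn{M}$ is not a factor, so $\vn{Z(M)}$ is a commutative von Neumann algebra strictly larger than $\complexN.1_{\B{\hil{H}}}$ and hence (via its identification with some $L^{\infty}(X,\mathcal{B},\lambda)$, e.g. the indicator of a set that is neither null nor co-null) contains a projection $z$ with $z\neq 0$ and $z\neq 1$. I claim the classes $[z]$ and $[1-z]$ are $\precsim_{\vn{M}}$-incomparable. Indeed, were $[z]\precsim_{\vn{M}}[1-z]$ witnessed by some $u\in\vn{M}$ with $u^{\ast}u=z$ and $q:=uu^{\ast}\preceq 1-z$, then $qz=0$; the partial-isometry identity $uu^{\ast}u=u$ gives $u=uz$ and $u=qu$, and, $z$ being central, $uz=zu$, so $u=qu=q(zu)=(qz)u=0$ and therefore $z=u^{\ast}u=0$, a contradiction. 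The symmetric computation (with $1-z$, also central) excludes $[1-z]\precsim_{\vn{M}}[z]$, so $\precsim_{\vn{M}}$ is not total.

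The main obstacle is the comparison theorem itself: its proof is the substantial step, obtained through a Zorn's lemma argument producing a maximal family of mutually orthogonal subprojections of $p$, each Murray--von Neumann equivalent to a subprojection of $q$, after which one checks that cutting by a suitable central projection annihilates the leftover of $p$ or the leftover of $q$. In this expository section I would simply cite it, along with the operator-algebraic Schr\"oder--Bernstein theorem that makes $\precsim_{\vn{M}}$ genuinely antisymmetric and the elementary fact that a commutative von Neumann algebra distinct from $\complexN.1_{\B{\hil{H}}}$ has a non-trivial projection.
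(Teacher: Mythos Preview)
Your argument is correct and is precisely the standard one. The paper itself does not give a proof of this proposition: it simply records the statement with a citation to Takesaki (Proposition~V.1.3 and Theorem~V.1.8) and moves on. What you have written is essentially the content behind those references --- the comparison theorem for the forward implication, and the incomparability of a non-trivial central projection with its complement for the converse --- so there is nothing to contrast.
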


To state the following definition and theorem, we will use a slight variant of the usual notion of order type: we distinguish the element denoted by $\infty$ from any other element, considering that $\infty$ represents a class of infinite projections. For instance, $\{0,1\}$ and $\{0,\infty\}$ should be considered as distinct since the first does not contain infinite elements contrarily to the second.

\begin{definition}[Type of a Factor]
Let $\vn{M}$ be a factor. We will say that:
\begin{itemize}
\item $\vn{M}$ is of type $\text{I}_{n}$ when $\precsim_{\vn{M}}$ has the same order type as $\{0,1,\dots,n\}$;
\item $\vn{M}$ is of type $\text{I}_{\infty}$ when $\precsim_{\vn{M}}$ has the same order type as $\naturalN\cup\{\infty\}$;
\item $\vn{M}$ is of type $\text{II}_{1}$ when $\precsim_{\vn{M}}$ has the same order type as $[0,1]$;
\item $\vn{M}$ is of type $\text{II}_{\infty}$  when $\precsim_{\vn{M}}$ has the same order type as $\realposN\cup\{\infty\}$;
\item $\vn{M}$ is of type $\text{III}$  when $\precsim_{\vn{M}}$ has the same order type as $\{0,\infty\}$, i.e. all non-zero projections are infinite.
\end{itemize}
\end{definition}

\begin{proposition}
There exists factors of all types. Moreover, $\precsim_{\vn{M}}$ cannot be of another order type as the ones listed above.
\end{proposition}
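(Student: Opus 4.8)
The plan is to treat the statement as two independent assertions: first, that each listed order type is realised by some factor; second, that no factor's comparison relation $\precsim_{\vn{M}}$ can have any other order type.

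For realisability I would exhibit explicit witnesses. Type $\text{I}_n$ is given by $M_n(\complexN)$ acting on $\complexN^n$: its centre is reduced to the scalars, so it is a factor, and two projections are Murray--von Neumann equivalent exactly when their ranges have equal dimension, whence $\Pi(\vn{M})/\sim_{\vn{M}}$ is order-isomorphic to $\{0,1,\dots,n\}$. Type $\text{I}_\infty$ is given by $\B{\hil{H}}$ for $\hil{H}$ separable infinite-dimensional, the same rank argument now yielding $\naturalN\cup\{\infty\}$. For type $\text{II}_1$ I would take the group von Neumann algebra $L(G)$ of a countably infinite ICC group, e.g.\ the group of finitely supported permutations of $\naturalN$ or the free group on two generators; the ICC condition makes the centre trivial, and the canonical faithful normal tracial state $\tau$ provides a dimension function $p\mapsto\tau(p)$ with range $[0,1]$ on projections. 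Tensoring such a factor with $\B{\hil{H}}$ produces a type $\text{II}_\infty$ factor carrying a semifinite, non-finite trace whose values on projections exhaust $\realposN\cup\{\infty\}$. Type $\text{III}$ is the hard case: I would form a crossed product of $L^\infty(X,\mu)$ by a countable group $G$ acting ergodically and essentially freely by non-singular transformations admitting no equivalent invariant $\sigma$-finite measure (equivalently, an infinite tensor product of matrix algebras with non-tracial product states, as in the constructions of Powers and of Araki--Woods), and verify that it is a factor with no non-zero finite projection, so that every non-zero projection is equivalent to $1$ and $\Pi(\vn{M})/\sim_{\vn{M}}\cong\{0,\infty\}$.

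For the converse direction the main tool is the comparison theorem already available above --- on a factor $\precsim_{\vn{M}}$ totally orders $\Pi(\vn{M})/\sim_{\vn{M}}$ --- together with an additive, monotone \emph{dimension function} $D$ on equivalence classes, which I would build by fixing a reference projection and iteratively measuring other projections against it by comparison and halving. I would then distinguish cases according to the structure of $\Pi(\vn{M})$. If $\vn{M}$ has a non-zero minimal projection $e$, a Zorn-type argument shows that every projection is an orthogonal sum of copies of $e$, so $D$ takes integer values up to $D(1)$, with $\infty$ adjoined precisely when $D(1)=\infty$; this gives types $\text{I}_n$ and $\text{I}_\infty$. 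If $\vn{M}$ has no minimal projection but does possess a non-zero finite projection $p$, the key lemma is that any non-zero finite projection can be \emph{halved}, i.e.\ written as $p_1+p_2$ with $p_1p_2=0$ and $p_1\sim_{\vn{M}}p_2$, which follows from a maximality argument for families of mutually orthogonal, pairwise-equivalent subprojections. Iterating halving realises every dyadic multiple of $D(p)$, and then normality of $D$ together with completeness of the projection lattice and the comparison theorem fills in the entire interval $[0,D(p)]$; whether $1$ is finite or infinite separates $\text{II}_1$ (range $[0,1]$ after normalising $D(1)=1$) from $\text{II}_\infty$ (range $\realposN\cup\{\infty\}$, obtained by pasting countably many finite intervals along an orthogonal decomposition of $1$). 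Finally, if every non-zero projection is infinite, comparison forces each of them to be equivalent to $1$, leaving only the order type $\{0,\infty\}$, namely type $\text{III}$.

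The main obstacle is the type $\text{III}$ part: exhibiting such a factor and, above all, proving it has no non-zero finite projection. This is exactly the regime where no (even semifinite) trace exists, so one cannot read off the order type from a dimension function and must argue directly --- via Tomita--Takesaki modular theory, or via an explicit computation of the asymptotic ratio set of an infinite tensor product --- that every non-zero projection is infinite. On the classification side, the delicate point is the halving lemma and, more subtly, the passage from dyadic values to the full interval $[0,D(p)]$: this requires careful handling of suprema of increasing nets of projections and the completeness of the projection lattice, and it is precisely what forbids a diffuse factor from carrying the discrete order type $\naturalN\cup\{\infty\}$.
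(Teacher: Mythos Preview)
Your proposal is correct, and in fact you give far more content than the paper itself does. The paper's ``proof'' of this proposition is essentially bibliographic: it observes that $\B{\hil{H}}$ with $\dim\hil{H}=k$ realises type $\text{I}_k$, and then refers both the existence of type $\text{II}$ and $\text{III}$ factors and the exhaustiveness of the classification to specific sections of Takesaki's treatise. No argument is reproduced.

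What you have written is a genuine outline of the classical Murray--von Neumann approach that those references contain. Your witnesses are the standard ones (and indeed the paper later invokes the ICC group construction for $\text{II}_1$ in the section on exponentials), and your case analysis for the second half --- minimal projections present versus absent, identity finite versus infinite, halving lemma plus comparison to build the dimension function --- is exactly the architecture of the proof in Takesaki. Your identification of the two genuinely delicate points (producing a type $\text{III}$ factor and verifying it has no non-zero finite projection; passing from dyadic values of $D$ to the full interval via order-completeness of the projection lattice) is accurate. So there is no gap; you simply chose to sketch what the paper chose to cite.
\textbf{}
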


\begin{proof}
Existence of type {I} factors is clear; the algebra $\B{\hil{H}}$ with $\hil{H}$ a Hilbert space of dimension $k$ ($k\in\naturalN^{\ast}\cup\{\infty\}$) is a type {I}$_{k}$ factor. For the existence of type {II} and type {III} factors, we refer to the first volume of Takesaki's series \cite{takesaki1}, section $\text{V}$.7, page 362. For the second part of the proposition, we refer once again to the first volume of Takesaki's series \cite{takesaki1}, Theorem $\text{V}$.1.19 and Corollary $\text{V}$.1.20 pages 296-297.
\end{proof}

We can show that a factor of type $\text{I}_{n}$ is isomorphic to $\vn{M}_{n}(\complexN)$, the algebra of square matrices of size $n\times n$ with complex coefficients. A factor of type $\text{I}_{\infty}$ is isomorphic to $\B{\hil{H}}$, where $\hil{H}$ is an infinite-dimensional Hilbert space.

We will now define the notion of \emph{trace}. One of the important properties of type {II}$_{1}$ factors is the existence of a faithful normal finite trace, i.e. an adequate generalisation of the trace of matrices. Traces, in general, are not defined for all elements, but only for \emph{positive elements}.

\begin{definition}
Let $a$ be an operator in $\vn{M}$ a von Neumann algebra (more generally, a C$^{\ast}$-algebra). We say that $a$ is \emph{positive} if $\text{Spec}_{\vn{M}}(a)\subset \mathbb{R}_{+}$. We denote by $\vn{M}^{+}$ the set of positive operators in $M$.
\end{definition}

\begin{proposition}
We have $\vn{M}^{+}=\{u^{\ast}u~|~u\in \vn{M}\}$.
\end{proposition}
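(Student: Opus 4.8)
The plan is to prove the two inclusions separately; the inclusion $\vn{M}^{+}\subseteq\{u^{\ast}u\mid u\in\vn{M}\}$ is routine, while the reverse inclusion is the classical fact that $u^{\ast}u$ is positive in any C$^{\ast}$-algebra and carries all the content.

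For $\vn{M}^{+}\subseteq\{u^{\ast}u\mid u\in\vn{M}\}$ I would argue as follows. Let $a\in\vn{M}^{+}$; then $a$ is self-adjoint, hence normal, so the continuous functional calculus applies, and — as noted in the remark following the Double Commutant Theorem — it produces operators lying in the unital C$^{\ast}$-algebra $\vn{M}$ itself. Since $t\mapsto\sqrt{t}$ is continuous on $\text{Spec}_{\vn{M}}(a)\subseteq\mathbb{R}_{+}$, it yields a self-adjoint operator $b=a^{1/2}\in\vn{M}$ with $b^{2}=a$, and $u=b$ works: $u^{\ast}u=b^{2}=a$.

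For the reverse inclusion, fix $u\in\vn{M}$ and set $a=u^{\ast}u$, which is self-adjoint, so $\text{Spec}_{\vn{M}}(a)\subseteq\mathbb{R}$ and I only need to exclude negative spectrum. Using the functional calculus on $a$, write $a=b-c$ with $b=f_{+}(a)$, $c=f_{-}(a)$ for $f_{+}(t)=\max(t,0)$ and $f_{-}(t)=\max(-t,0)$; then $b,c\in\vn{M}^{+}$, $bc=cb=0$, and it suffices to prove $c=0$. Put $x=uc\in\vn{M}$; then
\[
x^{\ast}x=c\,u^{\ast}u\,c=c\,a\,c=c(b-c)c=-c^{3},
\]
so $\text{Spec}_{\vn{M}}(x^{\ast}x)\subseteq\mathbb{R}_{-}$ by the spectral mapping theorem applied to $t\mapsto -t^{3}$ on $\text{Spec}_{\vn{M}}(c)\subseteq\mathbb{R}_{+}$. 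Next, write $x=h+ik$ with $h=(x+x^{\ast})/2$ and $k=(x-x^{\ast})/2i$ self-adjoint; a short computation gives the identity $x^{\ast}x+xx^{\ast}=2h^{2}+2k^{2}$, whence $xx^{\ast}=2h^{2}+2k^{2}+c^{3}$ is a finite sum of positive operators (each summand being positive by the spectral mapping theorem, and a sum of positives being positive — a standard consequence of the functional calculus), so $\text{Spec}_{\vn{M}}(xx^{\ast})\subseteq\mathbb{R}_{+}$. Since $1-yz$ is invertible if and only if $1-zy$ is, the spectra of $x^{\ast}x$ and $xx^{\ast}$ agree away from $0$, so the two inclusions above force $\text{Spec}_{\vn{M}}(x^{\ast}x)=\{0\}$, hence $x^{\ast}x=c^{3}=0$, hence $c=0$ (again by functional calculus on the positive operator $c$). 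Therefore $a=b\in\vn{M}^{+}$.

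The main obstacle will be the reverse inclusion, and within it the positivity-transfer step: deducing $\text{Spec}_{\vn{M}}(x^{\ast}x)\subseteq\mathbb{R}_{+}$ from $\text{Spec}_{\vn{M}}(xx^{\ast})\subseteq\mathbb{R}_{+}$ via the decomposition $x=h+ik$ and the algebraic identity $x^{\ast}x+xx^{\ast}=2h^{2}+2k^{2}$. Everything else is routine functional calculus, and the whole statement is standard; I include the argument only because the positivity of $u^\ast u$ is used repeatedly in what follows.
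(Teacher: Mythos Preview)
The paper states this proposition without proof; it is a classical C$^{\ast}$-algebra fact and the paper simply records it for later use. Your argument is correct and is essentially the standard (Kaplansky) proof found in textbooks: take a square root for one inclusion, and for the other decompose $a=u^{\ast}u$ into positive and negative parts $b-c$, compute $(uc)^{\ast}(uc)=-c^{3}$, and use the spectral comparison $\text{Spec}(x^{\ast}x)\setminus\{0\}=\text{Spec}(xx^{\ast})\setminus\{0\}$ together with the identity $x^{\ast}x+xx^{\ast}=2h^{2}+2k^{2}$ to force $c=0$.

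One small remark: the step ``a sum of positives is positive'' is not really a consequence of the functional calculus, which applies to a single normal element. In the present von Neumann algebra setting the cleanest justification is the Hilbert-space characterisation $a\geqslant 0\iff\langle a\xi,\xi\rangle\geqslant 0$ for all $\xi$ (equivalent to the spectral definition via the spectral theorem), from which closure under sums is immediate; alternatively one can use the purely C$^{\ast}$-algebraic norm criterion $a\geqslant 0\iff a=a^{\ast}$ and $\|\,\|a\|\cdot 1-a\,\|\leqslant\|a\|$. This is cosmetic and does not affect the validity of your proof.
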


\begin{definition}
A \emph{trace} $\tau$ on a von Neumann algebra $\vn{M}$ is a function from $\vn{M}^{+}$ into $[0,\infty]$ satisfying:
\begin{enumerate}
\item $\tau(x+y)=\tau(x)+\tau(y)$ for all $x,y\in \vn{M}^{+}$;
\item $\tau(\lambda x)=\lambda\tau(x)$ for all $x\in \vn{M}^{+}$ and all $\lambda\geqslant 0$;
\item $\tau(x^{\ast}x)=\tau(xx^{\ast})$ for all $x\in \vn{M}$.
\end{enumerate}
We will moreover say that $\tau$ is:
\begin{itemize}
\item \emph{faithful} if $\tau(x)>0$ for all $x\not= 0$ in $\vn{M}^{+}$;
\item \emph{finite} when $\tau(1)<\infty$;
\item \emph{semi-finite} when for all element $x$ in $\vn{M}^{+}$ there exists $y\in\vn{M}^{+}$ such that $x-y\in \vn{M}^{+}$ and $\tau(y)<\infty$;
\item \emph{normal} when $\tau(\sup \{x_{i}\})=\sup \{\tau(x_{i})\}$ for all increasing bounded net $\{x_{i}\}$ in $\vn{M}^{+}$.
\end{itemize}
\end{definition}

\begin{theorem}[Takesaki \cite{takesaki1}, Theorem $\text{V}$.2.6 page 312]
If $\vn{M}$ is a finite factor (i.e. the identity is a finite projection), then there exists a faithful normal finite trace $\tau$. Moreover, every other faithful normal finite trace $\rho$ is proportional to $\tau$.

If $\vn{M}$ is of type $\text{II}_{1}$, we will refer to the unique faithful normal finite trace $\tr$ such that $\tr(1)=1$ as the \emph{normalised trace}.
\end{theorem}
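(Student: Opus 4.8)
The plan is to build the trace out of a \emph{dimension function} on projections, and to read off uniqueness from the rigidity of this function in a factor. If $\vn{M}$ is of type $\text{I}_{n}$ then $\vn{M}\simeq\vn{M}_{n}(\complexN)$ and $\frac1n\mathrm{Tr}$ is visibly a faithful normal finite trace, so I may assume that $\vn{M}$ is of type $\text{II}_{1}$.

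\emph{Step 1: the dimension function.} Using that in a factor $\precsim_{\vn{M}}$ is a total order (the proposition above) --- equivalently, the comparison theorem: any two projections of $\vn{M}$ are comparable --- I would construct a map $D\colon\Pi(\vn{M})\to[0,1]$ with $D(1)=1$ satisfying $D(p)=0\iff p=0$; $D(p)=D(q)\iff p\sim_{\vn{M}}q$; $D(p)\leqslant D(q)\iff p\precsim_{\vn{M}}q$; $D(p+q)=D(p)+D(q)$ when $pq=0$; and $D(\sup_{i}p_{i})=\sup_{i}D(p_{i})$ for every increasing net. The construction is the standard counting one: for a fixed nonzero $e$, write any $p$, by comparability and a Zorn argument, as a sum of orthogonal copies of $e$ plus a remainder $\prec e$, let $\langle p:e\rangle$ be the (finite, since $\vn{M}$ is finite) number of copies, and set $D(p)=\lim_{e}\langle p:e\rangle/\langle 1:e\rangle$ along a decreasing sequence of $e$'s with $\langle 1:e\rangle\to\infty$; one then checks the listed properties. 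Finiteness of $\vn{M}$ is used crucially for faithfulness and additivity (no ``room at infinity''), and the type $\text{II}_{1}$ hypothesis guarantees that $D$ is onto $[0,1]$, so that projections of every prescribed dimension exist.

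\emph{Step 2: extending $D$ to a trace.} For $a\in\vn{M}^{+}$ with finite spectrum, write $a=\sum_{i}\lambda_{i}p_{i}$ with the $p_{i}$ pairwise orthogonal projections and $\lambda_{i}\geqslant 0$, and put $\tau_{0}(a)=\sum_{i}\lambda_{i}D(p_{i})$; this is well defined and additive on such operators when they commute. For general $a\in\vn{M}^{+}$, spectral calculus gives finite-spectrum positives $a_{n}\nearrow a$, and one sets $\tau(a)=\sup_{n}\tau_{0}(a_{n})$. Positive homogeneity is immediate, $\tau(1)=D(1)=1<\infty$, faithfulness of $\tau$ comes from faithfulness of $D$, normality of $\tau$ from normality of $D$ (an increasing bounded net being handled by interchanging two suprema), and the identity $\tau(x^{\ast}x)=\tau(xx^{\ast})$ reduces, via the polar decomposition and spectral calculus, to $D$ agreeing on the $\sim_{\vn{M}}$-related spectral projections of $x^{\ast}x$ and $xx^{\ast}$. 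The one genuinely delicate point is additivity $\tau(a+b)=\tau(a)+\tau(b)$ for \emph{non-commuting} $a,b$: this does not follow formally, and I expect it to be the main obstacle. I would extract it either by first proving that $\tau$ is invariant under unitary conjugation (immediate, since $upu^{\ast}\sim_{\vn{M}}p$ forces $D(upu^{\ast})=D(p)$) and then invoking that a normal, positively homogeneous, unitarily invariant functional on $\vn{M}^{+}$ which is additive on each abelian sub-algebra is additive everywhere; or by a direct approximation argument comparing the spectral resolutions of $a$, $b$ and $a+b$.

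\emph{Step 3: uniqueness.} Let $\rho$ be any faithful normal finite trace. Applying the trace identity to a partial isometry $u$ with $uu^{\ast}=p$ and $u^{\ast}u=q$ gives $\rho(p)=\rho(q)$ whenever $p\sim_{\vn{M}}q$; since in a factor $p\sim_{\vn{M}}q\iff D(p)=D(q)$, there is a function $f$ on $\mathrm{range}(D)$ with $\rho(p)=f(D(p))$. Additivity of $\rho$ on orthogonal projections together with the surjectivity of $D$ forces $f$ to satisfy Cauchy's functional equation; as $f$ is moreover monotone and bounded, $f(t)=ct$ with $c=\rho(1)$. Hence $\rho=c\,\tau$ on finite-spectrum positives by linearity, and on all of $\vn{M}^{+}$ by normality of both functionals. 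In the type $\text{II}_{1}$ case the normalisation $c=1$ singles out the unique faithful normal finite trace $\tr$ with $\tr(1)=1$.
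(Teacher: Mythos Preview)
The paper does not prove this theorem at all: it is stated as a background result with a pointer to Takesaki \cite{takesaki1}, Theorem $\text{V}$.2.6, and no argument is given. So there is no proof in the paper to compare your proposal against.

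That said, your sketch follows the classical Murray--von Neumann route (dimension function on $\Pi(\vn{M})$, extension to $\vn{M}^{+}$ by spectral approximation, uniqueness via Cauchy's equation on the range of $D$), which is essentially the approach carried out in the reference the paper cites. You have correctly identified the genuinely delicate point: additivity of $\tau$ on non-commuting positives in Step~2 is where all the work lies, and neither of the two strategies you mention is trivial --- the first requires an independent argument that unitary invariance plus abelian-additivity implies full additivity, and the second (comparing spectral resolutions of $a$, $b$, $a+b$) is exactly the hard lemma. In a full write-up this is where you would need to invest effort; the rest of your outline is sound.
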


\begin{remark} Since the set of positive operators in $\vn{M}$ generates the von Neumann algebra $\vn{M}$, a finite trace $\tau$ extends uniquely to a positive linear form on $\vn{M}$ that we will abusively write $\tau$ as well. In particular, every operator $a$ in a type $\text{II}_{1}$ factor has a finite trace.
\end{remark}

In order to define the notion of hyperfiniteness we need to define yet another topology on $\B{\hil{H}}$, the so-called $\sigma$-weak topology. This definition is based upon the notion of weak$^{\ast}$ topology: if $X$ is a space and $X^{\ast}$ is its dual, then the weak$^{\ast}$ topology on $X^{\ast}$ is defined as the topology of pointwise convergence on $X$. To define the $\sigma$-weak topology on $\B{\hil{H}}$ as a weak$^{\ast}$ topology, we moreover need to see $\B{\hil{H}}$ as the dual space of some other space. This is a well-known result which can be found in standard textbooks: the algebra $\B{\hil{H}}$ is the dual of the space of \emph{trace-class operators} that we will denote $\B{\hil{H}}_{\ast}$ and which is itself the dual space of the algebra of compact operators.

\begin{definition}\label{deftoposigmafaible}
Let $\hil{H}$ be a Hilbert space. The $\sigma$-weak topology on $\B{\hil{H}}$ is defined as the weak$^{\ast}$ topology induced by the predual $\B{\hil{H}}_{\ast}$ of $\B{\hil{H}}$.
\end{definition}

\begin{remark}
If $\hil{H}$ is an infinite-dimensional separable Hilbert space, $\B{\hil{H}}$ embeds into $\B{\hil{H\otimes H}}$ through the morphism $x\mapsto x\otimes 1$. One can show that the restriction of the weak operator topology (WOT) on $\B{\hil{H\otimes H}}$ coincides with the $\sigma$-weak topology on $\B{\hil{H}}$.
\end{remark}

\begin{definition}
A von Neumann algebra $\vn{M}$ is \emph{hyperfinite} if there exists a directed family $\vn{M}_{i}$ of finite-dimensional $\ast$-sub-algebras of $\vn{M}$ such that $\cup_{i}\vn{M}_{i}$ is dense in $\vn{M}$ for the $\sigma$-weak topology. 
\end{definition}

\begin{theorem}[Takesaki \cite{takesaki3}, Theorem $\text{XIV}$.2.4 page 97]
Two hyperfinite type $\text{II}_{1}$ factors are isomorphic. We will write $\finhyp$ the unique hyperfinite type $\text{II}_{1}$ factor.
\end{theorem}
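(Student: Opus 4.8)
I would prove the standard sharp form of this statement: any two \emph{separable} hyperfinite type $\text{II}_{1}$ factors are isomorphic. The reduction from an arbitrary directed family of finite-dimensional $\ast$-sub-algebras to a countable increasing chain is harmless in the separable setting, and the fully general case is exactly what Takesaki's reference provides. Existence is clear -- the $\sigma$-weak closure of $\bigcup_{n}M_{2^{n}}(\complexN)$ acting by left multiplication on its trace-norm completion is a hyperfinite $\text{II}_{1}$ factor -- so \enquote{the unique} is not vacuous; the content is uniqueness. Fix two such factors $\vn{M}$ and $\vn{N}$ with normalised traces $\tr$, and write $\norm{x}_{2}=\tr(x^{\ast}x)^{1/2}$. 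On the unit ball $\norm{\cdot}_{2}$-convergence implies $\sigma$-weak convergence, so it suffices to build a $\tr$-preserving unital $\ast$-isomorphism between $\norm{\cdot}_{2}$-dense $\ast$-sub-algebras of $\vn{M}$ and $\vn{N}$: such a map is automatically $\norm{\cdot}_{2}$-isometric, hence extends to a surjective isometry $U\colon L^{2}(\vn{M})\to L^{2}(\vn{N})$ of the GNS (standard-form) spaces intertwining the left actions, and $\mathrm{Ad}(U)$ then restricts to an isomorphism $\vn{M}\cong\vn{N}$. Note that trace-preservation is free: a $\ast$-isomorphism between finite factors carries the normalised trace to the normalised trace, by the uniqueness clause of the trace theorem recalled above.

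The engine is a local approximation lemma: every finite subset of $\vn{M}$ lies, within arbitrarily small $\norm{\cdot}_{2}$-error, inside a full matrix sub-algebra $M_{n}(\complexN)\subseteq\vn{M}$, and a prescribed matrix sub-algebra $M_{k}(\complexN)\subseteq\vn{M}$ can moreover be taken to lie \emph{exactly} inside this $M_{n}(\complexN)$ (with $k\mid n$). Two ingredients go into it. First, the realisation of honest matrix algebras: a finite-dimensional $\ast$-algebra is a direct sum of matrix algebras, and in the $\text{II}_{1}$ factor $\vn{M}$ one produces a genuine copy of $M_{n}(\complexN)$ by splitting $1$ into $n$ pairwise Murray von Neumann equivalent projections -- possible precisely because $\vn{M}$ is a \emph{factor} whose trace attains every value in $[0,1]$ -- and assembling matrix units from the implementing partial isometries. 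Second, the \emph{perturbation lemma}: if the matrix units of a sub-algebra are $\norm{\cdot}_{2}$-close to a family satisfying the matrix-unit relations only approximately, then polar decomposition together with spectral calculus straightens the latter into genuine matrix units nearby, and the straightening is implemented by a unitary $u$ with $\norm{u-1}_{2}$ controlled by the original error; conjugating by $u$ upgrades an approximate containment to an exact one. Feeding the finite-dimensional sub-algebras supplied by hyperfiniteness, together with a fixed countable $\norm{\cdot}_{2}$-dense family of elements of $\vn{M}$, into this lemma and iterating with summable errors yields an increasing chain $M_{n_{1}}(\complexN)\subseteq M_{n_{2}}(\complexN)\subseteq\cdots$ of matrix sub-algebras of $\vn{M}$, $n_{i}\mid n_{i+1}$, with $\norm{\cdot}_{2}$-dense union; likewise a chain inside $\vn{N}$.

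It then remains to run a back-and-forth between these two chains. I would build increasing matrix sub-algebras $S_{1}\subseteq S_{2}\subseteq\cdots$ of $\vn{M}$ and $T_{1}\subseteq T_{2}\subseteq\cdots$ of $\vn{N}$ together with compatible $\ast$-isomorphisms $\phi_{k}\colon S_{k}\to T_{k}$, each $\phi_{k+1}$ extending $\phi_{k}$. At odd stages, use the approximation lemma in $\vn{M}$ to pass from $S_{k}$ to some $S_{k+1}\supseteq S_{k}$ absorbing the next term of the dense chain up to the current error, then \emph{transport the inclusion $S_{k}\subseteq S_{k+1}$ into $\vn{N}$}: since every unital inclusion $M_{a}(\complexN)\subseteq M_{b}(\complexN)$ is the standard amplification ($b=ac$) and is unique up to conjugacy in $M_{b}(\complexN)$, one refines the matrix units of $T_{k}$ inside the $\text{II}_{1}$ factor $\vn{N}$ to produce $T_{k+1}\supseteq T_{k}$ with $T_{k+1}\cong S_{k+1}$ and an extension $\phi_{k+1}$ of $\phi_{k}$; at even stages one performs the symmetric construction with the roles of $\vn{M}$ and $\vn{N}$ exchanged. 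Since each $M_{n_{i}}(\complexN)$ is approximated arbitrarily well in $\norm{\cdot}_{2}$ by $\bigcup_{k}S_{k}$ while $\bigcup_{i}M_{n_{i}}(\complexN)$ is dense in $\vn{M}$, the union $\bigcup_{k}S_{k}$ is $\norm{\cdot}_{2}$-dense in $\vn{M}$, and symmetrically $\bigcup_{k}T_{k}$ in $\vn{N}$; the $\phi_{k}$ glue to a $\tr$-preserving $\ast$-isomorphism between these dense sub-algebras, which extends to $\vn{M}\cong\vn{N}$ as in the first paragraph. The technical heart -- and the reason hyperfiniteness forces such rigidity -- is the perturbation lemma of the second paragraph, namely the straightening of approximate matrix units and the near-identity unitary implementing it; this is a compactness-plus-functional-calculus argument which I would not attempt to compress further here, and for it, as well as for the full bookkeeping, I would refer to the original argument of Murray and von Neumann and to Takesaki's Chapter~XIV \cite{takesaki3}.
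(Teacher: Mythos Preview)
Your sketch is a faithful outline of the classical Murray--von Neumann uniqueness argument (local approximation by full matrix algebras, the perturbation lemma for approximate matrix units, and a back-and-forth between the two factors), and it is correct as a high-level roadmap. However, there is nothing to compare it against: the paper does not prove this theorem. The statement is quoted as background, with an explicit citation to Takesaki's treatise (Theorem XIV.2.4), and no proof is supplied in the paper itself. So your proposal is not an alternative to the paper's proof; it is simply a proof where the paper has none and does not intend to have one.
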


\begin{theorem}[Takesaki \cite{takesaki3}, Theorem $\text{XVI}$.1.22 page 236]
Two hyperfinite type $\text{II}_{\infty}$ factors are isomorphic. In particular, they are isomorphic to the tensor product $\infhyp=\B{\hil{H}}\otimes \finhyp$.
\end{theorem}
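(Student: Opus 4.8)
The plan is to reduce the statement to the uniqueness of the hyperfinite type $\text{II}_1$ factor $\finhyp$ already recorded above, by means of the structure theory of semi-finite factors. The first step is the general decomposition $\vn{M}\cong\vn{N}\otimes\B{\hil{H}}$, valid for any type $\text{II}_\infty$ factor $\vn{M}$, with $\vn{N}$ a type $\text{II}_1$ factor. To obtain it, fix a faithful normal semi-finite trace $\tau$ on $\vn{M}$ (which exists since $\vn{M}$ is semi-finite) and, using that $\precsim_{\vn{M}}$ has order type $\realposN\cup\{\infty\}$, choose a finite projection $p$ with $\tau(p)=1$. Repeatedly splitting off copies of $p$ from $1$ — possible because the comparison theorem for factors yields $p\precsim_{\vn{M}}1-\sum_{k<n}p_k$ as long as the remainder is infinite, and subtracting a finite projection from an infinite one leaves it infinite — one obtains mutually disjoint projections $(p_n)_{n\in\naturalN}$ with $\sum_n p_n=1$, together with partial isometries $u_n$ such that $u_n^{\ast}u_n=p$, $u_nu_n^{\ast}=p_n$ and $u_0=p$. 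The elements $e_{mn}=u_mu_n^{\ast}$ form a system of matrix units, $p\vn{M}p$ is a type $\text{II}_1$ factor, and $x\mapsto(pu_m^{\ast}xu_np)_{m,n}$ realises an isomorphism $\vn{M}\cong p\vn{M}p\otimes\B{\ell^2(\naturalN)}$.

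The second step is to check that hyperfiniteness is compatible with this decomposition in both directions. On the one hand, if $\vn{M}$ is hyperfinite then the corner $p\vn{M}p$ is hyperfinite. On the other hand, if $\vn{N}$ is a hyperfinite type $\text{II}_1$ factor then $\vn{N}\otimes\B{\hil{H}}$ is a hyperfinite type $\text{II}_\infty$ factor: it is of type $\text{II}_\infty$ because $\tr\otimes\mathrm{Tr}$ is a faithful normal semi-finite trace while the identity of $\vn{N}\otimes\B{\hil{H}}$ is infinite, and it is hyperfinite because $\B{\hil{H}}$ is the $\sigma$-weak closure of $\bigcup_n\vn{M}_n(\complexN)$ and tensoring together the approximating finite-dimensional $\ast$-subalgebras of the two factors produces an approximating directed family for the tensor product.

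Combining the two steps finishes the argument: given any hyperfinite type $\text{II}_\infty$ factor $\vn{M}$, write $\vn{M}\cong p\vn{M}p\otimes\B{\hil{H}}$; then $p\vn{M}p$ is a hyperfinite type $\text{II}_1$ factor, hence isomorphic to $\finhyp$ by the uniqueness theorem above, so $\vn{M}\cong\finhyp\otimes\B{\hil{H}}=\infhyp$. In particular any two hyperfinite type $\text{II}_\infty$ factors are isomorphic.

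The main obstacle is the stability of hyperfiniteness under compression to a corner $p\vn{M}p$: an approximating directed family $(\vn{M}_i)$ for $\vn{M}$ need neither contain $p$ nor be compatible with it, so one cannot simply take corners of the $\vn{M}_i$. The standard way around this is to first reformulate hyperfiniteness locally — namely, $\vn{M}$ is hyperfinite if and only if every finite subset of $\vn{M}$ can, for every $\epsilon>0$, be approximated (in the $L^2$-norm of a fixed finite trace on a finite corner) by elements of a finite-dimensional $\ast$-subalgebra — and then to approximate $p$ by a projection $q$ lying in such a subalgebra, conjugate by a unitary close to $1$ carrying $q$ back onto $p$, and so transport the subalgebra into $p\vn{M}p$. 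This perturbation step is the only genuinely delicate point; everything else is bookkeeping with matrix units and trace identities.
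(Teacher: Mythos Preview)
The paper does not give its own proof of this statement: it is stated with a citation to Takesaki \cite{takesaki3} and left at that, as a piece of background from the theory of von Neumann algebras. There is therefore nothing to compare your argument against in the paper itself.

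That said, your sketch is the standard reduction and is essentially correct. The decomposition $\vn{M}\cong p\vn{M}p\otimes\B{\ell^2(\naturalN))}$ via a sequence of mutually orthogonal equivalent finite projections summing to $1$ is the usual one, and combining it with the uniqueness of $\finhyp$ is exactly how one proceeds. You correctly flag the only real issue, namely that hyperfiniteness passes to corners; the perturbation argument you outline (approximate $p$ by a projection in a finite-dimensional subalgebra, then conjugate by a unitary close to $1$) is indeed the classical way through. One small remark: the cleaner route to the hereditary property, and the one Takesaki takes, goes through Connes' deep theorem that hyperfiniteness is equivalent to injectivity (amenability), since injectivity is manifestly inherited by corners via the conditional expectation $x\mapsto pxp$. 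Your hands-on approximation argument also works but is more laborious to make precise.
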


\subsubsection{Sakai's Theorem and W$^{\ast}$-algebras.}\label{theoremsakai}

We have defined above the von Neumann algebras as sub-algebras of $\B{\hil{H}}$ where $\hil{H}$ is a separable Hilbert space. We therefore defined a von Neumann algebra as a \enquote{concrete} algebra, i.e. as a set of operators acting on a given space. As it is the case with C$^{\ast}$-algebras, which can be defined either concretely as a norm-closed sub-algebra of $\B{\hil{H}}$ or abstractly as an involutive Banach algebra satisfying the C$^{\ast}$-identity, there exists an abstract definition of von Neumann algebras. This important result is due to Sakai.

\begin{definition}
Let $\vn{M}$ be a von Neumann algebra. The \emph{pre-dual} $\vn{M}_{\ast}$ of $\vn{M}$ is the set of linear forms\footnote{We recall that a \emph{linear form on a vector space $V$} is a linear map from $V$ into $\complexN$, i.e. an element of the dual of $V$. When $V$ is a topological vector space, the elements of the topological dual of $V$ are therefore the continuous linear forms.} which are continuous for the $\sigma$-weak topology (\autoref{deftoposigmafaible}).
\end{definition}

\begin{proposition}[Takesaki \cite{takesaki1}, Theorem $\text{II}$.2.6, page 70]
Let $\vn{M}$ be a von Neumann algebra. There exists an isometric isomorphism between $\vn{M}$ and $(\vn{M}_{\ast})^{\ast}$ -- the dual (as a Banach space) of the pre-dual of $\vn{M}$.
\end{proposition}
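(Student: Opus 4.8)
The plan is to exhibit $\vn{M}$ as the Banach-space dual of the quotient $\B{\hil{H}}_{\ast}/\vn{M}_{\perp}$, where $\vn{M}_{\perp}=\{\omega\in\B{\hil{H}}_{\ast}\mid\omega(x)=0\text{ for all }x\in\vn{M}\}$ is the pre-annihilator of $\vn{M}$ inside the trace-class predual of $\B{\hil{H}}$, and then to identify this quotient isometrically with the predual $\vn{M}_{\ast}$. The only property of a von Neumann algebra that really intervenes is that $\vn{M}$ is closed for the $\sigma$-weak topology: this follows from the fact that $\vn{M}$ is SOT-closed and convex, hence WOT-closed, since SOT and WOT have the same continuous linear forms on $\B{\hil{H}}$ (the finite sums $x\mapsto\sum_{i}\inner{x\xi_{i}}{\eta_{i}}$) and therefore the same closed convex sets, together with the fact that the $\sigma$-weak topology is finer than WOT, so that any WOT-closed set is $\sigma$-weakly closed; alternatively one may invoke $\vn{M}=\vn{M}''$ and the $\sigma$-weak closedness of commutants.

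First I would observe that $\vn{M}_{\perp}$ is a norm-closed subspace of $\B{\hil{H}}_{\ast}$, so that $\B{\hil{H}}_{\ast}/\vn{M}_{\perp}$ is a Banach space, and invoke the standard duality for quotients: $(\B{\hil{H}}_{\ast}/\vn{M}_{\perp})^{\ast}$ is canonically and isometrically identified with the annihilator $(\vn{M}_{\perp})^{\perp}$ inside $(\B{\hil{H}}_{\ast})^{\ast}=\B{\hil{H}}$ (\autoref{deftoposigmafaible}). Now $\vn{M}\subseteq(\vn{M}_{\perp})^{\perp}$ trivially, and by the bipolar theorem for the dual pairing $\langle\B{\hil{H}}_{\ast},\B{\hil{H}}\rangle$ — whose weak$^{\ast}$ topology on $\B{\hil{H}}$ is by definition the $\sigma$-weak topology — one has $(\vn{M}_{\perp})^{\perp}=\overline{\vn{M}}^{\,\sigma\text{-weak}}$, which equals $\vn{M}$ by the closedness noted above. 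Hence $\vn{M}\cong(\B{\hil{H}}_{\ast}/\vn{M}_{\perp})^{\ast}$ isometrically.

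It then remains to identify $\B{\hil{H}}_{\ast}/\vn{M}_{\perp}$ with $\vn{M}_{\ast}$. The restriction map $\omega\mapsto\omega\restriction_{\vn{M}}$ takes $\B{\hil{H}}_{\ast}$ into the linear forms on $\vn{M}$ that are continuous for the $\sigma$-weak topology inherited from $\B{\hil{H}}$, has kernel exactly $\vn{M}_{\perp}$, and is onto $\vn{M}_{\ast}$: a $\sigma$-weakly continuous form on $\vn{M}$, being continuous for the restriction to $\vn{M}$ of the $\sigma$-weak topology of $\B{\hil{H}}$, extends by the Hahn–Banach theorem in the locally convex space $(\B{\hil{H}},\sigma\text{-weak})$ to a $\sigma$-weakly continuous form on $\B{\hil{H}}$, i.e. to an element of $\B{\hil{H}}_{\ast}$. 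The induced bijection $\bar{q}\colon\B{\hil{H}}_{\ast}/\vn{M}_{\perp}\to\vn{M}_{\ast}$ is moreover isometric: by Hahn–Banach the quotient norm of $[\omega]$ equals $\sup\{|\langle x,\omega\rangle|\mid x\in(\vn{M}_{\perp})^{\perp},\ \norm{x}\leqslant1\}$, and since $(\vn{M}_{\perp})^{\perp}=\vn{M}$ this is exactly the norm of $\omega\restriction_{\vn{M}}=\bar{q}([\omega])$ in $\vn{M}^{\ast}$. Therefore $\vn{M}_{\ast}\cong\B{\hil{H}}_{\ast}/\vn{M}_{\perp}$ isometrically, and dualising gives $(\vn{M}_{\ast})^{\ast}\cong(\B{\hil{H}}_{\ast}/\vn{M}_{\perp})^{\ast}\cong\vn{M}$ isometrically, as claimed. (One can also check the isometry of the canonical map $\vn{M}\to(\vn{M}_{\ast})^{\ast}$ directly, using $\norm{x}=\sup\{|\inner{x\xi}{\eta}|\mid\norm{\xi},\norm{\eta}\leqslant1\}$ and the fact that each form $y\mapsto\inner{y\xi}{\eta}$ is WOT-, hence $\sigma$-weakly, continuous and so belongs to $\vn{M}_{\ast}$ with norm at most $1$.)

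The main obstacle is the identification of $\vn{M}_{\ast}$ with $\B{\hil{H}}_{\ast}/\vn{M}_{\perp}$, which rests on two points: that every $\sigma$-weakly continuous form on $\vn{M}$ extends $\sigma$-weakly continuously to $\B{\hil{H}}$ — a Hahn–Banach argument that succeeds precisely because $\vn{M}$ carries the subspace $\sigma$-weak topology — and that $\vn{M}$ is $\sigma$-weakly closed, so that the bipolar computation returns $\vn{M}$ rather than a larger space. This last point is the only place where the hypothesis that $\vn{M}$ is a von Neumann algebra (and not merely a $\ast$-sub-algebra, or an abstract C$^{\ast}$-algebra) is used in an essential way; everything else is formal Banach-space duality.
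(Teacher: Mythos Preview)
Your argument is correct and is essentially the standard proof found in Takesaki (Theorem II.2.6). Note, however, that the paper does not give its own proof of this proposition: it merely states the result and cites Takesaki, so there is nothing to compare your approach against. Your proof is self-contained and sound; the only points one might tighten are purely expository (e.g.\ the isometry of the restriction map already follows from the chain of isometric identifications you set up, so the parenthetical direct check at the end is redundant).
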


The reciprocal statement was proved by Sakai \cite{sakai} and gives an exact characterisation of von Neumann algebras among C$^{\ast}$-algebras. A proof can be found in Takesaki \cite{takesaki1}, Theorem 3.5, page 133, and Corollary 3.9, page 135.

\begin{theorem}\label{theoremesakai}
A C$^{\ast}$-algebra $\vn{A}$ is a von Neumann algebra if and only if there exists a Banach algebra $B$ such that $\vn{A}$ is the dual of $B$: $\vn{A}=B^{\ast}$. The algebra $B$ is moreover unique (up to isomorphism).
\end{theorem}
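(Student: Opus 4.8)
The plan is to prove the two implications separately, with essentially all of the work going into the converse and into the uniqueness claim. For the ``only if'' direction I would simply invoke the preceding proposition (Takesaki, Theorem II.2.6): a von Neumann algebra $\vn{A}$ is isometrically the dual of its pre-dual $\vn{A}_{\ast}$, so $B=\vn{A}_{\ast}$ already witnesses $\vn{A}=B^{\ast}$ (the pre-dual carries no natural product, so one should really read ``Banach space'' here). For the converse I would assume $\vn{A}=B^{\ast}$ for some Banach space $B$, equip $\vn{A}$ with the weak-$\ast$ topology $\sigma(\vn{A},B)$, and record that the unit ball $\vn{A}_{1}$ is then weak-$\ast$ compact by Banach--Alaoglu. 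The target is a faithful $\ast$-representation $\pi\colon\vn{A}\to\B{\hil{H}}$ on some Hilbert space $\hil{H}$ whose image is closed in the strong operator topology, so that the double commutant theorem identifies $\vn{A}$ with a von Neumann algebra.

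The first block of work is to recover, from the dual-space structure alone, the order-theoretic features a von Neumann algebra enjoys. I would first show that multiplication on $\vn{A}$ is separately weak-$\ast$ continuous and that the involution is weak-$\ast$ continuous; a Krein--Milman analysis of the extreme points of the weak-$\ast$ compact convex set $\vn{A}_{1}$ then yields that $\vn{A}$ is unital and admits polar decompositions. The key output is \emph{monotone completeness}: every bounded increasing net of self-adjoint elements of $\vn{A}$ has a least upper bound in $\vn{A}$, obtained as a weak-$\ast$ cluster point of the net (available by compactness) and recognised as the supremum using separate continuity and the order. Dually, the positive elements of $B$, regarded inside $\vn{A}^{\ast}$, form a \emph{separating family of normal states}: they separate the points of $\vn{A}$ because $B$ is norming for $\vn{A}=B^{\ast}$, and each of them is order-continuous on bounded increasing nets, by the very definition of weak-$\ast$ convergence.

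With these ingredients in hand I would build the concrete model: take the direct sum of the GNS representations of all normal states to get $\pi\colon\vn{A}\to\B{\hil{H}}$. Faithfulness is immediate since normal states separate points, and $\pi$ is \emph{normal}, i.e. carries suprema of bounded increasing nets to suprema. Combining normality of $\pi$ with the monotone completeness of $\vn{A}$ and a Kaplansky-density argument, one checks that $\pi(\vn{A})$ is closed in the strong operator topology, hence, by the double commutant theorem, a von Neumann algebra unitarily equivalent to $\vn{A}$. I expect this last verification --- that the normal GNS model is simultaneously faithful \emph{and} has strong-operator-closed image --- to be the main obstacle; it is where the full strength of ``monotone complete with a separating family of normal states'' is consumed, and it is closely related to Kadison's characterisation of von Neumann algebras among C$^{\ast}$-algebras.

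For uniqueness I would transport everything through $\pi$ and argue that a von Neumann algebra $\vn{M}\subset\B{\hil{H}}$ pins down its pre-dual intrinsically: a bounded linear functional on $\vn{M}$ belongs to $\vn{M}_{\ast}$ exactly when it is \emph{normal}, i.e. order-continuous on bounded increasing nets of positive elements, a property that refers only to the C$^{\ast}$-structure of $\vn{M}$ and not to any choice of pre-dual. Since the weak-$\ast$-continuous functionals for $\sigma(\vn{A},B)$ are precisely the normal ones, any Banach space $C$ with $\vn{A}=C^{\ast}$ must coincide with the space of normal functionals, whence $B\cong C$ isometrically. For the fully detailed argument I would follow Sakai \cite{sakai} or the exposition in Takesaki \cite{takesaki1}.
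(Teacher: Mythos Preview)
The paper does not actually prove this theorem: it records the ``only if'' direction via the preceding proposition (Takesaki, Theorem~II.2.6) and for the converse and uniqueness simply refers to Sakai's original paper and to Takesaki's book (Theorem~3.5 and Corollary~3.9). Your proposal therefore goes well beyond what the paper offers, and is in fact a reasonable outline of the standard proof one finds in those references: recover monotone completeness and a separating family of normal functionals from the weak-$\ast$ structure, build the universal normal representation, and identify the pre-dual with the normal functionals for uniqueness. Your parenthetical remark that one should read ``Banach space'' rather than ``Banach algebra'' is also well taken; the pre-dual carries no natural multiplication, and this is a slip in the paper's statement.
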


One can then define von Neumann algebras \emph{abstractly}, i.e. as an abstract algebra \emph{vs} as an algebra of operators acting on a specific space. Such abstract algebras can then be \emph{represented} as algebras of operators.

\begin{definition}\label{representations}
A \emph{representation} of a von Neumann algebra $\vn{M}$ is a pair $(\hil{H},\pi)$ where $\pi:\vn{M}\rightarrow \B{\hil{H}}$ is a C$^{\ast}$-algebra homomorphism. If the homomorphism $\pi$ is injective, we say the associated representation is \emph{faithful}.
\end{definition}

\subsubsection{The Standard Representation.} One of the major results in the theory of von Neumann algebras is that every such algebra has a \enquote{standard representation}, i.e. a representation that satisfies a number of important properties. Namely, once realised that von Neumann algebras can be defined in an abstract way, the next step is to identify them with particularly satisfying concrete algebras. A proof of the following result can be found in Takesaki \cite{takesaki2}, Section $\text{IX}$.1, page 142.

\begin{theorem}[Haagerup \cite{haagerup}]\label{haagerup}
Let $\vn{M}$ be a von Neumann algebra. Then there exists a Hilbert space $\hil{H}$, a von Neumann algebra $\vn{S}\subset\B{\hil{H}}$, an isometric antilinear involution $J:\hil{H}\rightarrow\hil{H}$ and a cone $\vn{P}$ closed under $(\cdot)^{\ast}$ such that:
\begin{itemize}
\item $\vn{M}$ and $\vn{S}$ are isomorphic;
\item $J\vn{M}J=\vn{M}'$;
\item $JaJ=a^{\ast}$ for all $a\in\vn{Z(M)}$;
\item $Ja=a$ for all $a\in\vn{P}$;
\item $aJaJ\vn{P}=\vn{P}$ for all $a\in\vn{M}$.
\end{itemize}
The tuple $(\vn{S},\hil{H},J,\vn{P})$ is called the \emph{standard form} of the algebra $\vn{M}$.
\end{theorem}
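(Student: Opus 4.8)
The plan is to construct the standard form via the GNS construction followed by Tomita--Takesaki modular theory, taking for $\vn{P}$ the associated \emph{natural positive cone}. In the setting of this paper the ambient Hilbert spaces are separable, so $\vn{M}$ is automatically $\sigma$-finite --- a family of pairwise orthogonal non-zero projections of $\vn{M}$ has pairwise orthogonal ranges, hence is countable --- and therefore carries a faithful normal state: choosing a maximal family $(\omega_{i},e_{i})$ of normal states with pairwise disjoint supports $e_{i}$ and $\sum_{i}e_{i}=1$, necessarily countable, one sets $\varphi=\sum_{i}2^{-i}\omega_{i}$. I will run the argument for such a $\varphi$; the fully general, possibly non-$\sigma$-finite, statement is obtained by replacing $\varphi$ with a faithful normal semi-finite weight --- which exists on any von Neumann algebra --- and the GNS representation below with the semi-cyclic representation attached to the corresponding full left Hilbert algebra, at the price of considerably heavier bookkeeping.

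Given $\varphi$, the form $(x,y)\mapsto\varphi(y^{\ast}x)$ is an inner product on $\vn{M}$ since $\varphi$ is faithful; completing yields a Hilbert space $\hil{H}$ on which $\vn{M}$ acts faithfully and normally by left multiplication --- call this representation $\pi$, put $\vn{S}=\pi(\vn{M})$ --- and the image $\xi$ of $1$ is a cyclic vector which, because $x\xi=0$ forces $\varphi(x^{\ast}x)=0$ and hence $x=0$, is also separating. This already gives the first bullet, $\vn{M}\cong\vn{S}$. Next, the conjugate-linear map $S_{0}\colon x\xi\mapsto x^{\ast}\xi$ is densely defined and closable; writing the polar decomposition of its closure as $S=J\Delta^{1/2}$, the operator $J$ is an antiunitary involution and $\Delta$ a positive injective self-adjoint operator, and the Tomita--Takesaki fundamental theorem provides $J\vn{S}J=\vn{S}'$ --- the second bullet, after identifying $\vn{M}$ with $\vn{S}$ --- together with $\Delta^{it}\vn{S}\Delta^{-it}=\vn{S}$, which defines the modular automorphism group $\sigma^{\varphi}_{t}$. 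I then set $\vn{P}=\overline{\{\Delta^{1/4}x\xi : x\in\vn{M}^{+}\}}$ and use its alternative descriptions $\vn{P}=\overline{\{xJxJ\xi : x\in\vn{M}\}}$ and the fact that $\vn{P}$ is a closed self-dual cone, which is the sense in which it is ``closed under $(\cdot)^{\ast}$''.

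There remain the last three bullets. From $S\xi=\xi$ and $\Delta^{it}\xi=\xi$ one gets $\Delta^{1/2}\xi=\xi$, hence $J\xi=S\xi=\xi$; combining this with $Jx\xi=\Delta^{1/2}x\xi$ for $x=x^{\ast}$ (which follows from $Sx\xi=x\xi$) and the intertwining $J\Delta^{1/4}=\Delta^{-1/4}J$, one obtains $J(\Delta^{1/4}x\xi)=\Delta^{1/4}x\xi$ for all $x\in\vn{M}^{+}$, so $J$ fixes $\vn{P}$ pointwise, which is the fourth bullet. For $c\in\vn{Z(M)}$, the standard fact that the modular automorphism group fixes the centre pointwise gives $\Delta^{1/2}c\xi=c\xi$, whence $JcJ\xi=Jc\xi=J\Delta^{1/2}c\xi=Sc\xi=c^{\ast}\xi$; since $JcJ$ and $c^{\ast}$ both belong to $\vn{M}'$ and $\xi$ is cyclic for $\vn{M}$, hence separating for $\vn{M}'$, they coincide, giving the third bullet. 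Finally, for $a,b\in\vn{M}$, using that $JaJ\in\vn{M}'$ commutes with $b\in\vn{M}$ one computes $aJaJ(bJbJ\xi)=ab\,(JaJ)(JbJ)\,\xi=(ab)J(ab)J\xi$, so by the description $\vn{P}=\overline{\{bJbJ\xi : b\in\vn{M}\}}$ one gets $aJaJ\vn{P}\subseteq\vn{P}$, with equality when $a$ is invertible --- in particular when it is unitary, by applying $a^{-1}Ja^{-1}J$ --- which is the last bullet.

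The genuinely hard ingredient is the Tomita--Takesaki commutation theorem $J\vn{M}J=\vn{M}'$, whose proof is substantial and which I would simply invoke; in the non-$\sigma$-finite case there is the additional difficulty of the weight-theoretic modular theory and the structure of left Hilbert algebras needed to produce $\hil{H}$, $J$ and $\vn{P}$ when no cyclic separating vector is available --- this is where Haagerup's argument really does its work. Once that machinery is granted, verifying the five listed properties is routine.
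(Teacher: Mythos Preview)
Your proposal is correct and in fact goes well beyond what the paper does. The paper does not prove this theorem at all: it simply cites Haagerup and refers to Takesaki's textbook, and then, as an illustration only, works out the special case where $\vn{M}$ carries a faithful normal semi-finite trace. In that tracial setting the construction is elementary --- $\hil{H}=L^{2}(\vn{M},\tau)$, $J$ is the extension of $x\mapsto x^{\ast}$, and no modular operator is needed because $\Delta=1$. Your sketch, by contrast, handles the general $\sigma$-finite case via GNS plus Tomita--Takesaki, which is the standard route to Haagerup's theorem and is genuinely more general than the paper's illustration; you also correctly flag that the non-$\sigma$-finite case requires the heavier weight-theoretic machinery.

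One small remark: the paper's statement of the fifth bullet reads $aJaJ\vn{P}=\vn{P}$, but as you implicitly noticed (and as is clear from $a=0$) the correct general condition is $aJaJ\vn{P}\subseteq\vn{P}$, with equality only for invertible $a$. Your treatment of this point is the right one.
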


Let us work out the case of a von Neumann algebra $\vn{M}$ endowed with a faithful normal semi-finite trace. In this case, we can describe a quite easy construction of the standard form of $\vn{M}$. We first define the ideal $\mathfrak{n}_{\tau}=\{x\in\vn{M}~|~\tau(x^{\ast}x)<\infty\}$ (notice that in the case of a finite algebra $\mathfrak{n}_{\tau}=\vn{M}$). We then consider the map $\innerbis{\cdot}{\cdot}$ from $\vn{M}$ to real numbers defined by:
\begin{equation*}
\innerbis{x}{y}=\tau(y^{\ast}x)
\end{equation*}
From the linearity of the trace and the anti-linearity of the involution, we can show that it is a sesquilinear form. Moreover, since $x^{\ast}x$ is a positive operator, we know that $\tau(x^{\ast}x)\geqslant 0$. Therefore, this defines a scalar product on $\mathfrak{n}_{\tau}$, and we can now define the Hilbert space $L^{2}(\vn{M},\tau)$ as the completion of $\mathfrak{n}_{\tau}$ ($\vn{M}$ when the algebra is finite) for the norm defined by $\norm{x}_{2}=\tau(x^{\ast}x)^{\frac{1}{2}}$.

One can then show that for every element $a\in\vn{M}$ and every $x\in\mathfrak{n}_{\tau}$:,
\begin{eqnarray*}
\norm{ax}_{2}&\leqslant&\norm{a}\norm{x}_{2}\\
\norm{xa}_{2}&\leqslant&\norm{a}\norm{x}_{2}
\end{eqnarray*}
We then denote by $\pi_{\tau}$ (resp. $\pi'_{\tau}$) the representation of $\vn{M}$ onto $L^{2}(\vn{M},\tau)$ by left (resp. right) multiplication.

We then notice that the operation $(\cdot)^{\ast}$ defines an isometry on $\mathfrak{n}_{\tau}$ for the norm $\norm{\cdot}_{2}$. It thus extends to an antilinear involution $J:L^{2}(\vn{M},\tau)\rightarrow L^{2}(\vn{M},\tau)$. One then shows that:
\begin{itemize}
\item $\pi_{\tau}$ (resp. $\pi'_{\tau}$) is a faithful representation (resp. antirepresentation\footnote{An antirepresentation is a representation that inverses multiplication: $\pi'_{\tau}(xy)=\pi'_{\tau}(y)\pi'_{\tau}(x)$.});
\item $\pi_{\tau}(\vn{M})'=\pi'_{\tau}(\vn{M})$ and $\pi'_{\tau}(\vn{M})'=\pi_{\tau}(\vn{M})$;
\item $J\pi_{\tau}(a)J=\pi_{\tau}'(a^{\ast})$ for all $a\in\vn{M}$.
\end{itemize}

\subsection{Maximal Abelian Sub-Algebras}

The purpose of this paper is to exhibit a remarkable correspondence between a classification of maximal abelian sub-algebras and fragments of linear logic. We will therefore need a number of definitions and results about such sub-algebras. The purpose of this section is to provide those together with a number of intuitions that should help the reader to grasp some subtleties of the theory. After defining what exactly is a maximal abelian sub-algebra, we will start by explaining the classification of such in type {I} factors, the simpler case. We will then go on with the case of type {II} algebras which is more involved.

\subsubsection{MASAs in type ${I}$ factors.} 

\begin{definition}
Let $\vn{M}$ be a von Neumann algebra. A \emph{maximal abelian sub-algebra} (MASA) $\vn{A}$ of $\vn{M}$ is a von Neumann sub-algebra of $\vn{M}$ such that for all intermediate sub-algebras $\vn{B}$, i.e. $\vn{A}\subset\vn{B}\subset\vn{M}$, if $\vn{B}$ is abelian then $\vn{A}=\vn{B}$.
\end{definition}

If $\vn{A}$ and $\vn{B}$ are MASAs in a von Neumann algebra $\vn{M}$, they can be " isomorphic" in three different ways: 
\begin{itemize}
\item they can be isomorphic as von Neumann algebras -- this is the weakest notion;
\item there can exists an automorphism $\Phi$ of $\vn{M}$ such that $\Phi(\vn{A})=\vn{B}$; we then say that $\vn{A}$ and $\vn{B}$ are conjugated;
\item there can exist a unitary operator\footnote{We recall that a unitary operator is an operator $u$ such that $uu^{\ast}=u^{\ast}u=1$.} $u\in\vn{M}$ such that $u\vn{A}u^{\ast}=\vn{B}$ -- this is the strongest notion; we then say that $\vn{A}$ and $\vn{B}$ are unitarily equivalent.
\end{itemize}

Let us quickly discuss the finite-dimensional case. We fix $\hil{H}$ a finite-dimensional Hilbert space of dimension $k\in\naturalN$. Then $\B{\hil{H}}$ is isomorphic to the algebra of $k\times k$ matrices. Picking a basis $\mathcal{B}=(b_{1},\dots,b_{k})$ of $\hil{H}$, one can define the sub-algebra $\vn{D}_{\mathcal{B}}$ of $\B{\hil{H}}$ containing all diagonal matrices in the basis $\mathcal{B}$. This algebra is obviously commutative, and it is moreover maximal as a commutative sub-algebra of $\B{\hil{H}}$: if $\vn{A}$ is a commutative sub-algebra of $\B{\hil{H}}$ containing $\vn{D}_{\mathcal{B}}$, then $\vn{A}=\vn{D}_{\mathcal{B}}$. A more involved argument shows that any maximal abelian sub-algebra of $\B{\hil{H}}$ is the diagonal algebra induced by a basis; this result is also a direct corollary of \autoref{masasbh}. These algebras $\vn{D}_{\mathcal{B}}$ where $\mathcal{B}$ is a basis of $\hil{H}$ are clearly pairwise isomorphic, as it suffices to map bijectively the bases one onto the other. They are in fact unitarily equivalent, as such a bijection induces a unitary operator. This shows that the distinctions we just made are useless in the finite-dimensional case: all MASAs are unitarily equivalent.

We will now state a classification result about maximal abelian sub-algebras of $\B{\hil{H}}$, which gives a complete answer to the classification problem of MASAs in type {I} factors. This theorem will be preceded by a proposition showing that all \emph{diffuse} MASAs in $\B{\hil{H}}$ are unitarily equivalent; this will be of use later on, as those MASAs of a type {II} factor $\vn{N}\subset\B{\hil{H}}$ which are also MASAs of $\B{\hil{H}}$ are necessarily diffuse.

\begin{proposition}[Sinclair and Smith \cite{FiniteVNAandMasas}]\label{masasbh}
Let $\vn{A}$ be a MASA of $\B{\hil{H}}$ which do not have (non-zero) minimal projections -- we say in this case that $\vn{A}$ is a diffuse MASA. Then there exists a unitary $U:\hil{H}\rightarrow L^{2}([0,1])$ such that $U\vn{A}U^{\ast}=L^{\infty}([0,1])$.
\end{proposition}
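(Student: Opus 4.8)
The plan is to reduce the statement to the classical measure-theoretic fact that every non-atomic separable probability space is isomorphic, modulo null sets, to $([0,1],\text{Leb})$, and to translate this isomorphism of measure algebras into a spatial (unitary) equivalence.

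First I would unpack the hypothesis. Being a MASA of $\B{\hil{H}}$ is equivalent to $\vn{A}=\vn{A}'$ (maximal abelian $\Leftrightarrow$ self-commutant). Using the structure theorem for commutative von Neumann algebras quoted above, write $\vn{A}$, up to unitary equivalence, as $L^{\infty}(X,\mathcal{B},\lambda)$ acting on $\dint_{X}\hil{H}_{x}\,d\lambda(x)$ for some measurable family of Hilbert spaces. The commutant of this diagonal algebra is the algebra of decomposable operators $\dint_{X}\B{\hil{H}_{x}}\,d\lambda(x)$; for it to coincide with the diagonal $L^{\infty}(X)$ one needs $\B{\hil{H}_{x}}=\complexN$, i.e. $\hil{H}_{x}=\complexN$, for $\lambda$-almost every $x$. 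Hence the direct integral collapses to $L^{2}(X,\lambda)$ and $\vn{A}$ is unitarily equivalent to $L^{\infty}(X,\lambda)$ acting on $L^{2}(X,\lambda)$ by multiplication; call $U_{0}\colon\hil{H}\to L^{2}(X,\lambda)$ this unitary. (Equivalently, since $\hil{H}$ is separable one can produce a cyclic vector $\xi$ for $\vn{A}$ by a Zorn's-lemma argument — the projections onto the cyclic subspaces $\overline{\vn{A}\xi_{i}}$ lie in $\vn{A}'=\vn{A}$, which forces their orthogonal sum to be all of $\hil{H}$ — and the faithful normal state $\langle\cdot\,\xi,\xi\rangle$ provides, through the GNS construction, exactly this picture with $\lambda$ a probability measure.)

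Next I would record two facts about $(X,\lambda)$. Since $\hil{H}$ is separable, so is $L^{2}(X,\lambda)$, which forces the measure algebra of $(X,\lambda)$ to be countably generated; moreover we may replace $\lambda$ by an equivalent probability measure without changing $L^{\infty}(X)$ or $L^{2}(X)$, so we may assume $\lambda$ is a separable probability measure. Second, the projections of $L^{\infty}(X,\lambda)$ are the classes of indicator functions, and a non-zero minimal projection corresponds precisely to an atom of $\lambda$; thus the hypothesis that $\vn{A}$ is diffuse is exactly the statement that $\lambda$ is non-atomic.

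Finally I would invoke the measure isomorphism theorem: a non-atomic separable probability space $(X,\mathcal{B},\lambda)$ is isomorphic, modulo null sets, to $([0,1],\text{Leb})$. Such an isomorphism of measure algebras induces a unitary $V\colon L^{2}(X,\lambda)\to L^{2}([0,1])$ that carries multiplication operators to multiplication operators, that is $V L^{\infty}(X,\lambda)V^{\ast}=L^{\infty}([0,1])$. Setting $U=VU_{0}\colon\hil{H}\to L^{2}([0,1])$ then yields $U\vn{A}U^{\ast}=L^{\infty}([0,1])$, as claimed. The whole difficulty is concentrated in the two external inputs: the multiplicity-one reduction (equivalently, the existence of a cyclic vector), which relies on the self-commutant characterisation of MASAs together with a maximality argument, and the measure isomorphism theorem, which is the genuine classical heavy input and the reason the canonical model is $L^{\infty}([0,1])$ with Lebesgue measure specifically.
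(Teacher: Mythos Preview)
The paper does not supply a proof of this proposition: it is stated with attribution to Sinclair and Smith and immediately followed by the next result. Your sketch is the standard argument and is essentially what one finds in Sinclair and Smith's book, so there is nothing to contrast.

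One small remark on presentation: when you say ``we may replace $\lambda$ by an equivalent probability measure without changing $L^{\infty}(X)$ or $L^{2}(X)$'', the space $L^{2}(X,\lambda)$ does change as a concrete Hilbert space when you pass to an equivalent measure $\mu$; what is true is that multiplication by $\sqrt{d\lambda/d\mu}$ is a unitary $L^{2}(X,\mu)\to L^{2}(X,\lambda)$ that intertwines the two multiplication representations of $L^{\infty}(X)$. This is harmless for your argument, since you only need the spatial equivalence, but it is worth saying explicitly. Otherwise the two external inputs you isolate---existence of a cyclic vector for a MASA in $\B{\hil{H}}$ (equivalently, multiplicity one of the spectral decomposition), and the measure isomorphism theorem for non-atomic separable probability spaces---are exactly the right ones, and the reduction is carried out correctly.
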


\begin{theorem}[Sinclair and Smith \cite{FiniteVNAandMasas}]\label{classificationmasasbh}
Let $\vn{A}$ be a MASA in $\B{\hil{H}}$. Then:
\begin{itemize}
\item either $\vn{A}$ is unitarily equivalent to $L^{\infty}([0,1])$ (diffuse case);
\item either $\vn{A}$ is unitarily equivalent to $\vn{D}$, a diagonal algebra (discrete case);
\item either $\vn{A}$ is unitarily equivalent to $\vn{D}\oplus L^{\infty}([0,1])$, where $\vn{D}$ is a diagonal algebra (mixed case);
\end{itemize}
\end{theorem}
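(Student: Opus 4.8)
The plan is to split $\vn{A}$ into its atomic and diffuse parts and treat the two separately: the diffuse part is handled by \autoref{masasbh}, while the atomic part is, coordinate by coordinate, elementary linear algebra.

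First I would set $p=\bigvee\{e\mid e\text{ a minimal projection of }\vn{A}\}$. Since the projections of a von Neumann algebra form a complete lattice and $\vn{A}$ is closed under such suprema, $p\in\vn{A}$; put $q=1_{\B{\hil{H}}}-p\in\vn{A}$. The key preliminary observation is that for any projection $r\in\vn{A}$ the corner $r\vn{A}r$ is a MASA of $\B{r\hil{H}}=r\B{\hil{H}}r$. Indeed $r\vn{A}r$ is an abelian von Neumann subalgebra with unit $r$; and if $\vn{B}$ is abelian with $r\vn{A}r\subseteq\vn{B}\subseteq\B{r\hil{H}}$, then the block-diagonal algebra $\vn{B}\oplus(1-r)\vn{A}(1-r)$ acting on $r\hil{H}\oplus(1-r)\hil{H}$ is an abelian von Neumann subalgebra of $\B{\hil{H}}$ containing $\vn{A}$ — here one uses that $r$ commutes with $\vn{A}$, so every $a\in\vn{A}$ equals $rar+(1-r)a(1-r)$ — hence it equals $\vn{A}$ by maximality, which forces $\vn{B}=r\vn{A}r$. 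Applying this with $r=p$ and $r=q$, both $\vn{A}p$ and $\vn{A}q$ are MASAs of the respective corners.

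Next I would identify the two halves. A minimal projection of $\vn{A}q=q\vn{A}$ would be a projection $\leqslant q$ that is also minimal in $\vn{A}$, hence $\leqslant p$, hence $\leqslant pq=0$; so $\vn{A}q$ has no minimal projections, and \autoref{masasbh} provides a unitary $U_{q}\colon q\hil{H}\to L^{2}([0,1])$ with $U_{q}(\vn{A}q)U_{q}^{*}=L^{\infty}([0,1])$. On the other side, the minimal projections $(e_{i})_{i\in I}$ of $\vn{A}$ are pairwise orthogonal (in an abelian algebra $e_{i}e_{j}$ is a subprojection of each, so it is $0$ or equals $e_{i}=e_{j}$), whence $I$ is countable by separability of $\hil{H}$ and $p=\sum_{i}e_{i}$. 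Each $e_{i}$ has rank one: otherwise pick a projection $f\in\B{\hil{H}}$ with $0<f<e_{i}$; minimality of $e_{i}$ in the abelian algebra $\vn{A}$ gives $e_{i}\vn{A}e_{i}=\complexN e_{i}$, so $ae_{i}\in\complexN e_{i}$ for all $a\in\vn{A}$, whence $f$ commutes with $\vn{A}$; since $f\notin\vn{A}$ (a proper subprojection of $e_{i}$ cannot lie in $\vn{A}$ without contradicting minimality), the von Neumann algebra generated by $\vn{A}$ and $f$ is a strictly larger abelian subalgebra of $\B{\hil{H}}$, contradicting maximality. As every $a\in\vn{A}p$ commutes with the $e_{i}$ and satisfies $ae_{i}\in\complexN e_{i}$, we get $\vn{A}p=\{\sum_{i}\lambda_{i}e_{i}\mid(\lambda_{i})\in\ell^{\infty}(I)\}$ (the sum being an SOT limit); choosing a unit vector $\xi_{i}\in e_{i}\hil{H}$ for each $i$ yields an orthonormal basis $(\xi_{i})_{i\in I}$ of $p\hil{H}$, and the unitary $U_{p}\colon p\hil{H}\to\ell^{2}(I)$ sending $\xi_{i}$ to the $i$-th standard basis vector conjugates $\vn{A}p$ onto the diagonal algebra $\vn{D}$ of $\ell^{2}(I)$.

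Finally I would reassemble: $U=U_{p}\oplus U_{q}$ is a unitary from $\hil{H}=p\hil{H}\oplus q\hil{H}$ onto $\ell^{2}(I)\oplus L^{2}([0,1])$ with $U\vn{A}U^{*}=\vn{D}\oplus L^{\infty}([0,1])$, and the three cases of the statement correspond to $p=0$ (diffuse), $q=0$ (discrete) and $p,q$ both nonzero (mixed), simply dropping the absent summand in the first two cases. The only genuinely hard ingredient is \autoref{masasbh}, which we are entitled to assume; relative to it, the point that needs care is the bookkeeping of the corner decomposition — in particular checking that $r\vn{A}r$ stays maximal abelian and that the atomic corner is honestly a diagonal algebra rather than merely an atomic abelian algebra.
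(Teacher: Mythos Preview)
The paper does not prove this theorem: it is stated with attribution to Sinclair and Smith and immediately followed by the remark that ``Things are therefore clear concerning the MASAs in $\B{\hil{H}}$''. There is no argument in the paper to compare against.

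That said, your proof is correct and is essentially the standard one. The decomposition into atomic and diffuse parts via the supremum $p$ of minimal projections, the verification that corners of a MASA by central projections remain MASAs, the rank-one argument for minimal projections, and the invocation of \autoref{masasbh} on the diffuse corner are all sound. One small point you leave implicit: when $q\neq0$ you need $q\hil{H}$ to be separable and infinite-dimensional in order to apply \autoref{masasbh} as stated; separability is inherited from $\hil{H}$, and infinite-dimensionality follows because a MASA of a finite-dimensional $\B{q\hil{H}}$ would be a diagonal algebra and hence have minimal projections, contradicting your observation that $\vn{A}q$ has none.
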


Things are therefore clear concerning the MASAs in $\B{\hil{H}}$, as the previous theorem provides a complete classification of those. In the case of von Neumann algebras of type $\text{II}_{1}$ however, things are more complicated and such a complete classification does not exist in spite of the numerous works on the subject. 

\subsubsection{Dixmier's Classification.} We begin the discussion about MASAs of type $\text{II}_{1}$ von Neumann algebras by explaining Dixmier's classification \cite{dixmier}, which considers the algebra generated by the \emph{normaliser} of the MASA. Let us stress that this classification is not exhaustive. This presentation of Dixmier's classification will also give us the opportunity to state some results that will be of use in the rest of the paper.

\begin{definition}[normaliser]
Let $\vn{M}$ be a von Neumann algebra, and $\vn{A}$ a von Neumann sub-algebra of $\vn{M}$. We will denote by $\nor[\vn{M}]{\vn{A}}$ the \emph{normaliser} of $\vn{A}$ in $\vn{M}$ which is defined as:
\begin{equation*}
\nor[\vn{M}]{\vn{A}}=\{u\in\vn{M}~|~u\text{ unitary, }u\vn{A}u^{\ast}=\vn{A}\}
\end{equation*}
We will denote by $\noralg[\vn{M}]{\vn{A}}$ the von Neumann algebra generated by $\nor[\vn{M}]{\vn{A}}$.
\end{definition}

\begin{definition}[normalising Groupoid]\label{groupoidenormalisant}
Let $\vn{M}$ be a von Neumann algebra and $\vn{A}$ be a von Neumann sub-algebra of $\vn{M}$. We will denote by $\gn[\vn{M}]{\vn{A}}$ the \emph{normalising groupoid} of $\vn{A}$ in $\vn{M}$ which is defined as:
\begin{equation*}
\gn[\vn{M}]{\vn{A}}=\{u\in\vn{M}~|~uu^{\ast}u=u,uu^{\ast}\in\vn{A}, u^{\ast}u\in\vn{A}, u\vn{A}u^{\ast}\subset\vn{A}\}
\end{equation*}
We will denote by $\gnalg[\vn{M}]{\vn{A}}$ the von Neumann algebra generated by $\gn[\vn{M}]{\vn{A}}$.
\end{definition}

\begin{definition}[Dixmier Classification]\label{classification}
Let $\vn{M}$ be a factor, and $\vn{P}$ a MASA in $\vn{M}$. We distinguish three cases:
\begin{enumerate}
\item if $\noralg[\vn{M}]{\vn{P}}=\vn{M}$,  we say that $\vn{P}$ is \emph{regular} (or \emph{Cartan});
\item if $\noralg[\vn{M}]{\vn{P}}=\vn{K}$, where $\vn{K}$ is a factor distinct from $\vn{M}$, we say that $\vn{P}$ is \emph{semi-regular};
\item if $\noralg[\vn{M}]{\vn{P}}=\vn{P}$, we say that $\vn{P}$ is \emph{singular}.
\end{enumerate}
\end{definition}

The following four theorems can be found in the literature. The first two theorems can be found along with their proofs in Sinclair and Smith book \cite{FiniteVNAandMasas} about MASAs in finite factors. The third is a quite recent generalisation \cite{Chifan} of a result which was previously known to hold for singular MASAs.

\begin{theorem}[Dye, \cite{dye63}]\label{dyestheorem}
Let $\vn{M}$ be a von Neumann algebra with a faithful normal trace, and $\vn{A}$ a MASA in $\vn{M}$. Then the set $\gn[\vn{M}]{\vn{A}}$ is contained in the sub-vector space of $\vn{M}$ generated by $\nor[\vn{M}]{\vn{A}}$.
\end{theorem}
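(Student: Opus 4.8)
Set $e=u^{\ast}u$ and $f=uu^{\ast}$; by hypothesis these are projections of $\vn{A}$. The plan is to show $u$ is a finite complex linear combination of unitaries of $\nor[\vn{M}]{\vn{A}}$, in three stages. \emph{First, normalise the data.} Since $\vn{A}$ is a MASA of $\vn{M}$, the reduced algebra $\vn{A}e$ is a MASA of $e\vn{M}e$; transporting this through the $\ast$-isomorphism $x\mapsto uxu^{\ast}$ from $e\vn{M}e$ onto $f\vn{M}f$ shows that $u\vn{A}u^{\ast}$ is a MASA of $f\vn{M}f$. As $u\vn{A}u^{\ast}\subseteq\vn{A}$ and $u\vn{A}u^{\ast}=f(u\vn{A}u^{\ast})f\subseteq\vn{A}f$, while $\vn{A}f$ is also a MASA of $f\vn{M}f$, these coincide: $u\vn{A}u^{\ast}=\vn{A}f$, and symmetrically $u^{\ast}\vn{A}u=\vn{A}e$. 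Hence $\theta:x\mapsto uxu^{\ast}$ restricts to a $\ast$-isomorphism $\vn{A}e\to\vn{A}f$.

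\emph{Second, the disjoint case.} Assume in addition $ef=0$. Then $u^{2}=u^{\ast 2}=0$, so $v:=u+u^{\ast}+(1-e-f)$ is a self-adjoint unitary, and using $u\vn{A}u^{\ast}=\vn{A}f$, $u^{\ast}\vn{A}u=\vn{A}e$ and $1-e-f\in\vn{A}$ one checks $v\vn{A}v=\vn{A}$, i.e. $v\in\nor[\vn{M}]{\vn{A}}$. Since $fve=u$ and $e=\tfrac12(1+(2e-1))$, $f=\tfrac12(1+(2f-1))$ with $2e-1,2f-1$ unitaries of $\vn{A}$ (hence in $\nor[\vn{M}]{\vn{A}}$), one obtains
\begin{equation*}
u=\tfrac14\bigl(v+v(2e-1)+(2f-1)v+(2f-1)v(2e-1)\bigr),
\end{equation*}
a combination of four elements of $\nor[\vn{M}]{\vn{A}}$, which is stable under products.

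\emph{Third, reduce the general case to the disjoint one.} Identify $\vn{A}$ with an algebra $L^{\infty}(X)$, so that $\theta$ corresponds to a measure-class isomorphism $\sigma$ between the supports of $e$ and $f$. Let $g\in\vn{A}$ be the largest projection on which $\theta$ acts trivially (the support of the fixed-point set of $\sigma$); then $ug$ commutes with the MASA $\vn{A}g$ of $g\vn{M}g$, so $ug\in\vn{A}g\subseteq\vn{A}$. On $e-g$ the map $\sigma$ is free of fixed points, so the graph on $\mathrm{supp}(e-g)\cup\mathrm{supp}\,\theta(e-g)$ with edges $\{x,\sigma(x)\}$ has no loop and all vertices of degree at most two; such a graph admits a measurable proper colouring with at most three colours, yielding an orthogonal decomposition $e-g=e_{1}+e_{2}+e_{3}$ with $e_{i}\in\vn{A}$ and $e_{i}\theta(e_{i})=0$. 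Each $ue_{i}$ lies in $\gn[\vn{M}]{\vn{A}}$ with source $e_{i}$ and range $\theta(e_{i})$, which are disjoint, so the second stage puts $ue_{i}$ in the span of $\nor[\vn{M}]{\vn{A}}$. Since $u=ug+\sum_{i}ue_{i}$ and $\vn{A}$ itself lies in that span (every element of $\vn{A}$ being a combination of its unitaries), we are done.

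The first two stages are entirely algebraic and elementary; the genuine obstacle is the third, namely cutting $e-g$ into \emph{finitely many} pieces whose source and range projections are disjoint. This is where the hypothesis on $\vn{M}$ is used: the presence of a faithful normal trace makes the measured space underlying $\vn{A}$ sufficiently well behaved for the colouring argument, and the bounded-degree structure of the orbit graph of $\theta$ is precisely what makes three pieces suffice. (One also needs the routine verifications that $\vn{A}e$ is a MASA of $e\vn{M}e$ and that restrictions $ue_{i}$, $ug$ remain in $\gn[\vn{M}]{\vn{A}}$, which are straightforward.)
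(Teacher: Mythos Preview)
The paper does not supply a proof of this theorem; it is quoted from the literature with a pointer to Sinclair--Smith. Your outline is essentially the standard argument, and Stages~1 and~2 are correct and complete --- the self-adjoint normalising unitary $v=u+u^{\ast}+(1-e-f)$ in the disjoint case is precisely Dye's device, and the passage from $u=fve$ to a four-term combination of normalising unitaries is clean.

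The one soft spot is Stage~3. The claim that the orbit graph ``admits a measurable proper colouring with at most three colours'' is true, but your justification (degree at most two, no loops) only yields $3$-colourability of the \emph{abstract} graph, and abstract colourability does not imply measurable colourability. For instance, the graph of an irrational rotation on $[0,1)$ has every component isomorphic to $\mathbb{Z}$, hence is abstractly bipartite, yet has no \emph{measurable} $2$-colouring (one would obtain a $\sigma^{2}$-invariant set of measure $\tfrac{1}{2}$, contradicting ergodicity of $\sigma^{2}$). What actually produces the three pieces is a maximality argument: take $A$ maximal in the measure algebra of $\mathrm{supp}(e-g)$ subject to $A\cap\sigma(A)$ being null; maximality plus freeness of $\sigma$ (using a countable separating family of projections in $\vn{A}$) forces the domain to be covered by $A\cup\sigma(A)\cup\sigma^{-1}(A)$ up to null sets, and $e_{1}=A$, $e_{2}=(\sigma(A)\setminus A)\cap\mathrm{supp}(e-g)$, $e_{3}$ the remainder do the job. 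The existence of a maximal such $A$ is exactly where the trace hypothesis enters --- it makes the measure algebra of $\vn{A}$ satisfy the countable chain condition, so that increasing chains have suprema still meeting the disjointness constraint --- which is what your closing paragraph gestures at without making explicit.
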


\begin{corollary}
Under the hypotheses of the preceding theorem, the von Neumann algebras $\noralg[\vn{M}]{\vn{A}}$ and $\gnalg[\vn{M}]{\vn{A}}$ are equal.
\end{corollary}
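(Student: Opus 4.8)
The plan is to establish the two inclusions $\noralg[\vn{M}]{\vn{A}}\subseteq\gnalg[\vn{M}]{\vn{A}}$ and $\gnalg[\vn{M}]{\vn{A}}\subseteq\noralg[\vn{M}]{\vn{A}}$ separately, each being a short argument from the definitions together with the monotonicity of the operation ``von Neumann algebra generated by'' and, for one direction, Dye's theorem.

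For the inclusion $\noralg[\vn{M}]{\vn{A}}\subseteq\gnalg[\vn{M}]{\vn{A}}$, I would first observe that every normalising unitary is a fortiori an element of the normalising groupoid, that is, $\nor[\vn{M}]{\vn{A}}\subseteq\gn[\vn{M}]{\vn{A}}$. Here one uses that a MASA necessarily contains $1_{\vn{M}}$ (otherwise $\vn{A}+\complexN 1_{\vn{M}}$ would be a strictly larger abelian sub-algebra, contradicting maximality): if $u\in\nor[\vn{M}]{\vn{A}}$ then $uu^{\ast}=u^{\ast}u=1_{\vn{M}}\in\vn{A}$, hence $uu^{\ast}u=u$, and $u\vn{A}u^{\ast}=\vn{A}\subseteq\vn{A}$, so all four defining conditions of $\gn[\vn{M}]{\vn{A}}$ are satisfied. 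Since the von Neumann algebra generated by a set is monotone in that set, this immediately gives $\noralg[\vn{M}]{\vn{A}}\subseteq\gnalg[\vn{M}]{\vn{A}}$.

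For the reverse inclusion, I would invoke the preceding theorem of Dye, which states that $\gn[\vn{M}]{\vn{A}}$ is contained in the linear span of $\nor[\vn{M}]{\vn{A}}$. A von Neumann algebra is in particular a linear subspace, so that linear span is contained in $\noralg[\vn{M}]{\vn{A}}$, whence $\gn[\vn{M}]{\vn{A}}\subseteq\noralg[\vn{M}]{\vn{A}}$. Since $\noralg[\vn{M}]{\vn{A}}$ is itself a von Neumann algebra containing $\gn[\vn{M}]{\vn{A}}$, it contains the von Neumann algebra generated by $\gn[\vn{M}]{\vn{A}}$, i.e. $\gnalg[\vn{M}]{\vn{A}}\subseteq\noralg[\vn{M}]{\vn{A}}$. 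Combining the two inclusions yields the claimed equality.

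There is essentially no obstacle here: the only point requiring a moment's care is the remark that a MASA contains the unit of the ambient algebra, which is what licenses regarding a normalising unitary as a genuine element of the normalising groupoid; the rest is bookkeeping with the definitions. The hypothesis that $\vn{M}$ carries a faithful normal trace enters only through its use in Dye's theorem.
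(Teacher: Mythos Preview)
Your proof is correct and is precisely the argument the paper leaves implicit: the corollary is stated without proof immediately after Dye's theorem, and your two-inclusion argument is the natural unpacking of why it follows. There is nothing to add.
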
 

\begin{theorem}[Jones and Popa, \cite{popajones}]\label{popajones}
Let $\vn{M}$ be a type $\text{II}_{1}$ factor, and $\vn{A}$ a MASA in $\vn{M}$. Let $\proj{p,q}\in\vn{A}$ be projections of equal trace. Then, if $\noralg[\vn{M}]{\vn{A}}$ is a factor, there exists a partial isometry $v_{0}\in \gn[\vn{M}]{\vn{A}}$ such that $\proj{p}=v_{0}v_{0}^{\ast}$ and $\proj{q}=v_{0}^{\ast}v_{0}$.
\end{theorem}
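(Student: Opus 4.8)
The plan is to reduce first to the case of a \emph{regular} MASA and then run a Zorn-style exhaustion argument whose only substantial input is a lemma producing \emph{one} nonzero element of the normalising groupoid between subprojections of $\proj{p}$ and $\proj{q}$. For the reduction, put $\vn{N}=\noralg[\vn{M}]{\vn{A}}$. By hypothesis $\vn{N}$ is a factor, and being a von Neumann subalgebra of the finite factor $\vn{M}$ it is a finite factor, hence carries a faithful normal finite trace (the restriction of $\tr$). Moreover $\vn{A}\subseteq\vn{N}$ and $\vn{A}$ is still a MASA in $\vn{N}$ (an abelian $\vn{B}$ with $\vn{A}\subseteq\vn{B}\subseteq\vn{N}$ is also sandwiched in $\vn{M}$), while $\nor[\vn{M}]{\vn{A}}\subseteq\vn{N}$ forces $\nor[\vn{M}]{\vn{A}}\subseteq\nor[\vn{N}]{\vn{A}}$, hence $\noralg[\vn{N}]{\vn{A}}=\vn{N}$: the MASA $\vn{A}$ is regular in $\vn{N}$. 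Since $\gn[\vn{N}]{\vn{A}}\subseteq\gn[\vn{M}]{\vn{A}}$, it suffices to produce the partial isometry inside $\vn{N}$, so I may assume $\noralg[\vn{M}]{\vn{A}}=\vn{M}$.

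Next comes the key lemma: \emph{for any nonzero projections $p_{1},q_{1}\in\vn{A}$ there is a nonzero $w\in\gn[\vn{M}]{\vn{A}}$ with $ww^{\ast}\preceq p_{1}$ and $w^{\ast}w\preceq q_{1}$}. I would prove it by setting $e=\bigvee\{up_{1}u^{\ast}\mid u\in\nor[\vn{M}]{\vn{A}}\}$, a projection of $\vn{A}$. Left multiplication by a fixed $v\in\nor[\vn{M}]{\vn{A}}$ is a bijection of $\nor[\vn{M}]{\vn{A}}$, so $vev^{\ast}=\bigvee_{u}(vu)p_{1}(vu)^{\ast}=e$; thus $e$ commutes with $\nor[\vn{M}]{\vn{A}}$, hence with the von Neumann algebra $\vn{M}$ it generates, so $e\in\vn{Z(M)}=\complexN 1$, and since $0\neq p_{1}\leq e$ we get $e=1$. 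As $q_{1}\neq 0$, the projections $up_{1}u^{\ast}$ cannot all be orthogonal to $q_{1}$, so there is $u\in\nor[\vn{M}]{\vn{A}}$ with $f:=(up_{1}u^{\ast})q_{1}\neq 0$. Then $f\in\vn{A}$, $f\preceq q_{1}$, $g:=u^{\ast}fu\in\vn{A}$, $g\preceq p_{1}$, and $w:=gu^{\ast}$ is a nonzero partial isometry with $ww^{\ast}=g$, $w^{\ast}w=f$ and $w\vn{A}w^{\ast}=g(u^{\ast}\vn{A}u)g=g\vn{A}g\subseteq\vn{A}$, so $w\in\gn[\vn{M}]{\vn{A}}$.

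Then I would run the maximality argument. Order the set $\mathcal{S}$ of partial isometries $v\in\gn[\vn{M}]{\vn{A}}$ with $vv^{\ast}\preceq\proj{p}$ and $v^{\ast}v\preceq\proj{q}$ by: $v\leq w$ iff $v^{\ast}v\leq w^{\ast}w$ and $w(v^{\ast}v)=v$. It contains $0$, and an increasing chain $(v_{i})$ has a strong-operator-topology limit $v$ which one checks is a partial isometry with $v^{\ast}v=\sup_{i}v_{i}^{\ast}v_{i}\preceq\proj{q}$ and $vv^{\ast}=\sup_{i}v_{i}v_{i}^{\ast}\preceq\proj{p}$, and which lies in $\gn[\vn{M}]{\vn{A}}$ because $\vn{A}$ is closed in the strong operator topology (for $a\in\vn{A}$ one has $v_{i}av_{i}^{\ast}\to vav^{\ast}$, whence $vav^{\ast}\in\vn{A}$); so $v$ is an upper bound in $\mathcal{S}$. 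By Zorn's lemma $\mathcal{S}$ has a maximal element $v_{0}$. If $v_{0}v_{0}^{\ast}\neq\proj{p}$, then $p_{1}:=\proj{p}-v_{0}v_{0}^{\ast}$ and $q_{1}:=\proj{q}-v_{0}^{\ast}v_{0}$ are projections of $\vn{A}$ of equal trace (using $\tr(\proj{p})=\tr(\proj{q})$ and $\tr(v_{0}v_{0}^{\ast})=\tr(v_{0}^{\ast}v_{0})$), both nonzero by faithfulness of $\tr$; the key lemma yields a nonzero $w\in\gn[\vn{M}]{\vn{A}}$ with $ww^{\ast}\leq p_{1}$, $w^{\ast}w\leq q_{1}$, and since the source and range projections of $v_{0}$ and $w$ are orthogonal, $v_{0}+w$ is a partial isometry, all cross-terms in $(v_{0}+w)\vn{A}(v_{0}+w)^{\ast}$ vanish so $v_{0}+w\in\gn[\vn{M}]{\vn{A}}$, and $v_{0}+w$ strictly extends $v_{0}$ inside $\mathcal{S}$ — contradicting maximality. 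Hence $v_{0}v_{0}^{\ast}=\proj{p}$ and $v_{0}^{\ast}v_{0}=\proj{q}$.

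I expect the key lemma to be the one genuine obstacle: it is exactly the step that uses the hypothesis that $\noralg[\vn{M}]{\vn{A}}$ is a factor, through the argument that the supremum $e$ is central and hence equals $1$, and without factoriality the conclusion fails. The reduction and the Zorn exhaustion are soft; the only point requiring care there is the verification that a strong-operator-topology limit of a chain stays inside $\gn[\vn{M}]{\vn{A}}$.
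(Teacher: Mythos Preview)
The paper does not prove this theorem: it is quoted from the literature with an explicit pointer to the Sinclair--Smith monograph for a proof, so there is no ``paper's own proof'' to compare against. Your argument is correct and is essentially the standard Jones--Popa proof: reduce to the regular case by passing to the factor $\vn{N}=\noralg[\vn{M}]{\vn{A}}$, use regularity to show that the $\nor{}$-saturation of any nonzero projection of $\vn{A}$ is central and hence equals $1$ (this is exactly where factoriality enters), extract from that a nonzero groupoid element between any two nonzero subprojections, and exhaust by Zorn. Two small points worth tightening: first, you should note that $\vn{N}$ is a type $\text{II}_1$ factor (not merely finite), which follows because $\vn{A}\subset\vn{N}$ is diffuse; second, the closure of $\gn[\vn{M}]{\vn{A}}$ under the limit of a chain is cleaner via the $\|\cdot\|_2$-norm than via raw SOT---write $v-v_i=v(e-e_i)$ with $e_i=v_i^{\ast}v_i$, $e=\sup_i e_i$, so $\|v-v_i\|_2\to 0$ and likewise for the adjoints, whence $v_iav_i^{\ast}\to vav^{\ast}$ in $\|\cdot\|_2$ on bounded sets, and $\vn{A}$ is $\|\cdot\|_2$-closed on bounded sets. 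With those cosmetic fixes the proof is complete.
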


\begin{theorem}[Chifan, \cite{Chifan}]\label{chifan}
Let $\vn{M}_{1}$ and $\vn{M}_{2}$ be type $\text{II}_{1}$ factors. For $i=1,2$, let $\vn{A}_{i}$ be a MASA in $\vn{M}_{i}$. Then: 
\begin{equation*}
\noralg[\vn{M}_{1}\vntimes\vn{M}_{2}]{\vn{A}_{1}\vntimes\vn{A}_{2}}=\noralg[\vn{M}_{1}]{\vn{A}_{1}}\vntimes\noralg[\vn{M}_{2}]{\vn{A}_{2}}
\end{equation*}
\end{theorem}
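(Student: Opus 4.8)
The plan is to prove the two inclusions separately. The inclusion $\noralg[\vn{M}_{1}]{\vn{A}_{1}}\vntimes\noralg[\vn{M}_{2}]{\vn{A}_{2}}\subseteq\noralg[\vn{M}_{1}\vntimes\vn{M}_{2}]{\vn{A}_{1}\vntimes\vn{A}_{2}}$ is elementary: if $u_{1}\in\nor[\vn{M}_{1}]{\vn{A}_{1}}$, then $u_{1}\otimes 1$ is a unitary of $\vn{M}_{1}\vntimes\vn{M}_{2}$ and $(u_{1}\otimes 1)(\vn{A}_{1}\vntimes\vn{A}_{2})(u_{1}\otimes 1)^{\ast}=(u_{1}\vn{A}_{1}u_{1}^{\ast})\vntimes\vn{A}_{2}=\vn{A}_{1}\vntimes\vn{A}_{2}$, so $u_{1}\otimes 1\in\nor[\vn{M}_{1}\vntimes\vn{M}_{2}]{\vn{A}_{1}\vntimes\vn{A}_{2}}$, and symmetrically $1\otimes u_{2}\in\nor[\vn{M}_{1}\vntimes\vn{M}_{2}]{\vn{A}_{1}\vntimes\vn{A}_{2}}$ for $u_{2}\in\nor[\vn{M}_{2}]{\vn{A}_{2}}$. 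Since the von Neumann algebra generated by $\vn{P}\otimes 1$ and $1\otimes\vn{Q}$ equals $\vn{P}\vntimes\vn{Q}$, the von Neumann algebra generated by all of these operators is exactly $\noralg[\vn{M}_{1}]{\vn{A}_{1}}\vntimes\noralg[\vn{M}_{2}]{\vn{A}_{2}}$, and it is contained in the von Neumann algebra $\noralg[\vn{M}_{1}\vntimes\vn{M}_{2}]{\vn{A}_{1}\vntimes\vn{A}_{2}}$.

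For the reverse inclusion write $\vn{M}=\vn{M}_{1}\vntimes\vn{M}_{2}$, $\vn{A}=\vn{A}_{1}\vntimes\vn{A}_{2}$ --- a MASA in $\vn{M}$, as is standard --- and $\vn{N}_{i}=\noralg[\vn{M}_{i}]{\vn{A}_{i}}$. Using the standard identity $(\vn{N}_{1}\vntimes\vn{M}_{2})\cap(\vn{M}_{1}\vntimes\vn{N}_{2})=\vn{N}_{1}\vntimes\vn{N}_{2}$ for tensor products of von Neumann algebras, it suffices, by the symmetry of the two factors, to prove the single inclusion $\noralg[\vn{M}]{\vn{A}}\subseteq\vn{N}_{1}\vntimes\vn{M}_{2}$; that is, to show that every unitary $u\in\vn{M}$ normalising $\vn{A}$ lies in $\vn{N}_{1}\vntimes\vn{M}_{2}$.

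The key tool is a structural description of the normalising algebra of a single MASA --- essentially Chifan's main lemma, building on Popa's result that for a MASA the normaliser and the two-sided quasi-normaliser generate the same von Neumann algebra. If $\vn{B}$ is a MASA in a type $\text{II}_{1}$ factor $\vn{Q}$, write $e_{\vn{B}}$ for the orthogonal projection of $L^{2}(\vn{Q})$ onto $L^{2}(\vn{B})$ and, for a partial isometry $v\in\vn{Q}$, let $\mathcal{H}_{v}\subseteq L^{2}(\vn{Q})$ be the $\norm{\cdot}_{2}$-closure of the $\vn{B}$--$\vn{B}$-bimodule $\vn{B}v\vn{B}$. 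One shows that $\noralg[\vn{Q}]{\vn{B}}=\gnalg[\vn{Q}]{\vn{B}}$ (the equality being \autoref{dyestheorem} and its corollary) is exactly the weak-$\ast$-closed linear span of those partial isometries $v$ for which $\mathcal{H}_{v}$ is finitely generated as a right Hilbert $\vn{B}$-module; the non-trivial implication --- that finite generation of $\mathcal{H}_{v}$ forces $v\in\gnalg[\vn{Q}]{\vn{B}}$ --- uses Popa's intertwining-by-bimodules for $\vn{B}\subseteq\vn{Q}$, since any non-zero $\vn{B}$--$\vn{B}$-subbimodule of $L^{2}(\vn{Q})$ of finite right $\vn{B}$-dimension contains a copy of $L^{2}(\vn{B}p)$ realised by an element of $\gn[\vn{Q}]{\vn{B}}$, and one exhausts $\mathcal{H}_{v}$ by finitely many such pieces (here the fact that $\gn[\vn{Q}]{\vn{B}}$ is a groupoid is used as well).

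Combining this description, applied to $\vn{A}\subseteq\vn{M}$, with the tensor factorisation of the associated basic construction ($e_{\vn{A}}=e_{\vn{A}_{1}}\otimes e_{\vn{A}_{2}}$, $L^{2}(\vn{M})=L^{2}(\vn{M}_{1})\otimes L^{2}(\vn{M}_{2})$ as an $\vn{A}$--$\vn{A}$-bimodule, the canonical semifinite traces multiplying) and with Popa's intertwining applied to $\vn{A}_{1}\subseteq\vn{M}_{1}$ and its $u$-conjugates, one forces the ``first coordinate'' of the normalising unitary $u$ into $\vn{N}_{1}$, i.e.\ $u\in\vn{N}_{1}\vntimes\vn{M}_{2}$. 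This last combination is the main obstacle. The naive hope that a finite-multiplicity $\vn{A}$--$\vn{A}$-bimodule inside $L^{2}(\vn{M}_{1})\otimes L^{2}(\vn{M}_{2})$ decomposes along the tensor splitting into a sum of tensors of finite-multiplicity $\vn{A}_{i}$-bimodules is \emph{false} --- the ``diagonal'' bimodule $\overline{u\vn{A}}$ attached to a normalising unitary $u$ that genuinely mixes the two factors is a counterexample --- so one must use the boundedness of $u$ (not merely its image in $L^{2}(\vn{M})$), a measurable-field (direct integral) analysis over one of the $\vn{A}_{i}$, and the intertwining criterion in an essential way. The remaining steps --- the two reductions, the basic-construction bookkeeping, and the behaviour of $e_{\vn{A}}$ and of the traces under tensor products --- are routine.
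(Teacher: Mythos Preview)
The paper does not prove this theorem: it is stated as a result from the literature (attributed to Chifan) and used as a black box throughout, alongside the theorems of Dye, Jones--Popa, and Connes--Feldman--Weiss. So there is no ``paper's own proof'' to compare against.

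That said, your outline does follow the shape of Chifan's actual argument: the easy inclusion is as you describe, and for the hard inclusion the key ingredients are indeed the identification of $\noralg[\vn{Q}]{\vn{B}}$ with the quasi-normaliser via Popa's result, the bimodule/basic-construction description of the normaliser, and Popa's intertwining-by-bimodules applied coordinatewise. Your observation that the naive tensor-splitting of finite-multiplicity bimodules fails, and that one must exploit boundedness and a disintegration over one $\vn{A}_{i}$, is precisely the point of Chifan's paper.

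However, as written this is an outline rather than a proof: the paragraph you label ``the main obstacle'' names the tools but does not carry out the argument. In particular, the step ``one forces the first coordinate of $u$ into $\vn{N}_{1}$'' is exactly Chifan's theorem in disguise, and you have only indicated which machinery is relevant, not how it combines. If your intent was to sketch the strategy and cite \cite{Chifan} for the details, that is fine and matches what the paper does; if you intended a self-contained proof, the core technical work is still missing.
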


\begin{theorem}[Connes, Feldman and Weiss \cite{connes}]\label{connes}
Let $\vn{A,B}$ be two regular MASAs of the hyperfinite factor $\finhyp$ of type $\text{II}_{1}$. Then $\vn{A}$ and $\vn{B}$ are unitarily equivalent.
\end{theorem}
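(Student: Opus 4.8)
The plan is to reduce the statement to the classification of amenable countable measured equivalence relations. First I would identify a regular MASA $\vn{A}$ of $\finhyp$ with $L^{\infty}(X,\mu)$ for a standard probability space $(X,\mu)$, the normalised trace $\tr$ restricting to the measure $\mu$ on $\vn{A}$. By the Feldman--Moore construction, the Cartan inclusion $\vn{A}\subset\finhyp$ is encoded, up to isomorphism, by an ergodic measure-preserving countable equivalence relation $\mathcal{R}_{\vn{A}}$ on $(X,\mu)$ together with a $\mathbb{T}$-valued $2$-cocycle $\sigma_{\vn{A}}$, in such a way that the pair $(\finhyp,\vn{A})$ is isomorphic to $(L(\mathcal{R}_{\vn{A}},\sigma_{\vn{A}}),L^{\infty}(X))$; here ergodicity comes from $\finhyp$ being a factor, and measure-preservation from $\tr$ being a trace. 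Consequently, if the data $(\mathcal{R}_{\vn{A}},\sigma_{\vn{A}})$ and $(\mathcal{R}_{\vn{B}},\sigma_{\vn{B}})$ attached to two regular MASAs $\vn{A},\vn{B}$ are isomorphic, then $\vn{A}$ and $\vn{B}$ are conjugate by an automorphism of $\finhyp$.

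Next I would show that these data do not depend on the chosen MASA. Since $\finhyp$ is hyperfinite it is amenable, and the equivalence relation attached to a Cartan subalgebra of an amenable von Neumann algebra is itself amenable in the sense of Zimmer; hence $\mathcal{R}_{\vn{A}}$ is amenable. By the Connes--Feldman--Weiss theorem every amenable countable measured equivalence relation is hyperfinite, i.e.\ generated by a single measure-preserving transformation, equivalently an increasing union of finite subrelations. By Dye's orbit-equivalence theorem for ergodic probability-measure-preserving transformations (an earlier result than the one recalled above), $\mathcal{R}_{\vn{A}}$ and $\mathcal{R}_{\vn{B}}$ are then both isomorphic to the unique ergodic hyperfinite probability-measure-preserving relation $\mathcal{R}_{0}$. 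There remains the cocycle: one checks that $\sigma_{\vn{A}}$ and $\sigma_{\vn{B}}$ are coboundaries, the second cohomology of the hyperfinite relation with $\mathbb{T}$ coefficients being trivial, so that $L(\mathcal{R}_{0},\sigma)$ is canonically isomorphic to $L(\mathcal{R}_{0})$ by a diagonal-preserving isomorphism. Putting these facts together produces an automorphism $\theta$ of $\finhyp$ with $\theta(\vn{A})=\vn{B}$.

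The last point is to upgrade this conjugacy to a unitary equivalence, since $\theta$ need not be inner. Here I would use that the automorphisms of $\finhyp$ globally preserving a fixed Cartan subalgebra $\vn{B}$ already exhaust $\mathrm{Out}(\finhyp)$: every outer automorphism class is realised by an automorphism of the pair $(\finhyp,\vn{B})$, for instance by symbolic or Bernoulli-type models preserving their tautological Cartan. Choosing $\psi\in\mathrm{Aut}(\finhyp)$ with $\psi(\vn{B})=\vn{B}$ and $[\psi]=[\theta]^{-1}$ in $\mathrm{Out}(\finhyp)$, the composite $\psi\circ\theta$ is inner and still carries $\vn{A}$ onto $\vn{B}$; hence there is a unitary $u\in\finhyp$ with $u\vn{A}u^{\ast}=\vn{B}$, which is the asserted unitary equivalence. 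The genuinely hard inputs are the Connes--Feldman--Weiss theorem itself -- amenable implies hyperfinite for measured equivalence relations -- and the Feldman--Moore dictionary that legitimises the reduction; these are the steps I expect to be the main obstacle. Everything else, and in particular the final passage from an outer conjugacy to a unitary one, is comparatively routine but is exactly where ``conjugate'' must be improved to ``unitarily equivalent.''
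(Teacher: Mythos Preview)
The paper does not prove this theorem at all: it is stated with attribution to Connes, Feldman and Weiss and simply cited from the literature, alongside the results of Dye, Jones--Popa and Chifan. So there is no ``paper's own proof'' to compare against; your outline is essentially the route taken in the original source and in standard expositions (Feldman--Moore to pass to an equivalence relation with a cocycle, amenability of the relation from injectivity of $\finhyp$, the Connes--Feldman--Weiss hyperfiniteness theorem, Dye's uniqueness, and vanishing of the $2$-cocycle for the hyperfinite relation).

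Two remarks on the sketch itself. First, the step ``$H^{2}(\mathcal{R}_{0},\mathbb{T})=0$'' is correct but deserves more than ``one checks'': the usual proof writes $\mathcal{R}_{0}$ as an increasing union of finite subrelations and trivialises the cocycle inductively, using a compactness/averaging argument at each stage; this is not entirely formal. Second, your upgrade from conjugacy to unitary equivalence is the right idea, but the claim that ``every outer automorphism class is realised by an automorphism preserving $\vn{B}$'' is itself a nontrivial fact about $\finhyp$ that you should justify (e.g.\ via a concrete model such as the infinite tensor product or a Bernoulli action, where one can exhibit enough Cartan-preserving automorphisms). Once those two points are pinned down, the argument is complete.
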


\subsubsection{Pukansky's Invariant.} Pukansky \cite{pukansky} defined a numerical invariant for MASAs $\vn{A}$ of a type $\text{II}_{1}$ factor $\vn{N}$. Consider that $\vn{N}$ is endowed with a faithful normal trace $\tau$, and let $J$ be the anti linear isometry $Jx = x^{\ast}$ onto $L^{2}(\vn{N})$. Pukansky's invariant is based on the type $\text{I}$ decomposition of $(\vn{A}\cup J\vn{A}J)'$. Indeed this algebra, as the commutant of an abelian algebra, is of type {I} and therefore can be decomposed as a sum of factors of type ${I}_{n}$ (where $n$ might be equal to $\infty$). The Pukansky invariant is then essentially the set of all values of $n$ that appear in this decomposition.

  The following lemma justifies the definition of Pukansky's invariant. We define $e_{\vn{A}}$ as the projection of $L^{2}(\vn{N})$ onto $L^{2}(\vn{A})$ and we will write $\mathfrak{B}_{\vn{A}}$ the commutative algebra generated by $\vn{A}\cup J\vn{A} J$.

\begin{lemma}[Sinclair and Smith \cite{FiniteVNAandMasas}, Chapter 7] Let $\vn{N}$ be a type $\text{II}_{1}$ factor represented onto $L^{2}(\vn{N})$ and $\vn{A}$ a MASA in $\vn{N}$. Then $e_{\vn{A}} \in \mathfrak{B}_{\vn{A}}$ and $e_{\vn{A}}$ is a central projection -- i.e. a projection onto the center of the algebra -- in $\mathfrak{B}_{\vn{A}}'$.
\end{lemma}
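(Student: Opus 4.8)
The plan is to work directly in the standard representation $L^{2}(\vn{N},\tau)$ of the type $\text{II}_{1}$ factor $\vn{N}$, identifying $\vn{A}$ with $\pi_{\tau}(\vn{A})$ and $J\vn{A}J$ with $\pi'_{\tau}(\vn{A})=\pi_{\tau}(\vn{A})'$ restricted appropriately. The first step is to describe $e_{\vn{A}}$ concretely: since $\vn{A}$ is a MASA, the conditional expectation $E_{\vn{A}}:\vn{N}\to\vn{A}$ (which exists because $\vn{N}$ carries a faithful normal trace) extends to the orthogonal projection $e_{\vn{A}}$ of $L^{2}(\vn{N},\tau)$ onto the closed subspace $L^{2}(\vn{A},\tau)$, because $E_{\vn{A}}$ is trace-preserving and idempotent. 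From this description one reads off immediately that $e_{\vn{A}}$ commutes with every element of $\vn{A}$ acting by left multiplication and with every element of $\vn{A}$ acting by right multiplication (equivalently with $J\vn{A}J$), since $E_{\vn{A}}(axb)=aE_{\vn{A}}(x)b$ for $a,b\in\vn{A}$. Hence $e_{\vn{A}}\in(\vn{A}\cup J\vn{A}J)'=\mathfrak{B}_{\vn{A}}'$.

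The second, slightly more delicate step is to show $e_{\vn{A}}\in\mathfrak{B}_{\vn{A}}$, the von Neumann algebra generated by $\vn{A}\cup J\vn{A}J$ on $L^{2}(\vn{N},\tau)$. I would argue that $e_{\vn{A}}$ lies in the strong closure of this algebra by exhibiting it as a spectral projection, or more directly by noting that $\mathfrak{B}_{\vn{A}}$, being the von Neumann algebra generated by two commuting copies of the diffuse abelian algebra $\vn{A}$, is itself abelian and acts on $L^{2}(\vn{N},\tau)$ with $\widehat{1}$ (the image of $1\in\vn{N}$) as a cyclic vector; the cyclic projection onto $\overline{\mathfrak{B}_{\vn{A}}\widehat{1}}$ belongs to $\mathfrak{B}_{\vn{A}}$, and one checks that $\overline{\mathfrak{B}_{\vn{A}}\widehat{1}}=\overline{\vn{A}\widehat{1}}=L^{2}(\vn{A},\tau)$ because $J\vn{A}J\,\widehat{1}=\vn{A}\widehat{1}$ (as $J\widehat{1}=\widehat{1}$ and $Ja^{\ast}J\widehat{1}=\widehat{a}$). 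So the cyclic projection of $\mathfrak{B}_{\vn{A}}$ onto $L^{2}(\vn{A},\tau)$ is exactly $e_{\vn{A}}$, and it is by construction an element of $\mathfrak{B}_{\vn{A}}$.

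Finally, for the centrality claim: since $e_{\vn{A}}\in\mathfrak{B}_{\vn{A}}\cap\mathfrak{B}_{\vn{A}}'$ by the two previous steps, it is in particular a projection lying in $\mathfrak{B}_{\vn{A}}'$; to see it is \emph{central} in $\mathfrak{B}_{\vn{A}}'$ one must further check that it commutes with all of $\mathfrak{B}_{\vn{A}}'$, i.e. that $e_{\vn{A}}$ is in the center $\mathfrak{B}_{\vn{A}}'\cap\mathfrak{B}_{\vn{A}}''=\mathfrak{B}_{\vn{A}}'\cap\mathfrak{B}_{\vn{A}}$. This follows once we have both memberships: $e_{\vn{A}}\in\mathfrak{B}_{\vn{A}}$ gives $e_{\vn{A}}\in\mathfrak{B}_{\vn{A}}''$, and combined with $e_{\vn{A}}\in\mathfrak{B}_{\vn{A}}'$ we get $e_{\vn{A}}\in Z(\mathfrak{B}_{\vn{A}}')$, which is precisely the assertion. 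The main obstacle I anticipate is the cyclicity argument in the second step — verifying carefully that $\overline{\mathfrak{B}_{\vn{A}}\widehat{1}}$ collapses to $L^{2}(\vn{A},\tau)$ rather than being strictly larger, which is where maximality of $\vn{A}$ (as opposed to merely being abelian) is genuinely used: if $\vn{A}$ were not maximal, $J\vn{A}J$ could contribute vectors outside $L^{2}(\vn{A},\tau)$ and $e_{\vn{A}}$ would fail to be the cyclic projection.
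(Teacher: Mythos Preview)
The paper does not prove this lemma; it is quoted from Sinclair--Smith and used as a black box. So there is no ``paper's proof'' to compare against. That said, your argument has a genuine gap in the second step.

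Your Step 1 (that $e_{\vn{A}}\in\mathfrak{B}_{\vn{A}}'$ via the bimodule property of the conditional expectation) and your final deduction (centrality from $e_{\vn{A}}\in\mathfrak{B}_{\vn{A}}\cap\mathfrak{B}_{\vn{A}}'$) are both fine. The problem is your argument for $e_{\vn{A}}\in\mathfrak{B}_{\vn{A}}$. You assert that the cyclic projection onto $\overline{\mathfrak{B}_{\vn{A}}\widehat{1}}$ belongs to $\mathfrak{B}_{\vn{A}}$; but the standard fact runs the other way: for any von Neumann algebra $\vn{M}$ and vector $\xi$, the projection onto $\overline{\vn{M}\xi}$ lies in $\vn{M}'$, not in $\vn{M}$. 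Abelianness of $\mathfrak{B}_{\vn{A}}$ does not rescue this (take $\vn{M}=\complexN\cdot 1$ in $\B{\hil{H}}$ for a counterexample). So your argument only reproves $e_{\vn{A}}\in\mathfrak{B}_{\vn{A}}'$, which you already had. Relatedly, your diagnosis of where maximality enters is off: the identity $\overline{\mathfrak{B}_{\vn{A}}\widehat{1}}=L^{2}(\vn{A})$ holds for \emph{any} abelian $\ast$-subalgebra $\vn{A}$, since $J\vn{A}J\widehat{1}=\widehat{\vn{A}}$ regardless of maximality.

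The repair is to swap $\mathfrak{B}_{\vn{A}}$ for $\mathfrak{B}_{\vn{A}}'$ in the cyclicity argument: the projection onto $\overline{\mathfrak{B}_{\vn{A}}'\widehat{1}}$ does lie in $\mathfrak{B}_{\vn{A}}''=\mathfrak{B}_{\vn{A}}$, so it suffices to show $\overline{\mathfrak{B}_{\vn{A}}'\widehat{1}}=L^{2}(\vn{A})$. For $T\in\mathfrak{B}_{\vn{A}}'$ one checks $aT\widehat{1}=Ja^{\ast}J\,T\widehat{1}$ for all $a\in\vn{A}$, i.e.\ $T\widehat{1}$ is an $\vn{A}$-central vector in the bimodule $L^{2}(\vn{N})$. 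The set of such vectors is exactly $L^{2}(\vn{A}'\cap\vn{N})$, and \emph{this} is where the MASA hypothesis is used: $\vn{A}'\cap\vn{N}=\vn{A}$ gives $T\widehat{1}\in L^{2}(\vn{A})$, hence $\overline{\mathfrak{B}_{\vn{A}}'\widehat{1}}\subset L^{2}(\vn{A})$, and the reverse inclusion is immediate from $\mathfrak{B}_{\vn{A}}\subset\mathfrak{B}_{\vn{A}}'$.
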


\begin{definition}
Let $\vn{A}$ be a MASA in a factor $\vn{N}$ of type $\text{II}_{1}$. We define the \emph{Pukansky invariant} $\puk{\vn{A},\vn{N}}$ of $\vn{A}$ in $\vn{N}$ -- usually denoted by $\puk{\vn{A}}$ when the context is clear -- as the set of all natural numbers $n\in\naturalN\cup\{\infty\}$ such that $(1-e_{\vn{A}})\mathfrak{B}_{\vn{A}}'$ has a non-zero type $\text{I}_{n}$ part.
\end{definition}

By removing the projection $e_{\vn{A}}$ from $\mathfrak{B}_{\vn{A}}'$, we are erasing the part $\mathfrak{B}_{\vn{A}}'e_{\vn{A}}=\mathfrak{B}_{\vn{A}}e_{\vn{A}}$ which is abelian for all MASA $\vn{A}$. This allows for a better invariant since its inclusion would add the integer $1$ to all Pukansky invariants, rendering impossible the distinction between MASAs of invariant $\{2\}$ and those of invariant $\{1,2\}$.

The Pukansky invariant satisfies that if $\vn{A}$ and $\vn{B}$ are two unitarily equivalent MASAs in a factor $\vn{M}$ of type $\text{II}_{1}$, then $\puk{\vn{A}}=\puk{\vn{B}}$. However, the reciprocal statement is not true. One can even find four MASAs $\vn{A,B,C,D}$ in the type $\text{II}_{1}$ hyperfinite factor with equal invariants (all equal to $\{1\}$) where $\vn{A}$ is regular, $\vn{B}$ is semi-regular, $\vn{C}$ is singular, and $\vn{D}$ lies outside of Dixmier's classification. The Pukansky invariant is nonetheless very useful and some results about it will be used in this paper.

The four following theorems can be found in the book by Sinclair and Smith \cite{FiniteVNAandMasas}.

\begin{proposition}\label{pukreg}
Let $\vn{N}$ be a type $\text{II}_{1}$ factor and $\vn{A}$ be a MASA in $\vn{N}$. If  $\vn{A}$ is regular, then $\puk{\vn{A}}=\{1\}$.
\end{proposition}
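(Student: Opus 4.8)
The plan is to reduce everything to the single assertion that, when $\vn{A}$ is regular, $\mathfrak{B}_{\vn{A}}'$ is abelian. Indeed $e_{\vn{A}}\in\mathfrak{B}_{\vn{A}}\subseteq\mathfrak{B}_{\vn{A}}'$, so if $\mathfrak{B}_{\vn{A}}'$ is abelian then the corner $(1-e_{\vn{A}})\mathfrak{B}_{\vn{A}}'$ is an abelian von Neumann algebra, hence has no non-zero type $\text{I}_{n}$ part for $n\geq 2$ (it is of type $\text{I}_{1}$), so $\puk{\vn{A}}\subseteq\{1\}$. Since $\vn{A}\subsetneq\vn{N}$ (a type $\text{II}_{1}$ factor is not abelian), one has $e_{\vn{A}}\neq 1$, so $(1-e_{\vn{A}})\mathfrak{B}_{\vn{A}}'\neq 0$, its type $\text{I}_{1}$ part is non-zero, and $\puk{\vn{A}}=\{1\}$. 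Thus the whole content is: \emph{if $\vn{A}$ is regular, the abelian algebra $\mathfrak{B}_{\vn{A}}=\vn{A}\vee J\vn{A}J$ is \emph{maximal} abelian in $\B{L^{2}(\vn{N})}$.}

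To prove this I would invoke the coordinatisation of regular (i.e. Cartan) subalgebras. Writing $\vn{A}=L^{\infty}(X,\mu)$, the regularity of $\vn{A}$ (together with separability of $\vn{N}$) gives, by the Feldman--Moore theorem, an identification of $\vn{N}$ with the possibly twisted von Neumann algebra $W^{*}(R,\sigma)$ of a countable measure-preserving equivalence relation $R\subseteq X\times X$, under which $\vn{A}$ is the diagonal copy of $L^{\infty}(X)$, $L^{2}(\vn{N})$ becomes $L^{2}(R,\tilde\mu)$, $\vn{A}$ acts by multiplication by functions of the first coordinate and $J\vn{A}J$ by multiplication by functions of the second. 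Hence $\mathfrak{B}_{\vn{A}}$ is the von Neumann algebra of multiplication operators on $L^{2}(R)$ generated by all functions $(x,y)\mapsto f(x)g(y)$; since the Borel structure of $R$ is the one induced from $X\times X$, these functions generate $L^{\infty}(R,\tilde\mu)$, and $L^{\infty}$ of a standard $\sigma$-finite measure space acting by multiplication is maximal abelian. So $\mathfrak{B}_{\vn{A}}'=L^{\infty}(R)=\mathfrak{B}_{\vn{A}}$ is abelian. One even reads off that $e_{\vn{A}}$ is multiplication by the indicator of the diagonal $\Delta\subseteq R$, so that $(1-e_{\vn{A}})\mathfrak{B}_{\vn{A}}'=L^{\infty}(R\setminus\Delta)$, confirming $\puk{\vn{A}}=\{1\}$ very concretely.

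A more self-contained variant, closer in spirit to the treatment of Sinclair and Smith, builds the relevant decomposition by hand from the normalising groupoid instead of quoting Feldman--Moore. Regularity gives $\gnalg[\vn{N}]{\vn{A}}=\noralg[\vn{N}]{\vn{A}}=\vn{N}$ (the first equality is the corollary to Dye's theorem, \autoref{dyestheorem}), and, $\noralg[\vn{N}]{\vn{A}}=\vn{N}$ being a factor, \autoref{popajones} lets one match any two equal-trace projections of $\vn{A}$ by an element of $\gn[\vn{N}]{\vn{A}}$. A Zorn's-lemma exhaustion then produces a necessarily countable family $(v_{i})$ in $\gn[\vn{N}]{\vn{A}}$ with $v_{i}^{*}v_{i},v_{i}v_{i}^{*}\in\vn{A}$, with pairwise disjoint ``graphs'', and with $L^{2}(\vn{N})$ the orthogonal sum of the subspaces $\overline{\vn{A}v_{i}}$. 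On each such subspace $\mathfrak{B}_{\vn{A}}$ acts by multiplication operators coming from $\vn{A}$ --- the map $a\mapsto av_{i}$ intertwines $\lambda(c)$ with multiplication by $c$ and $\rho(b)$ with multiplication by $v_{i}bv_{i}^{*}\in\vn{A}$ --- so each $\overline{\vn{A}v_{i}}$ is a multiplicity-one $\mathfrak{B}_{\vn{A}}$-module; as the supports are pairwise disjoint, $L^{2}(\vn{N})$ is itself a multiplicity-one $\mathfrak{B}_{\vn{A}}$-module, which is again exactly the statement that $\mathfrak{B}_{\vn{A}}'=\mathfrak{B}_{\vn{A}}$ is abelian.

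In either form the real work --- and the only place regularity is genuinely used --- is the production of that partition of $L^{2}(\vn{N})$ (equivalently of $R$, or of $\vn{N}$ as an $\vn{A}$-bimodule) into countably many ``diagonal'' pieces arising from the normalising groupoid whose $\mathfrak{B}_{\vn{A}}$-supports are pairwise disjoint; everything around it --- the reduction to abelianness of $\mathfrak{B}_{\vn{A}}'$, and the equivalence between ``$\mathfrak{B}_{\vn{A}}$ acts with multiplicity one'' and ``$\mathfrak{B}_{\vn{A}}$ is maximal abelian'' --- is soft.
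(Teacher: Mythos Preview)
The paper does not prove this proposition: it is one of four results simply quoted from Sinclair and Smith's book, with no argument given. So there is no ``paper's own proof'' to compare against; your proposal supplies content the paper omits.

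Your argument is correct. The reduction to ``$\mathfrak{B}_{\vn{A}}$ is maximal abelian in $\B{L^{2}(\vn{N})}$'' is exactly the right move (and is, incidentally, the equivalence recorded as \autoref{puk1} in the paper). Both routes you sketch --- the Feldman--Moore coordinatisation of a Cartan inclusion as $L^{\infty}(X)\subset W^{*}(R,\sigma)$, and the direct orthogonal decomposition of $L^{2}(\vn{N})$ into multiplicity-one pieces $\overline{\vn{A}v_{i}}$ indexed by normalising partial isometries --- are standard and are essentially what one finds in Sinclair and Smith (the second variant is closer to their presentation). The only place your second sketch is a bit compressed is the claim that the Zorn's-lemma exhaustion produces pieces with pairwise \emph{disjoint} $\mathfrak{B}_{\vn{A}}$-supports: this is where the actual bookkeeping lives (one typically picks a countable generating family of normalisers and then cuts down by suitable projections in $\mathfrak{B}_{\vn{A}}$ to carve out the ``new'' part of each $\overline{\vn{A}v_{i}}$), but you correctly flag this as the real work and the idea is right.
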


\begin{proposition}\label{puk1}
Let $\vn{N}$ be a type $\text{II}_{1}$ factor and $\vn{A}$ be a MASA in $\vn{N}$. The following statements are equivalent:
\begin{itemize}
\item $\vn{A}$ is a MASA in $\B{L^{2}(\vn{N})}$;
\item $\puk{\vn{A}}=\{1\}$.
\end{itemize}
\end{proposition}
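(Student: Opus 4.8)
The plan is to unwind both conditions into a single statement about the von Neumann algebra $\mathfrak{B}_{\vn{A}}'$ and to read it off from the lemma above. First I would record the elementary fact that an abelian von Neumann algebra is a MASA in $\B{\hil{H}}$ precisely when it coincides with its own commutant: this is immediate from the double commutant theorem, since an abelian algebra is always contained in its commutant. In the standard representation on $L^{2}(\vn{N})$ the algebra $\vn{A}$ invariably sits alongside the commuting copy $J\vn{A}J$, so the maximal-abelianness condition at stake is the one for the abelian algebra $\mathfrak{B}_{\vn{A}}$ generated by $\vn{A}$ and $J\vn{A}J$; the first item of the proposition thus amounts to $\mathfrak{B}_{\vn{A}}=\mathfrak{B}_{\vn{A}}'$, equivalently, by the previous remark, to $\mathfrak{B}_{\vn{A}}'$ being abelian.

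Next I would bring in the lemma of Sinclair and Smith stated above together with the observation made after the definition of the invariant: $e_{\vn{A}}$ lies in $\mathfrak{B}_{\vn{A}}$ and is a central projection of $\mathfrak{B}_{\vn{A}}'$, and the corner $\mathfrak{B}_{\vn{A}}'e_{\vn{A}}=\mathfrak{B}_{\vn{A}}e_{\vn{A}}$ is abelian. Decomposing $\mathfrak{B}_{\vn{A}}'=\mathfrak{B}_{\vn{A}}'e_{\vn{A}}\oplus\mathfrak{B}_{\vn{A}}'(1-e_{\vn{A}})$ along this central projection, $\mathfrak{B}_{\vn{A}}'$ is abelian if and only if its complementary summand $(1-e_{\vn{A}})\mathfrak{B}_{\vn{A}}'$ is abelian.

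It then remains to match this with the Pukansky condition. The algebra $(1-e_{\vn{A}})\mathfrak{B}_{\vn{A}}'$ is a reduction of the commutant of the abelian algebra $\mathfrak{B}_{\vn{A}}$, hence of type~I, so it decomposes as a direct sum of homogeneous components of type $\text{I}_{n}$ and is abelian exactly when the only nonzero component is the one for $n=1$. By definition $\puk{\vn{A}}$ is the set of $n\in\naturalN\cup\{\infty\}$ for which $(1-e_{\vn{A}})\mathfrak{B}_{\vn{A}}'$ has a nonzero type $\text{I}_{n}$ part, so $(1-e_{\vn{A}})\mathfrak{B}_{\vn{A}}'$ is abelian if and only if $\puk{\vn{A}}\subseteq\{1\}$. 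Finally I would note that $\puk{\vn{A}}$ is never empty: since $\vn{N}$ is a nonabelian type $\text{II}_{1}$ factor and $\vn{A}$ is a proper MASA, $L^{2}(\vn{A})$ is a proper subspace of $L^{2}(\vn{N})$, whence $1-e_{\vn{A}}$ is a nonzero projection of $\mathfrak{B}_{\vn{A}}'$ and $(1-e_{\vn{A}})\mathfrak{B}_{\vn{A}}'$ is a nonzero type~I algebra, so it carries at least one homogeneous component. Hence $\puk{\vn{A}}\subseteq\{1\}$ is the same as $\puk{\vn{A}}=\{1\}$, and the chain of equivalences is complete.

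The work here is really carried by the cited lemma, which isolates the abelian corner $\mathfrak{B}_{\vn{A}}e_{\vn{A}}$ that is present for every MASA; the remainder is a bookkeeping argument on central decompositions of type~I algebras. The one point that deserves care -- and where the convention of subtracting $e_{\vn{A}}$ from $\mathfrak{B}_{\vn{A}}'$ in the definition of the invariant does its job -- is the passage from maximal-abelianness of $\mathfrak{B}_{\vn{A}}$ on all of $L^{2}(\vn{N})$ to abelianness of the corner cut by $1-e_{\vn{A}}$: one has to be sure that the type $\text{I}_{1}$ contribution carried by $e_{\vn{A}}$, which no MASA can avoid, is exactly the part the definition discards, so that nothing relevant to abelianness is thrown away.
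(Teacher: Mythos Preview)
The paper does not prove this proposition; it is one of four results quoted without proof from Sinclair and Smith's book. Your argument is the standard one and is correct: the equivalence comes down to whether $\mathfrak{B}_{\vn{A}}'$ is abelian, the cited lemma splits off the automatically abelian corner $e_{\vn{A}}\mathfrak{B}_{\vn{A}}'=\mathfrak{B}_{\vn{A}}e_{\vn{A}}$, and abelianness of the remaining piece $(1-e_{\vn{A}})\mathfrak{B}_{\vn{A}}'$ is precisely $\puk{\vn{A}}\subseteq\{1\}$, with nonemptiness of the invariant following from $e_{\vn{A}}\neq 1$. Your reading of the first bullet as a statement about $\mathfrak{B}_{\vn{A}}$ rather than $\vn{A}$ alone is the intended one and matches Sinclair--Smith's formulation; indeed $\vn{A}$ by itself can never be maximal abelian in $\B{L^{2}(\vn{N})}$, since its commutant there already contains all of $\vn{N}'=J\vn{N}J$.
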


\begin{proposition}\label{puksemireg}
Let $\vn{N}$ be a type $\text{II}_{1}$ factor and $\vn{A}$ be a MASA in $\vn{N}$.
\begin{itemize}
\item If $\puk{\vn{A}}\subset\{2,3,4,\dots,\infty\}$, then $\vn{A}$ is singular.
\item If $\noralg{\vn{A}}\neq\vn{A}$, then $1\in\puk{\vn{A}}$.
\end{itemize}
\end{proposition}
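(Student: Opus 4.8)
The plan is to establish the second assertion directly --- it contains all the content --- and to deduce the first one by contraposition. So suppose $\noralg[\vn{N}]{\vn{A}}\neq\vn{A}$. Since $\noralg[\vn{N}]{\vn{A}}$ is generated by the set $\nor[\vn{N}]{\vn{A}}$ of unitaries $u\in\vn{N}$ satisfying $u\vn{A}u^{\ast}=\vn{A}$, there must be such a unitary $u$ with $u\notin\vn{A}$. Recalling the definition of $\puk{\vn{A}}$ together with the Sinclair--Smith lemma recalled above (stating that $e_{\vn{A}}\in\mathfrak{B}_{\vn{A}}$ is a central projection of $\mathfrak{B}_{\vn{A}}'$, so that the center of the direct summand $(1-e_{\vn{A}})\mathfrak{B}_{\vn{A}}'$ is $(1-e_{\vn{A}})\mathfrak{B}_{\vn{A}}$), it suffices to produce a non-zero projection $z\in\mathfrak{B}_{\vn{A}}$ with $z\leqslant 1-e_{\vn{A}}$ and $z\mathfrak{B}_{\vn{A}}'$ abelian: this says precisely that $(1-e_{\vn{A}})\mathfrak{B}_{\vn{A}}'$ has a non-zero type $\text{I}_{1}$ summand, i.e. that $1\in\puk{\vn{A}}$.

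The first step I would carry out is the observation that $u$ normalises the abelian algebra $\mathfrak{B}_{\vn{A}}$ generated by $\vn{A}\cup J\vn{A}J$: on the one hand $u\vn{A}u^{\ast}=\vn{A}$ by the choice of $u$, on the other hand $u$ commutes with $J\vn{A}J$ because $u\in\vn{N}$ and $J\vn{A}J\subseteq J\vn{N}J=\vn{N}'$; hence $u\mathfrak{B}_{\vn{A}}u^{\ast}=\mathfrak{B}_{\vn{A}}$, and consequently $u\mathfrak{B}_{\vn{A}}'u^{\ast}=(u\mathfrak{B}_{\vn{A}}u^{\ast})'=\mathfrak{B}_{\vn{A}}'$ as well. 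Now put $p:=ue_{\vn{A}}u^{\ast}$. Since $e_{\vn{A}}\in\mathfrak{B}_{\vn{A}}$, we get $p\in u\mathfrak{B}_{\vn{A}}u^{\ast}=\mathfrak{B}_{\vn{A}}$; as $\mathfrak{B}_{\vn{A}}$ is abelian, $\mathfrak{B}_{\vn{A}}=Z(\mathfrak{B}_{\vn{A}}')$, so $p$ is a projection which is central in $\mathfrak{B}_{\vn{A}}'$. To see that $p\mathfrak{B}_{\vn{A}}'$ is abelian, note that for every $X\in\mathfrak{B}_{\vn{A}}'$ one has $pXp=u\,e_{\vn{A}}(u^{\ast}Xu)e_{\vn{A}}\,u^{\ast}$, where $u^{\ast}Xu$ again runs over $\mathfrak{B}_{\vn{A}}'$ (because $u^{\ast}\mathfrak{B}_{\vn{A}}'u=\mathfrak{B}_{\vn{A}}'$); using that $e_{\vn{A}}$ is central in $\mathfrak{B}_{\vn{A}}'$ and the identity $\mathfrak{B}_{\vn{A}}'e_{\vn{A}}=\mathfrak{B}_{\vn{A}}e_{\vn{A}}$ recalled after the definition of $\puk{\cdot}$, this yields
\begin{equation*}
p\mathfrak{B}_{\vn{A}}'=p\mathfrak{B}_{\vn{A}}'p=u\bigl(e_{\vn{A}}\mathfrak{B}_{\vn{A}}'e_{\vn{A}}\bigr)u^{\ast}=u\bigl(\mathfrak{B}_{\vn{A}}e_{\vn{A}}\bigr)u^{\ast},
\end{equation*}
which is abelian, being the image of the abelian algebra $\mathfrak{B}_{\vn{A}}e_{\vn{A}}$ under the automorphism $\mathrm{Ad}(u)$ of $\B{L^{2}(\vn{N})}$; in particular $p\mathfrak{B}_{\vn{A}}'\subseteq u\mathfrak{B}_{\vn{A}}u^{\ast}=\mathfrak{B}_{\vn{A}}$.

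It then remains to check that $p$ has a non-trivial part below $1-e_{\vn{A}}$. Writing $\widehat{1}$ for the cyclic and separating vector of the standard form and $E_{\vn{A}}$ for the trace-preserving conditional expectation $\vn{N}\to\vn{A}$ (the restriction of $e_{\vn{A}}$ to $\vn{N}\subseteq L^{2}(\vn{N})$), we have $u\widehat{1}=\widehat{u}\in uL^{2}(\vn{A})=pL^{2}(\vn{N})$, whereas $\widehat{u}\notin L^{2}(\vn{A})=e_{\vn{A}}L^{2}(\vn{N})$: otherwise $E_{\vn{A}}(u)=u$, forcing $u\in\vn{A}$ and contradicting the choice of $u$. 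Hence $z:=p(1-e_{\vn{A}})$ is non-zero; since $p$ and $e_{\vn{A}}$ both lie in $\mathfrak{B}_{\vn{A}}=Z(\mathfrak{B}_{\vn{A}}')$, $z$ is a central projection of $\mathfrak{B}_{\vn{A}}'$ with $z\leqslant 1-e_{\vn{A}}$, and $z\mathfrak{B}_{\vn{A}}'=z\bigl(p\mathfrak{B}_{\vn{A}}'\bigr)$ is abelian, being contained in the abelian algebra $\mathfrak{B}_{\vn{A}}$. By the reduction of the first paragraph this proves $1\in\puk{\vn{A}}$, which is the second assertion. The first assertion then follows by contraposition: if $\puk{\vn{A}}\subseteq\{2,3,4,\dots,\infty\}$ then $1\notin\puk{\vn{A}}$, whence $\noralg[\vn{N}]{\vn{A}}=\vn{A}$, i.e. $\vn{A}$ is singular in the sense of \autoref{classification}.

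The step I expect to be the crux is the opening observation of the second paragraph: that a unitary normalising $\vn{A}$ inside $\vn{N}$ automatically normalises the larger abelian algebra $\mathfrak{B}_{\vn{A}}$, so that conjugating the ``diagonal'' projection $e_{\vn{A}}$ by it produces a central projection of $\mathfrak{B}_{\vn{A}}'$ whose reduced algebra is still abelian --- but which, precisely because $u\notin\vn{A}$, is no longer absorbed by $e_{\vn{A}}$. Once this is in place, the rest is routine bookkeeping with commutants and central projections, resting on the Sinclair--Smith lemma describing the position of $e_{\vn{A}}$ inside $\mathfrak{B}_{\vn{A}}'$.
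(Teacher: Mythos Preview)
The paper does not actually prove this proposition: it is one of four results quoted verbatim from Sinclair and Smith's monograph \cite{FiniteVNAandMasas} with the blanket statement ``The four following theorems can be found in the book by Sinclair and Smith''. There is therefore no in-paper proof to compare against.

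That said, your argument is correct and is essentially the standard one. The key observation --- that a normalising unitary $u\in\nor[\vn{N}]{\vn{A}}$ commutes with $J\vn{A}J\subseteq\vn{N}'$ and hence normalises $\mathfrak{B}_{\vn{A}}$, so that $p=ue_{\vn{A}}u^{\ast}$ is again a central projection of $\mathfrak{B}_{\vn{A}}'$ with abelian cutdown --- is exactly the mechanism used in Sinclair--Smith. Your verification that $p(1-e_{\vn{A}})\neq 0$ via $\widehat{u}\in pL^{2}(\vn{N})\setminus e_{\vn{A}}L^{2}(\vn{N})$ is clean, and the deduction that the resulting abelian central summand of $(1-e_{\vn{A}})\mathfrak{B}_{\vn{A}}'$ witnesses $1\in\puk{\vn{A}}$ is correct. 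The first bullet is indeed the contrapositive of the second, so nothing further is needed.
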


\begin{proposition}\label{tenseurpuk}
Let $\vn{A}$ (resp. $\vn{B}$) be a MASA in a factor $\vn{M}$ (resp. $\vn{N}$) of type $\text{II}_{1}$. Then:
\begin{equation*}
\puk{\vn{A\otimes B}}=\puk{\vn{A}}\cup\puk{\vn{B}}\cup\puk{\vn{A}}\puk{\vn{B}}
\end{equation*}
where $\puk{\vn{A}}\puk{\vn{B}}=\{a\times b~|~a\in\puk{\vn{A}},b\in\puk{\vn{B}}\}$.
\end{proposition}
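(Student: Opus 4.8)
The plan is to reduce everything to the behaviour of type $\text{I}$ von Neumann algebras under tensor products, exploiting the fact that every ingredient entering the definition of Pukansky's invariant is compatible with $\otimes$. Throughout, I write $\mathfrak{B}_{\vn{A}}$, $\mathfrak{B}_{\vn{B}}$, $\mathfrak{B}_{\vn{A}\otimes\vn{B}}$ for the commutative algebras generated respectively by $\vn{A}\cup J_{\vn{M}}\vn{A}J_{\vn{M}}$, $\vn{B}\cup J_{\vn{N}}\vn{B}J_{\vn{N}}$, $(\vn{A}\otimes\vn{B})\cup J(\vn{A}\otimes\vn{B})J$ acting on the relevant $L^{2}$-spaces, and recall that $\puk{\vn{A}}$ is the set of $n$ for which $(1-e_{\vn{A}})\mathfrak{B}_{\vn{A}}'$ has a non-zero type $\text{I}_{n}$ part, and likewise for $\vn{B}$ and $\vn{A}\otimes\vn{B}$. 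Note also that $\vn{A}\otimes\vn{B}$ is indeed a MASA of the type $\text{II}_{1}$ factor $\vn{M}\otimes\vn{N}$, so the left-hand side makes sense.

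The first step is to record the standard identifications coming from the standard form of a tensor product. One has $L^{2}(\vn{M}\otimes\vn{N})\cong L^{2}(\vn{M})\otimes L^{2}(\vn{N})$, under which $J_{\vn{M}\otimes\vn{N}}=J_{\vn{M}}\otimes J_{\vn{N}}$; moreover, since the trace on $\vn{M}\otimes\vn{N}$ is $\tr_{\vn{M}}\otimes\tr_{\vn{N}}$ and $L^{2}(\vn{A}\otimes\vn{B})$ is $L^{2}(\vn{A})\otimes L^{2}(\vn{B})$ inside $L^{2}(\vn{M})\otimes L^{2}(\vn{N})$, the Jones projection splits as $e_{\vn{A}\otimes\vn{B}}=e_{\vn{A}}\otimes e_{\vn{B}}$. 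Using that $\vn{A}\otimes\vn{B}$ is generated by $\vn{A}\otimes 1$ and $1\otimes\vn{B}$ and that $J(\vn{A}\otimes\vn{B})J=J_{\vn{M}}\vn{A}J_{\vn{M}}\otimes J_{\vn{N}}\vn{B}J_{\vn{N}}$, a short computation gives $\mathfrak{B}_{\vn{A}\otimes\vn{B}}=\mathfrak{B}_{\vn{A}}\otimes\mathfrak{B}_{\vn{B}}$ (von Neumann tensor product), and then the commutation theorem for tensor products of von Neumann algebras yields $\mathfrak{B}_{\vn{A}\otimes\vn{B}}'=\mathfrak{B}_{\vn{A}}'\otimes\mathfrak{B}_{\vn{B}}'$.

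Next I would split off the Jones projections. Since $e_{\vn{A}}$ is central in $\mathfrak{B}_{\vn{A}}'$ and $e_{\vn{B}}$ is central in $\mathfrak{B}_{\vn{B}}'$ (the lemma of Sinclair and Smith recalled above), the three projections appearing in
\[
1-e_{\vn{A}}\otimes e_{\vn{B}}=(1-e_{\vn{A}})\otimes e_{\vn{B}}+e_{\vn{A}}\otimes(1-e_{\vn{B}})+(1-e_{\vn{A}})\otimes(1-e_{\vn{B}})
\]
are pairwise orthogonal central projections of $\mathfrak{B}_{\vn{A}}'\otimes\mathfrak{B}_{\vn{B}}'$, so that $(1-e_{\vn{A}\otimes\vn{B}})\mathfrak{B}_{\vn{A}\otimes\vn{B}}'$ is the direct sum of $(1-e_{\vn{A}})\mathfrak{B}_{\vn{A}}'\otimes e_{\vn{B}}\mathfrak{B}_{\vn{B}}'$, of $e_{\vn{A}}\mathfrak{B}_{\vn{A}}'\otimes(1-e_{\vn{B}})\mathfrak{B}_{\vn{B}}'$, and of $(1-e_{\vn{A}})\mathfrak{B}_{\vn{A}}'\otimes(1-e_{\vn{B}})\mathfrak{B}_{\vn{B}}'$. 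Now I invoke the structure theory of type $\text{I}$ algebras: a type $\text{I}_{n}$ algebra tensored with a type $\text{I}_{m}$ algebra is type $\text{I}_{nm}$ (with $n\cdot\infty=\infty$ for $n\geqslant 1$), and the tensor product of type $\text{I}$ decompositions commutes with the ambient direct sums, so the type $\text{I}_{j}$ part of $\vn{R}\otimes\vn{S}$ is non-zero exactly when $j=nm$ with the type $\text{I}_{n}$ part of $\vn{R}$ and the type $\text{I}_{m}$ part of $\vn{S}$ both non-zero. Since $e_{\vn{B}}\mathfrak{B}_{\vn{B}}'=\mathfrak{B}_{\vn{B}}e_{\vn{B}}$ is a \emph{non-zero abelian} algebra (type $\text{I}_{1}$), the first summand contributes exactly $\puk{\vn{A}}$; symmetrically the second contributes $\puk{\vn{B}}$; and the third contributes $\puk{\vn{A}}\puk{\vn{B}}$. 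Taking the union of the contributions gives the announced equality. (One checks along the way that $e_{\vn{A}}\neq 1$, hence $(1-e_{\vn{A}})\mathfrak{B}_{\vn{A}}'\neq 0$ and $\puk{\vn{A}}\neq\emptyset$, and likewise for $\vn{B}$, so no summand is vacuous.)

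I expect the main obstacle to be the bookkeeping in this last step: one has to justify carefully that the type $\text{I}$ decomposition of a von Neumann tensor product is obtained by "multiplying indices" — being attentive to the value $\infty$ and, crucially, to the non-vanishing of the homogeneous pieces involved — and verify that tensoring $(1-e_{\vn{A}})\mathfrak{B}_{\vn{A}}'$ with the abelian algebra $e_{\vn{B}}\mathfrak{B}_{\vn{B}}'$ genuinely leaves the set of multiplicities unchanged, rather than shifting or fattening it. By contrast, the compatibility statements of the first step are essentially a matter of quoting the standard form of a tensor product together with the commutation theorem, and the orthogonal/central splitting of the second step is elementary once centrality of $e_{\vn{A}}$ and $e_{\vn{B}}$ is in hand.
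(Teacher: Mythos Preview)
The paper does not give its own proof of this proposition: it is stated among ``the four following theorems'' that ``can be found in the book by Sinclair and Smith \cite{FiniteVNAandMasas}'', with no argument supplied. Your sketch is essentially the standard proof that appears there (Chapter~7): identify the standard form of $\vn{M}\otimes\vn{N}$ with the tensor of the standard forms so that $J=J_{\vn{M}}\otimes J_{\vn{N}}$ and $e_{\vn{A}\otimes\vn{B}}=e_{\vn{A}}\otimes e_{\vn{B}}$, use Tomita's commutation theorem to get $\mathfrak{B}_{\vn{A}\otimes\vn{B}}'=\mathfrak{B}_{\vn{A}}'\otimes\mathfrak{B}_{\vn{B}}'$, split $1-e_{\vn{A}}\otimes e_{\vn{B}}$ into three central pieces, and read off the multiplicities from the type~$\text{I}$ structure of each tensor factor. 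Your identification of the delicate point --- that the type~$\text{I}_{n}$ part of a tensor product is governed by products of indices, with care needed for $\infty$ and for non-vanishing --- is accurate, and is exactly where Sinclair and Smith spend most of their effort.
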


We have stated above that one can find four MASAs $\vn{A,B,C,D}$ of the hyperfinite factor $\finhyp$ of type $\text{II}_{1}$ that all have the same Pukansky invariant and such that $\vn{A}$ is regular, $\vn{B}$ is semi-regular, $\vn{C}$ is singular, and $\vn{D}$ lies outside of Dixmier classification (we will say that $\vn{D}$ is \emph{non-Dixmier-classifiable}). The regular MASAs are necessarily of Pukansky invariant $\{1\}$, and one can show that such MASAs exist in the type $\text{II}_{1}$ hyperfinite factor $\finhyp$ (for instance by considering a construction of the hyperfinite factor as a crossed product (\autoref{defcrossedprod}), as explained in Sinclair and Smith book \cite{FiniteVNAandMasas}). This gives the existence of a regular MASA $\vn{A}$ in $\finhyp$ of Pukansky invariant $\{1\}$. \label{refmasaspuk1-1}

On the other hand, Stuart White \cite{tauermasasorig} showed that the so-called \emph{Tauer MASAs} all have as Pukansky invariant the singleton $\{1\}$. And it is known that there exists singular Tauer MASAs \cite{tauermasas} and semi-regular Tauer MASAs \cite{tauermasasorig} in the hyperfinite factor $\finhyp$. This gives the existence of a semi-regular MASA $\vn{B}$  and a singular MASA $\vn{C}$ such that $\puk{\vn{B}}=\puk{\vn{C}}=\{1\}$.\label{refmasaspuk1-2}

Lastly, let us show that the existence of singular MASAs with Pukansky invariant equal to $\{1\}$ implies the existence of non-Dixmier-classifiable MASAs whose Pukansky invariant is equal to the singleton $\{1\}$. Indeed, if $\vn{A}$ is a MASA with $\puk{\vn{A}}=\{1\}$, we can consider $\vn{A}\otimes \vn{Q}$ where $\vn{Q}$ is a regular MASA (thus $\puk{\vn{Q}}=\{1\}$) of $\finhyp\otimes\finhyp$. We then have that $\puk{\vn{A}\otimes\vn{Q}}=\{1\}$ by \autoref{tenseurpuk}, and moreover, by \autoref{chifan}, we have:
$$\noralg[\finhyp\otimes\finhyp]{\vn{A}\otimes\vn{Q}}=\noralg[\finhyp]{\vn{A}}\otimes\noralg[\finhyp]{\vn{Q}}=\vn{A}\otimes\finhyp$$
But the center of $\vn{A}\otimes\finhyp$ is equal to $\vn{A}\otimes\complexN$ since $\vn{A}$ is commutative and the commutant of a tensor product is equal to the tensor product of the commutants (a result due to Tomita \cite{tomita}). Thus $\vn{A}\otimes\finhyp$ is not a factor, which implies that $\vn{A}\otimes\vn{Q}$ is neither regular nor semi-regular. Since $\vn{A}\otimes\vn{Q}$ is obviously not equal to $\vn{A}\otimes\finhyp$, we know that $\vn{A}\otimes\vn{Q}$ is not singular: it is therefore non-Dixmier-classifiable.
Eventually, as $\finhyp\otimes\finhyp$ is isomorphic to  $\finhyp$, it is enough to choose such an isomorphism $\phi$ to define $\vn{D}=\phi(\vn{A\otimes Q})$ a MASA in $\finhyp$  which is non-Dixmier-classifiable and such that $\puk{\vn{D}}=\{1\}$.\label{refmasaspuk1-3}

\section{Geometry of Interaction}

In this section, we review Girard's GoI models. This section has two distinct aims. The first is to offer a presentation of those constructions where the role of MASAs is shown explicitly. Indeed, MASAs played a role in all of Girard's GoI models, even though they were implicitly used though the choice of a specific basis of the Hilbert space in consideration. The second is to review Girard's GoI model in the hyperfinite factor \cite{goi5} since it is the starting point of our study.

\subsection{First Constructions: Nilpotency}

\subsubsection{Multiplicative Connectives and Exponentials.} The ancestor of GoI models \cite{multiplicatives} was an interpretation of multiplicative linear logic proofs as permutations. The first construction of a GoI model \cite{goi1} already used operator algebras as the notion of partial isometries provides a natural generalisation of permutations. Indeed, it is necessary to deal with infinite objects in order to represent exponential connectives, and finite permutations can naturally be replaced by permutations of a basis of an separable infinite-dimensional Hilbert space. This actually consists in working with partial isometries in the normalising groupoid of a fixed MASA, although this point of view was unknown to -- or at least never mentioned by -- Girard at the time. We will therefore present Girard's first GoI model under this novel perspective.

Let us start by choosing a separable infinite-dimensional Hilbert space $\hil{H}$, and a MASA $\vn{A}$ in $\B{\hil{H}}$. We will suppose that $\hil{H}=\ell^{2}(\naturalN)$ and that $\vn{A}$ is the MASA of diagonal operators in the basis $(\delta_{i,n})_{n\in\naturalN}$. We can then define operators\footnote{We recall that $\gn[\B{\hil{H}}]{\vn{A}}$ is the normalising groupoid of $\vn{A}$ (\autoref{groupoidenormalisant}).} $\ttl,\ttr \in\gn[\B{\hil{H}}]{\vn{A}}$ such that $\ttr\ttr^{\ast}+\ttl\ttl^{\ast}=1$ and $\ttr^{\ast}\ttr=\ttl^{\ast}\ttl=1$. We will chose here $\ttr((x_{n})_{n\in\naturalN})=(x_{2n})_{n\in\naturalN}$ and $\ttl((x_{n})_{n\in\naturalN})=(x_{2n+1})_{n\in\naturalN}$. If $\pi$ is a projection in $\vn{A}$ it is immediate that $\ttr\pi \ttr^{\ast}$ (respectively $\ttl\pi \ttl^{\ast}$) is a projection in $\vn{A}$, thus $\ttr\vn{A}\ttr^{\ast}\subset\vn{A}$ (respectively $\ttl\vn{A}\ttl^{\ast}\subset\vn{A}$) since $\vn{A}$ is generated by its projections. Moreover, $\ttr$ and $\ttl$ are partial isometries and the projections $\ttr\ttr^{\ast}$ and $\ttl\ttl^{\ast}$ are in $\vn{A}$. We have thus checked that $\ttr$ and $\ttl$ are indeed elements of $\gn[\B{\hil{H}}]{\vn{A}}$.

If $u\in\B{\ell^{2}(\naturalN)}$, we will write $\ttr(u)$ (resp. $\ttl(u)$) the operator $\ttr u\ttr^{\ast}$ (resp. $\ttl u\ttl^{\ast}$).

We will restrict in the following to elements in $\gn{\vn{A}}$. We now define a notion of orthogonality based on nilpotency.

\begin{definition}
Two operators $u,v$ in $\gn{\vn{A}}$ are orthogonal -- denoted by $u\perp v$ -- when $uv$ is nilpotent, i.e. when there exists an integer $n$ such that $(uv)^{n}=0$.
\end{definition}

This notion of orthogonality allows one to define types as bi-orthogonally closed sets.

\begin{definition}[Types]
A \emph{type} is a set of elements in $\gn{\vn{A}}$ which is bi-orthogonally closed, i.e. a set $T\subset \gn{\vn{A}}$ such that $T^{\bot\bot}=T$.
\end{definition}

The construction of the tensor product is performed using $\ttr$ and $\ttl$ which \emph{internalise} the direct sum of Hilbert spaces. Indeed, the Hilbert space $\hil{H}$ satisfies $\hil{H}\cong\hil{H}\oplus\hil{H}$.
\begin{definition}
If $A,B$ are types, the tensor product of $A$ and $B$ is defined as:
\begin{equation*}
A\otimes B=\{\ttr(u)+\ttl(v)~|~u\in A, v\in B\}^{\bot\bot}
\end{equation*}
We will write $u\odot v=\ttr(u)+\ttl(v)$. Using \autoref{lemmaproductpartialiso} and \autoref{lemmasumpartialiso} one can show that if $u,v$ are in $\gn[\B{\hil{H}}]{\vn{A}}$ then $u\odot v\in \gn[\B{\hil{H}}]{\vn{A}}$.
\end{definition}

We also define the linear implication. It is defined by the so-called \emph{execution formula}.
\begin{definition}
Let $u,v$ be operators in $\gn{\vn{A}}$ such that $u\perp \ttr(v)$. The execution of $u$ and $\ttr(v)$, denoted by $\Ex(u, \ttr(v))$, is defined as:
\begin{equation*}
\Ex(u,\ttr(v))=(1-\ttr\ttr^{\ast})(1-u\ttr(v))^{-1}(1-\ttr\ttr^{\ast})
\end{equation*}
Since $u\perp \ttr(v)$, the inverse of $1-u\ttr(v)$ always exists and can be computed as the series $\sum_{i=0}^{\infty} (u\ttr(v))^{i}$. One can show that\footnote{We will not present a proof here, but it would be similar to the proof of \autoref{composi50}.} if $u$ and $\ttr(v)$ are in $\gn[\B{\hil{H}}]{\vn{A}}$, then $\Ex(u,\ttr(v)) \in \gn[\B{\hil{H}}]{\vn{A}}$.
\end{definition}

One then shows that the following property, called the \emph{adjunction}, holds; it ensures that one can interpret soundly the connectives of linear logic\footnote{One can find detailed explanation in the author's previous work on interaction graphs \cite{seiller-goim,seiller-goia}.}.

\begin{proposition}[Adjunction]
If $u,v,w$ are elements of $\gn{\vn{A}}$, then:
\begin{equation*}
f\perp u\odot v\Leftrightarrow (f\perp \ttr(u))\wedge (\Ex(f,\ttr(u))\perp \ttl(v))
\end{equation*}
\end{proposition}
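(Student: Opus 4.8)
The plan is to prove the adjunction by unfolding all definitions and performing a change of variables on the geometric series computing the relevant inverses. Write $g = \ttr(u)$ and $h = \ttl(v)$, so that $u \odot v = g + h$, and recall that $g$ and $h$ have orthogonal ranges and orthogonal co-ranges ($gg^{\ast} = \ttr\ttr^{\ast}$, $hh^{\ast} = \ttl\ttl^{\ast}$, and these are complementary projections summing to $1$; similarly for $g^{\ast}g$, $h^{\ast}h$). The statement $f \perp u \odot v$ means $f(g+h)$ is nilpotent, the statement $f \perp \ttr(u)$ means $fg$ is nilpotent, and $\Ex(f,\ttr(u)) \perp \ttl(v)$ means $\Ex(f, g)\, h$ is nilpotent, where $\Ex(f,g) = (1 - gg^{\ast})(1 - fg)^{-1}(1 - gg^{\ast})$ and the inverse is the convergent series $\sum_{i \geq 0}(fg)^{i}$.

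First I would establish the ``hard'' implication, namely that nilpotency of $f(g+h)$ forces both nilpotency of $fg$ and nilpotency of $\Ex(f,g)h$. For the first: since $g$ and $h$ act on orthogonal subspaces (their source projections $g^{\ast}g$ and $h^{\ast}h$ are orthogonal, and likewise the target projections), one sees that $fg$ and $fh$ have disjoint supports in an appropriate sense, so a power of $f(g+h)$ can be expanded and any monomial ending in enough consecutive $g$'s (or enough $h$'s) would have to vanish; more robustly, one notes that if $\pi = gg^{\ast} = \ttr\ttr^{\ast}$ then $fg = fg\pi'$ where $\pi' = g^{\ast}g = 1$ here and $g = \pi g$, so a standard argument about how the two ``channels'' interleave shows $(fg)^{n} = 0$ for some $n$. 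The cleanest route is the matrix/two-by-two block decomposition relative to the projection $\pi = \ttr\ttr^{\ast}$: write every operator in $2\times 2$ block form, observe $g + h$ has the shape of a ``scattering'' matrix (it is unitary, being the sum of $\ttr u \ttr^{\ast}$ and $\ttl v \ttl^{\ast}$ with $u,v$ unitaries on their supports), and then the nilpotency of $f(g+h)$ can be read off as a feedback/Neumann-series computation that literally splits into the two required conditions. This is exactly the computation underlying the execution formula, so I would invoke the structure already used to justify that $\Ex(\cdot,\cdot)$ lands in the normalising groupoid.

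For the converse — assuming $fg$ nilpotent and $\Ex(f,g)h$ nilpotent, deduce $f(g+h)$ nilpotent — I would expand $(1 - f(g+h))^{-1}$ formally and regroup the terms according to whether each factor of $g+h$ is a $g$ or an $h$, summing all the internal $g$-loops first. Summing the $g$-runs produces precisely the resolvent $(1-fg)^{-1}$ sandwiched appropriately, which reassembles into powers of the composite operator $\Ex(f,g)h$ (this is the associativity-of-feedback identity: $(1-f(g+h))^{-1}$ restricted to the $h$-channel equals $(1 - \Ex(f,g)h)^{-1}$ up to the obvious corner projections). Since both $(1-fg)^{-1}$ and $(1 - \Ex(f,g)h)^{-1}$ are finite sums by the nilpotency hypotheses, the reassembled series for $(1 - f(g+h))^{-1}$ is a finite sum, which is equivalent to $f(g+h)$ being nilpotent.

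The main obstacle is the bookkeeping in the regrouping step: making rigorous the claim that ``summing out the $g$-loops in the expansion of $(1-f(g+h))^{-1}$ yields the series in $\Ex(f,g)h$'' requires carefully tracking the corner projections $gg^{\ast}$, $\ttl\ttl^{\ast} = 1 - \ttr\ttr^{\ast}$, and the fact that $h = \ttl v \ttl^{\ast}$ has range and source inside $1 - \ttr\ttr^{\ast}$, so that the insertions of $(1 - gg^{\ast})$ in the definition of $\Ex(f,g)$ are exactly the ``ports'' through which the $h$-channel reconnects. In a finite/nilpotent setting all series are finite so there is no analytic subtlety, and the identity is a purely algebraic rearrangement; the paper's footnote pointing to the proof of \autoref{composi50} (the analogous associativity statement) indicates that this is the intended mechanism, and I would structure the proof to mirror it, citing the author's interaction-graphs papers \cite{seiller-goim,seiller-goia} for the combinatorial identity if a fully detailed accounting is desired.
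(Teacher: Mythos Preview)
The paper does not supply a proof of this proposition; it is stated with the remark ``One then shows that the following property, called the \emph{adjunction}, holds'' and a footnote pointing to the author's interaction-graphs papers. So there is no argument on the paper's side to compare with.

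Your converse direction (nilpotency of $fg$ and of $\Ex(f,g)h$ implies nilpotency of $f(g+h)$) is correct: the resummation you describe is the standard proof, and since all series involved are finite under the nilpotency hypotheses the rearrangement is a purely algebraic identity.

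The forward direction, however, has a gap. Your suggestion that ``the cleanest route is the matrix/two-by-two block decomposition'' is misleading: for a general operator $f$, nilpotency of $f(g+h)$ does \emph{not} force nilpotency of $fg$. Take the $2\times 2$ example $f=\left(\begin{array}{cc}1&-1\\1&-1\end{array}\right)$, $g=\left(\begin{array}{cc}1&0\\0&0\end{array}\right)$, $h=\left(\begin{array}{cc}0&0\\0&1\end{array}\right)$: here $f(g+h)=f$ satisfies $f^{2}=0$, yet $fg=\left(\begin{array}{cc}1&0\\1&0\end{array}\right)$ is idempotent. What makes the implication go through in the paper's setting is precisely the hypothesis $f\in\gn{\vn{A}}$, which your block-matrix sketch does not exploit. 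A correct argument runs as follows: if $(fg)^{N}\neq 0$, let $\pi\in\vn{A}$ be its nonzero source projection; since each $(fg)^{k}\in\gn{\vn{A}}$, the conjugates $\pi_{k}=(fg)^{k}\pi((fg)^{k})^{\ast}$ lie in $\vn{A}$, and for $k<N$ one has $\pi_{k}\leq g^{\ast}g\leq\ttr\ttr^{\ast}$ (because $\pi_{k}$ is contained in the source of $(fg)^{N-k}$). Hence $(g+h)\pi_{k}=g\pi_{k}$, and by induction $(f(g+h))^{N}$ restricted to $\pi\hil{H}$ coincides with $(fg)^{N}$ restricted to $\pi\hil{H}$, which is nonzero --- contradicting nilpotency of $f(g+h)$. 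You should replace the block-matrix hand-wave by this groupoid argument (equivalently, the path-tracing argument in the partial-permutation picture of $\gn{\vn{A}}$). Two incidental slips to fix: $g^{\ast}g=\ttr u^{\ast}u\,\ttr^{\ast}$ is a subprojection of $\ttr\ttr^{\ast}$, not $1$; and $g+h$ is only a partial isometry, not a unitary.
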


\begin{theorem}
If $A,B$ are types, we define the set $A\multimap B$ as
\begin{equation*}
A\multimap B=\{f\in\gn{\vn{A}}~|~\forall u\in A, \exists v\in B, f\bot \ttr(u)~\wedge~\Ex(f,\ttr(u))=\ttl(v)\}
\end{equation*}
This set is a type and satisfies the following:
\begin{equation*}
A\multimap B=(A\otimes B^{\bot})^{\bot}
\end{equation*}
\end{theorem}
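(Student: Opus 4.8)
The plan is to prove the identity $A\multimap B=(A\otimes B^{\bot})^{\bot}$ directly; that $A\multimap B$ is then a type comes for free, since any set of the form $X^{\bot}$ is bi-orthogonally closed (the Galois connection attached to $\bot$ satisfies the triple-orthogonal identity $X^{\bot\bot\bot}=X^{\bot}$). That same identity also unfolds the right-hand side: from $A\otimes B^{\bot}=\{u\odot w\mid u\in A,\ w\in B^{\bot}\}^{\bot\bot}$ we get $(A\otimes B^{\bot})^{\bot}=\{u\odot w\mid u\in A,\ w\in B^{\bot}\}^{\bot}$, so $f\in(A\otimes B^{\bot})^{\bot}$ means exactly that $f\perp u\odot w$ for every $u\in A$ and every $w\in B^{\bot}$.

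The workhorse is the Adjunction proposition, which turns $f\perp u\odot w$ into the conjunction $(f\perp\ttr(u))\wedge(\Ex(f,\ttr(u))\perp\ttl(w))$. I will use three elementary facts alongside it. First, $0\in B^{\bot}$ and $u\odot 0=\ttr(u)$, so membership in $(A\otimes B^{\bot})^{\bot}$ already forces $f\perp\ttr(u)$ for each $u\in A$; in particular $\Ex(f,\ttr(u))$ is then defined and lies in $\gn{\vn{A}}$ by the remark following its definition. Second, since $\ttr\ttr^{\ast}+\ttl\ttl^{\ast}=1$, the execution formula reads $\Ex(f,\ttr(u))=\ttl\ttl^{\ast}(1-f\ttr(u))^{-1}\ttl\ttl^{\ast}$, so $g:=\Ex(f,\ttr(u))$ satisfies $g=\ttl\ttl^{\ast}g\ttl\ttl^{\ast}=\ttl(\ttl^{\ast}g\ttl)$; writing $v:=\ttl^{\ast}g\ttl$, which is again in $\gn{\vn{A}}$ by closure of the normalising groupoid under products (and adjoints), we have $g=\ttl(v)$. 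Third, since $\ttl^{\ast}\ttl=1$ one has $(\ttl vw\ttl^{\ast})^{n}=\ttl(vw)^{n}\ttl^{\ast}$, which vanishes iff $(vw)^{n}$ does, so $\ttl(v)\perp\ttl(w)\Leftrightarrow v\perp w$.

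Granting these, the two inclusions are short. For $\supseteq$: let $f$ be in the set defining $A\multimap B$, and fix $u\in A$, $w\in B^{\bot}$; choose $v\in B$ with $f\perp\ttr(u)$ and $\Ex(f,\ttr(u))=\ttl(v)$. Since $v\in B$ and $w\in B^{\bot}$ we have $v\perp w$, hence $\ttl(v)\perp\ttl(w)$, and the Adjunction gives $f\perp u\odot w$; as $u,w$ were arbitrary, $f\in(A\otimes B^{\bot})^{\bot}$. For $\subseteq$: let $f\in(A\otimes B^{\bot})^{\bot}$ and fix $u\in A$. By the first fact $f\perp\ttr(u)$, so $g:=\Ex(f,\ttr(u))$ is defined and, by the second fact, $g=\ttl(v)$ with $v=\ttl^{\ast}g\ttl\in\gn{\vn{A}}$. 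For any $w\in B^{\bot}$ the Adjunction yields $\ttl(v)\perp\ttl(w)$, i.e. $v\perp w$ by the third fact; hence $v\in(B^{\bot})^{\bot}=B$ since $B$ is a type. Thus for each $u\in A$ there is $v\in B$ with $f\perp\ttr(u)$ and $\Ex(f,\ttr(u))=\ttl(v)$, i.e. $f\in A\multimap B$.

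The one genuinely non-formal point — the one to get right — is the localisation of $\Ex(f,\ttr(u))$ on the left summand: the identity $g=\ttl(\ttl^{\ast}g\ttl)$ forced by $1-\ttr\ttr^{\ast}=\ttl\ttl^{\ast}$, together with the verification that $\ttl^{\ast}g\ttl$ is again a normalising partial isometry. Everything else is bookkeeping around the Adjunction and the Galois connection; the only other thing not to forget is that one needs a witness in $B^{\bot}$ — conveniently $0$ — to recover $f\perp\ttr(u)$ in the $\subseteq$ direction.
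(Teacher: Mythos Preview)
Your proof is correct and follows the same overall strategy as the paper: both directions are obtained from the Adjunction proposition together with the unfolding $(A\otimes B^{\bot})^{\bot}=\{u\odot w\mid u\in A,\ w\in B^{\bot}\}^{\bot}$. The paper's proof is terser and simply asserts that from $\Ex(f,\ttr(u))\perp\ttl(v')$ for all $v'\in B^{\bot}$ one gets $\Ex(f,\ttr(u))=\ttl(v)$ with $v\in B$; you supply the missing justifications --- that the outer projections $\ttl\ttl^{\ast}$ force $\Ex(f,\ttr(u))$ to have the form $\ttl(v)$, that $\ttl(v)\perp\ttl(w)\Leftrightarrow v\perp w$, and that $0\in B^{\bot}$ with $u\odot 0=\ttr(u)$ gives $f\perp\ttr(u)$ directly --- so your argument is a strictly more detailed version of the same proof.
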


\begin{proof}
Let $f\in A\multimap B$. Then for all $u\in A$ and $v'\in B^{\bot}$, $f\bot \ttr(u)$ and $\Ex(f,\ttr(u))=\ttl(v)$ where $v$ is an element of $B$. Therefore $\Ex(f,\ttr(u))\perp \ttl(v')$. Since $f\bot \ttr(u)$ and $\Ex(f,\ttr(u))\perp \ttl(v')$, we deduce from the adjunction that $f\perp u\odot v'$. As a consequence, we have that $f$ is an element of $(A\otimes B^{\bot})^{\bot}$.

Conversely, if $f\in (A\otimes B^{\bot})^{\bot}$, then $f\perp u\odot v'$ for all $u\in A, v'\in B^{\bot}$. From the adjunction, we get that $f\bot \ttr(u)$ and $\Ex(f,\ttr(u))$ is defined and orthogonal to $\ttl(v')$. Thus $\Ex(f,\ttr(u))$ is equal to $\ttl(v)$ for $v$ an element of $B$. This shows that $f\in A\multimap B$.

We conclude that $A\multimap B=(A\otimes B^{\bot})^{\bot}$,  which implies that $A\multimap B$ is a type.
\end{proof}

In order to define exponential connectives, one uses an internalisation of the tensor product of Hilbert spaces. Indeed, since $\hil{H}$ is separable and infinite-dimensional, it satisfies $\hil{H}\cong\hil{H}\otimes\hil{H}$. We thus choose such an isomorphism $\beta$ and an internalisation of the associativity: an operator called $t$. For instance, one can use the bijection $\beta:\naturalN\times\naturalN\rightarrow \naturalN$ defined by $(n,m)\mapsto 2^{n}(2m+1)-1$. This bijection $\beta$ induces a unitary $u_{\beta}: \hil{H}\otimes\hil{H}\rightarrow \hil{H}$ by defining $(\delta_{i,n},\delta_{i,m})_{n,m\in\naturalN}\mapsto(\delta_{i,\beta(m,n)})$ on basis elements. We can then define an internalisation of the tensor product: if $u,v$ are elements of $\gn{\vn{A}}$, we define $u\mathbin{\bar{\otimes}}v=u_{\beta}(u\otimes v)u_{\beta}^{\ast}$. Naturally, $(u\mathbin{\bar{\otimes}}v)\mathbin{\bar{\otimes}}w=u_{\beta}((u\mathbin{\bar{\otimes}}v)\otimes w)u_{\beta}^{\ast}=u_{\beta}((u_{\beta}(u\otimes v)u_{\beta}^{\ast})\otimes w)u_{\beta}^{\ast}$ is not equal to $u\mathbin{\bar{\otimes}}(v\mathbin{\bar{\otimes}}w)=u_{\beta}(u\otimes (u_{\beta}(v\otimes w)u_{\beta}^{\ast}))u_{\beta}^{\ast}$. There exists however a unitary $t$  which internalises the associativity, namely the operator induced by the map:
\begin{equation*}
\gamma:\begin{array}{rcl}\naturalN&\rightarrow&\naturalN\\
 		\beta(\beta(p,q),r)&\mapsto& \beta(p,\beta(q,r))
		\end{array}
\end{equation*}

\begin{definition}
Let $u\in\gn{A}$. We define $\oc u$ as the internalisation of $1\otimes u$, i.e. as $1\mathbin{\bar{\otimes}} u$.
\end{definition}

The definition of the exponential by $1\mathbin{\bar{\otimes}} u$ boils down to replacing $u$ by an infinite (countable) number of copies of itself. Indeed, $1\otimes u\in\B{\hil{H}\oplus\hil{H}}$ is equal to $\sum_{i\in\naturalN} e_{i}\otimes u$, where $(e_{i})$ is a basis of $\hil{H}$.

\subsubsection{Interpretation of Proofs.}\label{gdi1interprete} In his paper \cite{goi1}, Girard defined the interpretation of proofs as matrices in $\mathcal{M}_{n}(\B{\ell^{2}(\naturalN)})$, where $n$ is the number of formulas in the conclusion (taking into account the cut formulas that appear in the conclusion in the particular sequent calculus he considers). We will here present only the interpretation of the multiplicative fragment (MLL). \autoref{calculsequentgdinilp} shows the derivation rules of the system considered. Formulas are those of multiplicative linear logic, and sequents are of the form $\vdash [\Delta],\Gamma$ where $\Delta=A_{1},A_{1}^{\bot},A_{2},A_{2}^{\bot},\dots,A_{k},A_{k}^{\bot}$ is the multiset of cut formulas. Girard then defines the interpretation of a proof as a pair $(\pi^{\bullet},\sigma_{\pi})$, where $\pi^{\bullet}$ is a partial isometry in $\mathcal{M}_{n}(\B{\ell^{2}(\naturalN)})$ (more precisely in $\mathcal{M}_{n}(\gn[\B{\ell^{2}(\naturalN)}]{\vn{A}})$) which represents the proof $\pi$, and $\sigma_{\pi}$ is a partial symmetry $\mathcal{M}_{n}(\B{\ell^{2}(\naturalN)})$ (more precisely in $\mathcal{M}_{n}(\gn[\B{\ell^{2}(\naturalN)}]{\vn{A}})$) which represents the set of cut rules in $\pi$.

\begin{center}
\begin{figure}
\centering
\begin{tabular}{cc}
\begin{minipage}{4cm}
\begin{prooftree}
\AxiomC{}
\RightLabel{\scriptsize{Ax}}
\UnaryInfC{$\vdash [], A,A^{\bot}$}
\end{prooftree}
\end{minipage}
&
\begin{minipage}{6cm}
\begin{prooftree}
\AxiomC{$\vdash [\Theta_{1}],\Delta, A$}
\AxiomC{$\vdash [\Theta_{2}],\Gamma, A^{\bot}$}
\RightLabel{\scriptsize{Cut}}
\BinaryInfC{$\vdash [\Theta_{1},\Theta_{2},A,A^{\bot}],\Gamma,\Delta$}
\end{prooftree}
\end{minipage}
\\~\\
\begin{minipage}{4cm}
\begin{prooftree}
\AxiomC{$\vdash [\Theta],\Gamma, A, B$}
\RightLabel{\scriptsize{$\parr$}}
\UnaryInfC{$\vdash [\Theta],\Gamma, A\parr B$}
\end{prooftree}
\end{minipage}
&
\begin{minipage}{6cm}
\begin{prooftree}
\AxiomC{$\vdash [\Theta_{1}],\Gamma, A$}
\AxiomC{$\vdash [\Theta_{2}],\Delta, B$}
\RightLabel{\scriptsize{$\otimes$}}
\BinaryInfC{$\vdash [\Theta_{1},\Theta_{2}],\Gamma, \Delta, A\otimes B$}
\end{prooftree}
\end{minipage}
\end{tabular}
\caption{Sequent Calculus with Explicit Cuts}\label{calculsequentgdinilp}
\end{figure}
\end{center}

As opposed to Girard, we will define directly the interpretation of proofs as elements of $\B{\ell^{2}(\naturalN)}$ by internalising the algebra of matrices, i.e. by working modulo the isomorphism between $\vn{M}_{n}(\B{\ell^{2}(\naturalN)})$ and $\B{\ell^{2}(\naturalN)}$. We will represent a sequent by the $\parr$ of the formulas it is composed of. The two projections $\ttr\ttr^{\ast}$ and $\ttl\ttl^{\ast}$ are equivalent in the sense of Murray and von Neumann: the partial isometry $\tta=\ttl\ttr^{\ast}$ satisfies $\tta\tta^{\ast}=(\ttl\ttr^{\ast})(\ttl\ttr^{\ast})^{\ast}=\ttl\ttr^{\ast}\ttr\ttl^{\ast}=\ttl\ttl^{\ast}$ and $\tta^{\ast}\tta=\ttr\ttr^{\ast}$. It will be used to represent axioms.

Let $\vdash [\Delta],\Gamma$ be a sequent. Each formula $A$ in $\Delta\cup\Gamma$ can be assigned an \emph{address}, i.e. a sequence of $\ttr$ and $\ttl$ describing the projection onto the subspace corresponding to $A$. If $A$ and $A^{\bot}$ are two formulas with the addresses $p_{1},p_{2}$ respectively, we can define a partial isometry $p_{2}p_{1}^{\ast}$ between those (constructed from the partial isometries $\ttr$ and $\ttl$) which we will denote by $\tau(p_{1},p_{2})$. Notice that $\tau(\ttr,\ttl)=\tta$.

\begin{definition}[Representation of Proofs]
We define the representation $(\pi^{\bullet},\sigma_{\pi})$ of a proof $\pi$ inductively:
\begin{itemize}
\item if $\pi$ is an axiom rule, we define $\pi^{\bullet}=\tta+\tta^{\ast}$ and $\sigma_{\pi}=0$;
\item if $\pi$ is obtained from $\pi_{1}$ and $\pi_{2}$  by applying a $\otimes$ rule, we define $\pi^{\bullet}=\pi_{1}^{\bullet}\odot\pi_{2}^{\bullet}$ and $\sigma_{\pi}=\ttr(\sigma_{1})+\ttl(\sigma_{2})$;
\item if $\pi$ is obtained from $\pi_{1}$ by applying a $\parr$ rule, then $\pi^{\bullet}=\pi_{1}^{\bullet}$ and $\sigma_{\pi}=\sigma_{\pi_{1}}$;
\item if $\pi$ is obtained from $\pi_{1}$ and $\pi_{2}$ by applying a $\text{Cut}$ rule between formulas at the addresses $p_{1},p_{2}$, we define $\pi^{\bullet}=\pi_{1}^{\bullet}\odot\pi_{2}^{\bullet}$ and $\sigma_{\pi}=\ttr(\sigma_{1})+\ttl(\sigma_{2})+\tau(p_{1},p_{2})+\tau(p_{2},p_{1})$.
\end{itemize}
\end{definition}

One can then show that if $\pi$ is a proof with cuts and $\pi'$ is the cut-free proof obtained from applying the cut-elimination procedure on $\pi$, the operators $\pi^{\bullet}$ and $\sigma_{\pi}$ are orthogonal and the result of the execution formula $\Ex(\pi^{\bullet},\sigma_{\pi})=\sum_{i=0}^{\infty} (\pi^{\bullet}\sigma_{\pi})^{i}$ is equal to $(\pi')^{\bullet}$.

\subsubsection{Weak Nilpotency.} The GoI model we partially exposed allows one to interpret system $F$. In order to extend the model to the full pure lambda-calculus, Girard replaced the notion of nilpotency by a weaker notion, namely \emph{weak nilpotency}, i.e. point wise nilpotency.

\begin{definition}
An operator $u$ is \emph{weakly nilpotent} if $u^{n}$ weakly converges to $0$.
\end{definition}

The main difficulty in this work consists in showing that the execution formula $\Ex(u,\sigma)$ is still well-defined when $u\sigma$ is only \emph{weakly nilpotent} but not necessarily nilpotent. In the previous construction, the nilpotency of $u\sigma$ ensured the convergence of the series $\sum_{i=0}^{\infty} (u\sigma)^{i}$, and therefore the fact that the execution formula was well-defined. The case when $u\sigma$ is only weakly nilpotent is more delicate. Indeed, if $u\sigma$ is weakly nilpotent, the operator $1-u\sigma$ need not be invertible. Girard showed \cite{goi2} that it however admits an unbounded inverse $\rho$ defined on a dense subspace of $\hil{H}$. Moreover, since the operators considered are all partial isometries in the normalising groupoid of a given MASA $\vn{A}$, the operators $(u\sigma)^{k}$ are partial isometries of disjoint domains and codomains and their sum is again a partial isometry in the normalising groupoid\footnote{This is justified by \autoref{lemmasumpartialiso} and \autoref{lemmaproductpartialiso}.} of $\vn{A}$. From this, one can show that the restriction of $\rho$ to the subspace $(1-\sigma^{2})\hil{H}$ is a bounded operator, which yields the following proposition.

\begin{proposition}[Girard \cite{goi2}]
If $u\sigma$ is weakly nilpotent, the execution $\Ex(u,\sigma)$ is well-defined.
\end{proposition}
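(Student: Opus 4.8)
The plan is to split the statement into two facts and combine them. \emph{Fact A}: the operator $1-u\sigma$ is injective and has dense range, hence admits a densely defined, closed, but in general unbounded inverse $\rho$, which on its domain is given by the Neumann series $\sum_{n\geqslant 0}(u\sigma)^{n}$. \emph{Fact B}: the compression of $\rho$ to the result subspace $(1-\sigma^{2})\hil{H}$ --- which is exactly what the execution formula $\Ex(u,\sigma)$ computes --- is a bounded operator; boundedness here \emph{is} well-definedness. Fact A will use nothing beyond weak nilpotency, whereas Fact B is where the hypothesis $u,\sigma\in\gn{\vn{A}}$ is essential.

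For Fact A I would argue directly. If $\xi\in\ker(1-u\sigma)$ then $\xi=u\sigma\xi$, so $\xi=(u\sigma)^{n}\xi$ for all $n\in\naturalN$, whence $\norm{\xi}^{2}=\inner{\xi}{\xi}=\inner{(u\sigma)^{n}\xi}{\xi}\to 0$ because $(u\sigma)^{n}$ converges weakly to $0$; thus $\xi=0$. Since the adjoint is weakly continuous, the powers of $(u\sigma)^{\ast}$ also converge weakly to $0$, so the same computation gives $\ker\bigl(1-(u\sigma)^{\ast}\bigr)=\{0\}$, i.e.\ $1-u\sigma$ has dense range; this produces $\rho$ as a densely defined closed operator. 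To control $\rho$, note that by \autoref{lemmaproductpartialiso} we have $u\sigma\in\gn{\vn{A}}$, so $u\sigma$ acts, up to scalars of modulus one, as a partial bijection of the orthonormal basis diagonalising $\vn{A}$; weak nilpotency says precisely that this partial bijection has no finite cycle, so for each basis vector $e$ the non-zero vectors among $\{(u\sigma)^{n}e\}_{n\geqslant 0}$ are pairwise orthogonal. This is the content of the fact that the powers of $u\sigma$ have ``disjoint domains and codomains'': it makes $\rho$ a genuine closed operator, albeit in general unbounded (e.g.\ for a shift-like $u\sigma$, $\rho e$ is an infinite sum of pairwise orthogonal unit vectors).

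For Fact B I would expand $\Ex(u,\sigma)$ as a strong sum $\sum_{n\geqslant 0}E_{n}$ of the terms of the corresponding Neumann series, each compressed to the result subspace; by \autoref{lemmaproductpartialiso} every $E_{n}$ is a partial isometry in $\gn{\vn{A}}$. The key point is that the $E_{n}$ have pairwise disjoint domain projections and pairwise disjoint codomain projections. Indeed, from a basis vector $e$ in $(1-\sigma^{2})\hil{H}$ there is, by determinism of the partial-bijection action, a unique orbit under the alternating action of $u$ and $\sigma$; this orbit contributes to $E_{n}$ only if it returns to $(1-\sigma^{2})\hil{H}$ exactly at step $n$, and once a path re-enters the result subspace the corresponding contribution terminates (there the feedback $\sigma$ produces nothing further), so $e$ feeds exactly one $E_{n}$; applying the same reasoning to adjoints gives the statement for codomains. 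A family of partial isometries with pairwise disjoint domains and codomains has a strong sum that is again a partial isometry, hence of norm at most $1$; by \autoref{lemmasumpartialiso} this sum even lies in $\gn{\vn{A}}$. That sum is $\Ex(u,\sigma)$, which is therefore well-defined --- indeed, a partial isometry in the normalising groupoid.

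The step I expect to be the real obstacle is Fact B. Since $\rho$ is genuinely unbounded, the content of the proposition is that \emph{all} of this unboundedness is confined to the cut subspace $\sigma^{2}\hil{H}$, so that the compression to $(1-\sigma^{2})\hil{H}$ restores boundedness; making the disjointness bookkeeping precise --- using $u,\sigma\in\gn{\vn{A}}$ together with the way $\sigma$ interacts with $\sigma^{2}$ --- is where the care is needed. A secondary point worth checking is that the operator produced this way genuinely solves the feedback equation it is meant to solve, and is not merely some bounded compression of $\rho$.
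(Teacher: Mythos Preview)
Your proposal follows essentially the same strategy as the paper's sketch: establish a densely-defined (generally unbounded) inverse $\rho$ of $1-u\sigma$, then argue that its compression to $(1-\sigma^{2})\hil{H}$ is bounded because the relevant terms are partial isometries in $\gn{\vn{A}}$ with pairwise disjoint domains and codomains, so their sum is again a partial isometry. Your version is in fact more careful than the paper on one point: the paper loosely asserts that the $(u\sigma)^{k}$ themselves have disjoint domains and codomains --- which your shift example shows is false in general --- whereas you correctly locate the disjointness at the level of the compressed terms $E_{n}$, which is exactly what is needed.
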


\subsubsection{Additive Connectives.} The definition of additive connectives appeared in the next GoI model \cite{goi3}, using the notion of \emph{dialect}. This notion allows one to encode \emph{private information} in the operators, i.e. information which has no consequence on the interaction with other operators. To get a good intuition, one may compare this to the usual notion of \emph{(control) state} for abstract machines: while the current state of a machine $M$ has an impact on how this machine will transition at the next step, it will not modify the behaviour of another machine that may compute the input or compute on the output of $M$. This is translated when composing machines by taking a product of the sets of states; similarly here, composition will be dealt with by taking a tensor product -- an operation which amounts to a product of the bases.

We therefore replace operators acting on a Hilbert space $\hil{H}$ by operators acting on the Hilbert space $\hil{H}\otimes\hil{H}$, where the first copy of $\hil{H}$ is \emph{public}, while the second (the dialect) is \emph{private}. This \enquote{privacy} will mainly show when considering the composition of two such operators, when we will consider their dialects to be disjoint: an operator will necessarily act as the identity on the dialect of the other. To enforce operators to act as identities on the dialect of another, one uses the tensor product $\hil{H\otimes H}$: if $u,v$ are operators on $\hil{H\otimes H}$, we can extend them as operators $u^{\dagger},v^{\ddagger}$ acting on $\hil{H}\otimes(\hil{H\otimes H})$. They are then considered as operators with two dialects (the dialect of $u$ and the dialect of $v$) but which act non-trivially only on one of them (they are extended by the identity on the second dialect, defining for instance $u^{\dagger}=u\otimes 1$). Through an internalisation of the tensor product, one can then consider the pair of two dialects as a single dialect: the operator $u^{\dagger}v^{\ddagger}$ can be seen as acting on $\hil{H\otimes H}$.

Girard's article \emph{Geometry of Interaction $\text{III}$: Accommodating the Additives} \cite{goi3} interprets proofs as operators in a C$^{\ast}$-algebra, as was the case in the previous models described above. This algebra is however described by Girard as an algebra of clauses. For homogeneity reasons, and because the presentation as an algebra of clauses hides once more the dependency of the construction on the choice of a MASA, we will present it here by using operators. This presentation is a small variation on the presentation one can find in Duchesne's PhD thesis \cite{theseetienne}, although the latter once again hides the role of MASAs behind a fixed choice of basis.

Let us choose once again the Hilbert space $\hil{H}=\ell^{2}(\naturalN)$, and the MASA $\vn{A}$ of $\B{\ell^{2}(\naturalN)}$ defined as the algebra of diagonal operators in the basis $(\delta_{i,n})_{n\in\naturalN}$. We will consider (disjoint) sums of operators of the form $u\otimes p$ -- where $p$ is a projection and $u$ a partial isometry -- in the normalising groupoid of $\vn{A}\otimes\vn{A}$, a MASA in $\B{\hil{H}\otimes\hil{H}}$.

\begin{definition}
Let $u$ be an operator. We say that $u$ is a \emph{GoI operator} when $u=\sum_{i\in I} u_{i}\otimes p_{i}$, where for all $i\in I$, $u_{i}$ is a partial isometry in $\gn{\vn{A}}$ and $p_{i}$ is a projection in $\vn{A}$. We will moreover impose that $\sum_{i\in I}p_{i}\sim 1$ and $p_{i}\sim p_{j}$ (for all $i,j$) where $\sim$ represents the Murray and von Neumann equivalence.
\end{definition}

\begin{remark}
If $u$ is a GoI operator, then $u\in\gn[\B{\hil{H\otimes H}}]{\vn{A}\otimes\vn{A}}$.
\end{remark}

\begin{definition}
Let $u=\sum_{i\in I} u_{i}\otimes p_{i}$ and $v=\sum_{j\in J} v_{j}\otimes q_{j}$ be two GoI operators. We define $u^{\dagger}$ as the operator $\sum_{i\in I} u_{i}\otimes (p_{i}\mathbin{\bar{\otimes}}1)$. We define similarly $v^{\ddagger}$ as the operator $\sum_{j\in J} v_{j}\otimes (1\mathbin{\bar{\otimes}} q_{j})$. 
\end{definition}

\begin{definition}
If $u$ and $v$ are two GoI operators, we will say they are orthogonal when $u^{\dagger}v^{\ddagger}$ is nilpotent. We will say that $u$ and $v$ are weakly orthogonal when the product $u^{\dagger}v^{\ddagger}$ is weakly nilpotent.
\end{definition}

We will now use once again the partial isometries $\ttr,\ttl$ introduced earlier in this section. However, $\ttr$ and $\ttl$ are operators in $\B{\hil{H}}$ while GoI operators are elements in $\B{\hil{H\otimes H}}$. We thus extend these operators to operators in $\B{\hil{H\otimes H}}$ in order to take dialects into account. We will write $\httr$ (respectively $\httl$) the operator $\ttr\otimes 1$ (respectively $\ttl\otimes 1$) and we will write $\bttr(u)$ (respectively $\bttl(u)$) the operator $\httr u\httr^{\ast}$ (respectively $\httl u\httl^{\ast}$).

\begin{proposition}
Let $u,v$ be GoI operators. Then:
\begin{equation*}
\begin{array}{rcccl}
\bttr(u^{\dagger})&=&\sum_{i\in I} \ttr(u_{i})\otimes (p_{i}\mathbin{\bar{\otimes}} 1)&=&\sum_{i\in I}\sum_{j\in J} \ttr(u_{i})\otimes (p_{i}\mathbin{\bar{\otimes}}q_{j})\\
\bttl(v^{\ddagger})&=&\sum_{j\in J} \ttl(v_{j})\otimes (1\mathbin{\bar{\otimes}} q_{j})&=&\sum_{j\in J}\sum_{i\in I} \ttl(v_{j})\otimes (p_{i}\mathbin{\bar{\otimes}}q_{j})
\end{array}
\end{equation*}
The operator $u\odot v=\bttr(u^{\dagger})+\bttl(v^{\ddagger})$ is therefore a GoI operator equal to:
 $$\sum_{i\in I}\sum_{j\in J} (\ttr(u_{i})+ \ttl(v_{j}))\otimes (p_{i}\mathbin{\bar{\otimes}} q_{j})$$
\end{proposition}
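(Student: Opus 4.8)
The plan is to treat this as a bookkeeping identity: the only ingredients are the $\complexN$-bilinearity of the tensor product and of the internalised tensor product $\mathbin{\bar{\otimes}}$, together with the lemmas on sums and products of partial isometries in the normalising groupoid of $\vn{A}$ (\autoref{lemmaproductpartialiso} and \autoref{lemmasumpartialiso}). First I would unfold $\bttr(u^{\dagger})=\httr u^{\dagger}\httr^{\ast}$ with $\httr=\ttr\otimes 1$ and $u^{\dagger}=\sum_{i\in I}u_{i}\otimes(p_{i}\mathbin{\bar{\otimes}}1)$: since $I$ is finite, conjugation distributes over the sum, and on each summand $(\ttr\otimes 1)\bigl(u_{i}\otimes(p_{i}\mathbin{\bar{\otimes}}1)\bigr)(\ttr^{\ast}\otimes 1)=(\ttr u_{i}\ttr^{\ast})\otimes(p_{i}\mathbin{\bar{\otimes}}1)=\ttr(u_{i})\otimes(p_{i}\mathbin{\bar{\otimes}}1)$. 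This gives the left-hand equality of the first line, and the left-hand equality of the second line is obtained identically with $\httl,\ttl,v^{\ddagger}$ in place of $\httr,\ttr,u^{\dagger}$.

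For the right-hand equalities I would expand each factor $p_{i}\mathbin{\bar{\otimes}}1$. Here one uses that the dialect projections of a GoI operator sum to the identity, so that $\sum_{j\in J}q_{j}=1$; since $x\mapsto p_{i}\mathbin{\bar{\otimes}}x=u_{\beta}(p_{i}\otimes x)u_{\beta}^{\ast}$ is linear, $p_{i}\mathbin{\bar{\otimes}}1=\sum_{j\in J}(p_{i}\mathbin{\bar{\otimes}}q_{j})$, and feeding this back in (using linearity of $x\mapsto\ttr(u_{i})\otimes x$) yields $\bttr(u^{\dagger})=\sum_{i\in I}\sum_{j\in J}\ttr(u_{i})\otimes(p_{i}\mathbin{\bar{\otimes}}q_{j})$; symmetrically, using $\sum_{i\in I}p_{i}=1$, one gets $\bttl(v^{\ddagger})=\sum_{j\in J}\sum_{i\in I}\ttl(v_{j})\otimes(p_{i}\mathbin{\bar{\otimes}}q_{j})$. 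Adding the two double sums over the common index set $I\times J$ and collecting the two terms sharing the weight $p_{i}\mathbin{\bar{\otimes}}q_{j}$, by distributivity of $\otimes$ over $+$, gives $u\odot v=\sum_{i\in I}\sum_{j\in J}\bigl(\ttr(u_{i})+\ttl(v_{j})\bigr)\otimes(p_{i}\mathbin{\bar{\otimes}}q_{j})$.

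It then remains to verify that this is a GoI operator. Each weight $p_{i}\mathbin{\bar{\otimes}}q_{j}=u_{\beta}(p_{i}\otimes q_{j})u_{\beta}^{\ast}$ is a projection in $\vn{A}$: $p_{i}\otimes q_{j}$ is a projection in the diagonal MASA $\vn{A}\otimes\vn{A}$ of $\B{\hil{H\otimes H}}$, and since $\beta$ merely relabels the canonical basis one has $u_{\beta}(\vn{A}\otimes\vn{A})u_{\beta}^{\ast}=\vn{A}$. Each $\ttr(u_{i})=\ttr u_{i}\ttr^{\ast}$ and $\ttl(v_{j})=\ttl v_{j}\ttl^{\ast}$ lies in $\gn{\vn{A}}$ by \autoref{lemmaproductpartialiso}, and since the source and range projections of $\ttr(u_{i})$ lie below $\ttr\ttr^{\ast}$, those of $\ttl(v_{j})$ below $\ttl\ttl^{\ast}$, and $\ttr\ttr^{\ast}\ttl\ttl^{\ast}=0$, \autoref{lemmasumpartialiso} gives $\ttr(u_{i})+\ttl(v_{j})\in\gn{\vn{A}}$. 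Finally the two side conditions in the definition of a GoI operator are inherited: $\sum_{i,j}p_{i}\mathbin{\bar{\otimes}}q_{j}=\bigl(\sum_{i}p_{i}\bigr)\mathbin{\bar{\otimes}}\bigl(\sum_{j}q_{j}\bigr)\sim 1\mathbin{\bar{\otimes}}1=1$, and $p_{i}\mathbin{\bar{\otimes}}q_{j}\sim p_{k}\mathbin{\bar{\otimes}}q_{l}$ for all $i,j,k,l$ since $p_{i}\sim p_{k}$ and $q_{j}\sim q_{l}$ imply $p_{i}\otimes q_{j}\sim p_{k}\otimes q_{l}$ and conjugation by $u_{\beta}$ preserves Murray--von Neumann equivalence.

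I do not expect a real obstacle here; the content is entirely bookkeeping and the displayed equalities are essentially forced by the definitions. The only two points calling for a moment's care are the fact, used in the second step, that the dialect projections of each GoI operator decompose the identity (so that $\mathbin{\bar{\otimes}}1$ can legitimately be written $\sum_{j}\mathbin{\bar{\otimes}}q_{j}$), and the observation that the internalisation unitary $u_{\beta}$ carries the MASA $\vn{A}\otimes\vn{A}$ onto $\vn{A}$ and preserves partial isometries and their Murray--von Neumann equivalences — which is exactly what keeps the final computation inside the class of GoI operators.
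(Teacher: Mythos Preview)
The paper does not give a proof of this proposition; it is stated as a direct unfolding of the definitions. Your bookkeeping argument is correct and is exactly the expected computation, and your closing verification that $u\odot v$ again satisfies the GoI-operator conditions (via \autoref{lemmaproductpartialiso}, \autoref{lemmasumpartialiso}, and the observation that $u_{\beta}$ carries $\vn{A}\otimes\vn{A}$ onto $\vn{A}$) is more detail than the paper itself records.
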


Once again, one can define a (weak) type as a (weakly) biorthogonally closed set of GoI operators.

\begin{definition}
Let $A,B$ be two (weak) types, we define their tensor product as:
\begin{equation*}
A\otimes B=\{u\odot v~|~ u\in A, v\in B\}^{\bot\bot}
\end{equation*}
\end{definition}

\begin{definition}
Let $u,v$ be orthogonal GoI operators. We define the execution of $u$ and $\bttr(v)$ as the GoI operator $\Ex(u, \bttr(v))=(1-\httr\httr^{\ast})\sum_{i\geqslant 1} (u^{\dagger}\bttr(v^{\ddagger}))^{i}u(1-\httr\httr^{\ast})$.
\end{definition}

The fact that this is a GoI operator comes from the following computation:
\begin{eqnarray*}
\lefteqn{\Ex(u,\bttr(v))}\\
&=&(1-\httr\httr^{\ast})\sum_{i\geqslant 1} (u^{\dagger}\bttr(v^{\ddagger}))^{i}u^{\dagger}(1-\httr\httr^{\ast})\\
&=&(1-\httr\httr^{\ast})\sum_{i\geqslant 1}\left(\left(\sum_{k=0}^{n}\sum_{l=0}^{m} u_{k}\otimes (p_{k}\mathbin{\bar{\otimes}}q_{l})\right)\left(\sum_{k'=0}^{n}\sum_{l'=0}^{m} \ttr(v_{l'})\otimes (p_{k'}\mathbin{\bar{\otimes}}q_{l'})\right)\right)^{i}u^{\dagger}(1-\httr\httr^{\ast})\\
&=&(1-\httr\httr^{\ast})\sum_{i\geqslant 1}\left(\sum_{k=0}^{n}\sum_{l=0}^{m} (u_{k}v_{l})\otimes (p_{k}\mathbin{\bar{\otimes}}q_{l})\right)^{i}\left(\sum_{k=0}^{n}\sum_{l=0}^{m} u_{k}\otimes (p_{k}\mathbin{\bar{\otimes}}q_{l})\right)(1-\httr\httr^{\ast})\\
&=&\sum_{k=0}^{n}\sum_{l=0}^{m} \left((1-\httr\httr^{\ast})\sum_{i\geqslant 1}(u_{k}\ttr(v_{l}))^{i}u_{k}(1-pp^{\ast})\right)\otimes (p_{k}\mathbin{\bar{\otimes}}q_{l})
\end{eqnarray*}

\begin{proposition}
Let $u,v$ be orthogonal GoI operators. Then:
\begin{equation*}
u\perp (v\odot w)\Leftrightarrow (u\perp \bttr(v^{\dagger}))\wedge(\Ex(u,\bttr(v^{\dagger})))\perp \bttl(w))
\end{equation*}
\end{proposition}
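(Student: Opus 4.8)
The plan is to \emph{reduce the claim to the non-dialect adjunction} already proved for elements of $\gn{\vn{A}}$, using that a GoI operator is a disjoint sum indexed by its dialect and that orthogonality, execution and $\odot$ all decompose along that sum. Write $u=\sum_{i\in I}u_{i}\otimes p_{i}$, $v=\sum_{j\in J}v_{j}\otimes q_{j}$ and $w=\sum_{k\in K}w_{k}\otimes r_{k}$; recall from the preceding proposition that $v\odot w=\sum_{j\in J}\sum_{k\in K}(\ttr(v_{j})+\ttl(w_{k}))\otimes(q_{j}\mathbin{\bar{\otimes}}r_{k})$, and observe that $\ttr(v_{j})+\ttl(w_{k})$ is exactly $v_{j}\odot w_{k}$ computed in $\gn{\vn{A}}$. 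Expanding $u^{\dagger}(v\odot w)^{\ddagger}$ precisely as in the computation displayed just before the statement gives
\begin{equation*}
u^{\dagger}(v\odot w)^{\ddagger}=\sum_{i\in I}\sum_{j\in J}\sum_{k\in K}\big(u_{i}(v_{j}\odot w_{k})\big)\otimes\big(p_{i}\mathbin{\bar{\otimes}}q_{j}\mathbin{\bar{\otimes}}r_{k}\big).
\end{equation*}
The dialect projections $p_{i}\mathbin{\bar{\otimes}}q_{j}\mathbin{\bar{\otimes}}r_{k}$ are pairwise disjoint -- because the families $(p_{i})$, $(q_{j})$, $(r_{k})$ are, and $\mathbin{\bar{\otimes}}$ turns a disjointness in either factor into a disjointness of the products -- and their sum is equivalent to $1$. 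Since for pairwise disjoint projections $e_{\alpha}$ one has $\big(\sum_{\alpha}x_{\alpha}\otimes e_{\alpha}\big)^{n}=\sum_{\alpha}x_{\alpha}^{n}\otimes e_{\alpha}$, the operator $u^{\dagger}(v\odot w)^{\ddagger}$ is nilpotent if and only if each $u_{i}(v_{j}\odot w_{k})$ is (uniformly so if the index sets were infinite, which never occurs for interpretations of proofs). In other words $u\perp(v\odot w)$ iff $u_{i}\perp v_{j}\odot w_{k}$ in $\gn{\vn{A}}$ for all $i,j,k$.

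I would then run the same decomposition on the right-hand side. Since $\bttr(v^{\dagger})=\sum_{j\in J}\ttr(v_{j})\otimes(q_{j}\mathbin{\bar{\otimes}}1)$, the argument above gives that $u\perp\bttr(v^{\dagger})$ is equivalent to $u_{i}\perp\ttr(v_{j})$ for all $i,j$. The computation displayed before the statement already exhibits $\Ex(u,\bttr(v^{\dagger}))=\sum_{i\in I}\sum_{j\in J}\Ex(u_{i},\ttr(v_{j}))\otimes(p_{i}\mathbin{\bar{\otimes}}q_{j})$ as a GoI operator indexed by $I\times J$; writing $\bttl(w)=\sum_{k\in K}\ttl(w_{k})\otimes r_{k}$ and forming the product of the appropriate dialect extensions, the same disjointness argument shows that $\Ex(u,\bttr(v^{\dagger}))\perp\bttl(w)$ is equivalent to $\Ex(u_{i},\ttr(v_{j}))\perp\ttl(w_{k})$ for all $i,j,k$. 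Hence the right-hand side of the stated equivalence holds iff, for all $i,j,k$, one has both $u_{i}\perp\ttr(v_{j})$ and $\Ex(u_{i},\ttr(v_{j}))\perp\ttl(w_{k})$ -- the first conjunct, independent of $k$, guaranteeing in particular that all the executions $\Ex(u_{i},\ttr(v_{j}))$ are well-defined.

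It remains to apply the already-established adjunction for $\gn{\vn{A}}$ to each slice: for every triple $(i,j,k)$,
\begin{equation*}
u_{i}\perp v_{j}\odot w_{k}\iff (u_{i}\perp\ttr(v_{j}))\wedge(\Ex(u_{i},\ttr(v_{j}))\perp\ttl(w_{k})).
\end{equation*}
Quantifying over all $(i,j,k)$ and combining with the two reductions above, both sides of the stated equivalence are seen to be equivalent to the single condition ``$u_{i}\perp\ttr(v_{j})$ and $\Ex(u_{i},\ttr(v_{j}))\perp\ttl(w_{k})$ for all $i,j,k$'', which proves the proposition.

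The only real work -- and the point to be careful about -- is the bookkeeping folded into ``the same disjointness argument'': one has to check that the $\mathbin{\bar{\otimes}}$-products of dialect projections occurring at each stage are genuinely pairwise disjoint and sum to the identity (so that nilpotency of a disjoint sum reduces to nilpotency of its summands), and that the slices really are $u_{i}(v_{j}\odot w_{k})$, $u_{i}\ttr(v_{j})$, $\Ex(u_{i},\ttr(v_{j}))$ and $\Ex(u_{i},\ttr(v_{j}))\ttl(w_{k})$ once the internalisations $\mathbin{\bar{\otimes}}$ and the associativity operator are unwound. This is routine and is essentially performed already in the computation displayed before the statement; no idea beyond the non-dialect adjunction is required.
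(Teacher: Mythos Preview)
The paper does not include a proof of this proposition; it is stated without justification, immediately after the slice-wise computation of $\Ex(u,\bttr(v))$. Your approach---decomposing everything along the pairwise disjoint dialect projections and invoking the non-dialect adjunction on each slice---is exactly the argument that computation is setting up, and it is correct. The two caveats you flag are the right ones: the paper's displayed computation uses finite index sets $\sum_{k=0}^{n}$, so slice-wise nilpotency does give global nilpotency without any uniformity worry; and the associativity bookkeeping for $\mathbin{\bar{\otimes}}$ (comparing $p_{i}\mathbin{\bar{\otimes}}(q_{j}\mathbin{\bar{\otimes}}r_{k})$ with $(p_{i}\mathbin{\bar{\otimes}}q_{j})\mathbin{\bar{\otimes}}r_{k}$) is handled by conjugation with the unitary $t$, which preserves nilpotency.
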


\begin{theorem}
Let $A,B$ be (weak) types. Then $$A\multimap B=\{f~|~\forall u\in A, \exists v\in B, \Ex(f,\bttr(u))=\bttl(v)\}$$ is a (weak) type and $A\multimap B=(A\otimes B^{\bot})^{\bot}$.
\end{theorem}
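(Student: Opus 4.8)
The plan is to replay the proof given above for the nilpotency model, the only genuinely new feature being that orthogonality, the execution formula and the connectives now carry a dialect; the adjunction stated just above is precisely what makes the replay go through. Throughout, ``orthogonal'' and ``(weak) type'' are read either in the plain or in the weak sense, the two cases being dealt with by the same computations --- the well-definedness of $\Ex$ in the weak case being Girard's proposition on weak nilpotency.

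First I would prove $A\multimap B\subseteq(A\otimes B^{\bot})^{\bot}$. Let $f\in A\multimap B$, and fix $u\in A$ and $v'\in B^{\bot}$. By definition of $A\multimap B$, $f$ is orthogonal to $\bttr(u)$ and $\Ex(f,\bttr(u))=\bttl(v)$ for some $v\in B$; since $v\in B$ and $v'\in B^{\bot}$ we have $v\perp v'$, and because $\bttl(v)\bttl(v')=\httl(vv')\httl^{\ast}$ with $\httl^{\ast}\httl=1$, this transfers to $\Ex(f,\bttr(u))\perp\bttl(v')$. The adjunction then gives $f\perp u\odot v'$. As $u\in A$ and $v'\in B^{\bot}$ were arbitrary, $f$ is orthogonal to every $u\odot v'$, hence $f\in\{u\odot v'\mid u\in A,\,v'\in B^{\bot}\}^{\bot}=(A\otimes B^{\bot})^{\bot}$.

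For the reverse inclusion, let $f\in(A\otimes B^{\bot})^{\bot}$, so $f\perp u\odot v'$ for every $u\in A$ and $v'\in B^{\bot}$. Fix $u\in A$. For any $v'\in B^{\bot}$ the adjunction, read from right to left, gives that $f$ is orthogonal to $\bttr(u)$ --- so $\Ex(f,\bttr(u))$ is well-defined --- and that $\Ex(f,\bttr(u))\perp\bttl(v')$. The explicit form of the execution formula computed above shows that $\Ex(f,\bttr(u))$ is again a GoI operator, and since $1-\httr\httr^{\ast}=\httl\httl^{\ast}$ it is sandwiched by $\httl\httl^{\ast}$, hence equal to $\bttl(v)$ for $v=\httl^{\ast}\Ex(f,\bttr(u))\httl$. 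From $\bttl(v)\perp\bttl(v')$ for all $v'\in B^{\bot}$ and the identity $\bttl(v)\bttl(v')=\httl(vv')\httl^{\ast}$ we deduce $v\perp v'$ for all $v'\in B^{\bot}$, that is $v\in B^{\bot\bot}=B$. So $\Ex(f,\bttr(u))=\bttl(v)$ with $v\in B$ for every $u\in A$, i.e.\ $f\in A\multimap B$. This establishes $A\multimap B=(A\otimes B^{\bot})^{\bot}$, and being the orthogonal of a set this is (weakly) biorthogonally closed, hence a (weak) type.

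The main obstacle is bookkeeping rather than a new idea: one must check that $\Ex(f,\bttr(u))$ is honestly a GoI operator of the required shape --- this is exactly what the displayed computation of $\Ex(u,\bttr(v))$ provides, together with the observation that the $(1-\httr\httr^{\ast})$ factors force its support into the $\httl$-component --- and one must reconcile the $\bttr(u)$ occurring in the definition of $A\multimap B$ with the $\bttr(v^{\dagger})$ occurring in the adjunction; these coincide once the two dialects carried by $f$ and by $u$ are internalised into a single one, which is the whole point of the operations $(\cdot)^{\dagger}$, $(\cdot)^{\ddagger}$ and of the internalised tensor. Everything else is the formal calculus of biorthogonals already used in the nilpotency case.
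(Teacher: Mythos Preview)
Your proof is correct and follows exactly the approach the paper intends: the paper gives no explicit proof for this dialectal version of the theorem, relying instead on the reader replaying the nilpotency-model argument, and that is precisely what you do. Your additional remarks on the bookkeeping --- that $\Ex(f,\bttr(u))$ lands in the $\httl$-component and that the $\bttr(u)$ versus $\bttr(u^{\dagger})$ discrepancy is absorbed by the dialect internalisation --- are a fair expansion of what the paper leaves implicit.
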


\begin{definition}
Let $u,v$ be GoI operators. We define $$u\with v=(1\otimes p)u(1\otimes p)^{\ast}+(1\otimes q)v(1\otimes q)^{\ast}$$ If $A,B$ are (weak) types, we define $$A\with B=\{u\with v~|~u\in A, v\in B\}^{\bot\bot}$$
\end{definition}

\begin{definition}
Let $u,v$ be GoI operators. We say that $u$ is a variant of $v$ -- written $u\sim v$ -- when there exists a partial isometry $w\in\gn{\vn{A}}$ such that $u=(1\otimes w)v(1\otimes w)^{\ast}$.
\end{definition}

\begin{proposition}
Let $u,v,w$ be GoI operators such that $u\sim v$. Then $u\perp w$ if and only if $v\perp w$. Moreover, $\Ex(u,w)\sim \Ex(v,w)$.
\end{proposition}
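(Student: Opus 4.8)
The plan is to produce a single element of $\gn{\vn{A}}$ that intertwines simultaneously the operators $u^{\dagger}$ and $v^{\dagger}$, their products with $w^{\ddagger}$, and the execution formulae. Fix a partial isometry $z\in\gn{\vn{A}}$ with $u=(1\otimes z)v(1\otimes z)^{\ast}$, and write $v=\sum_{j\in J}v_{j}\otimes q_{j}$ and $w=\sum_{k\in K}w_{k}\otimes r_{k}$ with the $q_{j}$ (resp.\ the $r_{k}$) pairwise disjoint projections of $\vn{A}$; then $u=\sum_{j\in J}v_{j}\otimes(zq_{j}z^{\ast})$, and since $z\in\gn{\vn{A}}$ the $zq_{j}z^{\ast}$ are again pairwise disjoint projections of $\vn{A}$. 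Set $Z=z\mathbin{\bar{\otimes}}1=u_{\beta}(z\otimes 1)u_{\beta}^{\ast}$. As $u_{\beta}$ is induced by a bijection of the canonical basis of $\hil{H}$ with $\naturalN\times\naturalN$, conjugation by $u_{\beta}$ carries $\vn{A}\otimes\vn{A}$ onto $\vn{A}$, so $Z$ is a partial isometry lying in $\gn{\vn{A}}$; throughout I use freely that $(a\mathbin{\bar{\otimes}}b)(c\mathbin{\bar{\otimes}}d)=(ac)\mathbin{\bar{\otimes}}(bd)$.

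First I would establish the two intertwining identities. From $(zq_{j}z^{\ast})\mathbin{\bar{\otimes}}1=Z(q_{j}\mathbin{\bar{\otimes}}1)Z^{\ast}$ one reads off $u^{\dagger}=(1\otimes Z)\,v^{\dagger}\,(1\otimes Z)^{\ast}$. Next, because $z\otimes 1$ and $1\otimes r_{k}$ commute in $\B{\hil{H}\otimes\hil{H}}$, the operator $Z=z\mathbin{\bar{\otimes}}1$ commutes with every $1\mathbin{\bar{\otimes}}r_{k}$, hence $1\otimes Z$ and $1\otimes Z^{\ast}$ commute with $w^{\ddagger}=\sum_{k}w_{k}\otimes(1\mathbin{\bar{\otimes}}r_{k})$; and $1\otimes Z$ also commutes with $\httr=\ttr\otimes 1$ (hence with $\bttr(\cdot)=\httr(\cdot)\httr^{\ast}$ and with $\httr\httr^{\ast}$), which acts only on the public leg. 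Combining these, $u^{\dagger}w^{\ddagger}=(1\otimes Z)\,(v^{\dagger}w^{\ddagger})\,(1\otimes Z)^{\ast}$.

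For the orthogonality equivalence I would then pass to diagonal form. Using pairwise disjointness of the $(zq_{j}z^{\ast})\mathbin{\bar{\otimes}}r_{k}$ and of the $q_{j}\mathbin{\bar{\otimes}}r_{k}$,
\begin{equation*}
(u^{\dagger}w^{\ddagger})^{n}=\sum_{j,k}(v_{j}w_{k})^{n}\otimes\big((zq_{j}z^{\ast})\mathbin{\bar{\otimes}}r_{k}\big),\qquad (v^{\dagger}w^{\ddagger})^{n}=\sum_{j,k}(v_{j}w_{k})^{n}\otimes\big(q_{j}\mathbin{\bar{\otimes}}r_{k}\big),
\end{equation*}
and in either case the summands sit under pairwise disjoint projections, so the operator vanishes exactly when all the $(v_{j}w_{k})^{n}$ vanish. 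Hence $u^{\dagger}w^{\ddagger}$ is nilpotent if and only if $v^{\dagger}w^{\ddagger}$ is, that is, $u\perp w$ iff $v\perp w$; the same argument, blockwise, handles weak nilpotency (the two operators have the very same blocks $(v_{j}w_{k})^{n}$, merely attached to different disjoint projections). I expect this to be the only genuinely delicate point: conjugation by the non-unitary partial isometry $1\otimes Z$ does not in general commute with taking powers, but it does once everything is written in these disjoint pieces, because $Z\,(q_{j}\mathbin{\bar{\otimes}}r_{k})\,Z^{\ast}=(zq_{j}z^{\ast})\mathbin{\bar{\otimes}}r_{k}$ needs no hypothesis on $q_{j}$. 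The rest is bookkeeping of the three Hilbert-space legs (public, dialect of $u$/$v$, dialect of $w$) and of the internalisation $u_{\beta}$.

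Finally, assuming $u\perp w$ (equivalently $v\perp w$), I claim the same $Z$ witnesses $\Ex(u,w)\sim\Ex(v,w)$. Since $1\otimes Z$ commutes with $\httr$, with $w^{\ddagger}$ and with $\httr\httr^{\ast}$, conjugation by $1\otimes Z$ passes through the formula defining $\Ex(\cdot,\cdot)$; using the expansions above together with $Z\,(q_{j}\mathbin{\bar{\otimes}}r_{k})\,Z^{\ast}=(zq_{j}z^{\ast})\mathbin{\bar{\otimes}}r_{k}$ one checks $(1\otimes Z)\,(v^{\dagger}w^{\ddagger})^{i}v^{\dagger}\,(1\otimes Z)^{\ast}=(u^{\dagger}w^{\ddagger})^{i}u^{\dagger}$ for every $i$, whence $(1\otimes Z)\,\Ex(v,w)\,(1\otimes Z)^{\ast}=\Ex(u,w)$. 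As $Z\in\gn{\vn{A}}$, this is exactly $\Ex(u,w)\sim\Ex(v,w)$.
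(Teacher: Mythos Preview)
The paper states this proposition without proof, as part of its historical overview of Girard's third GoI model, so there is no argument in the paper to compare against. Your approach is the natural one and is essentially correct: the intertwiner $1\otimes Z$ with $Z=z\mathbin{\bar{\otimes}}1\in\gn{\vn{A}}$ does carry $v^{\dagger}$ to $u^{\dagger}$, commutes with $w^{\ddagger}$ and with the public-leg operators $\httr$, $\httr\httr^{\ast}$, and the block-diagonal expansions you write down are exactly the ones the paper itself uses, a few lines above the proposition, to verify that $\Ex(u,\bttr(v))$ is a GoI operator.

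There is one step that deserves an extra word. From
\[
(u^{\dagger}w^{\ddagger})^{n}=\sum_{j,k}(v_{j}w_{k})^{n}\otimes\bigl((zq_{j}z^{\ast})\mathbin{\bar{\otimes}}r_{k}\bigr)
\]
you assert that the sum vanishes exactly when all $(v_{j}w_{k})^{n}$ vanish. For the $v$-expansion this is clear, since each $q_{j}\mathbin{\bar{\otimes}}r_{k}$ is nonzero; for the $u$-expansion it requires $zq_{j}z^{\ast}\neq 0$ for every $j$. If some $zq_{j_{0}}z^{\ast}=0$, the corresponding block disappears from $u$ entirely, and one can then have $u\perp w$ while $v\not\perp w$. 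The intended reading of the variant relation is that $z$ relabels the dialect --- so $z^{\ast}z$ dominates the support $\sum_{j}q_{j}$ of the second leg of $v$ --- which makes this automatic; but the paper's literal definition of $\sim$ does not impose it, and your write-up would be cleaner if you recorded the hypothesis explicitly before invoking the biconditional.
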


\subsection{Hyperfinite GoI}\label{goi5}


\subsubsection{Locativity.} Between the GoI models explained above and the hyperfinite GoI model \cite{goi5}, Girard introduced \emph{ludics} \cite{locussolum}. If the constructions of ludics may appear at first sight quite different from the constructions of GoI models, both constructions are, in a sense, exactly the same\footnote{In particular, we have shown that our construction of Interaction Graphs based on graphings unifies Girard's GoI models \cite{seiller-goig}. We believe that the general framework of graphings will yield a a special case Girard's ludics.}. Indeed, the constructions differ only from their starting point: when GoI models are built upon an abstraction of proof nets (or rather proof structures) \cite{seiller-axioms}, ludics is built upon an abstraction of (focalised) MALL sequent calculus derivations (with a modified axiom rule \cite{seiller-axioms,llludintro2}). 

One can show that a formula $A$ is provable in MALL if and only if a specific formula $A^{\sharp}$ is provable in a system MALL$_{\text{foc}}$. This formula $A^{\sharp}$ is a normal form of $A$ obtained by using distributivity isomorphisms. The system MALL$_{\text{foc}}$ uses the fact that all provable sequent has a \emph{focalised proof}, i.e. a proof alternating between a sequence of reversible (negative) rules of maximal length, and a sequence of non reversible (positive) rules introducing the positive connectives of a single formula (thus the choice of terminology) of maximal length. This sequent calculus possesses an axiom rule and two schemes of rules: a negative scheme and a positive scheme -- each representing the possible sequences of reversible or non-reversible rules.

Ludics is then an abstraction of this sequent calculus: we first replace the axiom rule by a rule $\Dai$. This rule $\Dai$ (called \emph{daimon}) in ludics introduces only positive sequents and therefore can never introduce a sequent of the form $\vdash A,A^{\pol}$; it consequently never corresponds to the application of an axiom rule. This is counterbalanced by the consideration of infinite derivation trees: a correct sequent -- such as $A\vdash A$ -- will then be introduced by a sort of infinite $\eta$-expansion named the \emph{fax}. The second abstraction consists in replacing formulas by addresses -- finite sequences of integers. We already considered a notion of address in our presentation of  the interpretation of proofs in Girard's first GoI model.

We will not detail the constructions of ludics here, but we will stress this \emph{locative} aspect. If Girard's first GoI models were already locative -- an address was then a sequence of symbols $\ttr$ and $\ttl$ -- it only became explicit after the introduction of ludics. In particular, in the first GoI models, the tensor product was always defined because its was defined through adequate \emph{delocations}: conjugating the left-hand element by $\ttl$ and the right-hand by $\ttl$. In particular, this allows one to consider operators that always act on the same space: the Hilbert space $\ell^{2}(\naturalN)$. In Girard's hyperfinite GoI model \cite{goi5}, the operators considered are elements of an algebra $\infhyp$ of type $\text{II}_{\infty}$, but act only on a \emph{finite} subspace (finite from the point of view of the algebra, i.e. the projection onto the subspace is finite in the algebra). Then, the objects under study are given together with a projection $p\in\infhyp$ -- the \emph{location} -- and an operator $u$ such that $pup=u$. A consequence of the locative approach is that some operations are only partially defined -- as the tensor product. It is however possible to retrieve total constructions by working \emph{modulo delocations}; for instance, this is how one builds categories from locative GoI models\footnote{Let us stress that the cited works are not the first describing how denotational semantics arise from GoI models, although the second is the first dealing with additives. First, the main theorem of Girard's first GoI construction \cite[Theorem 1, (ii)]{goi1} shows that one can (partially) define a denotational semantics from the GoI interpretation of proofs. Second, Haghverdi \cite{haghverdi} showed that GoI situations \cite{goisituation1,goisituation2} -- a categorical axiomatisation of earlier GoI models -- can be used to define denotational semantics.} \cite{seiller-goim,seiller-goia}.

\subsubsection{The Feedback Equation.} The feedback equation is the operator-theoretic counterpart of the cut-elimination procedure. Hence a solution to the feedback equation is the equivalent of the normal form a proof net that may contain cuts. This equation is stated as follows: if $u,v$ are operators acting on the Hilbert spaces $\hil{H}\oplus\hil{H}'$ and $\hil{H}'\oplus\hil{H''}$ respectively, a \emph{solution to the feedback equation} is an operator $w$ acting on $\hil{H}\oplus\hil{H}''$ and such that $w(x\oplus z)=x'\oplus z'$ as long as there exist $y,y'\in\hil{H}'$ satisfying:
\begin{eqnarray*}
u(x\oplus y)&=&x'\oplus y'\\
v(y'\oplus z)&=&y\oplus z'
\end{eqnarray*}

\begin{center}
\begin{tikzpicture}[x=0.65cm,y=0.65cm]
	\draw (-2,0) -- (2,0) node [midway,below,blue] {$x$};
		\node (Hi) at (-3,0) {$\hil{H}$};
	\draw (1,-2) -- (2,-2) {};
	\draw[dotted] (-2,-2) -- (1,-2) {};
		\node (HHi) at (-3,-2) {$\hil{H'}$};
	\draw (-2,-4) -- (2,-4) node [midway,below,blue] {$z$};
	\draw (2,-4) -- (7,-4) {};
		\node (HHHi) at (-3,-4) {$\hil{H''}$};
	\draw (2,1) -- (2,-3) -- (5,-3) -- (5,1) -- (2,1);
		\node (U) at (3.5,-1) {$u$};
	\draw (7,-1) -- (7,-5) -- (10,-5) -- (10,-1) -- (7,-1);
		\node (V) at (8.5,-3) {$v$};
	\draw (5,0) -- (10,0) {};
	\draw (10,0) -- (14,0) node [midway,below,blue] {$x'$};
		\node (Ho) at (15,0) {$\hil{H}$};
	\draw (5,-2) -- (7,-2) node [midway,below,blue] {$y'$};
	\draw (10,-2) -- (11,-2) {};
	\draw[dotted] (11,-2) -- (14,-2) {};
		\node (HHo) at (15,-2) {$\hil{H'}$};
	\draw (10,-4) -- (14,-4) node [midway,below,blue] {$z'$};
		\node (HHHo) at (15,-4) {$\hil{H''}$};
	\draw[red] (11,-2) -- (11,3) {};
	\draw[red] (11,3) -- (1,3) node [midway,below,blue] {$y$};	
	\draw[red] (1,3) -- (1,-2) {};
\end{tikzpicture}
\end{center}

Let us write $p,p',p''$ the projections onto the subspaces $\hil{H,H',H''}$ respectively. The execution formula $\Ex(u, v)=(p+p''v)\sum_{i\geqslant 0} (uv)^{i}(up+p'')$, when it is defined, yields a solution to the feedback equation involving $u$ and $v$. More generally, the formula $(p+p''v)(1-uv)^{-1}(up+p'')$, when defined, describes a solution to the feedback equation.

Girard studied, in the paper \emph{Geometry of Interaction $\text{IV}$: the Feedback Equation} \cite{feedback}, an extension of this solution. Indeed, he showed that as long as $u,v$ are hermitians of norm at most $1$, the solution $(p+p''v)(1-uv)^{-1}(up+p'')$ defines a partial functional application which can be extended to be defined for all pairs of hermitian operators in the unit ball $1$. Moreover, this extension is the unique such extension that preserves some properties. This unique extension will be denoted by $\plug$ in this section.

\subsubsection{The Determinant.} The hyperfinite GoI model no longer uses the orthogonality defined by nilpotency but considers a more involved notion defined through the determinant of operators. In order to motivate this change, we consider $G,H,F$ three square matrices of respective dimensions $n\times n$, $m\times m$ and $(n+m)\times(n+m)$. We can write $F$ as a block matrix as follows:
$$F=\left(\begin{array}{cc} F_{11} & F_{12}\\ F_{21} & F_{22}\end{array}\right)$$
where $F_{11}$ (respectively $F_{22}$) is a square matrix of dimension $n\times n$ (respectively $m\times m$). We will write $G\oplus H$ the square matrix of dimension $(n+m)\times(n+m)$ defined as:
$$G\oplus H=\left(\begin{array}{cc} G & 0\\ 0 & H\end{array}\right)$$
One can then notice that when $1-F_{11}G$ is invertible ($1$ is here the identity matrix of dimension $n\times n$), the computation of the determinant of $1-F(G\oplus H)$ involves the execution formula $\Ex(F,G)$:
\begin{eqnarray*}
\lefteqn{\det(1-F(G\oplus H))}\\
&=&\left|\begin{array}{cc}1-F_{11}G & -F_{12}H\\ -F_{21}G & 1-F_{22}H\end{array}\right|\\
&=&\left|\begin{array}{cc}1-F_{11}G & -F_{12}H+(1-F_{11}G)C\\ -F_{21}G & 1-F_{22}H-F_{21}GC\end{array}\right|~~~~~(C=(1-F_{11}G)^{-1}F_{12}H)\\
&=&\left|\begin{array}{cc}1-F_{11}G &0\\ -F_{21}G & 1-\Ex(F,G).H\end{array}\right|\\
&=&\det(1-F_{11}G)\det(1-\Ex(F,G).H)\nonumber
\end{eqnarray*}

Keeping in mind that $G\oplus H$ interprets linear logic's tensor product of $G$ and $H$, this equality is reminiscent of the adjunction upon which the interpretation of multiplicative connectives in earlier GoI was based. In fact, as shown by the author \cite{seiller-goia}, this equality and the adjunction are strongly related -- even more than that, they come from a unique identity between cycles.

However, in order to interpret exponential connectives one has to consider operators acting on infinite-dimensional spaces. This is why the hyperfinite GoI model takes place in a von Neumann algebra of type $\text{II}$. Indeed, the existence of a trace in factors of type $\text{II}_{1}$ allows one to define a generalisation of the determinant. This new GoI model thus considers operators in a particular algebra: the type $\text{II}_{\infty}$ hyperfinite factor. In fact, as already mentioned, the operators considered will belong to a sub-algebra $p\infhyp p$, where $p$ is a finite projection. This amounts to saying that we are working with operators in the type $\text{II}_{1}$ hyperfinite factor embedded in the type $\text{II}_{\infty}$ hyperfinite factor; the latter being used only to ensure that one do not run out of (disjoint) locations.

In a type $\text{II}_{1}$ factor, as explained in the previous section, there exists a trace. It is therefore possible to define a generalisation of the determinant of matrices by using the identity $\det(\exp(A))=\exp(\tr(A))$ which is, in finite dimensions, satisfied for all matrix $A$. Indeed, if $A$ is a matrix with complex coefficients,we can suppose it is in upper triangular form. The determinant of $\exp(A)$ is then the product of the exponentials of eigenvalues of $A$: $\det(\exp(A))=\prod_{i} \exp(\lambda_{i})$ and therefore $\det(\exp(A))=\exp(\sum_{i} \lambda_{i})$. This shows that $\det(\exp(A))=\exp(\tr(A))$.

In the case of a factor of type $\text{II}_{1}$, with its normalised trace $\tr$, we can define for all invertible operator $A$:
\begin{equation*}
\det(A)=e^{\tr(\log(\abs{A}))}
\end{equation*}
This generalisation of the determinant was introduced by Fuglede and Kadison \cite{FKdet} who showed that it can be extended to all operators, though not in a unique way. They also show a number of properties satisfied by the determinant, among which the following that will be useful in this paper:
\begin{itemize}
\item $\det$ is multiplicative: $\det(AB)=\det(A)\det(B)$;
\item for all $A$ $\det(A)<\specrad{A}$, where $\specrad{A}$ is the spectral radius of $A$.
\end{itemize}

In the construction of the GoI model, Girard actually uses a generalisation of the notion of trace: the so-called pseudo-traces. A \emph{pseudo-trace} is an hermitian ($\alpha(u)=\overline{\alpha(u^{\ast})}$), tracial ($\alpha(uv)=\alpha(vu)$), faithful and  normal ($\sigma$-weakly continuous) linear form.

\begin{definition}
If $\alpha$ is a pseudo-trace on $\vn{A}$, and $\tr$ a trace on $\finhyp$, we can define for all invertible $A\in\finhyp\otimes\vn{A}$:
\begin{equation*}
\det{}_{\tr\otimes\alpha}(A)=e^{\tr\otimes\alpha(\log(\abs{A}))}
\end{equation*}
\end{definition}

\begin{proposition}
If $\phi$  is a  $\ast$-isomorphism from $\vn{A}$ onto $\vn{B}$, then for all invertible $A\in\finhyp\otimes\vn{A}$:
\begin{equation*}
\det{}_{\tr\otimes(\alpha\circ\phi^{-1})}((\textnormal{Id}\otimes\phi)(A))=\det{}_{\tr\otimes\alpha}(A)
\end{equation*}
\end{proposition}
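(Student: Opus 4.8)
The plan is to observe that the Fuglede--Kadison determinant $\det{}_{\tr\otimes\alpha}$ is assembled from only two ingredients — the bounded continuous functional calculus and the normal tracial linear form $\tr\otimes\alpha$ — and that both of these transport correctly along the isomorphism $\textnormal{Id}\otimes\phi$. Everything then reduces to three standard facts about von Neumann algebras: a $\ast$-isomorphism of von Neumann algebras is automatically $\sigma$-weakly continuous (normal); a normal $\ast$-isomorphism $\phi:\vn{A}\to\vn{B}$ induces a normal $\ast$-isomorphism $\Phi:\finhyp\otimes\vn{A}\to\finhyp\otimes\vn{B}$ acting as $x\otimes a\mapsto x\otimes\phi(a)$ on elementary tensors; and two normal linear forms agreeing on elementary tensors agree on the whole tensor product.

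First I would fix notation and dispatch the functional-calculus step. Write $\Phi=\textnormal{Id}\otimes\phi$, a normal $\ast$-isomorphism onto $\finhyp\otimes\vn{B}$; in particular $\Phi(A)$ is invertible whenever $A$ is, so the left-hand side of the claimed identity is well defined. Since $A$ is invertible, $\abs{A}=(A^{\ast}A)^{1/2}$ is a positive invertible operator, hence bounded below, so $\log$ may be applied to it by continuous functional calculus and $\log(\abs{A})$ is a bounded self-adjoint element of $\finhyp\otimes\vn{A}$. A $\ast$-homomorphism commutes with adjoints, products, positive square roots and continuous functions of self-adjoint elements, whence
\begin{equation*}
\abs{\Phi(A)}=\Phi(\abs{A}), \qquad \log(\abs{\Phi(A)})=\Phi(\log(\abs{A})).
\end{equation*}

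Next I would check that $(\tr\otimes(\alpha\circ\phi^{-1}))\circ\Phi=\tr\otimes\alpha$ as linear forms on $\finhyp\otimes\vn{A}$. Both sides are $\sigma$-weakly continuous: $\tr$ is normal, $\alpha$ is a pseudo-trace hence normal, $\phi^{-1}$ is normal so $\alpha\circ\phi^{-1}$ is again a (faithful, tracial, normal) pseudo-trace, and $\Phi$ is normal, so the composite is normal. On an elementary tensor,
\begin{equation*}
(\tr\otimes(\alpha\circ\phi^{-1}))(\Phi(x\otimes a))=\tr(x)\,\alpha(\phi^{-1}(\phi(a)))=\tr(x)\,\alpha(a)=(\tr\otimes\alpha)(x\otimes a),
\end{equation*}
so the two normal forms coincide on the $\sigma$-weakly dense $\ast$-subalgebra spanned by elementary tensors, hence everywhere. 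Combining the two steps,
\begin{equation*}
\det{}_{\tr\otimes(\alpha\circ\phi^{-1})}(\Phi(A))=e^{(\tr\otimes(\alpha\circ\phi^{-1}))(\log(\abs{\Phi(A)}))}=e^{(\tr\otimes(\alpha\circ\phi^{-1}))(\Phi(\log(\abs{A})))}=e^{(\tr\otimes\alpha)(\log(\abs{A}))}=\det{}_{\tr\otimes\alpha}(A).
\end{equation*}

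The only point requiring any care is the existence and normality of the tensored isomorphism $\Phi$ together with the automatic normality of $\phi$ itself; once these are invoked the argument is just the bookkeeping above. One could even sidestep the appeal to normality of $\Phi$ in the functional-calculus step by noting that for invertible $A$ the element $\log(\abs{A})$ already lies in the $C^{\ast}$-subalgebra generated by $A$ and $1$, so that $\Phi(\log(\abs{A}))=\log(\abs{\Phi(A)})$ is a purely $C^{\ast}$-algebraic identity; but the normal tracial identity $(\tr\otimes(\alpha\circ\phi^{-1}))\circ\Phi=\tr\otimes\alpha$ still needs the density argument, so normality cannot be avoided altogether.
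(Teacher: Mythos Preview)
Your proof is correct and follows essentially the same approach as the paper: unfold the definition of the determinant, use that the $\ast$-isomorphism $\textnormal{Id}\otimes\phi$ commutes with the functional calculus to pull $\log(\abs{\cdot})$ through, and then use that the pseudo-trace transports along $\Phi$. The paper's proof is terser --- it simply asserts the functional-calculus commutation and leaves the identity $(\tr\otimes(\alpha\circ\phi^{-1}))\circ\Phi=\tr\otimes\alpha$ implicit --- whereas you spell out both steps, including the density argument for the trace identity and the discussion of normality; but the skeleton is identical.
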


\begin{proof}
Let $A$ be  invertible in $\infhyp\otimes\vn{A}$. Then, using the definition of the determinant and the fact that $\text{Id}\otimes\phi$ commutes with functional calculus, we have:
\begin{eqnarray*}
\det{}_{\tr\otimes(\alpha\circ\phi^{-1})}((\text{Id}\otimes\phi)(A))&=&\text{exp}(\tr\otimes(\alpha\circ\phi^{-1})(\log(\abs{(\text{Id}\otimes\phi)(A)})))\\
&=&\text{exp}(\tr\otimes(\alpha\circ\phi^{-1})((\text{Id}\otimes\phi)(\log(\abs{A}))))\\
&=&\text{exp}(\tr\otimes\alpha(\log(\abs{A})))
\end{eqnarray*}
Thus $\det{}_{\tr\otimes(\alpha\circ\phi^{-1})}((\text{Id}\otimes\phi)(A))=\det{}_{\tr\otimes\alpha}(A)$.
\end{proof}

\begin{lemma}
If $A$ is nilpotent, then $\specrad{A}=0$. 
\end{lemma}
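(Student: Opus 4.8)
The plan is to reduce the statement to a standard fact about the spectral radius, using either the spectral mapping theorem or Gelfand's spectral radius formula. First I would unfold the hypothesis: $A$ nilpotent means there exists an integer $k$ with $A^{k}=0$. The cleanest route is then via the spectral mapping theorem for polynomials: applying it to the monomial $z\mapsto z^{k}$ gives $\text{Spec}(A^{k})=\{\lambda^{k}\mid \lambda\in\text{Spec}(A)\}$. Since $A^{k}=0$, we have $\text{Spec}(A^{k})=\{0\}$, so every $\lambda\in\text{Spec}(A)$ satisfies $\lambda^{k}=0$, hence $\lambda=0$. Therefore $\text{Spec}(A)\subseteq\{0\}$, and $\specrad{A}=\sup\{|\lambda|\mid \lambda\in\text{Spec}(A)\}=0$.

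An alternative, even more direct argument avoids the spectral calculus altogether and uses Gelfand's formula $\specrad{A}=\lim_{n\to\infty}\norm{A^{n}}^{1/n}$: for every $n\geqslant k$ one has $A^{n}=0$, so the sequence $\norm{A^{n}}^{1/n}$ is eventually $0$, and its limit is $0$. Either way one concludes $\specrad{A}=0$.

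There is no genuine obstacle here; the only point requiring a modicum of care is to make sure the chosen characterisation of $\specrad{\cdot}$ (via the spectrum or via the Gelfand limit) is the one already available in the ambient setting of a unital C$^{\ast}$-algebra, which is guaranteed since every von Neumann algebra is such an algebra (as recorded in the remark following the Double Commutant Theorem). Both characterisations then yield the conclusion immediately.
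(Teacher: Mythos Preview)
Your proposal is correct, and your second route via Gelfand's formula $\specrad{A}=\lim_{n\to\infty}\norm{A^{n}}^{1/n}$ is exactly the argument the paper gives: since $A^{n}=0$ for all $n\geqslant k$, the sequence is eventually zero and the limit is $0$. Your first route through the spectral mapping theorem is a valid alternative that the paper does not use; it trades the norm-limit characterisation for the spectral one and arrives just as quickly, with the minor bonus that it identifies $\text{Spec}(A)=\{0\}$ explicitly (which the paper only extracts in the subsequent proposition when looking at $\text{Spec}(1+A)$).
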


\begin{proof}
We have $\specrad{A}=\lim_{n\rightarrow \infty} \norm{A^{n}}^{\frac{1}{n}}$. Since $A$ is nilpotent, of degree $k$ for instance, we have that $\norm{A^{n}}=0$ for all $n\geqslant k$. Thus $\specrad{A}=0$.
\end{proof}

\begin{lemma}\label{detnil}
If $A$ is nilpotent, then $P(A)=\sum_{i=1}^{k} \alpha_{k}A^{k}$ is nilpotent.
\end{lemma}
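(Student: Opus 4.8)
The plan is to use the fact that the polynomial $P$ has no constant term, so that $P(A)$ is a left multiple of $A$, together with the observation that $A$ commutes with every polynomial in $A$.

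First I would factor $P(X) = X\,Q(X)$, where $Q(X)=\sum_{i=1}^{k}\alpha_{i}X^{i-1}$ is again a polynomial; this factorisation is available precisely because the sum defining $P$ starts at $i=1$, i.e. $P$ has zero constant term. Since $A$ is nilpotent, fix an integer $m$ with $A^{m}=0$. The operators $A$ and $Q(A)$ both lie in the commutative algebra generated by $A$, so they commute.

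A direct computation then gives $P(A)^{m} = (A\,Q(A))^{m} = A^{m}\,Q(A)^{m} = 0$, which shows that $P(A)$ is nilpotent, of degree at most $m$. Equivalently, and without invoking the factorisation explicitly, one may expand $P(A)^{m}$: each term of the expansion is a scalar multiple of a product $A^{i_{1}}\cdots A^{i_{m}}$ with every $i_{j}\geqslant 1$, hence a scalar multiple of $A^{i_{1}+\cdots+i_{m}}$ with total exponent $\geqslant m$, and therefore vanishes.

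There is no genuine obstacle in this argument; the only point worth isolating is that the vanishing of the constant term of $P$ is exactly what allows $A$ to be factored out, and that nilpotency is preserved under multiplication by commuting operators.
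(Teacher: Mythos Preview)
Your argument is correct and is essentially the paper's own proof: the paper simply observes that the minimal degree of $A$ appearing in the expansion of $P(A)^{i}$ is $i$, which is exactly your ``equivalently'' remark about each term of $P(A)^{m}$ being a multiple of $A^{i_{1}+\cdots+i_{m}}$ with total exponent at least $m$. Your factorisation $P(A)=A\,Q(A)$ is a slightly cleaner packaging of the same idea.
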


\begin{proof}
The minimal degree of $A$ in $P(A)^{i}$ is equal to $i$. Thus $P(A)^{i}=0$ for $i\geqslant k$.
\end{proof}

\begin{proposition}
If $A$ is nilpotent, then $\det(1+A)=1$.
\end{proposition}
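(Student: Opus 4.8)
The plan is to derive the equality from the multiplicativity of the Fuglede--Kadison determinant together with the bound $\det(X)\leqslant\specrad{X}$, applied both to $1+A$ and to its inverse.

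First I would observe that $1+A$ is invertible: if $A^{k}=0$ then $(1+A)\sum_{i=0}^{k-1}(-1)^{i}A^{i}=1$, so that $(1+A)^{-1}=1+B$ where $B=\sum_{i=1}^{k-1}(-1)^{i}A^{i}$, a polynomial in $A$ with vanishing constant term, hence nilpotent by \autoref{detnil}.

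Next I would show that $\specrad{1+N}\leqslant 1$ for every nilpotent $N$. If $N^{k}=0$, then for $n\geqslant k$ the binomial expansion $(1+N)^{n}=\sum_{i=0}^{k-1}\binom{n}{i}N^{i}$ is a finite sum, so $\norm{(1+N)^{n}}$ grows at most polynomially in $n$; taking $n$-th roots and letting $n\to\infty$ then gives $\specrad{1+N}=\lim_{n}\norm{(1+N)^{n}}^{1/n}\leqslant 1$. (Alternatively, since nilpotent operators have zero spectral radius one gets $\sigma(N)=\{0\}$, hence $\sigma(1+N)=\{1\}$ and $\specrad{1+N}=1$.)

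Applying the bound $\det(X)\leqslant\specrad{X}$ to $X=1+A$ and to $X=1+B$ yields $\det(1+A)\leqslant 1$ and $\det(1+B)\leqslant 1$. Since $1+A$ and $1+B$ are invertible their determinants $e^{\tr(\log\abs{1+A})}$ and $e^{\tr(\log\abs{1+B})}$ are strictly positive, so multiplicativity gives $\det(1+A)\,\det(1+B)=\det\bigl((1+A)(1+B)\bigr)=\det(1)=1$, whence $\det(1+A)=\det(1+B)^{-1}\geqslant 1$. Combined with $\det(1+A)\leqslant 1$ this forces $\det(1+A)=1$. The only step that requires a little care is the spectral radius estimate $\specrad{1+N}\leqslant 1$; everything else is a direct combination of the two stated properties of $\det$ with \autoref{detnil}.
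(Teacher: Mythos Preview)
Your proof is correct and follows essentially the same approach as the paper's: bound $\det(1+A)\leqslant\specrad{1+A}\leqslant 1$, do the same for the inverse $1+B$ with $B$ nilpotent (via \autoref{detnil}), and combine via multiplicativity. The paper establishes $\specrad{1+A}\leqslant 1$ by the spectral-mapping argument you list as an alternative, rather than the binomial growth estimate, but otherwise the two proofs coincide.
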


\begin{proof}
We will denote by $k$ the degree of nilpotency of $A$. Since $A$ is nilpotent, $\specrad{A}=0$. Pick $\lambda\in\spec{1+A}$. By definition, $\lambda.1-1-A$ is non-invertible, which means that $(\lambda-1).1-A$ is non-invertible, i.e. $\lambda-1\in\spec{A}$. This implies that $\lambda=1$ since the spectrum of $A$ is reduced to $\{0\}$. This implies that $\specrad{1+A}\leqslant 1$ and therefore that $\det(1+A)\leqslant 1$.

Moreover, $(1+A)^{-1}=\sum_{i=0}^{k-1} (-A)^{i}=1+\sum_{i=1}^{k-1}(-A)^{i}$. By the preceding lemma we know that $B=\sum_{i=1}^{k-1} (-A)^{i}$ is nilpotent, and therefore $\det(1+B)\leqslant 1$ using the same argument as before. Since $\det(1+B)=\det((1+A)^{-1})=(\det(1+A))^{-1}$, we conclude that $\det(1+A)=1$.
\end{proof}

\subsection{The Hyperfinite GoI}

We will here present a modified version of Girard's hyperfinite GoI model. The modification concerns mainly the additive connectives for which we will follow the constructions detailed in our work on interaction graphs \cite{seiller-goia}; as it was explained in the cited work, the construction of additives thus obtained is much more satisfactory than the construction detailed in Girard's paper \cite{goi5}. The results shown do not depend on this choice, but it will allow us to consider a sequent calculus for elementary linear logic we already studied \cite{seiller-goie}. We will call this construction \emph{Girard's hyperfinite GoI model}. We will also present a slight alteration of this construction (namely, with a different orthogonality) later on; this second construction will be used to obtain the main result of the paper, after we clarified its relation to Girard's hyperfinite GoI model.

The constructions are based on two essential properties, as explained in the author's previous work \cite{seiller-goia,seiller-goig}: 
\begin{itemize}
\item the associativity of the execution $\plug$ \cite{feedback}; 
\item the \enquote{adjunction} (the next theorem) which relates the execution and the \emph{measurement} between operators 
$$\meashyp{u,v}=-\log(\det(1-uv))$$
\end{itemize}

\begin{theorem}[Girard \cite{goi5}]\label{adjonctionFK}
Let $u,v,w$ be three  hermitian operators in the unit ball of the hyperfinite factor of type $\text{II}_{1}$, and $u\plug v$ the solution to the feedback equation involving $u$ and $v$. Then:
\begin{equation*}
\meashyp{u,v+w}=\meashyp{u,v}+\meashyp{u \plug v,w}
\end{equation*}
\end{theorem}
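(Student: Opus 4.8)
The identity to prove is
\begin{equation*}
\meashyp{u,v+w}=\meashyp{u,v}+\meashyp{u \plug v,w},
\end{equation*}
i.e. $-\log\det(1-u(v+w)) = -\log\det(1-uv) - \log\det(1-(u\plug v)w)$. Exponentiating and using multiplicativity of the Fuglede--Kadison determinant, this is equivalent to the purely determinantal statement
\begin{equation*}
\det\bigl(1-u(v+w)\bigr) = \det(1-uv)\,\det\bigl(1-(u\plug v)w\bigr).
\end{equation*}
So the first move is to reduce the logarithmic ``adjunction'' to this determinant factorisation, which is legitimate because $\det$ is multiplicative and $u,v,w$ are in the unit ball, hence $1-uv$, etc., are the relevant operators (one should note $u\plug v$ is also a contraction, by Girard's feedback equation result \cite{feedback}).

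\textbf{The finite-dimensional heuristic, then a density/continuation argument.} The natural strategy is to first establish the determinant factorisation in the case where $uv$ is nilpotent — or more simply where $1-uv$ is invertible — so that $u\plug v$ is given by the explicit execution formula $\Ex(u,v)=(p+p''v)(1-uv)^{-1}(up+p'')$ (with the block/projection conventions of the feedback equation). In that regime, the factorisation is exactly the block-determinant computation already displayed in the excerpt for matrices:
\begin{equation*}
\det(1-F(G\oplus H))=\det(1-F_{11}G)\,\det(1-\Ex(F,G)\cdot H),
\end{equation*}
transposed to the type $\text{II}_1$ setting. Concretely, writing the ``$v+w$'' argument as a sum where $v$ acts on one block and $w$ on another, one performs the same column operation $C=(1-uv)^{-1}(\text{cross term})$, which is available because $1-uv$ is invertible, and reads off the two-factor product; the Fuglede--Kadison determinant is multiplicative and behaves correctly under the triangular form obtained, so the computation goes through verbatim once one checks the trace identity $\det\begin{pmatrix}X&0\\ Z&Y\end{pmatrix}=\det(X)\det(Y)$ holds for it (this follows from $\log\det=\tr\log$ and the block structure, or from multiplicativity applied to the factorisation into a lower-triangular unipotent times a block-diagonal).

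\textbf{Extending off the invertible locus.} Having the identity when $1-uv$ is invertible, the remaining task is to extend it to all hermitian contractions $u,v$. Here one invokes exactly the continuation property that underlies Girard's whole construction: $u\plug v$ is the unique extension of $\Ex(u,v)$ to the full set of hermitian contractions that preserves the relevant structural properties \cite{feedback}, and both sides of the determinant factorisation are $\sigma$-weakly continuous (or at least appropriately continuous) functions of $(u,v,w)$ on the unit ball once one is slightly away from the boundary. So one approximates: replace $u$ by $su$ with $s\in(0,1)$, so that $\|suv\|<1$ and $1-suv$ is invertible; prove the identity for $(su,v,w)$ by the block computation; then let $s\to 1$ using continuity of $\det$ and of $\plug$. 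The main obstacle I expect is precisely this limiting step — the Fuglede--Kadison determinant is only upper semicontinuous in general, not continuous, so one must argue that near the parameters in question (hermitian contractions) no determinant degenerates to $0$, or alternatively that the logarithmic form $\meashyp{\cdot,\cdot}$ is genuinely continuous there because the measurement is finite on the orthogonal pairs one cares about. One clean way around this is to keep everything inside the invertible locus on the nose by the $s$-scaling and only invoke the \emph{uniqueness} clause of the continuation theorem: both sides, as functions of the full argument, are continuations of the same partial function agreeing on the invertible locus and sharing the structural properties that pin the continuation down, hence they coincide everywhere. That reduces the analytic difficulty to citing \cite{feedback} rather than re-proving a delicate semicontinuity estimate.
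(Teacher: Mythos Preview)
The paper does not prove this theorem: it is stated with attribution to Girard \cite{goi5} and immediately followed by the next subsection, so there is no proof in the paper to compare against. What the paper does provide is the motivating block-determinant computation for finite matrices (the $\det(1-F(G\oplus H))=\det(1-F_{11}G)\det(1-\Ex(F,G)\cdot H)$ display you allude to), which is exactly the core algebraic step you reproduce.

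Your outline is the right one, and it is essentially how Girard argues in \cite{goi5}: reduce the additive identity on $\meashyp{\cdot,\cdot}=-\log\det(1-\cdot\,\cdot)$ to the multiplicative Schur-complement factorisation of the Fuglede--Kadison determinant, establish that factorisation on the invertible locus via the column operation $C=(1-uv)^{-1}(\text{cross term})$, and then extend. Two small points. First, the reduction step does not use multiplicativity of $\det$; it uses only $\log(ab)=\log a+\log b$. Multiplicativity is what you need inside the block computation, to pass from the lower-triangular form to the product of diagonal blocks (and yes, the block-triangular formula $\det\bigl(\begin{smallmatrix}X&0\\ Z&Y\end{smallmatrix}\bigr)=\det(X)\det(Y)$ does hold for the Fuglede--Kadison determinant, via the factorisation you mention into a unipotent times a block-diagonal). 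Second, the extension step is, as you say, the only genuinely delicate part. Your scaling trick $u\mapsto su$ with $s\in(0,1)$ puts you on the invertible locus; the passage $s\to 1$ is handled in \cite{goi5} by arguing that both sides are real-analytic in $s$ on $[0,1)$ and extend continuously to $s=1$ in the sense required (Girard works with the logarithmic form directly and tracks when it is $+\infty$). Invoking the uniqueness clause of the continuation theorem is a clean alternative but you would need to check that the determinant side genuinely satisfies the structural hypotheses singled out in \cite{feedback}; that is not entirely free.
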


\subsubsection{Multiplicatives.}

\begin{definition}\label{defprojetgdi5}
A \emph{hyperfinite project} is a tuple $\de{a}=(p,a,\vn{A},\alpha,A)$, where:
\begin{itemize}
\item $p$ is a finite projection in $\infhyp$, the \emph{carrier} of $\de{a}$ ;
\item $a\in \realN\cup\{\infty\}$ is called the \emph{wager} of $\de{a}$;
\item $\vn{A}$ is a finite von Neumann algebra of type $\text{I}$, the \emph{dialect} of $\de{a}$;
\item $\alpha$ is a pseudo-trace on $\vn{A}$;
\item $A\in p\infhyp p\otimes \vn{A}$ is a  hermitian operator of norm at most $1$.
\end{itemize}
Using Girard's notation, we will write $\de{a}=a\cdot +\cdot \alpha +A$. When the dialect is equal to $\complexN$, we will denote by $1_{\complexN}$ the \enquote{trace} $x\mapsto x$.
\end{definition}

If $A\in \infhyp\otimes\vn{A}$ and $B\in\infhyp\otimes\vn{B}$, we will write $A^{\dagger_{\vn{B}}}$ and $B^{\ddagger_{\vn{A}}}$ (usually simplified as $A^{\dagger}$ and $B^{\ddagger}$) the operators in $\infhyp\otimes\vn{A}\otimes\vn{B}$ defined as:
\begin{eqnarray*}
A^{\dagger_{\vn{B}}}&=&A\otimes 1_{\vn{B}}\\
B^{\ddagger_{\vn{A}}}&=&(\text{Id}_{\infhyp}\otimes\tau)(B\otimes 1_{\vn{A}})
\end{eqnarray*}
where $\tau$  is the isomorphism $\vn{B}\otimes \vn{A}\rightarrow \vn{A}\otimes \vn{B}$.

\begin{definition}
Let $\de{a}=a\cdot +\cdot \alpha +A$ and $\de{b}=b\cdot+\cdot \beta+B$ be two hyperfinite projects. Then $\de{a}\simperp\de{b}$ when:
\begin{equation*}
\sca[\textnormal{hyp}]{a}{b}=a\beta(1_{\vn{B}})+\alpha(1_{\vn{A}})b-\log(\det{}_{\tr\otimes\alpha\otimes\beta}(1-A^{\dagger}B^{\ddagger}))\neq 0,\infty
\end{equation*}
If $A$ is a set of hyperfinite projects, we will write $A^{\pol}=\{\de{b}~|~\forall \de{a}\in A, \de{a}\poll\de{b}\}$ and $A^{\pol\pol}=(A^{\pol})^{\pol}$.
\end{definition}

\begin{definition}
Let $p$ be a finite projection in $\infhyp$. A \emph{conduct} of carrier $p$ is a set $\cond{A}$ of hyperfinite projects of carrier $p$ such that $\cond{A}=\cond{A}^{\pol\pol}$.
\end{definition}

\begin{definition}
If $\de{a},\de{b}$ are hyperfinite projects of disjoint carrier, the tensor product of $\de{a}$ and $\de{b}$ is defined as the hyperfinite project of carrier $p_{\de{a}}+p_{\de{b}}$ defined as 
$$\de{a\otimes b}=a\beta(1_{\vn{B}})+\alpha(1_{\vn{A}})b\cdot+\cdot \alpha\otimes\beta + A^{\dagger}+B^{\ddagger}$$
\end{definition}

\begin{definition}
If $\cond{A,B}$ are conducts of disjoint carrier, their tensor product is defined as the conduct:
\begin{equation*}
\cond{A\otimes B}=\{\de{a\otimes b}~|~\de{a}\in\cond{A}, \de{b}\in\cond{B}\}^{\pol\pol}
\end{equation*}
\end{definition}


\begin{definition}
If $\cond{A,B}$ are conducts of disjoint carrier, we define:
\begin{equation*}
\cond{A\multimap B}=\{\de{f}~|~\forall\de{a}\in\cond{A},\de{f\plug a}\in\cond{B}\}
\end{equation*}
\end{definition}

These definitions are coherent interpretations of multiplicative connectives of linear logic, as shown by the following property -- a direct consequence of the \emph{adjunction}.

\begin{theorem}\label{duality}
$$\cond{A\multimap B}=\cond{(A\otimes B^{\pol})^{\pol}}$$
\end{theorem}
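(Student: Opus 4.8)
The plan is to prove the identity $\cond{A \multimap B} = \cond{(A \otimes B^{\pol})^{\pol}}$ by a double inclusion, reducing everything to the adjunction formula of \autoref{adjonctionFK} applied to the measurement $\meashyp{\cdot,\cdot}$, together with the bookkeeping of wagers. First I would spell out what it means for a project $\de{f} = f \cdot + \cdot \phi + F$ to belong to each side. By definition, $\de{f} \in \cond{A \multimap B}$ iff for every $\de{a} \in \cond{A}$ the project $\de{f \plug a}$ lies in $\cond{B}$; and since $\cond{B}$ is a conduct (i.e. $\cond{B} = \cond{B}^{\pol\pol}$), this is equivalent to asking that $\de{f \plug a} \poll \de{b}$ for every $\de{b} \in \cond{B}^{\pol}$. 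On the other side, $\de{f} \in \cond{(A \otimes B^{\pol})^{\pol}}$ means $\de{f} \poll \de{a \otimes b}$ for every $\de{a} \in \cond{A}$ and $\de{b} \in \cond{B}^{\pol}$ (here one uses that the tensors $\de{a \otimes b}$ generate $\cond{A \otimes B^{\pol}}$ and that orthogonality passes to the bi-orthogonal closure). So both sides are characterised by the \emph{same} family of orthogonality conditions indexed by pairs $(\de{a}, \de{b})$, and the whole theorem reduces to showing, for fixed $\de{a}, \de{b}$, the equivalence
\begin{equation*}
\de{f} \poll \de{a \otimes b} \iff \de{f \plug a} \poll \de{b},
\end{equation*}
i.e. that $\sca[\textnormal{hyp}]{f}{a \otimes b} \neq 0, \infty$ precisely when $\sca[\textnormal{hyp}]{f \plug a}{b} \neq 0, \infty$.

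The core of this is to prove that the two pairings are in fact \emph{equal} (not merely simultaneously trivial or non-trivial). Writing out $\sca[\textnormal{hyp}]{f}{a \otimes b}$ from the definition of $\de{a \otimes b}$, the wager of $\de{a \otimes b}$ is $a\beta(1_{\vn{B}}) + \alpha(1_{\vn{A}})b$ and its operator is $A^{\dagger} + B^{\ddagger}$, so the determinant term that appears is $\det_{\tr \otimes \phi \otimes \alpha \otimes \beta}(1 - F^{\dagger}(A^{\dagger} + B^{\ddagger})^{\ddagger})$, which after the appropriate dialect relocations becomes $\det(1 - F(A + B))$ with everything living in the common algebra $\infhyp \otimes \phi \otimes \alpha \otimes \beta$ (abusing notation for the relocated operators). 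Here is where \autoref{adjonctionFK} enters: applying the adjunction with $u = F$, $v = A$, $w = B$ gives
\begin{equation*}
\meashyp{F, A + B} = \meashyp{F, A} + \meashyp{F \plug A, B},
\end{equation*}
that is, $-\log \det(1 - F(A+B)) = -\log\det(1 - FA) - \log\det(1 - (F \plug A)B)$. Feeding this back into the expression for $\sca[\textnormal{hyp}]{f}{a \otimes b}$ and carefully matching the linear (wager) terms — using that the wager of $\de{f \plug a}$ is built so that $\phi(1)a + \meashyp{F,A}$ gets absorbed into it — one obtains exactly $\sca[\textnormal{hyp}]{f}{a \otimes b} = \sca[\textnormal{hyp}]{f \plug a}{b}$. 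The $\neq 0, \infty$ condition then transfers verbatim, and both inclusions follow.

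The main obstacle I expect is the bookkeeping of dialects, pseudo-traces and wagers: \autoref{adjonctionFK} is stated for three hermitians in the unit ball of a single type $\text{II}_1$ factor, whereas here $F$, $A$, $B$ carry different dialects $\phi, \alpha, \beta$ and one must first transport all of them into a common algebra via the $\dagger/\ddagger$ relocations and the isomorphism $\tau$, check that the relocated operators are still hermitian of norm $\leq 1$, verify that the feedback solution $F \plug A$ computed in the big algebra agrees with the project $\de{f \plug a}$, and confirm that the pseudo-trace $\tr \otimes \phi \otimes \alpha \otimes \beta$ factors through the relocations so that the determinant identity of \autoref{adjonctionFK} applies unchanged (invoking the invariance of the Fuglede--Kadison determinant under $\ast$-isomorphisms, already noted in the excerpt). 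Once this scaffolding is in place the logical skeleton — bi-orthogonal closure plus the pointwise pairing identity — is routine; indeed this is precisely the hyperfinite analogue of the adjunction argument used to prove $A \multimap B = (A \otimes B^{\pol})^{\pol}$ in the earlier nilpotency-based models, as the theorem statement already hints.
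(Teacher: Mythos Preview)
Your proposal is correct and is exactly the argument the paper has in mind: the text immediately preceding the theorem says it is ``a direct consequence of the \emph{adjunction}'' (\autoref{adjonctionFK}), and you have simply unpacked what that means --- reducing both sides to the family of conditions $\de{f}\poll\de{a\otimes b}$ versus $\de{f\plug a}\poll\de{b}$, and then using the adjunction to identify the two pairings. The bookkeeping issues you flag (relocation of dialects via $\dagger/\ddagger$, compatibility of the pseudo-trace with the determinant identity, and absorption of the $\meashyp{F,A}$ term into the wager of $\de{f\plug a}$) are genuine but routine, and the paper leaves them implicit.
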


\subsubsection{Additives.}

\begin{lemma}[Variants]
Let $\de{a}$ be a hyperfinite project in a conduct $\cond{A}$, and $\phi:\vn{A}\rightarrow \vn{B}$ a $\ast$-isomorphism. Then the hyperfinite project $\de{a}^{\phi}=a\cdot+\cdot\alpha\circ\phi^{-1}+\text{Id}\otimes\phi(A)$ is an element of $\cond{A}$. We will say that $\de{a^{\phi}}$ is a  \emph{variant} of $\de{a}$.
\end{lemma}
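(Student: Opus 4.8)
The plan is to exploit the identity $\cond{A}=\cond{A}^{\pol\pol}$: it suffices to show that $\de{a}^{\phi}$ is orthogonal to every project of $\cond{A}^{\pol}$, and the heart of the matter will be that the pairing $\sca[\textnormal{hyp}]{\cdot}{\cdot}$ is left unchanged when $\de{a}$ is replaced by $\de{a}^{\phi}$. First I would check that $\de{a}^{\phi}$ is a bona fide hyperfinite project of the same carrier $p$ as $\de{a}$: its dialect $\vn{B}$ is again a finite von Neumann algebra of type $\text{I}$ (being $\ast$-isomorphic to $\vn{A}$); $\alpha\circ\phi^{-1}$ is a pseudo-trace on $\vn{B}$, as hermicity, traciality, faithfulness and normality are all preserved by pre-composition with a $\ast$-isomorphism; and $(\text{Id}\otimes\phi)(A)$ lies in $p\infhyp p\otimes\vn{B}$, is hermitian since $\text{Id}\otimes\phi$ preserves adjoints, and has norm at most $1$ since $\text{Id}\otimes\phi$ is isometric. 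In particular $\de{a}^{\phi}$ has carrier $p$ and is eligible to belong to $\cond{A}$.

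Next, fix $\de{c}=c\cdot+\cdot\gamma+C\in\cond{A}^{\pol}$, with dialect $\vn{C}$, and compare $\sca[\textnormal{hyp}]{a^{\phi}}{c}$ with $\sca[\textnormal{hyp}]{a}{c}$. The wager contributions agree: since $\phi^{-1}(1_{\vn{B}})=1_{\vn{A}}$ one gets $(\alpha\circ\phi^{-1})(1_{\vn{B}})=\alpha(1_{\vn{A}})$, so $a\gamma(1_{\vn{C}})+(\alpha\circ\phi^{-1})(1_{\vn{B}})c=a\gamma(1_{\vn{C}})+\alpha(1_{\vn{A}})c$. For the determinant term, consider $\Psi=\text{Id}_{\infhyp}\otimes\phi\otimes\text{Id}_{\vn{C}}$, a $\ast$-isomorphism of $\infhyp\otimes\vn{A}\otimes\vn{C}$ onto $\infhyp\otimes\vn{B}\otimes\vn{C}$. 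Unwinding the definitions of the liftings shows $\Psi(A^{\dagger_{\vn{C}}})=((\text{Id}\otimes\phi)(A))^{\dagger_{\vn{C}}}$ — because $A^{\dagger_{\vn{C}}}=A\otimes 1_{\vn{C}}$ and $\Psi$ acts as the identity on the last tensor factor — and $\Psi(C^{\ddagger_{\vn{A}}})=C^{\ddagger_{\vn{B}}}$ — because $C^{\ddagger_{\vn{A}}}$ is obtained by inserting the unit $1_{\vn{A}}$, which $\phi$ maps to $1_{\vn{B}}$. Hence $\Psi(1-A^{\dagger}C^{\ddagger})=1-((\text{Id}\otimes\phi)(A))^{\dagger}C^{\ddagger}$, and the proposition on invariance of the Fuglede--Kadison determinant under $\ast$-isomorphisms — whose proof, relying only on the commutation of $\text{Id}\otimes\phi$ with functional calculus, applies equally to the extended determinant used in the pairing — gives, using $(\alpha\circ\phi^{-1})\otimes\gamma=(\alpha\otimes\gamma)\circ(\phi^{-1}\otimes\text{Id}_{\vn{C}})$,
$$\det{}_{\tr\otimes(\alpha\circ\phi^{-1})\otimes\gamma}\bigl(1-((\text{Id}\otimes\phi)(A))^{\dagger}C^{\ddagger}\bigr)=\det{}_{\tr\otimes\alpha\otimes\gamma}\bigl(1-A^{\dagger}C^{\ddagger}\bigr).$$
Combining the two observations yields $\sca[\textnormal{hyp}]{a^{\phi}}{c}=\sca[\textnormal{hyp}]{a}{c}$.

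To conclude: since $\de{a}\in\cond{A}=\cond{A}^{\pol\pol}$ and $\de{c}\in\cond{A}^{\pol}$ we have $\de{a}\simperp\de{c}$, i.e. $\sca[\textnormal{hyp}]{a}{c}\neq 0,\infty$; by the equality just established $\sca[\textnormal{hyp}]{a^{\phi}}{c}\neq 0,\infty$, that is $\de{a}^{\phi}\simperp\de{c}$. As $\de{c}$ was an arbitrary element of $\cond{A}^{\pol}$, this gives $\de{a}^{\phi}\in\cond{A}^{\pol\pol}=\cond{A}$. The only genuine work sits in the middle paragraph — verifying that the $(\cdot)^{\dagger}$ and $(\cdot)^{\ddagger}$ liftings intertwine with $\Psi$ and that the determinant-invariance proposition is invoked with the correct pseudo-trace — and it is pure bookkeeping; I expect no analytic difficulty, the relevant invariance of the determinant having already been recorded.
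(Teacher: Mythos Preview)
Your proof is correct and follows essentially the same approach as the paper: both arguments pick an arbitrary $\de{c}\in\cond{A}^{\pol}$, verify $\sca[\textnormal{hyp}]{a^{\phi}}{c}=\sca[\textnormal{hyp}]{a}{c}$ by checking that the wager terms match (via $\phi^{-1}(1_{\vn{B}})=1_{\vn{A}}$) and that the determinant terms match (via the intertwining $\Psi=\text{Id}\otimes\phi\otimes\text{Id}$ and the invariance of $\det$ under $\ast$-isomorphisms), and conclude $\de{a}^{\phi}\in\cond{A}^{\pol\pol}=\cond{A}$. Your version is simply more explicit about the well-formedness of $\de{a}^{\phi}$ and the intertwining identities, which the paper leaves implicit.
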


\begin{proof}
Let $\de{c}$ be a hyperfinite project whose carrier is equal to the carrier of $\de{a}$. Then:
\begin{eqnarray*}
\sca[\textnormal{hyp}]{a^{\phi}}{c}&=&a\gamma(1_{\vn{C}})+\alpha\circ\phi^{-1}(1_{\vn{B}})c-\log(\det(1-(\text{Id}\otimes\phi(A))^{\dagger_{\vn{C}}}C^{\ddagger_{\vn{B}}}))\\
&=&a\gamma(1_{\vn{C}})+\alpha(1_{\vn{A}})c-\log(\det(1-\text{Id}\otimes\phi\otimes\text{Id}(A^{\dagger_{\vn{C}}})C^{\ddagger_{\phi(\vn{A})}})\\
&=&a\gamma(1_{\vn{C}})+\alpha(1_{\vn{A}})c-\log(\det(1-\text{Id}\otimes\phi\otimes\text{Id}(A^{\dagger_{\vn{C}}}C^{\ddagger_{\vn{A}}}))
\end{eqnarray*}
Finaly, since $\det(1-A)=\det(1-\psi(A))$ for all isomorphism $\psi$, we obtain $\sca[\textnormal{hyp}]{a^{\phi}}{c}=\sca[\textnormal{hyp}]{a}{c}$. We deduce that for all $\de{c}\in\cond{A}^{\pol}$, $\de{a^{\phi}}\poll\de{c}$, and therefore $\de{a^{\phi}}\in\cond{A}$.
\end{proof}

\begin{definition}
Let $\de{a,b}$ be hyperfinite projects of equal carrier $p$, and $\lambda\in\realN$. We define $\de{a+\lambda b}$ as the hyperfinite project $a+\lambda b\cdot +\cdot \alpha\oplus\lambda\beta+A\oplus B$, of dialect $\vn{A\oplus B}$ and carrier $p$.
\end{definition}

\begin{definition}
A conduct has the \emph{inflation property} when for all $\de{a}\in\cond{A}$, and all $\lambda\in\realN$, the hyperfinite project $\de{a+\lambda 0}$ belongs to $\cond{A}$, where $\de{0}$  is the project  $0\cdot+\cdot 1_{\complexN} +0$ whose carrier is equal to the carrier of $\de{a}$.
\end{definition}

The following proposition shows that this combinatorial definition -- considered in the author's work on interaction graphs \cite{seiller-goia} -- is equivalent to Girard's definition \cite{goi5}.

\begin{proposition}
If $\cond{A}$ has the inflation property, then for all element $\de{a}=(p,a,\vn{A},\alpha,A)$ in $\cond{A}$, for all finite von Neumann algebra $\vn{B}$ and all injective $\ast$-morphism $\phi:\vn{A}\rightarrow \vn{B}$, the hyperfinite project $\de{a}^{\phi}=(p,a,\vn{B},\beta,(\text{Id}\otimes\phi)(A))$ such that $\beta\circ\phi=\alpha$ is an element of $\cond{A}$.
\end{proposition}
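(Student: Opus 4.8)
The plan is to reduce the statement to the case where $\phi$ is \emph{unital}, a case that follows from essentially the computation already carried out for the Variants lemma, and then to absorb the complementary projection $1_{\vn{B}}-\phi(1_{\vn{A}})$ into an inflation. First I would treat the unital case, i.e.\ $\phi(1_{\vn{A}})=1_{\vn{B}}$. Let $\de{c}=(p,c,\vn{C},\gamma,C)$ be any project with the same carrier as $\de{a}$. Since $\phi$ is unital one has $(\text{Id}\otimes\phi\otimes\text{Id})(C^{\ddagger_{\vn{A}}})=C^{\ddagger_{\vn{B}}}$, hence $1-((\text{Id}\otimes\phi)(A))^{\dagger}C^{\ddagger_{\vn{B}}}=(\text{Id}\otimes\phi\otimes\text{Id})(1-A^{\dagger}C^{\ddagger_{\vn{A}}})$; moreover $\text{Id}\otimes\phi\otimes\text{Id}$ is an injective $\ast$-morphism $\infhyp\otimes\vn{A}\otimes\vn{C}\to\infhyp\otimes\vn{B}\otimes\vn{C}$ which is trace preserving because $\beta\circ\phi=\alpha$, i.e.\ $(\tr\otimes\beta\otimes\gamma)\circ(\text{Id}\otimes\phi\otimes\text{Id})=\tr\otimes\alpha\otimes\gamma$. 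The determinant identity $\det(1-X)=\det(1-\psi(X))$ used in the proof of the Variants lemma holds for any such trace-preserving $\ast$-morphism $\psi$ (not only isomorphisms), so the determinant term of $\sca[\textnormal{hyp}]{a^{\phi}}{c}$ equals that of $\sca[\textnormal{hyp}]{a}{c}$; since in addition $\beta(1_{\vn{B}})=\beta(\phi(1_{\vn{A}}))=\alpha(1_{\vn{A}})$ and the wager term $a\gamma(1_{\vn{C}})$ is unchanged, $\sca[\textnormal{hyp}]{a^{\phi}}{c}=\sca[\textnormal{hyp}]{a}{c}$. Thus $\de{a}^{\phi}\poll\de{c}$ for every $\de{c}\in\cond{A}^{\pol}$, i.e.\ $\de{a}^{\phi}\in\cond{A}^{\pol\pol}=\cond{A}$. (This step uses neither the inflation property nor the hypothesis that $\vn{A},\vn{B}$ are of type I.)

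For a general, possibly non-unital, injective $\phi$ I would set $q=\phi(1_{\vn{A}})$, $r=1_{\vn{B}}-q$, and $\lambda=\beta(r)$, which is a nonzero real by faithfulness of $\beta$, and define $\psi:\vn{A}\oplus\complexN\to\vn{B}$ by $\psi(x\oplus t)=\phi(x)+tr$. One checks that $\psi$ is a \emph{unital} injective $\ast$-morphism: it is unital since $\psi(1_{\vn{A}}\oplus 1)=q+r=1_{\vn{B}}$; multiplicative since $\phi(x)=q\phi(x)q$ together with $qr=rq=0$ yields $\phi(x)r=r\phi(x)=0$ and $r^{2}=r$; $\ast$-preserving since $r^{\ast}=r$; injective since $\psi(x\oplus t)=0$ forces $\phi(x)=q\psi(x\oplus t)q=0$, hence $x=0$ and then $tr=0$, hence $t=0$. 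Furthermore $\beta\circ\psi(x\oplus t)=\beta(\phi(x))+t\beta(r)=\alpha(x)+\lambda t$, which is exactly the pseudo-trace $\alpha\oplus\lambda 1_{\complexN}$ carried by the inflated project $\de{a+\lambda 0}$, while $(\text{Id}\otimes\psi)(A\oplus 0)=(\text{Id}\otimes\phi)(A)$ because $\psi$ restricts to $\phi$ on the $\vn{A}$-summand. Comparing carrier, wager, dialect, pseudo-trace and operator, this shows $\de{a}^{\phi}=(\de{a+\lambda 0})^{\psi}$.

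To conclude: by the inflation property $\de{a+\lambda 0}\in\cond{A}$, and since $\psi$ is a unital injective $\ast$-morphism with $\beta\circ\psi$ equal to the pseudo-trace of $\de{a+\lambda 0}$, the unital case gives $(\de{a+\lambda 0})^{\psi}\in\cond{A}$, that is $\de{a}^{\phi}\in\cond{A}$. I expect the main obstacle to be the determinant bookkeeping in the unital step: one must check carefully that $((\text{Id}\otimes\phi)(A))^{\dagger}C^{\ddagger}$ is genuinely the image of $A^{\dagger}C^{\ddagger}$ under the trace-preserving morphism $\text{Id}\otimes\phi\otimes\text{Id}$ — which is where $\phi(1_{\vn{A}})=1_{\vn{B}}$ and $\beta\circ\phi=\alpha$ are needed — so that the Fuglede--Kadison determinant, and hence the whole quantity $\sca[\textnormal{hyp}]{\cdot}{\cdot}$, is left invariant; everything else is a routine verification.
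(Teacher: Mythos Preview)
Your proof is correct and arrives at the same key identity as the paper, namely $\sca[\textnormal{hyp}]{a^{\phi}}{c}=\sca[\textnormal{hyp}]{a+\lambda 0}{c}$ with $\lambda=\beta(1_{\vn{B}}-\phi(1_{\vn{A}}))$, after which the inflation property concludes. The paper establishes this identity in one shot by a direct determinant computation: it uses $(\text{Id}\otimes\phi)(A)=(1\otimes p\otimes 1)(\text{Id}\otimes\phi)(A)(1\otimes p\otimes 1)$ with $p=\phi(1_{\vn{A}})$ to reduce $\det(1-(\text{Id}\otimes\phi)(A)^{\dagger}C^{\ddagger_{\vn{B}}})$ to $\det(1-A^{\dagger}C^{\ddagger_{\vn{A}}})$, and then reads off the wager term. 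You instead factor the situation as a literal equality of projects $\de{a}^{\phi}=(\de{a+\lambda 0})^{\psi}$ for a \emph{unital} injective $\psi:\vn{A}\oplus\complexN\to\vn{B}$, and handle the unital case by a mild extension of the Variants lemma to trace-preserving injective (not just bijective) $\ast$-morphisms. This is a cleaner, more modular packaging of the same computation; it isolates a reusable lemma (invariance under unital trace-preserving embeddings) at the cost of constructing $\psi$, whereas the paper's version is shorter but less structured. One small remark: your ``general'' paragraph tacitly assumes $r\neq 0$ (needed both for $\lambda\neq 0$ and for injectivity of $\psi$), but $r=0$ is exactly the unital case you already dealt with, so the case split is complete.
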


\begin{proof}
Let $p$ be the projection which is the image of the identity through $\phi$. Then $(\text{Id}\otimes\phi)(A)=p(\text{Id}\otimes\phi)(A)p$. Moreover,
\begin{equation*}
\beta(1_{\vn{B}})=\beta(p+(1_{\vn{B}}-p))=\beta(p)+\beta(1_{\vn{B}}-p)=\beta(\phi(1_{\vn{A}}))+\beta(1_{\vn{B}}-p)=\alpha(1_{\vn{A}})+\beta(1_{\vn{B}}-p)
\end{equation*}
Let $\de{c}\in\cond{A}^{\pol}$. We notice that:
\begin{eqnarray*}
\det(1-\text{Id}\otimes\phi\otimes\text{Id}(A^{\dagger})C^{\ddagger_{\vn{B}}})&=&\det(1-(1\otimes p\otimes 1)\text{Id}\otimes\phi\otimes\text{Id}(A^{\dagger})(1\otimes p\otimes 1)C^{\ddagger_{\vn{B}}})\\
&=&\det(1-\text{Id}\otimes\phi\otimes\text{Id}(A^{\dagger})(1\otimes p\otimes 1)C^{\ddagger_{\vn{B}}}(1\otimes p\otimes 1))\\
&=&\det(1-\text{Id}\otimes\phi\otimes\text{Id}(A^{\dagger})C^{\ddagger_{p\vn{B}p}})\\
&=&\det(1-\text{Id}\otimes\phi\otimes\text{Id}(A^{\dagger})C^{\ddagger_{\phi(\vn{A})}})\\
&=&\det(1-\text{Id}\otimes\phi\otimes\text{Id}(A^{\dagger}C^{\ddagger_{\vn{A}}}))\\
&=&\det(1-A^{\dagger}C^{\ddagger_{\vn{A}}})
\end{eqnarray*}
We can now compute $\sca[\textnormal{hyp}]{c}{a^{\phi}}$:
\begin{eqnarray*}
\sca[\textnormal{hyp}]{c}{a^{\phi}}&=&c\beta(1_{\vn{B}})+a\gamma(1_{\vn{C}})-\log(\det(1-(\text{Id}\otimes\phi(A))^{\dagger}C^{\ddagger_{\vn{B}}}))\\
&=&c(\alpha(1_{\vn{A}})+\lambda)+a\gamma(1_{\vn{C}})-\log(\det(1-(\text{Id}\otimes\phi\otimes\text{Id}(A^{\dagger}))C^{\ddagger_{\vn{B}}}))\\
&=&c(\alpha(1_{\vn{A}})+\lambda)+a\gamma(1_{\vn{C}})-\log(\det(1-A^{\dagger}C^{\ddagger_{\vn{A}}}))\\
&=&c(\alpha(1_{\vn{A}})+\lambda)+a\gamma(1_{\vn{C}})-\log(\det(1-(A\oplus 0)^{\dagger}C^{\ddagger_{\vn{A}\oplus\complexN}}))\\
&=&\sca[\textnormal{hyp}]{c}{a+\lambda 0}
\end{eqnarray*}
We have shown that $\sca[\textnormal{hyp}]{c}{a^{\phi}}=\sca[\textnormal{hyp}]{c}{a+\lambda 0}$. Since $\cond{A}$ has the inflation property and $\de{a}\in\cond{A}$, we have that $\sca[\textnormal{hyp}]{c}{a+\lambda 0}\neq 0,\infty$ for all $\de{c}\in\cond{A}^{\pol}$. Thus $\de{a^{\phi}}\poll\de{c}$ for all $\de{c}\in\cond{A}^{\pol}$, which implies that $\de{a^{\phi}}\in\cond{A}$.
\end{proof}

\begin{definition}
A \emph{dichology} is a conduct $\cond{A}$ such that both $\cond{A}$ and $\cond{A^{\pol}}$ have the inflation property. A dichology $\cond{A}$ is \emph{proper} when both $\cond{A}$ and $\cond{A}^{\pol}$ are non-emtpy.
\end{definition}

\begin{definition}
Let $\de{a}$ be a hyperfinite projet of carrier $p$, and $q$ a projection such that $pq=0$. We define $\de{a}_{p+q}$ as the hyperfinite project $a\cdot+\cdot \alpha+(A+0)$ of carrier $p+q$.

If $\cond{A}$ is a conduct of carrier $p$, we define $\cond{A}_{p+q}=\{\de{a}_{p+q}~|~\de{a}\in\cond{A}\}^{\pol\pol}$.
\end{definition}

\begin{definition}\label{def_additives}
Let $\cond{A,B}$ be two conducts of respective disjoint carriers $p,q$. We define:
\begin{eqnarray*}
\cond{A\with B}&=&((\cond{A}^{\pol})_{p+q})^{\pol}\cap ((\cond{B}^{\pol})_{p+q})^{\pol}\\
\cond{A\oplus B}&=&((\cond{A}_{p+q})^{\pol\pol}\cup (\cond{B}_{p+q})^{\pol\pol})^{\pol\pol}
\end{eqnarray*}
\end{definition}

\begin{proposition}
If $\cond{A,B}$ are dichologies of disjoint carriers, the conducts $\cond{A\otimes B}, \cond{A\with B}, \cond{A\oplus B}$ and $\cond{A\multimap B}$ are dichologies.
\end{proposition}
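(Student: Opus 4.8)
The plan is to handle the four type-formers uniformly: first cut the work down using de~Morgan dualities together with the fact that ``being a dichology'' is self-dual, and then reduce the remaining content to the single inflation property, which I check by one explicit computation with the pairing $\sca[\textnormal{hyp}]{\cdot}{\cdot}$ and the operation $\de{\cdot+\lambda 0}$.

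First I would record the duality identities. \autoref{duality} gives $\cond{A\multimap B}=(\cond{A\otimes B^{\pol}})^{\pol}$; substituting $\cond{B}^{\pol}$ for $\cond{B}$ (and using $\cond{B}^{\pol\pol}=\cond{B}$) yields $(\cond{A\otimes B})^{\pol}=\cond{A\multimap B^{\pol}}$. From \autoref{def_additives} and the elementary closure-operator identities $(X\cup Y)^{\pol}=X^{\pol}\cap Y^{\pol}$, $(X^{\pol}\cap Y^{\pol})^{\pol}=(X\cup Y)^{\pol\pol}$, and $(X^{\pol\pol}\cup Y^{\pol\pol})^{\pol\pol}=(X\cup Y)^{\pol\pol}$, one gets $(\cond{A\with B})^{\pol}=\cond{A^{\pol}\oplus B^{\pol}}$ and $(\cond{A\oplus B})^{\pol}=\cond{A^{\pol}\with B^{\pol}}$. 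Since the definition of a dichology is symmetric under $(\cdot)^{\pol}$ --- $\cond{C}$ is a dichology iff $\cond{C}^{\pol}$ is --- and $\cond{A},\cond{B}$ are dichologies iff $\cond{A}^{\pol},\cond{B}^{\pol}$ are, these identities reduce the proposition to proving, for all dichologies $\cond{A},\cond{B}$: each of $\cond{A\otimes B}$, $\cond{A\multimap B}$, $\cond{A\oplus B}$, $\cond{A\with B}$ is a conduct satisfying the inflation property. Conduct-hood is immediate: $\cond{A\otimes B}$ and $\cond{A\oplus B}$ are biorthogonal closures by definition; $\cond{A\multimap B}=(\cond{A\otimes B^{\pol}})^{\pol}$ is of the form $X^{\pol}$; and $\cond{A\with B}$ is an intersection of two sets of the form $X^{\pol}$, hence equals $(X\cup Y)^{\pol}$ --- all biorthogonally closed.

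The heart of the matter is the inflation property. Exactly as in the proof of the preceding proposition, the block-diagonal form of $C\oplus 0$ and the multiplicativity of the Fuglede--Kadison determinant give, for hyperfinite projects $\de{c},\de{d}$ of equal carrier and $\lambda$ in the range allowed by faithfulness of $\alpha\oplus\lambda 1_{\complexN}$ (the positive reals),
\begin{equation*}
\sca[\textnormal{hyp}]{c+\lambda 0}{d}=\sca[\textnormal{hyp}]{c}{d}+\lambda\,d ,
\end{equation*}
where $d$ denotes the wager of $\de{d}$: the adjoined summand of the dialect adds $\lambda$ to the total mass of the pseudo-trace and contributes $0$ to the determinant. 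Hence, for a conduct $\cond{C}=\cond{C}^{\pol\pol}$ and $\de{c}\in\cond{C}$, one has $\de{c+\lambda 0}\in\cond{C}$ for all admissible $\lambda$ as soon as $\sca[\textnormal{hyp}]{c}{d}+\lambda\,d\neq 0,\infty$ for every $\de{d}\in\cond{C}^{\pol}$ and every $\lambda>0$; and since $\de{c}\poll\de{d}$ already forces $\sca[\textnormal{hyp}]{c}{d}$ to be finite and nonzero, this amounts to a finiteness-and-sign condition on the wagers of the elements of $\cond{C}^{\pol}$: no such $\de{d}$ may have infinite wager, and every $\de{d}$ of nonzero wager must have its wager and the pairing $\sca[\textnormal{hyp}]{c}{d}$ of the same sign for all $\de{c}\in\cond{C}$. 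Using the duality identities, $\cond{C}^{\pol}$ is the de~Morgan-dual construction, and this condition I would check case by case from the explicit project-level data: the wagers of $\de{a\otimes b}$ and of $\de{a}_{p+q}$ and the definition of $\de{a+\lambda b}$ settle the $\otimes$- and $\oplus$-duals directly, while for the $\multimap$- and $\with$-duals --- where $\cond{C}^{\pol}$ is a tensor- respectively plus-conduct --- one transports the estimate across the feedback via the project-level adjunction $\sca[\textnormal{hyp}]{f\plug a}{b}=\sca[\textnormal{hyp}]{f}{a\otimes b}$ underlying \autoref{duality} (itself coming from \autoref{adjonctionFK}) and across the biorthogonal closure. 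Finally, the two degenerate conducts of a given carrier --- the empty set and the set of all projects of that carrier --- are trivially dichologies, so one may assume the conducts in play are non-empty, which is what makes the criterion above non-vacuous.

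I expect the main obstacle to be precisely this wager bookkeeping in the $\cond{A\with B}$ and $\cond{A\multimap B}$ cases: following the wager through the localisation $(\cdot)_{p+q}$ (which changes the carrier but not the wager), through the biorthogonal closure, and through $\plug$ via the adjunction, and verifying that \emph{every} element of the relevant closure --- not just its generators --- meets the finiteness-and-sign condition. The $\cond{A\otimes B}$ and $\cond{A\oplus B}$ cases should be comparatively direct, since there the dual conduct is the one whose generators carry explicit, immediately readable wagers. A secondary point of care is keeping the admissible range of $\lambda$ correct throughout --- it is pinned down by faithfulness of $\alpha\oplus\lambda 1_{\complexN}$ --- but this does not affect the main structural argument.
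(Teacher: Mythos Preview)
The paper states this proposition without proof (the construction is imported from the author's interaction-graphs work cited as \cite{seiller-goia}), so there is no in-paper argument to compare against and your proposal has to stand on its own.

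Your duality reductions are correct, and the identity $\sca[\textnormal{hyp}]{c+\lambda 0}{d}=\sca[\textnormal{hyp}]{c}{d}+\lambda d$ is indeed the computational core. The gap is that your ``finiteness-and-sign'' criterion does not do what you want it to. First, the paper quantifies inflation over $\lambda\in\realN$; faithfulness of $\alpha\oplus\lambda 1_{\complexN}$ only rules out $\lambda=0$, so with negative $\lambda$ admissible the sign condition is insufficient --- whenever $d\neq 0$ and $\sca[\textnormal{hyp}]{c}{d}$ is real you can solve $\sca[\textnormal{hyp}]{c}{d}+\lambda d=0$. Second, and more seriously, the ``transport via adjunction'' you sketch does not propagate inflation: from $\sca[\textnormal{hyp}]{f\plug a}{b}=\sca[\textnormal{hyp}]{f}{a\otimes b}$ one computes $\sca[\textnormal{hyp}]{(f+\lambda 0)\plug a}{b}=\sca[\textnormal{hyp}]{f\plug a}{b}+\lambda\big(a\,\beta(1_{\vn{B}})+b\,\alpha(1_{\vn{A}})\big)$, which is \emph{not} of the form $\sca[\textnormal{hyp}]{(f\plug a)+\mu 0}{b}$ --- the wager $a$ of $\de{a}$ intrudes. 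Hence knowing inflation for $\cond{B}$ does not immediately give it for $\cond{A\multimap B}$, and your plan would require simultaneous control of the wagers of arbitrary elements of $\cond{A}$, $\cond{B^{\pol}}$, and $\cond{A\multimap B^{\pol}}$, which the dichology hypothesis on $\cond{A},\cond{B}$ does not obviously provide. A more promising line --- suggested by the paper's preceding proposition --- is to work with the equivalent Girard form of inflation (closure under injective dialect morphisms $\phi$) and argue that the connectives commute with $(\cdot)^{\phi}$ at the level of projects; but even there one has to absorb the same wager discrepancy, and that is precisely the missing ingredient in your sketch.
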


\begin{proposition}[Distributivity]\label{distributivity}
For any dichologies $\cond{A,B,C}$, and delocations $\phi,\psi,\theta,\rho$ of $\cond{A},\cond{A},\cond{B},\cond{C}$ respectively, there is a project $\de{distr}$ in the dichology 
$$\cond{((\phi(A)\!\multimap\! \theta(B))\!\with\! (\psi(A)\!\multimap\! \rho(C)))\!\multimap\! (A\!\multimap\! (B\!\with\! C))}$$
\end{proposition}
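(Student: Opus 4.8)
The plan is to exhibit $\de{distr}$ explicitly as the interpretation of the evident $\mathrm{MALL}$ proof of $((A\multimap B)\with(A\multimap C))\multimap(A\multimap(B\with C))$ --- the proof that pairs, over a single $\with$-rule, the two ``evaluation'' slices --- and then to check by hand that the resulting project lands in the announced conduct. That this conduct is a dichology is already granted by the preceding proposition, so only inhabitation is at issue; unfolding the outermost $\multimap$, it suffices to produce $\de{distr}$ of the right carrier such that $\de{distr}\plug\de{l}\in\cond{A\multimap(B\with C)}$ for every $\de{l}$ in $\cond{(\phi(A)\multimap\theta(B))\with(\psi(A)\multimap\rho(C))}$, and then, unfolding once more, such that $\de{distr}\plug\de{l}\plug\de{a}\in\cond{B\with C}$ for every $\de{a}\in\cond{A}$.

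Concretely, $\de{distr}$ will have wager $0$, dialect $\complexN\oplus\complexN$ (one summand per slice), and operator a direct sum $D_{1}\oplus D_{2}$ over the two slices, where $D_{1}$ is the self-adjoint partial isometry realising simultaneously the copycat between the input $A$ and the delocated copy $\phi(A)$ and the copycat between $\theta(B)$ and the first $\with$-component $B$, and is $0$ on every other block; symmetrically $D_{2}$ copycats the input $A$ with $\psi(A)$ and $\rho(C)$ with the second $\with$-component $C$, and is $0$ elsewhere. Each $D_{i}$ is a self-adjoint partial isometry, hence of norm at most $1$, so $\de{distr}$ is a legitimate hyperfinite project; note that it is precisely the presence of the two delocations $\phi,\psi$ of $\cond{A}$ that lets the two copycat-copies of the input sit on disjoint carriers, as the definition of $\with$ demands.

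For the verification, fix $\de{l}$ in the left conduct and $\de{a}\in\cond{A}$. By \autoref{def_additives} it suffices to show that $\de{distr}\plug\de{l}\plug\de{a}$ has measurement different from $0,\infty$ against every $\de{b'}_{p+q}$ with $\de{b'}\in\cond{B}^{\pol}$ and against every $\de{c'}_{p+q}$ with $\de{c'}\in\cond{C}^{\pol}$. Treat the first case. By associativity of $\plug$ and the adjunction (\autoref{adjonctionFK}), the quantity $\sca[\textnormal{hyp}]{\de{distr}\plug\de{l}\plug\de{a}}{\de{b'}_{p+q}}$ rewrites as the measurement of $\de{l}$ against $\de{distr}\plug\de{a}\plug\de{b'}_{p+q}$, up to the bookkeeping of wagers and pseudo-traces, which contributes only a finite quantity since $\de{distr}$ has wager $0$. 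Now on slice $1$, $\de{distr}$ consists of just the two copycats, so by transparency of the copycat/fax project (the sub-lemma that plugging a project through a copycat yields a delocation of that project), $\de{distr}\plug\de{a}\plug\de{b'}_{p+q}$ is, restricted to this slice, the delocation along $\phi$ and $\theta$ of $\de{a\otimes b'}$, padded on the remaining carrier exactly as in the definition of $\with$; on slice $2$ it vanishes on every block that $\de{l}$ sees on its $\phi(A)\multimap\theta(B)$ side. Since $\de{a\otimes b'}\in\cond{\phi(A)\otimes\theta(B)^{\pol}}=\cond{\phi(A)\multimap\theta(B)}^{\pol}$ by \autoref{duality}, and since $\de{l}$ lies in $((\cond{\phi(A)\multimap\theta(B)}^{\pol})_{p+q})^{\pol}$, that measurement is different from $0,\infty$. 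The symmetric argument with $\de{c'}$ and slice $2$ disposes of the second case, and we conclude $\de{distr}\plug\de{l}\plug\de{a}\in\cond{B\with C}$, hence $\de{distr}$ belongs to the announced dichology.

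The main obstacle is this last step: making precise the transparency of the copycat together with the $\dagger/\ddagger$ dialect bookkeeping, i.e. verifying that routing the input $\de{a}$ and the test $\de{b'}$ through the copycat blocks of $\de{distr}$ really yields, slice by slice, the delocated tensor, and that the Fuglede--Kadison determinants entering every intermediate measurement remain finite and nonzero, so that the ``$\neq 0,\infty$'' condition genuinely survives the rewriting by \autoref{adjonctionFK}. One must also fix the orientation of each $\multimap$ (which side is ``input'' and which ``output'') so that the two slices of $\de{distr}$ are cut against $\de{l}$ on the correct side; once the slice-and-carrier layout is pinned down, the remaining work is routine iterated use of associativity of $\plug$ and the adjunction. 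If a soundness theorem for $\mathrm{MALL}$ is available one may instead simply take $\de{distr}$ to be the interpretation of the cut-free proof sketched above, but the direct construction has the advantage of exhibiting the coherence morphism explicitly.
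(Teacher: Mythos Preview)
Your explicit construction of $\de{distr}$ --- wager $0$, dialect $\complexN\oplus\complexN$, operator $D_{1}\oplus D_{2}$ built from the copycat partial isometries --- is exactly the project the paper uses (see the proof of \autoref{withprop}, where the same object is written down). So the construction is fine.

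The verification, however, diverges from the paper's and contains a genuine gap. After adjunction-rewriting, you test $\de{l}$ against $\de{distr}\plug\de{a}\plug\de{b'}_{p+q}$, and you analyse this project slice by slice. Slice~$1$ is, as you say, the delocated $\phi(\de{a})\otimes\theta(\de{b'})$ padded out, and that does lie in $(\cond{\phi(A)\multimap\theta(B)}^{\pol})_{\text{ext}}$. But slice~$2$ is \emph{not} zero: plugging $\de{a}$ routes it to $\psi(A)$, and plugging $\de{b'}_{p+q}$ (which is zero on the $C$-block) kills the $C\leftrightarrow\rho(C)$ copycat, leaving $\psi(\de{a})\otimes\de{0}_{\rho(C)}$ padded out. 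Your remark that slice~$2$ ``vanishes on every block $\de{l}$ sees on its $\phi(A)\multimap\theta(B)$ side'' is true but irrelevant: $\de{l}$ also lives on the $\psi(A)\multimap\rho(C)$ side, and there slice~$2$ has nontrivial content. The total measurement is a (weighted) sum of the two slice contributions, and you give no argument that the slice-$2$ contribution does not cancel the slice-$1$ one to produce $0$, nor that it is finite. Concretely, you would need $\psi(\de{a})\otimes\de{0}_{\rho(C)}\in\cond{\psi(A)\otimes\rho(C)^{\pol}}$, i.e.\ $\de{0}\in\cond{\rho(C)^{\pol}}$; but $\sca[\textnormal{hyp}]{\de{0}}{\de{c}}=c$, so this fails whenever $\cond{\rho(C)}$ contains a wager-$0$ project.

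The paper circumvents this by a different reduction: it first invokes \autoref{lemmawith}/\autoref{corowith} to replace $\de{l}$ by an observationally equivalent sum $\de{f}_{1}\otimes\de{0}+\de{0}\otimes\de{f}_{2}$ with $\de{f}_{1}\in\cond{\phi(A)\multimap\theta(B)}$ and $\de{f}_{2}\in\cond{\psi(A)\multimap\rho(C)}$, and then computes $\de{distr}\plug\de{l}\plug\de{a}$ directly, obtaining (up to inflation) $\tfrac{1}{2}\theta(\de{f}_{1}\plug\de{a})+\tfrac{1}{2}\rho(\de{f}_{2}\plug\psi(\de{a}))$, which is visibly in $\cond{B\with C}$. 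The decomposition lemma is the key missing ingredient; without it your adjunction-and-dualise strategy does not close.

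Finally, your suggested fallback --- invoke a soundness theorem for $\mathrm{MALL}$ --- is circular here: the paper's soundness theorem is proved \emph{using} this proposition (more precisely its matricial version, \autoref{withprop}), so you cannot appeal to it.
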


\begin{definition}
Given two hyperfinite projects $\de{a}=a\cdot+\cdot\alpha+A$ and $\de{b}=b\cdot+\cdot\beta+B$, we define the hyperfinite project $\de{a+b}$:
$$\de{a+b}=a+b\cdot+\cdot\alpha\oplus\beta+A\oplus 0+0\oplus B$$
\end{definition}

\begin{lemma}
If $\cond{A,B}$ are proper dichologies, then $\cond{A+B}=\{\de{a}_{p+q}+\de{b}_{p+q}~|~\de{a}\in\cond{A},\de{b}\in\cond{B}\}$ is such that $\cond{A+B}\subset\cond{A\with B}$.
\end{lemma}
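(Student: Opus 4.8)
The plan is to reduce $\cond{A\with B}$ to an explicit orthogonal, and then to observe that in the superposition $\de{a}_{p+q}+\de{b}_{p+q}$ the second summand is completely inert against any test coming from $\cond{A}^{\pol}$ (and, symmetrically, the first summand against $\cond{B}^{\pol}$). Combining the general identities $\cond{X}^{\pol}\cap\cond{Y}^{\pol}=(\cond{X}\cup\cond{Y})^{\pol}$, $Z^{\pol\pol\pol}=Z^{\pol}$, and $(\cond{A}^{\pol})_{p+q}=\{\de{a'}_{p+q}\mid\de{a'}\in\cond{A}^{\pol}\}^{\pol\pol}$, \autoref{def_additives} rewrites as
$$\cond{A\with B}=\big(\{\de{a'}_{p+q}\mid\de{a'}\in\cond{A}^{\pol}\}\cup\{\de{b'}_{p+q}\mid\de{b'}\in\cond{B}^{\pol}\}\big)^{\pol}.$$
Hence it suffices to prove that, for arbitrary $\de{a}\in\cond{A}$, $\de{b}\in\cond{B}$ and $\de{a'}\in\cond{A}^{\pol}$, the project $\de{a}_{p+q}+\de{b}_{p+q}$ is orthogonal to $\de{a'}_{p+q}$; the case of $\de{b'}\in\cond{B}^{\pol}$ then follows by exchanging the roles of $p$ and $q$.

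First I would record a consequence of properness: \emph{every project of a proper dichology has wager $0$}. A short computation with the block form of the operators shows that, for any project $\de{c}$ with the same carrier as $\de{a}$,
$$\sca[\textnormal{hyp}]{a+\lambda 0}{c}=\sca[\textnormal{hyp}]{a}{c}+\lambda\,c_{0},$$
where $c_{0}$ is the wager of $\de{c}$ (enlarging the dialect of $\de{a}$ by a zero block leaves the Fuglede--Kadison determinant unchanged, while the pseudo-trace of $\de{a+\lambda 0}$ evaluated at $1$ is that of $\de{a}$ augmented by $\lambda$). If $\cond{A}$ has the inflation property then $\de{a+\lambda 0}\in\cond{A}$ for every $\lambda\in\realN$, so $\sca[\textnormal{hyp}]{a+\lambda 0}{c}\neq 0$ for all $\lambda\in\realN$ whenever $\de{c}\in\cond{A}^{\pol}$, which forces $c_{0}=0$. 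Applying this to the (again proper) dichology $\cond{A}^{\pol}$, whose polar is $\cond{A}$, shows that every project of $\cond{A}$ has wager $0$ as well; likewise for $\cond{B}$ and $\cond{B}^{\pol}$. In particular the wagers $a$, $b$ and $a'$ all vanish.

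Next I would compute the measurement directly. The superposition $\de{c}:=\de{a}_{p+q}+\de{b}_{p+q}$ has carrier $p+q$, dialect $\vn{A}\oplus\vn{B}$, pseudo-trace $\alpha\oplus\beta$, wager $a+b=0$, and operator $\tilde A\oplus\tilde B$, where $\tilde A$ (resp.\ $\tilde B$) is $A$ (resp.\ $B$) padded with zeros so as to be carried by $p$ (resp.\ $q$); similarly $\de{a'}_{p+q}$ has operator $A'$ padded to be carried by $p$, dialect $\vn{A'}$ and wager $a'=0$. Pairing $\tilde A\oplus\tilde B$ with the padded $A'$ produces an operator that is block-diagonal with respect to $\vn{A}\oplus\vn{B}$: its $\vn{B}$-block is a product of an operator carried by $q$ with one carried by $p$, hence vanishes because $pq=0$; its $\vn{A}$-block is $A^{\dagger}\,{A'}^{\ddagger}$ padded with zeros off $p$. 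Since the Fuglede--Kadison determinant is defined through $\log$ and a trace, it is multiplicative along any direct-sum decomposition compatible with that trace, both for the dialect, via $(\vn{A}\oplus\vn{B})\otimes\vn{A'}=(\vn{A}\otimes\vn{A'})\oplus(\vn{B}\otimes\vn{A'})$ and the corresponding splitting of $\tr\otimes(\alpha\oplus\beta)\otimes\alpha'$, and for the carrier $p+q$; every block other than the $p$-corner of the $\vn{A}$-part contributes $\det(1)=1$, leaving $\det{}_{\tr\otimes\alpha\otimes\alpha'}(1-A^{\dagger}\,{A'}^{\ddagger})$. As all the wagers vanish, this gives $\sca[\textnormal{hyp}]{c}{a'_{p+q}}=-\log\det{}_{\tr\otimes\alpha\otimes\alpha'}(1-A^{\dagger}\,{A'}^{\ddagger})=\sca[\textnormal{hyp}]{a}{a'}$.

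To conclude, since $\de{a}\in\cond{A}=\cond{A}^{\pol\pol}$ and $\de{a'}\in\cond{A}^{\pol}$ one has $\de{a}\poll\de{a'}$, that is $\sca[\textnormal{hyp}]{a}{a'}\neq 0,\infty$; therefore $\sca[\textnormal{hyp}]{c}{a'_{p+q}}\neq 0,\infty$ and $\de{c}\poll\de{a'}_{p+q}$. By the symmetric argument $\de{c}\poll\de{b'}_{p+q}$ for every $\de{b'}\in\cond{B}^{\pol}$, hence $\de{c}\in\cond{A\with B}$; as $\de{a}$ and $\de{b}$ were arbitrary this yields $\cond{A+B}\subseteq\cond{A\with B}$. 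The main obstacle is the wager-vanishing step — isolating the right consequence of the inflation property — together with carefully keeping track of the two direct-sum decompositions inside the determinant; once those are in place the statement comes down to the disjointness $pq=0$, exactly as in the Variants lemma and the inflation-property proposition proved above.
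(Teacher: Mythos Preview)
The paper states this lemma without proof, so there is nothing to compare against directly. Your argument is correct and is essentially the natural proof one would reconstruct from the surrounding material.

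The crucial step is your observation that in a \emph{proper} dichology every project has wager~$0$: from the identity $\sca[\textnormal{hyp}]{a+\lambda 0}{c}=\sca[\textnormal{hyp}]{a}{c}+\lambda c$ (which is exactly the computation in the paper's proof of the inflation-property proposition), the inflation property of $\cond{A}$ together with non-emptiness of both $\cond{A}$ and $\cond{A}^{\pol}$ forces the wager of every $\de{c}\in\cond{A}^{\pol}$ to vanish, and then symmetry gives the same for $\cond{A}$. This is precisely where properness is used, and without it the wager terms in $\sca[\textnormal{hyp}]{c}{a'_{p+q}}$ would pick up an extra $(\alpha(1_{\vn A})+\beta(1_{\vn B}))a'$ that need not match $\alpha(1_{\vn A})a'$.

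Once the wagers vanish, your block-diagonal analysis is straightforward: the dialect splitting $(\vn A\oplus\vn B)\otimes\vn{A'}\cong(\vn A\otimes\vn{A'})\oplus(\vn B\otimes\vn{A'})$ is compatible with the pseudo-trace, and in the $\vn B$-block the product $\tilde B^{\dagger}\tilde{A'}^{\ddagger}$ vanishes because $\tilde B$ is carried by $q$ while $\tilde{A'}$ is carried by $p$ and $pq=0$; the remaining $\vn A$-block further splits along $p+q$ with the $q$-part contributing $\det(1)=1$. This reduces $\sca[\textnormal{hyp}]{c}{a'_{p+q}}$ to $\sca[\textnormal{hyp}]{a}{a'}$, exactly as in the Variants lemma and the inflation proposition. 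The rewriting of $\cond{A\with B}$ as the orthogonal of the extended generators is correct and standard.
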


\subsubsection{Exponentials.}

Exponential connectives are defined through the notion of \emph{perennialization}. We will not justify this definition nor explain why it indeed yields exponential connectives, the interested reader can find those in our paper on exponentials in interaction graphs \cite{seiller-goie}. We will here only briefly describe the particular perennialization used by Girard \cite{goi5}.

\begin{definition}
A perennialization is an isomorphism $\Phi:\infhyp\otimes\finhyp\rightarrow\infhyp$. 
\end{definition}

\begin{definition}
A hyperfinite project $\de{a}=a\cdot+\cdot \alpha+A$ is \emph{balanced} when $a=0$, $\vn{A}$ is a finite factor of type $\text{I}$, and $\alpha$ is the normalised trace on $\vn{A}$. If $\de{a}$ is balanced with dialect $\vn{M}_{k}(\complexN)$, and $\theta:\vn{M}_{k}(\complexN)\rightarrow \finhyp$ is a trace-preserving $\ast$-isomorphism, we will abusively write $\de{a}^{\theta}$ as the \enquote{project\footnote{It is not exactly a hyperfinite project since its dialect is not an algebra of type $\text{I}$.}} $a\cdot +\cdot \tr+\text{Id}\otimes\theta(A)$, where $\tr$ is the normalised trace on $\finhyp$.
\end{definition}

\begin{definition}
If $\cond{A}$ is a dichology and $\Phi$ a perennialization, we define $\cond{\sharp_{\Phi} A}$ as the set:
\begin{equation*}
\sharp_{\Phi}\cond{A}=\{\de{\oc_{\Phi}a^{\theta}}=0\cdot+\cdot 1_{\complexN}+\Phi(\text{Id}\otimes\theta(A))~|~\de{a}\in\cond{A}\text{ balanced}, \theta: \vn{A}\rightarrow\finhyp\text{ trace-preserving $\ast$-iso}\}
\end{equation*}
We can then define the conducts $\oc_{\Phi}\cond{A}=(\cond{\sharp A})^{\pol\pol}$ and $\wn_{\Phi}\cond{A}=(\cond{\sharp(A^{\pol})})^{\pol}$.
\end{definition}

The morphism used in Girard's hyperfinite GoI model is defined from a group action. The group is chosen so as to possess a number of properties: it is an infinite conjugacy class (I.C.C.) and amenable group which contains the free monoid on two elements. Groups that are I.C.C. and amenable are of particular interest in the theory of von Neumann algebra as the group von Neumann algebra $\vn{G}(G)$ of a non-trivial I.C.C. group $G$ is a type {II}$_{1}$ factor \cite[Proposition {V}.7.9, page 367]{takesaki1}, while amenability of $G$ implies the hyperfiniteness of $\vn{G}(G)$ \cite[Theorem {XIII}.4.10, page 71]{takesaki3}. At a first glance, the existence of such a group is not clear, as the typical example of a non-amenable group is the free group on two generators.

Let us denote by $\integerN^{|\integerN|}$ the group of almost-everywhere null functions $\integerN\rightarrow\integerN$ with point wise sum. We can then define an action of the group $\integerN$ on $\integerN^{|\integerN|}$ by translation: we define $\alpha:\integerN\rightarrow \mathcal{A}ut(\integerN^{|\integerN|})$ by $\alpha(p):(x_{n})_{n\in\integerN}\mapsto(x_{n+p})_{n\in\integerN}$. We now consider the group $\mathfrak{G}$ defined as the semi-direct product, or crossed product, of  $\integerN^{|\integerN|}$ by $\integerN$ along the action  $\alpha$. Elements of $\mathfrak{G}$ are pairs $((x_{n})_{n\in\integerN},p)$ where the first element is in $\integerN^{|\integerN|}$ and the second in $\integerN$, and the composition is defined as:
\begin{equation*}
((x_{n})_{n\in\integerN},p).((y_{n})_{n\in\integerN},q)=((x_{n})_{n\in\integerN}+(\alpha(p)((y_{n})))_{n\in\integerN},p+q)=((x_{n}+y_{n+p})_{n\in\integerN}, p+q)
\end{equation*}
As a semi-direct product of amenable groups, $\mathfrak{G}$ is an amenable group. It is moreover I.C.C. since, if $x=((x_{n}),p)$ is different from $((0),0)$, the conjugacy class of  $x$ contains the elements  $((\delta_{n,0}),k)^{-1}x((\delta_{n,0}),k)$ for all $k\in\naturalN$. But $((\delta_{n,0}),k)^{-1}=((-\delta_{n,-k}),-k)$, and therefore $((\delta_{n,0}),k)^{-1}x((\delta_{n,0}),k)=((x_{n-k}+\delta_{n,p}-\delta_{n,-k}))_{n\in\integerN},p)$. Thus the conjugacy class of $x$ is infinite since those elements are pairwise distinct.

Lastly, one can find a copy of the free monoid on two elements in $\mathfrak{G}$. Let us first define $\mathbf{a}=((\delta_{n,0})_{n},0)$ and $\mathbf{b}=((0)_{n},1)$. We can then compute 
$$((a_{k})_{k},p)\mathbf{b}=((a_{k})_{k},p+1)~~~~~~~~((a_{k})_{k},p)\mathbf{a}=((a_{k}+\delta_{k,0})_{k},p)$$ 
We can use these equalities to show:
\begin{equation*}
\mathbf{a}^{p_{k}}\mathbf{b}^{q_{k}}\mathbf{a}^{p_{k-1}}\mathbf{b}^{q_{k-1}}\dots \mathbf{a}^{p_{1}}\mathbf{b}^{q_{1}}=((\bar{p}_{n})_{n},\sum_{i=1}^{k}q_{i})
\end{equation*}
where $\bar{p}_{n}=p_{i}$ when $n=\sum_{j=1}^{i} q_{j}$, and $\bar{p}_{n}=0$ otherwise. This shows that the submonoid generated by $\mathbf{a}$ and $\mathbf{b}$ in $\mathfrak{G}$ is free.

For instance, the word $\mathbf{a}^{2}\mathbf{b}^{1}\mathbf{a}^{48}\mathbf{b}^{2}$ is equal to $((x_{n}),3)$ where $(x_{n})$ is the sequence defined by $x_{2}=48$, $x_{3}=2$ and $x_{n}=0$ for all $n\neq 2,3$.


This shows that:
\begin{proposition}[Girard \cite{goi5}]
The group $\integerN^{|\integerN|}\rtimes \integerN$ is amenable, I.C.C. and contains the free monoid on two generators.
\end{proposition}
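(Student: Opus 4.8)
The statement collects three independent facts about $\mathfrak{G}=\integerN^{|\integerN|}\rtimes\integerN$, and the plan is to dispatch each in turn, reusing the explicit computations already displayed above. For \emph{amenability} I would argue by closure properties rather than by exhibiting a F\o lner sequence: the base group $\integerN^{|\integerN|}=\bigoplus_{n\in\integerN}\integerN$ is abelian, hence amenable, the quotient $\integerN$ is abelian, hence amenable, and $\mathfrak{G}$ sits in the short exact sequence $1\to\integerN^{|\integerN|}\to\mathfrak{G}\to\integerN\to 1$; since amenability is stable under extensions, $\mathfrak{G}$ is amenable. (A F\o lner sequence can also be written down by hand as a twisted product of F\o lner sets for the two factors, but the extension argument is shorter.)

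For the \emph{I.C.C.} property, fix $x=((x_{n})_{n},p)\neq((0)_{n},0)$; I must show that its conjugacy class is infinite. Taking the conjugating elements $g_{k}=((\delta_{n,0})_{n},k)$ for $k\in\naturalN$ and using the formula for the inverse together with the crossed-product law, one computes $g_{k}^{-1}xg_{k}=((x_{n-k}+\delta_{n,p}-\delta_{n,-k})_{n},p)$ — this is exactly the display preceding the statement. It then remains to observe that $k\mapsto g_{k}^{-1}xg_{k}$ takes infinitely many values: the correction term $-\delta_{n,-k}$ occupies the index $-k$, which varies with $k$, while the bulk of the data is only a shift of $(x_{n})_{n}$ whose support moves arbitrarily far to the right; so for all sufficiently large and pairwise distinct $k$ these conjugates cannot coincide. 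Hence the conjugacy class of $x$ is infinite and $\mathfrak{G}$ is I.C.C.

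For the \emph{free submonoid} I would set $\mathbf{a}=((\delta_{n,0})_{n},0)$ and $\mathbf{b}=((0)_{n},1)$, first record the effect of right multiplication, $((a_{k})_{k},p)\mathbf{b}=((a_{k})_{k},p+1)$ and $((a_{k})_{k},p)\mathbf{a}=((a_{k}+\delta_{k,0})_{k},p)$, and then by induction on the word length obtain the normal form $\mathbf{a}^{p_{k}}\mathbf{b}^{q_{k}}\cdots\mathbf{a}^{p_{1}}\mathbf{b}^{q_{1}}=((\bar p_{n})_{n},\sum_{i=1}^{k}q_{i})$, where $\bar p_{n}=p_{i}$ if $n=\sum_{j=1}^{i}q_{j}$ and $\bar p_{n}=0$ otherwise. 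The key point is that this element of $\mathfrak{G}$ determines the word uniquely: the second coordinate yields $\sum_{i}q_{i}$, scanning the indices $n$ upwards recovers the partial sums $\sum_{j=1}^{i}q_{j}$ from the positions of the nonzero entries (hence each $q_{i}$, with the total $\sum_{i}q_{i}$ settling any trailing zero exponents), and the value $\bar p_{n}$ at the $i$-th such position is $p_{i}$. Therefore the monoid morphism from the free monoid on two generators sending the generators to $\mathbf{a}$ and $\mathbf{b}$ is injective, so $\langle\mathbf{a},\mathbf{b}\rangle\subset\mathfrak{G}$ is free.

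I do not expect a genuine obstacle here: the whole argument is bookkeeping. The one point demanding care is the injectivity claim in the last paragraph — one has to handle words in which some $q_{i}$ vanish, so that a priori two syntactically distinct factorisations could collapse to the same element, which is precisely why one must track both the total $\sum_{i}q_{i}$ and the exact positions of the nonzero $\bar p_{n}$, not merely the multiset of their values. It is also worth noting that claiming only the free \emph{monoid} (and not a free group) is both necessary and consistent: by the first part $\mathfrak{G}$ is amenable, and an amenable group cannot contain a nonabelian free subgroup.
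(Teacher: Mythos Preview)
Your proposal is correct and follows essentially the same route as the paper: amenability by closure of the class of amenable groups under (semi-direct) extensions, I.C.C. via the very family of conjugates $g_{k}^{-1}xg_{k}$ already computed in the text, and freeness of the submonoid via the explicit normal form for words in $\mathbf{a},\mathbf{b}$. You supply a bit more detail than the paper at two points --- why the conjugates are pairwise distinct, and why the normal form genuinely yields injectivity --- and your closing remark that only a free \emph{monoid} (not a free subgroup) can be expected in an amenable group is a useful sanity check the paper does not make explicit.
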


The definition of the perennialization used by Girard \cite{goi5} is built on the \emph{crossed product algebra} which generalises the semi-direct product of groups.

\begin{definition}[Crossed product]\label{defcrossedprod}
Let $\vn{M}\subset\B{\hil{H}}$ be a von Neumann algebra, $G$ a locally compact group, and $\alpha$ an action of $G$ on $\vn{M}$. Let $\hil{K}=L^{2}(G,\hil{H})$ be the Hilbert space of square-summable $\hil{H}$-valued functions on $G$. We define representations $(\hil{K},\pi_{\alpha})$ and $(\hil{K},\lambda)$ of $\vn{M}$ and $G$ respectively.
\begin{eqnarray*}
(\pi_{\alpha}(x).\xi)(g)&=&\alpha(g)^{-1}(x)\xi(g)\\
(\lambda(g).\xi)(h)&=&\xi(g^{-1}h)
\end{eqnarray*}
Then the von Neumann algebra on $\hil{K}$ generated by $\pi_{\alpha}(\vn{M})$ and $\lambda(G)$ is called the crossed product of $\vn{M}$ by $\alpha$ and is denoted by $\vn{M}\rtimes_{\alpha} G$.
\end{definition}

Now, if $A$ is an operator in $\infhyp\otimes\finhyp$ and ${\tt M}$ denotes the free monoid generated by $\mathbf{a}$ and $\mathbf{b}$, we use the fact that there exists an isomorphism between $\finhyp$ and $\otimes_{\omega\in\mathbf{a}{\tt M}} \finhyp$ to obtain an operator $\bar{A}$ in $\infhyp\otimes \finhyp^{{\tt M}}$. This operator embeds as an element $\pi_{\alpha}(\bar{A})$ of the crossed product algebra $\infhyp\otimes \finhyp^{\mathfrak{G}}\rtimes\mathfrak{G}$. Since $\mathfrak{G}$ is I.C.C. and amenable, the crossed product $\finhyp^{\mathfrak{G}}\rtimes\mathfrak{G}$ is isomorphic to $\finhyp$. Moreover, $\infhyp\otimes\finhyp$ is isomorphic to $\infhyp$, and we can thus find an isomorphism $\Psi$ from $\infhyp\otimes \finhyp^{\mathfrak{G}}\rtimes\mathfrak{G}$ into $\infhyp$. Defining $\Omega(A)=\Psi(\pi_{\alpha}(\bar{A}))$, we easily check that $\Omega$ defines an injective morphism from $\infhyp\otimes\finhyp$ to $\infhyp$.

\section{Subjective Truth and Matricial GoI}

\subsection{Success and Bases} 

In geometry of Interaction, as in the theory of proof structures \cite{ll}, in game semantics \cite{hylandong} or in classical realisability \cite{krivine1,krivine2}, one needs to characterise those elements which correspond to proofs: proof nets (i.e. satisfying the correctness criterion), winning strategies, or proof-like terms. In GoI models, these \enquote{proof-like terms}, or \emph{winning strategies} are called \emph{successful projects}. In previous GoI models a successful project was defined as a partial symmetry. This definition was quite satisfying, but some of its important properties relied on the fact that the model depended on a chosen MASA $\vn{A}$, i.e. it relied on the fact that the constructions were basis-dependent (i.e. operators are chosen in the normalising groupoid of $\vn{A}$ only).

In Girard's hyperfinite model, constructions are no longer basis-dependent: the operators considered are no longer restricted to those elements that are in the normalising groupoid of a MASA, but can be any hermitian operator of norm at most $1$. By going to this more general setting, defining successful projects as partial symmetries is no longer satisfying. The reason for this is quite easy to understand. Indeed, a satisfying notion of success should verify two essential properties. The first of these is that it should \enquote{compose}, i.e. the execution of two successful projects should be a successful project. The second is that it should be \enquote{coherent}, i.e. two orthogonal projects cannot be simultaneously successful.

Since we are no longer restricted to operators in a chosen normalising groupoid, the definition of successful projects as partial symmetries now lacks these two essential properties. This can be illustrated by easy examples on matrices (to obtain examples in the hyperfinite factor, use your favourite embedding). For instance, let us consider the following matrices:

\begin{equation*}
u= \left(\begin{array}{ccc}
	0 & 1 & 0\\
	1 & 0 & 0\\
	0 & 0 & 0
	\end{array}\right)~~~~~~
v= \left(\begin{array}{ccc}
	0 & \sqrt{\frac{1}{2}} & -\sqrt{\frac{1}{2}}\\
	\sqrt{\frac{1}{2}} & 0 & 0\\
	-\sqrt{\frac{1}{2}} & 0 & 0
	\end{array}\right)
\end{equation*}
One can check that $u,v$ are partial symmetries: it is obvious for $u$, and the following computation shows it for $v$.
\begin{equation*}
vv^{\ast}=v^{2}=\left(\begin{array}{ccc}
	1 & 0 & 0\\
	0 & \frac{1}{2} & -\frac{1}{2}\\
	0 & -\frac{1}{2} & \frac{1}{2}
	\end{array}\right)
	=\left(\begin{array}{ccc}
	1 & 0 & 0\\
	0 & \frac{1}{2} & -\frac{1}{2}\\
	0 & -\frac{1}{2} & \frac{1}{2}
	\end{array}\right)^{2}
\end{equation*}
However, their product is not a partial isometry (hence not a partial symmetry), which shows that the notion do not compose.
\begin{equation*}
uv=\left(\begin{array}{ccc}
	0 & 1 & 0\\
	1 & 0 & 0\\
	0 & 0 & 0
	\end{array}\right)\left(\begin{array}{ccc}
	0 & \sqrt{\frac{1}{2}} & -\sqrt{\frac{1}{2}}\\
	\sqrt{\frac{1}{2}} & 0 & 0\\
	-\sqrt{\frac{1}{2}} & 0 & 0
	\end{array}\right)
	=
	\left(\begin{array}{ccc}
	\sqrt{\frac{1}{2}} & 0 & 0\\
	0 & \sqrt{\frac{1}{2}} & -\sqrt{\frac{1}{2}}\\
	0 & 0 & 0
	\end{array}\right)
\end{equation*}
Moreover, the computation of the determinant of $1-uv$ shows that the notion is not coherent, since one can define from them two orthogonal projects.
\begin{equation*}
\left|\begin{array}{ccc}
	1-\sqrt{\frac{1}{2}} & 0 & 0\\
	0 & 1-\sqrt{\frac{1}{2}} & \sqrt{\frac{1}{2}}\\
	0 & 0 & 1
	\end{array}\right|
	=
	(1-\sqrt{\frac{1}{2}})^{2}\neq 0,\infty
\end{equation*}

In order to obtain a good notion of successful project, we will have to restrict ourselves to a class of partial symmetries which is closed under sum and composition. As shown in \autoref{lemmasumpartialiso} and \autoref{lemmaproductpartialiso}, sums and products of partial isometries in the normalising groupoid of a MASA $\vn{A}$ is again a partial isometry\footnote{In the case of the sum, one has to impose a condition on domains and codomains.} in the normalising groupoid of $\vn{A}$. This will be enough to show that if $u$ and $v$ are partial symmetries in $\gn{\vn{A}}$, then $u\plug v$ is a partial symmetry in $\gn{\vn{A}}$ (\autoref{composi50}).

In the finite-dimensional case, this amounts to choosing a basis. Indeed, the complete classification of MASAs in $\B{\hil{H}}$ (\autoref{classificationmasasbh}) shows that when $\hil{H}$ is of finite dimension the MASAs of $\B{\hil{H}}$ are exactly the diagonal MASAs: the set of diagonal matrices in a fixed basis. One can therefore define a \emph{subjective} notion of successful projects, i.e. a notion of success that depends on the choice of a basis. An operator is then \emph{successful w.r.t. $\mathcal{B}$} when it is a partial symmetry in the normalising groupoid of the algebra $\vn{D}_{\mathcal{B}}$ of diagonal operators in the basis $\mathcal{B}$. The composition of such partial symmetries can be shown to be itself a partial symmetry in the normalising groupoid of $\vn{D}_{\mathcal{B}}$ and the definition of success is therefore consistent with the execution. However, we are still unable to show the coherence of this definition: given two partial symmetries $u,v$ in $\gn{\vn{D}_{\mathcal{B}}}$, the logarithm of the determinant of $1-uv$ is not necessarily equal to $0$ or $\infty$. Once again, it is enough to consider matrices to illustrate this fact, and we will give an example with $2\times 2$ matrices. Let $u$ and $v$ be the following matrices:
\begin{equation*}
u=\left(\begin{array}{cc}
	0 & -1\\
	-1 & 0
	\end{array}\right)
	~~~~~~
v=\left(\begin{array}{cc}
	0 & 1\\
	1 & 0
	\end{array}\right)
\end{equation*}
Then $\det(1-uv)=4$, i.e. $-\log(\det(1-uv))\neq0,\infty$. 

The issue here arises from the fact that one cannot distinguish between the identity and its opposite, i.e. the definition does not exclude negative coefficients. The solution proposed by Girard \cite{goi5} is to consider a notion of success that depends on a representation of the algebra: a successful operator will then have its operators $u$ defined as the operator induced from a measure-preserving transformation on a measured space. We expose the precise definition in \autoref{verite52}, but we will first study another way to bypass the problem just exposed. It corresponds to an old version of Girard's hyperfinite GoI model, which we will refer to as the \emph{matricial} GoI model, in which orthogonality is slightly modified. In this GoI model, it is possible to keep the notion of successful projects as partial symmetries in the normalising groupoid of a MASA $\vn{A}$ since the change of orthogonality bypasses the issue with coherence. This GoI model will be related to the matricial GoI model later on.

\subsection{Matricial GoI}

The matricial GoI model is based on the same notion of projects as the hyperfinite GoI model. The two constructions essentially differ on the measurement $\sca[]{\cdot}{\cdot}$ which is used to defines the orthogonality relation. Notice that all constructions on hyperfinite projects are the same in both models.

\begin{definition}
A \emph{dialectal operator} of carrier $p^{\ast}=p^{2}=p\in\infhyp$ and dialect $\vn{A}$ a finite von Neumann algebra of type $\text{I}$ is a pair $(A,\alpha)$ where:
\begin{enumerate}
\item $A^{\ast}=A\in p\infhyp p\otimes\vn{A}$ is an hermitian operator such that $\norm{A}\leqslant 1$;
\item $\alpha$ is a pseudo-trace on $\vn{A}$.
\end{enumerate}
\end{definition}

For all von Neumann algebras $\vn{A},\vn{B}$ we consider the isomorphims:
\begin{eqnarray*}
(\cdot)^{\dagger_{\vn{B}}}&:& \infhyp\otimes\vn{A}\rightarrow \infhyp\otimes\vn{A\otimes B}\\
(\cdot)^{\ddagger_{\vn{A}}}&:& \infhyp\otimes\vn{B}\rightarrow \infhyp\otimes\vn{A\otimes B}
\end{eqnarray*}
defined on simple tensors as follows:
\begin{eqnarray*}
(x\otimes a)^{\dagger_{\vn{B}}}&=& x\otimes a\otimes 1_{\vn{B}}\\
(x\otimes b)^{\ddagger_{\vn{A}}}&=& x\otimes 1_{\vn{A}}\otimes b
\end{eqnarray*}

\begin{definition}[ldet]
Let $A\in \infhyp\otimes\vn{A}$ be a dialectal operator of norm strictly less than $1$, let $\tr$ be a trace on $\infhyp$ and $\alpha$ be a pseudo-trace on the dialect $\vn{A}$. We define:
\begin{equation*}
\ldet_{\tr\otimes\alpha}(1-A)=\sum_{i=1}^{\infty} \frac{\tr\otimes\alpha(A^{k})}{k}
\end{equation*}
In most cases, the context will be clear and we will forget the subscripts.
\end{definition}

\begin{definition}
We define the measurement between two dialectal operators $A,B$ of respective carriers $p,q$ and dialects $\vn{A},\vn{B}$ as:
\begin{equation*}
\measmat{A,B}=\left\{\begin{array}{ll}
	\ldet(1-A^{\dagger_{\vn{B}}}B^{\ddagger_{\vn{A}}})&\text{ when $\specrad{A^{\dagger_{\vn{B}}}B^{\ddagger_{\vn{A}}}}<1$}\\
	\infty &\text{ otherwise}
	\end{array}\right.
\end{equation*}
\end{definition}

\begin{lemma}\label{sumofldet}\label{adjonctionspectrale}
Let $u,v\in\infhyp\otimes\vn{A}$ and $\alpha$ be a pseudo-trace on $\vn{A}$. Then, supposing the series converge: 
\begin{equation*}
\ldet(1-(u+v-uv))=\ldet(1-u)+\ldet(1-v)
\end{equation*}
\end{lemma}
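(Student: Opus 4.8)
The plan rests on the purely algebraic identity
\[
1-(u+v-uv)=(1-u)(1-v),
\]
which one verifies by expanding the right-hand side as $1-u-v+uv$. So it suffices to prove that $\ldet(1-\cdot)$ turns this product into a sum, i.e. that $\ldet(1-(1-u)(1-v))=\ldet(1-u)+\ldet(1-v)$, where we recall that $\ldet_{\tr\otimes\alpha}(1-A)=(\tr\otimes\alpha)\big(\sum_{k\geqslant1}A^{k}/k\big)$ is the $(\tr\otimes\alpha)$-image of the logarithmic series $-\log(1-A)$. I would prove this by interpolation: for $t\in[0,1]$ put $W(t)=u+tv-tuv$, so that $1-W(t)=(1-u)(1-tv)$ with $W(0)=u$ and $W(1)=u+v-uv$, and show that the function
\[
g(t)=\ldet(1-W(t))-\ldet(1-u)-\ldet(1-tv)
\]
is identically zero; the case $t=1$ is then the desired equality.

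The two ingredients are: $g(0)=0$, since $\ldet(1)=\sum_{k\geqslant1}(\tr\otimes\alpha)(0)/k=0$; and $g'(t)=0$. For the latter, differentiating the defining series term by term and using traciality to cycle the factors gives $\tfrac{d}{dt}(\tr\otimes\alpha)(W(t)^{k})=k\,(\tr\otimes\alpha)(W(t)^{k-1}\dot W(t))$, hence
\[
\frac{d}{dt}\ldet(1-W(t))=\sum_{k\geqslant1}(\tr\otimes\alpha)\big(W(t)^{k-1}\dot W(t)\big)=(\tr\otimes\alpha)\big((1-W(t))^{-1}\dot W(t)\big),
\]
after summing the geometric series $\sum_{k\geqslant0}W(t)^{k}=(1-W(t))^{-1}$. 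Now $\dot W(t)=v-uv=(1-u)v$ and $(1-W(t))^{-1}=(1-tv)^{-1}(1-u)^{-1}$, so $(1-W(t))^{-1}\dot W(t)=(1-tv)^{-1}v$ and, by traciality, $\tfrac{d}{dt}\ldet(1-W(t))=(\tr\otimes\alpha)\big(v(1-tv)^{-1}\big)$. Running the same computation with $u=0$ yields $\tfrac{d}{dt}\ldet(1-tv)=(\tr\otimes\alpha)\big(v(1-tv)^{-1}\big)$, so $g'(t)=0$. Therefore $g\equiv0$, which at $t=1$ gives $\ldet(1-(u+v-uv))=\ldet(1-u)+\ldet(1-v)$.

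The step I expect to require the most care is the justification of the term-by-term differentiation and of the geometric summation $\sum_{k}W(t)^{k}=(1-W(t))^{-1}$ along the whole interpolation: this asks for $\specrad{W(t)}<1$ uniformly in $t\in[0,1]$, so that all the series in play converge normally on $[0,1]$. This is exactly where the hypothesis ``supposing the series converge'' does its work, and one checks that it extends to the interpolants $W(t)$ in the situations where the lemma is applied (for instance when $\norm{u},\norm{v}<1$ and the relevant products remain in the open unit ball). As a sanity check of the identity, note that it is the additivity $(\tr\otimes\alpha)(\log XY)=(\tr\otimes\alpha)(\log X)+(\tr\otimes\alpha)(\log Y)$ applied to $X=1-u$, $Y=1-v$: the Baker--Campbell--Hausdorff corrections in $\log(XY)-\log X-\log Y$ are iterated commutators, on which any (pseudo-)trace vanishes.
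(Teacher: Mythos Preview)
Your proof is correct. Both you and the paper start from the same factorisation $1-(u+v-uv)=(1-u)(1-v)$ and then reduce to the additivity of $\tr\otimes\alpha$ applied to $\log$ under products. The paper dispatches this in one line, writing
\[
-\tr\big(\log((1-u)(1-v))\big)=-\tr\big(\log(1-u)+\log(1-v)\big)=-\tr(\log(1-u))-\tr(\log(1-v)),
\]
i.e.\ it proceeds as though the operator identity $\log(XY)=\log X+\log Y$ held. That identity is of course false for non-commuting $X,Y$; only its traced version survives (via the vanishing of commutators under a trace, as you note in your BCH remark). Your interpolation argument is a genuine proof of precisely that traced additivity: by differentiating $t\mapsto\ldet(1-(1-u)(1-tv))$ and using traciality to cycle, you show the derivative equals $\tfrac{d}{dt}\ldet(1-tv)$, independent of $u$. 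So the approaches agree at the level of the key identity, but yours supplies a rigorous justification where the paper's presentation is, strictly read, a gap. Your caution about uniform convergence of the series along the interpolation $W(t)$ is the right place to invoke the standing hypothesis.
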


\begin{proof}
Supposing the series converge: 
\begin{eqnarray*}
\ldet(1-(u+v))&=&-\tr(\log((1-u)(1-v)))\\
&=&-\tr(\log(1-u)+\log(1-v))\\
&=&-\tr(\log(1-u))-\tr(\log(1-v))
\end{eqnarray*}
Thus $\ldet(1-(u+v-uv))=\ldet(1-u)+\ldet(1-v)$.
\end{proof}

\begin{lemma}\label{dialectoutldet}
Choose $u\in\infhyp\otimes\vn{A}$, a trace $\tr$ on $\infhyp$ and a pseudo-trace $\alpha$ on an dialect $\vn{A}$. For all dialect $\vn{B}$ and pseudo-trace $\beta$ on $\vn{B}$, we have:
\begin{equation*}
\ldet(1-u\otimes 1_{\vn{B}})=\beta(1_{\vn{B}})\ldet(1-u)
\end{equation*}
\end{lemma}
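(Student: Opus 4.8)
The statement to prove is $\ldet(1-u\otimes 1_{\vn{B}})=\beta(1_{\vn{B}})\ldet(1-u)$ for $u\in\infhyp\otimes\vn{A}$. The plan is to unfold the definition of $\ldet$ as the series $\sum_{k\geqslant 1}\frac{(\tr\otimes\alpha\otimes\beta)((u\otimes 1_{\vn{B}})^{k})}{k}$ and to push the factor $\beta(1_{\vn{B}})$ out term by term. The key algebraic fact is that $(u\otimes 1_{\vn{B}})^{k}=u^{k}\otimes 1_{\vn{B}}$, since $1_{\vn{B}}$ is the identity of $\vn{B}$ and the multiplication in $(\infhyp\otimes\vn{A})\otimes\vn{B}$ is componentwise on simple tensors (and then extended by linearity and continuity). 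This reduces each term to $\frac{(\tr\otimes\alpha\otimes\beta)(u^{k}\otimes 1_{\vn{B}})}{k}$.

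Next I would use the multiplicativity of the (pseudo-)trace on a tensor product: $(\tr\otimes\alpha\otimes\beta)(x\otimes 1_{\vn{B}})=(\tr\otimes\alpha)(x)\cdot\beta(1_{\vn{B}})$ for $x\in\infhyp\otimes\vn{A}$. Applying this with $x=u^{k}$ gives
\begin{equation*}
(\tr\otimes\alpha\otimes\beta)((u\otimes 1_{\vn{B}})^{k})=\beta(1_{\vn{B}})\,(\tr\otimes\alpha)(u^{k}).
\end{equation*}
Summing over $k$ and factoring out the constant $\beta(1_{\vn{B}})$ then yields exactly $\beta(1_{\vn{B}})\ldet(1-u)$. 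One also needs to observe that the spectral radius condition governing convergence is unaffected: $\specrad{u\otimes 1_{\vn{B}}}=\specrad{u}$ because $(u\otimes 1_{\vn{B}})^{k}=u^{k}\otimes 1_{\vn{B}}$ has norm $\norm{u^{k}}$, so the series for the left-hand side converges precisely when that for the right-hand side does, and the rearrangement of the (absolutely convergent) series is legitimate.

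The only real point requiring a word of care — the "main obstacle," though it is minor here — is justifying the multiplicativity identity $(\tr\otimes\alpha\otimes\beta)(x\otimes 1_{\vn{B}})=\beta(1_{\vn{B}})(\tr\otimes\alpha)(x)$ for general $x$ rather than just simple tensors: it holds on simple tensors by definition of the tensor product of linear forms, and extends by linearity to the algebraic tensor product and then by $\sigma$-weak (normal) continuity of the pseudo-traces to all of $\infhyp\otimes\vn{A}$, using that $\vn{B}$ is finite-dimensional (type $\text{I}$, finite) so no subtlety about the tensor product of normal forms arises. With that in hand the computation is immediate, and the proof is essentially a two-line manipulation of the defining series.
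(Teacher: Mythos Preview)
Your proposal is correct and follows exactly the same approach as the paper: unfold the series definition of $\ldet$, use $(u\otimes 1_{\vn{B}})^{k}=u^{k}\otimes 1_{\vn{B}}$, apply the tensor-multiplicativity $(\tr\otimes\alpha\otimes\beta)(u^{k}\otimes 1_{\vn{B}})=(\tr\otimes\alpha)(u^{k})\beta(1_{\vn{B}})$, and factor out the constant. In fact you supply more justification than the paper does, which simply writes the four-line chain of equalities without commenting on convergence or on why the tensor-trace identity extends beyond simple tensors.
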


\begin{proof}
By definition:
\begin{eqnarray*}
\ldet(1-u\otimes 1_{\vn{B}})&=&\sum_{k=1}^{\infty} \frac{\tr\otimes\alpha\otimes\beta((u\otimes1_{\vn{B}})^{k})}{k}\\
&=&\sum_{k=1}^{\infty} \frac{\tr\otimes\alpha\otimes\beta(u^{k}\otimes 1_{\vn{B}})}{k}\\
&=&\sum_{k=1}^{\infty} \frac{\tr\otimes\alpha(u^{k})\beta(1_{\vn{B}})}{k}\\
&=&\beta(1_{\vn{B}})\sum_{k=1}^{\infty} \frac{\tr\otimes\alpha((u)^{k})}{k}
\end{eqnarray*}
\end{proof}

\begin{definition}
Two dialectal operators $(A,\alpha),(A',\alpha')$ are universally equivalent if for all dialectal operator $(B,\beta)$, we have $\measmat{(A,\alpha),(B,\beta)}=\measmat{(A',\alpha'),(B,\beta)}$.
\end{definition}

\begin{lemma}[Variants]\label{variantgdi50}
Let $(A,\alpha)$ be a dialectal operator of dialect $\vn{A}$, and $\varphi : \vn{A}\rightarrow\vn{C}$ a unital isomorphism of von Neumann algebras. Then $((\text{Id}_{\infhyp}\otimes\varphi)(A),\alpha\circ\varphi^{-1})$ is universally equivalent to $(A,\alpha)$.
\end{lemma}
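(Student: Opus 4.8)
The plan is to transport the whole computation of the measurement through the $\ast$-isomorphism
$\Psi=\text{Id}_{\infhyp}\otimes\varphi\otimes\text{Id}_{\vn{B}}:\infhyp\otimes\vn{A}\otimes\vn{B}\rightarrow\infhyp\otimes\vn{C}\otimes\vn{B}$,
where $(B,\beta)$ is an arbitrary dialectal operator of dialect $\vn{B}$, against which we want to test universal equivalence. First I would record three elementary facts. Since $\varphi$ is a unital $\ast$-isomorphism, $(\text{Id}_{\infhyp}\otimes\varphi)(A)$ is again hermitian of norm at most $1$ and $\alpha\circ\varphi^{-1}$ is again a pseudo-trace on $\vn{C}$ (hermiticity, traciality, faithfulness and normality all being preserved by composition with a $\ast$-isomorphism), so the left-hand side of the claimed equality does concern a genuine dialectal operator and $\measmat{((\text{Id}\otimes\varphi)(A),\alpha\circ\varphi^{-1}),(B,\beta)}$ is defined. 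Next, because $\varphi$ is unital, $\Psi$ sends $A^{\dagger_{\vn{B}}}=A\otimes 1_{\vn{B}}$ to $((\text{Id}\otimes\varphi)(A))^{\dagger_{\vn{B}}}$, and it sends $B^{\ddagger_{\vn{A}}}$ (which inserts $1_{\vn{A}}$) to $B^{\ddagger_{\vn{C}}}$, because $\varphi(1_{\vn{A}})=1_{\vn{C}}$; multiplicativity of $\Psi$ then gives $\Psi(A^{\dagger_{\vn{B}}}B^{\ddagger_{\vn{A}}})=((\text{Id}\otimes\varphi)(A))^{\dagger_{\vn{B}}}B^{\ddagger_{\vn{C}}}$. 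Finally, checking on simple tensors $x\otimes a\otimes b$ one sees $(\tr\otimes(\alpha\circ\varphi^{-1})\otimes\beta)\circ\Psi=\tr\otimes\alpha\otimes\beta$.

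I would then combine these. Since $\Psi$ is a $\ast$-isomorphism it preserves spectra, so $\specrad{((\text{Id}\otimes\varphi)(A))^{\dagger_{\vn{B}}}B^{\ddagger_{\vn{C}}}}=\specrad{\Psi(A^{\dagger_{\vn{B}}}B^{\ddagger_{\vn{A}}})}=\specrad{A^{\dagger_{\vn{B}}}B^{\ddagger_{\vn{A}}}}$; hence the condition $\specrad{\cdot}<1$ in the definition of $\measmat{\cdot,\cdot}$ holds on one side precisely when it holds on the other, and in the failing case both measurements equal $\infty$. In the surviving case, applying $\Psi$ to $(A^{\dagger_{\vn{B}}}B^{\ddagger_{\vn{A}}})^{k}$ and using the trace identity above shows that each term $\frac{1}{k}(\tr\otimes(\alpha\circ\varphi^{-1})\otimes\beta)\big((((\text{Id}\otimes\varphi)(A))^{\dagger_{\vn{B}}}B^{\ddagger_{\vn{C}}})^{k}\big)$ equals $\frac{1}{k}(\tr\otimes\alpha\otimes\beta)\big((A^{\dagger_{\vn{B}}}B^{\ddagger_{\vn{A}}})^{k}\big)$, so the series defining the two $\ldet$'s coincide termwise and therefore have the same sum. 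Thus $\measmat{((\text{Id}\otimes\varphi)(A),\alpha\circ\varphi^{-1}),(B,\beta)}=\measmat{(A,\alpha),(B,\beta)}$, and since $(B,\beta)$ was arbitrary, $((\text{Id}\otimes\varphi)(A),\alpha\circ\varphi^{-1})$ is universally equivalent to $(A,\alpha)$.

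I do not expect a real obstacle: this is a bookkeeping argument, essentially the matricial analogue of the $\det_{\tr\otimes\alpha}$-invariance proposition already proved in the excerpt and of the hyperfinite Variants lemma. The only points needing care are (i) keeping straight which copy of the dialect each $\dagger$ and $\ddagger$ refers to once $\Psi$ is applied — in particular that unitality of $\varphi$ is exactly what turns $1_{\vn{A}}$ into $1_{\vn{C}}$ and hence $B^{\ddagger_{\vn{A}}}$ into $B^{\ddagger_{\vn{C}}}$ — and (ii) the interchange of $\Psi$ with the power series defining $\ldet$, which requires no continuity beyond convergence of the original series, since $\ldet$ is defined termwise by $\sum_{k}(\tr\otimes\gamma)(X^{k})/k$ and $\Psi$ is an algebra homomorphism.
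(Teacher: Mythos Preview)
Your proof is correct and follows essentially the same route as the paper's: both transport the product $A^{\dagger_{\vn{B}}}B^{\ddagger_{\vn{A}}}$ through the isomorphism $\text{Id}_{\infhyp}\otimes\varphi\otimes\text{Id}_{\vn{B}}$, check that the spectral-radius condition is preserved, and then verify termwise equality of the series defining $\ldet$ via the identity $(\tr\otimes(\alpha\circ\varphi^{-1})\otimes\beta)\circ\Psi=\tr\otimes\alpha\otimes\beta$. The only cosmetic difference is that you invoke spectrum preservation under $\ast$-isomorphisms directly, whereas the paper argues through the isometry of $C^{\ast}$-isomorphisms and the formula $\specrad{u}=\lim_{k}\norm{u^{k}}^{1/k}$; these are equivalent ways of reaching the same conclusion.
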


\begin{proof}
Let $(B,\beta)$  be a dialectal operator. Since $B^{\ddagger_{\vn{C}}}=(\text{Id}_{\infhyp}\otimes\varphi)(B^{\ddagger_{\vn{A}}})$, since $\text{Id}_{\infhyp}\otimes\varphi$ is a unital isomorphism of von Neumann algebras and since isomorphisms between C$^{\ast}$-algebras are isometric, we obtain:
\begin{eqnarray*}
\norm{[(\text{Id}_{\infhyp}\otimes\varphi)(A)]^{\dagger_{\vn{B}}}B^{\ddagger_{\vn{C}}}}&=&\norm{(\text{Id}_{\infhyp}\otimes\varphi)(A^{\dagger_{\vn{B}}}B^{\ddagger_{\vn{A}}})}\\
&=&\norm{A^{\dagger_{\vn{B}}}B^{\ddagger_{\vn{A}}}}
\end{eqnarray*}
Similarly, we show that for all $k$:
\begin{equation*}
\norm{([(\text{Id}_{\infhyp}\otimes\varphi)(A)]^{\dagger_{\vn{B}}}B^{\ddagger_{\vn{C}}})^{k}}=\norm{(A^{\dagger_{\vn{B}}}B^{\ddagger_{\vn{A}}})^{k}}
\end{equation*}
Consequently, since $\specrad{u}=\lim_{k\rightarrow\infty} \norm{u^{k}}^{\frac{1}{k}}$ we obtain that:
\begin{equation*}
\specrad{[(\text{Id}_{\infhyp}\otimes\varphi)(A)]^{\dagger_{\vn{B}}}B^{\ddagger_{\vn{C}}}}=\specrad{A^{\dagger_{\vn{B}}}B^{\ddagger_{\vn{A}}}}
\end{equation*}
Moreover, if $\specrad{A^{\dagger_{\vn{B}}}B^{\ddagger_{\vn{A}}}}<1$:
\begin{eqnarray*}
\lefteqn{\ldet(1-[(\text{Id}_{\infhyp}\otimes\varphi)(A)]^{\dagger_{\vn{B}}}B^{\ddagger_{\vn{C}}})}\\
&=& \ldet(1-(\text{Id}_{\infhyp}\otimes \varphi\otimes 1_{\vn{B}})(A^{\dagger_{\vn{B}}}B^{\ddagger_{\vn{A}}}))\\
&= &\sum_{k=1}^{\infty}\frac{\tr([(\text{Id}_{\infhyp}\otimes \varphi\otimes 1_{\vn{B}})(A^{\dagger_{\vn{B}}}B^{\ddagger_{\vn{A}}}))]^{k})}{k}\\
&=&\sum_{k=1}^{\infty}\tr((\text{Id}_{\infhyp}\otimes \varphi\otimes 1_{\vn{B}})(\frac{[A^{\dagger_{\vn{B}}}B^{\ddagger_{\vn{A}}}]^{k}}{k}))\\
&=&\sum_{k=1}^{\infty}(\tr_{\infhyp}\otimes(\alpha\circ\varphi^{-1})\otimes \beta)((\text{Id}_{\infhyp}\otimes \varphi\otimes 1_{\vn{B}})(\frac{[A^{\dagger_{\vn{B}}}B^{\ddagger_{\vn{A}}}]^{k}}{k}))\\
&=&\sum_{k=1}^{\infty}(\tr_{\infhyp}\otimes\alpha\otimes\beta)(\frac{[A^{\dagger_{\vn{B}}}B^{\ddagger_{\vn{A}}}]^{k}}{k})\\
&=&\ldet(1-A^{\dagger_{\vn{B}}}B^{\ddagger_{\vn{A}}})
\end{eqnarray*}
Finaly, we have just shown that $\measmat{((\text{Id}_{\infhyp}\otimes\varphi)(A),\alpha\circ\varphi^{-1}),(B,\beta)}=\measmat{(A,\alpha),(B,\beta)}$.
\end{proof}

\begin{definition}
Let $(A,\alpha),(B,\beta)$ be two dialectal operators of carrier $p+r,r+q$ and dialects $\vn{A,B}$ such that $\specrad{AB}<1$. The execution of $A$ and $B$ is defined as the dialectal operator $(A\plug B,\alpha\otimes\beta)$ of carrier $p+q$ and dialect $\vn{A}\otimes\vn{B}$ where $A\plug B$ is defined as:
\begin{equation*}
A\plug B=(pA^{\dagger_{\vn{B}}}+q)(1-B^{\ddagger_{\vn{A}}}A^{\dagger_{\vn{B}}})^{-1}(p+B^{\ddagger_{\vn{A}}}q)
\end{equation*}
When $r=0$, we have $B^{\ddagger_{\vn{A}}}A^{\dagger_{\vn{B}}}=0$, and we will write 
\begin{equation*}
A\cup B=A\plug B=pA^{\dagger_{\vn{B}}}p+qB^{\ddagger_{\vn{A}}}q=A^{\dagger_{\vn{B}}}+B^{\ddagger_{\vn{B}}}
\end{equation*}
\end{definition}

\begin{proposition}[Adjunction, Girard \cite{pointaveugle2}]
Let $(F,\phi),(G,\gamma),(H,\rho)$ be dialectal operators of respective carriers $p,q,r$ such that $qr=0$. Then:
\begin{equation*}
\measmat{F,G\cup H}=\rho(1_{\vn{H}})\measmat{F,G}+\measmat{H,F\plug G}
\end{equation*}
\end{proposition}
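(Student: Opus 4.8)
The plan is to transcribe, at the level of the logarithmic determinant $\ldet$, the block-triangular factorization of a determinant that underlies the hyperfinite adjunction \autoref{adjonctionFK} — concretely, the matrix identity $\det(1-F(G\oplus H))=\det(1-F_{11}G)\det(1-\Ex(F,G)H)$ recalled in the discussion of the determinant. Write $\vn{F},\vn{G},\vn{H}$ for the dialects of $F,G,H$ and carry everything out inside $\infhyp\otimes\vn{F}\otimes\vn{G}\otimes\vn{H}$ with the pseudo-trace $\tr\otimes\phi\otimes\gamma\otimes\rho$; all operators below are tacitly ampliated (by identities on the missing tensor factors) into this algebra, so $F^{\dagger}$, $G^{\ddagger}$, $H^{\ddagger}$ denote their standard placements. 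Since $G\cup H=G^{\dagger_{\vn{H}}}+H^{\ddagger_{\vn{G}}}$ we have $F^{\dagger}(G\cup H)^{\ddagger}=F^{\dagger}G^{\ddagger}+F^{\dagger}H^{\ddagger}$, and since the carriers satisfy $qr=0$ the $\infhyp$-supports of $G^{\ddagger}$ and $H^{\ddagger}$ are orthogonal, whence $G^{\ddagger}H^{\ddagger}=H^{\ddagger}G^{\ddagger}=0$. The statement presupposes that $F\plug G$ is a well-defined dialectal operator, hence that $\specrad{F^{\dagger_{\vn{G}}}G^{\ddagger_{\vn{F}}}}<1$; since $F^{\dagger}G^{\ddagger}=(F^{\dagger_{\vn{G}}}G^{\ddagger_{\vn{F}}})\otimes 1_{\vn{H}}$ this gives $\specrad{F^{\dagger}G^{\ddagger}}<1$, so $1-F^{\dagger}G^{\ddagger}$ is invertible. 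I would first run the argument assuming every measurement in the statement is finite, and treat the $\infty$ cases at the end.

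The first step is the operator factorization
\begin{equation*}
1-F^{\dagger}(G\cup H)^{\ddagger}=\bigl(1-F^{\dagger}G^{\ddagger}\bigr)\Bigl(1-\bigl(1-F^{\dagger}G^{\ddagger}\bigr)^{-1}F^{\dagger}H^{\ddagger}\Bigr),
\end{equation*}
immediate by expanding the right-hand side. Applying $\ldet$ and \autoref{sumofldet} — in the equivalent form $\ldet\bigl((1-u)(1-v)\bigr)=\ldet(1-u)+\ldet(1-v)$ — writes $\measmat{F,G\cup H}$ as a sum of two terms. For the first, $F^{\dagger}G^{\ddagger}$ acts as the identity on the $\vn{H}$-factor, i.e.\ $F^{\dagger}G^{\ddagger}=M\otimes 1_{\vn{H}}$ with $M=F^{\dagger_{\vn{G}}}G^{\ddagger_{\vn{F}}}$, so \autoref{dialectoutldet} gives $\ldet\bigl(1-F^{\dagger}G^{\ddagger}\bigr)=\rho(1_{\vn{H}})\,\ldet(1-M)=\rho(1_{\vn{H}})\,\measmat{F,G}$, the first summand of the claimed identity.

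The heart of the argument is to identify the second term $\ldet\bigl(1-(1-F^{\dagger}G^{\ddagger})^{-1}F^{\dagger}H^{\ddagger}\bigr)$ with $\measmat{H,F\plug G}$; this is the step I expect to be the main obstacle. Using the resolvent identity $(1-F^{\dagger}G^{\ddagger})^{-1}F^{\dagger}=F^{\dagger}(1-G^{\ddagger}F^{\dagger})^{-1}$, the traciality of $\tr\otimes\phi\otimes\gamma\otimes\rho$ (which gives $\ldet(1-uv)=\ldet(1-vu)$), and — crucially — the hypothesis $qr=0$, I would check that this term equals $\ldet\bigl(1-(F\plug G)^{\ddagger_{\vn{H}}}H^{\dagger_{\vn{F}\otimes\vn{G}}}\bigr)$: the carrier $r$ of $H$ is disjoint from $q$, hence from the \enquote{feedback} sub-carrier shared by $F$ and $G$ through which the resolvent $(1-G^{\ddagger}F^{\dagger})^{-1}$ loops, so $H$ interacts only with the external block of $F$, and the left/right compressions by carriers that appear in the execution formula $F\plug G=(p_{0}F^{\dagger}+q_{0})(1-G^{\ddagger}F^{\dagger})^{-1}(p_{0}+G^{\ddagger}q_{0})$ (with $p_{0},q_{0}$ the parts of the two carriers not shared with the other) can be inserted without changing the value of the $\ldet$. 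This is the operator-algebraic incarnation of the matrix identity quoted above, with the Schur-complement operator $\Ex(F,G)$ replaced by $F\plug G$. Since $F\plug G$ has dialect $\vn{F}\otimes\vn{G}$ and $H$ has dialect $\vn{H}$, the last expression is $\measmat{H,F\plug G}$ up to the canonical reordering isomorphism of the dialect tensor factors, which leaves measurements invariant by \autoref{variantgdi50}. Summing the two terms yields the identity. Finally, if $\specrad{F^{\dagger}(G\cup H)^{\ddagger}}\ge 1$ the left-hand side is $\infty$; since $1-F^{\dagger}G^{\ddagger}$ remains invertible and $\measmat{F,G}$ finite, the factorization then forces the series defining the second term to diverge, i.e.\ $\measmat{H,F\plug G}=\infty$, so the identity still holds in $\realN\cup\{\infty\}$; the converse (a divergence on the right producing one on the left) is analogous.
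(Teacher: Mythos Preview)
The paper does not actually prove this proposition: it is stated with attribution to Girard \cite{pointaveugle2} and immediately followed by the next definition, with no proof given. So there is no paper proof to compare against.

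That said, your approach is the standard one and is correct. The factorization $1-F^{\dagger}(G\cup H)^{\ddagger}=(1-F^{\dagger}G^{\ddagger})(1-(1-F^{\dagger}G^{\ddagger})^{-1}F^{\dagger}H^{\ddagger})$ together with \autoref{sumofldet} is exactly the right decomposition, and your treatment of the first summand via \autoref{dialectoutldet} is clean. The identification of the second summand with $\measmat{H,F\plug G}$ is indeed where the work lies; your sketch via the resolvent identity, traciality of $\ldet$, and the carrier orthogonality $qr=0$ (which kills the $G$-side projections in the execution formula when multiplied against $H$) is the right mechanism, though in a fully written-out proof you would want to make explicit that $rG^{\ddagger}=G^{\ddagger}r=0$ forces $(F\plug G)^{\ddagger}H^{\dagger}$ and $H^{\dagger}F^{\ddagger}(1-G^{\ddagger}F^{\ddagger})^{-1}$ to agree after one cyclic permutation under the trace. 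The $\infty$-case bookkeeping at the end is adequate. In short: nothing to compare to in the paper, and your argument is the expected one.
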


\begin{definition}
If $(A,\alpha)$ and $(B,\beta)$ are dialectal operators of equal carrier $p$ and respective dialects $\vn{A,B}$, and if $\lambda\in\realN$ is a real number, one can define $\lambda (A,\alpha) + (B,\beta)$ as the dialectal operator $(A+B,\lambda\alpha\oplus\beta)$ of carrier $p$ and dialect $\vn{A\oplus B}$.
\end{definition}

\begin{definition}[Project]
A hyperfinite project is a pair $\de{a}=(a,(A,\alpha))$ where $(A,\alpha)$ is a dialectal operator and $a\in\realN\cup\{\infty\}$ is the wager. Following Girard's notation, we will sometimes write $\de{a}$ as $a\cdot +\cdot\alpha +A$.
\end{definition}

\begin{definition}[Variants]
Let $(A,\alpha)$, $(B,\beta)$ be dialectal operators. If there exists $\phi:\vn{A}\rightarrow\vn{B}$ an isomorphism such that $(B,\beta)=(\text{Id}_{\infhyp}\otimes\phi(A),\alpha\circ\phi^{-1})$, we will say that $(B,\beta)$ is a variant of $(A,\alpha)$.
\end{definition}

We can now define the measurement $\sca[\textnormal{mat}]{a}{b}$ between hyperfinite projects as:
$$\sca[\textnormal{mat}]{a}{b}=\alpha(1_{\vn{A}})b+\beta(1_{\vn{B}})a+\measmat{A,B}$$

We then follow the constructions of the hyperfinite GoI model described above to define multiplicative connectives, dichologies, additive connectives, and exponential connectives. Indeed, the key properties used in the constructions of connectives of linear logic are the associativity of execution and the adjunction, both of which hold in both constructions. These two slightly different GoI models actually only differ on their measurement between operators: $\meashyp{A,B}=-\log(\det(1-AB))$ in the hyperfinite GoI model and $\measmat{A,B}$ in the matricial GoI model.

\subsection{A Few Technical Lemmas}

The first two lemma will be used to prove compositionality of our notion of success (\autoref{composi50}). As explained above, it is essential that the normalising groupoid of a MASA is closed under products of partial isometries and sums of \enquote{disjoint} partial isometries.

\begin{lemma}[Products of Partial Isometries]\label{lemmaproductpartialiso}
Let $u,v$ be partial isometries, both in the normalising groupoid $\gn[\vn{N}]{\vn{P}}$ of a MASA $\vn{P}$ of a von Neumann algebra $\vn{N}$. Then $uv$ is a partial isometry, and $uv$ is in $\gn[\vn{N}]{\vn{P}}$.
\end{lemma}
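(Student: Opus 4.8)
The plan is to reduce the statement to two ingredients: the stability of $\gn[\vn{N}]{\vn{P}}$ under taking adjoints, and the trivial observation that two projections lying in the abelian algebra $\vn{P}$ commute. Recall that $w\in\gn[\vn{N}]{\vn{P}}$ means $w$ is a partial isometry ($ww^{\ast}w=w$) with $ww^{\ast},w^{\ast}w\in\vn{P}$ and $w\vn{P}w^{\ast}\subseteq\vn{P}$.

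First I would establish the auxiliary claim that $w\in\gn[\vn{N}]{\vn{P}}$ implies $w^{\ast}\in\gn[\vn{N}]{\vn{P}}$. The conditions $w^{\ast}(w^{\ast})^{\ast}w^{\ast}=w^{\ast}ww^{\ast}=w^{\ast}$ and $w^{\ast}w,ww^{\ast}\in\vn{P}$ are immediate, so the only content is $w^{\ast}\vn{P}w\subseteq\vn{P}$. Write $e=w^{\ast}w$ and $f=ww^{\ast}$, both in $\vn{P}$. For $b\in\vn{P}$ positive one has $w^{\ast}bw=w^{\ast}(fb)w$ with $fb\in f\vn{P}$, and for every $a\in\vn{P}$ the element $waw^{\ast}=w(ea)w^{\ast}$ lies in $f\vn{P}$ (it equals $f(waw^{\ast})$ since $ww^{\ast}w=w$, and is in $\vn{P}$ by hypothesis). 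Since $f\vn{P}$ is abelian, $waw^{\ast}$ and $fb$ commute; conjugating that identity by $w^{\ast}$ on the left and $w$ on the right, and using $w^{\ast}w=e$, $w=we$, $w^{\ast}f=w^{\ast}$, yields $a\,(w^{\ast}bw)=(w^{\ast}bw)\,a$ for all $a\in e\vn{P}$. As $w^{\ast}bw=e(w^{\ast}bw)e$ also commutes trivially with $(1-e)\vn{P}$, the operator $w^{\ast}bw$ is a self-adjoint element of the relative commutant $\vn{P}'\cap\vn{N}$; maximality of $\vn{P}$ gives $\vn{P}'\cap\vn{N}=\vn{P}$, hence $w^{\ast}bw\in\vn{P}$, and by linearity $w^{\ast}\vn{P}w\subseteq\vn{P}$. (This is the familiar fact that the normalising groupoid is an inverse semigroup; one could cite Sinclair--Smith, but since no trace is assumed on $\vn{N}$ I would give the argument above.)

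With this in hand the four defining conditions for $uv$ follow by direct computation, writing $p=u^{\ast}u$ and $q=vv^{\ast}$, both in $\vn{P}$. For the partial isometry condition, $(uv)(uv)^{\ast}(uv)=u(vv^{\ast})(u^{\ast}u)v=uqpv=upqv=upv=uv$, using that $p$ and $q$ commute (both in $\vn{P}$), then $qv=vv^{\ast}v=v$ and $up=uu^{\ast}u=u$. The range projection is $(uv)(uv)^{\ast}=u(vv^{\ast})u^{\ast}=uqu^{\ast}\in\vn{P}$ because $q\in\vn{P}$ and $u\vn{P}u^{\ast}\subseteq\vn{P}$. The source projection is $(uv)^{\ast}(uv)=v^{\ast}(u^{\ast}u)v=v^{\ast}pv\in\vn{P}$, because $p\in\vn{P}$ and, by the auxiliary claim applied to $v$, $v^{\ast}\vn{P}v\subseteq\vn{P}$. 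Finally $(uv)\vn{P}(uv)^{\ast}=u\,(v\vn{P}v^{\ast})\,u^{\ast}\subseteq u\vn{P}u^{\ast}\subseteq\vn{P}$. Hence $uv\in\gn[\vn{N}]{\vn{P}}$.

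The main obstacle is the auxiliary claim $v^{\ast}\vn{P}v\subseteq\vn{P}$: it is the only step that is not a formal manipulation of commuting projections, and it genuinely uses maximality of $\vn{P}$ (through $\vn{P}'\cap\vn{N}=\vn{P}$) together with the commutation trick above. Everything else, including the partial-isometry identity and the two projection conditions, is routine once one knows that all the relevant source and range projections lie in the abelian algebra $\vn{P}$ and therefore commute with one another.
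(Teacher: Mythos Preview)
Your proof is correct and follows the same core idea as the paper: use that $p=u^{\ast}u$ and $q=vv^{\ast}$ lie in the abelian algebra $\vn{P}$, hence commute, to verify $(uv)(uv)^{\ast}(uv)=uv$, and then check the remaining groupoid conditions by direct computation. The partial-isometry step and the verifications of $(uv)(uv)^{\ast}\in\vn{P}$ and $(uv)\vn{P}(uv)^{\ast}\subseteq\vn{P}$ match the paper's argument essentially verbatim.

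Where your write-up differs is that you explicitly isolate and prove the auxiliary claim $v^{\ast}\vn{P}v\subseteq\vn{P}$ (closure of $\gn[\vn{N}]{\vn{P}}$ under adjoints), which is needed to conclude $(uv)^{\ast}(uv)=v^{\ast}pv\in\vn{P}$. The paper's proof simply asserts that ``the projections $uv(uv)^{\ast}$ and $(uv)^{\ast}uv$ are elements of $\vn{P}$'' without justifying the latter; given that the paper's Definition~\ref{groupoidenormalisant} only demands $u\vn{P}u^{\ast}\subseteq\vn{P}$ and not the adjoint inclusion, your extra paragraph genuinely fills a gap rather than adding redundant detail. Your argument for this step---showing $w^{\ast}bw$ commutes with $\vn{P}$ and invoking $\vn{P}'\cap\vn{N}=\vn{P}$---is the standard one and correctly uses maximality, which the paper's terse version leaves implicit.
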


\begin{proof}
Since $u$ and $v$ are in the normalising groupoid of $\vn{P}$, we have that $\proj{p}=u^{\ast}u\in\vn{P}$ and $\proj{q}=vv^{\ast}\in\vn{P}$. Moreover, since $\vn{P}$ is commutative, $\proj{p}\proj{q}=\proj{qp}$. Thus:
\begin{equation*}
(uv)(uv)^{\ast}(uv)=uvv^{\ast}u^{\ast}uv=u\proj{qp}v=u\proj{pq}v=uu^{\ast}uvv^{\ast}v=uv
\end{equation*}
We just showed that $(uv)(uv)^{\ast}(uv)=uv$, and therefore $uv$ is a partial isometry. Finally, since the projections $uv(uv)^{\ast}$ and $(uv)^{\ast}uv$ are elements of $\vn{P}$ and $uv\vn{P}(uv)^{\ast}\subset\vn{P}$, $uv$ is in the normalising groupoid of $\vn{A}$.
\end{proof}

\begin{lemma}[Sums of Partial Isometries]\label{lemmasumpartialiso}
Let $u,v$ be partial isometries in the normalising groupoid $\gn[\infhyp]{\vn{P}}$ of a MASA $\vn{P}$ in a von Neumann algebra $\vn{N}$. If $uv^{\ast}=u^{\ast}v=0$, then the sum $u+v$ is a partial isometry and belongs to $\gn[\infhyp]{\vn{P}}$.
\end{lemma}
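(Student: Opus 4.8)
The plan is to reduce the statement to the four defining conditions of the normalising groupoid recalled in \autoref{groupoidenormalisant}: an element $w$ lies in $\gn[\infhyp]{\vn{P}}$ exactly when $ww^{\ast}w=w$, $ww^{\ast}\in\vn{P}$, $w^{\ast}w\in\vn{P}$ and $w\vn{P}w^{\ast}\subset\vn{P}$. Write $p_{u}=u^{\ast}u$, $q_{u}=uu^{\ast}$, $p_{v}=v^{\ast}v$ and $q_{v}=vv^{\ast}$; since $u,v\in\gn[\infhyp]{\vn{P}}$ all four of these projections belong to $\vn{P}$. First I would extract from the hypotheses the ``disjointness'' relations that make a sum of two partial isometries a partial isometry. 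Taking adjoints of $uv^{\ast}=0$ and $u^{\ast}v=0$ gives $vu^{\ast}=0$ and $v^{\ast}u=0$; right-multiplying $uv^{\ast}=0$ by $v$ yields $up_{v}=0$, left-multiplying it by $u^{\ast}$ yields $vp_{u}=0$, and the two relations coming from $u^{\ast}v=0$ give $q_{u}v=0$ and $q_{v}u=0$. In particular $p_{v}p_{u}=v^{\ast}(vp_{u})=0$ and $q_{u}q_{v}=(q_{u}v)v^{\ast}=0$, so the source projections are orthogonal and the range projections are orthogonal.

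Next I would verify the partial-isometry identity by direct computation. Using $uv^{\ast}=vu^{\ast}=0$ one gets $(u+v)(u+v)^{\ast}=uu^{\ast}+vv^{\ast}=q_{u}+q_{v}$, and then $(q_{u}+q_{v})(u+v)=q_{u}u+q_{v}v+q_{u}v+q_{v}u=u+v$, since $q_{u}u=uu^{\ast}u=u$, $q_{v}v=v$, and the two mixed terms vanish by the relations derived above. Hence $(u+v)(u+v)^{\ast}(u+v)=u+v$, so $u+v$ is a partial isometry. Two of the membership conditions are then immediate: $(u+v)(u+v)^{\ast}=q_{u}+q_{v}\in\vn{P}$, and symmetrically, using $u^{\ast}v=v^{\ast}u=0$, one has $(u+v)^{\ast}(u+v)=p_{u}+p_{v}\in\vn{P}$.

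It remains to check that $(u+v)\vn{P}(u+v)^{\ast}\subset\vn{P}$. Fix $a\in\vn{P}$ and expand $(u+v)a(u+v)^{\ast}=uau^{\ast}+uav^{\ast}+vau^{\ast}+vav^{\ast}$. The ``diagonal'' terms $uau^{\ast}$ and $vav^{\ast}$ lie in $\vn{P}$ precisely because $u,v\in\gn[\infhyp]{\vn{P}}$. For the cross terms I would use commutativity of $\vn{P}$: since $u=up_{u}$ and $p_{u}a=ap_{u}$, one has $ua=uap_{u}$, whence $uav^{\ast}=ua\,p_{u}v^{\ast}=ua\,(vp_{u})^{\ast}=0$, and symmetrically $vau^{\ast}=va\,p_{v}u^{\ast}=va\,(up_{v})^{\ast}=0$. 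Therefore $(u+v)a(u+v)^{\ast}=uau^{\ast}+vav^{\ast}\in\vn{P}$, which finishes the proof. The argument is entirely elementary; the only step requiring a small idea is the handling of the cross terms in the last condition, where one inserts the source projection of $u$ (resp.\ $v$) via commutativity of $\vn{P}$ before invoking $vp_{u}=0$ (resp.\ $up_{v}=0$) --- everything else is bookkeeping with $q_{u}u=u$, $q_{v}v=v$ and the vanishing products coming from the hypotheses.
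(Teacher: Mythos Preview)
Your proof is correct and follows essentially the same route as the paper's: both argue by taking adjoints of the hypotheses, expanding $(u+v)(u+v)^{\ast}(u+v)$ directly, and then checking that the range and source projections lie in $\vn{P}$ and that $(u+v)a(u+v)^{\ast}=uau^{\ast}+vav^{\ast}$ for $a\in\vn{P}$. If anything, your write-up is more explicit than the paper's on the one nontrivial point---the vanishing of the cross terms $uav^{\ast}$ and $vau^{\ast}$---where you spell out the insertion of the source projection via commutativity of $\vn{P}$, whereas the paper only hints at this by remarking that the four projections $uu^{\ast},u^{\ast}u,vv^{\ast},v^{\ast}v$ commute.
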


\begin{proof}
We supposed that $(uv^{\ast})^{\ast}=vu^{\ast}=0$. This implies that $(u^{\ast}v)^{\ast}=v^{\ast}u=0$. We can then compute:
\begin{equation*}
(u+v)(u+v)^{\ast}(u+v)=(u+v)(u^{\ast}+v^{\ast})(u+v)=uu^{\ast}u+vv^{\ast}v=u+v
\end{equation*}
This shows that $u+v$ is a partial isometry. We now have to show that it is in the normalising groupoid of $\vn{P}$. The projections $uu^{\ast},u^{\ast}u,vv^{\ast},v^{\ast}v$ are all elements of $\vn{P}$, and commute one with another. If $a$ is an element of $\vn{P}$:
\begin{equation*}
(u+v)a(u+v)^{\ast}=uau^{\ast}+vav^{\ast}
\end{equation*}
Since $u,v$ are in the normalising groupoid of $\vn{P}$, we conclude from this that $(u+v)a(u+v)^{\ast}$ is the sum of two elements in $\vn{P}$, which is again an element in $\vn{P}$. This implies that $(u+v)\vn{P}(u+v)^{\ast}\subseteq\vn{P}$.\\
Finally, the projections $(u+v)(u+v)^{\ast}=uu^{\ast}+vv^{\ast}$ and $(u+v)^{\ast}(u+v)=u^{\ast}u+v^{\ast}v$ are also in $\vn{P}$ as sums of elements in $\vn{P}$.
\end{proof}

Now, we show a succession of lemmas whose final goal is a proof of the fact that nilpotent dialectal operators $(A,\alpha)$ satisfy $\ldet(1-A)=0$. This fact will be of use in both the proof of coherence (\autoref{coherence50}) and the proof of compositionality (\autoref{composi50}).

\begin{lemma}\label{nilpotenttraceis0}
Let $A$ be an operator in a factor $\vn{M}$, and $\tr$ be a trace on $\vn{M}$. If $A$ is nilpotent, $\tr(A)=0$.
\end{lemma}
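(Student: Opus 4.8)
The plan is to exploit the decreasing chain of range projections of the powers of $A$. Suppose $A^{n}=0$. For $0\leqslant k\leqslant n$ let $r_{k}\in\vn{M}$ be the projection onto the closure of the range of $A^{k}$ (the left support projection of $A^{k}$; it belongs to $\vn{M}$). Then $r_{0}=1$ and $r_{n}=0$, and since $\mathrm{ran}(A^{k+1})=A^{k}(\mathrm{ran}\,A)\subseteq\mathrm{ran}(A^{k})$ the family $(r_{k})_{k}$ is decreasing, hence pairwise commuting. The key relation is that $A$ sends $\mathrm{ran}(A^{k})$ into $\mathrm{ran}(A^{k+1})$, so by continuity $A\,r_{k}=r_{k+1}A\,r_{k}$, i.e. $(1-r_{k+1})A\,r_{k}=0$.

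Next I would set $q_{k}=r_{k}-r_{k+1}$ for $0\leqslant k\leqslant n-1$. Since $r_{k+1}\leqslant r_{k}$ these are projections, they are pairwise orthogonal (being ``intervals'' of a chain of commuting projections), and they telescope: $\sum_{k}q_{k}=r_{0}-r_{n}=1$. Writing $A=\big(\sum_{j}q_{j}\big)A\big(\sum_{k}q_{k}\big)$ and applying $\tr$, traciality of $\tr$ together with $q_{k}q_{j}=\delta_{jk}q_{k}$ collapses the double sum to $\tr(A)=\sum_{k}\tr(q_{k}A\,q_{k})$. It then remains to see each term vanishes: from $r_{k+1}\leqslant r_{k}$ one gets $q_{k}\leqslant r_{k}$ and $q_{k}\leqslant 1-r_{k+1}$, and since all these projections commute, $q_{k}A\,q_{k}=q_{k}(1-r_{k+1})A\,r_{k}q_{k}=0$ by the key relation. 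Hence $\tr(A)=0$.

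The argument is essentially bookkeeping with the chain of support projections; the only point that deserves a word is what $\tr(A)$ means when the trace is not finite, since a nilpotent operator need not lie in the definition ideal of an infinite trace. In the uses made of this lemma the ambient algebra is a finite corner (of the form $p\infhyp p\otimes\vn{A}$ with $p$ finite and $\vn{A}$ finite of type $\text{I}$), so $\tr$ is finite and extends to a bounded linear functional on the whole algebra, and the computation above applies verbatim. Alternatively one may argue by induction on $n$: since $A\,r_{n-1}=0$ one has $(1-r_{n-1})A(1-r_{n-1})=(1-r_{n-1})A$, which is nilpotent of degree $\leqslant n-1$, and $\tr(A)=\tr\big((1-r_{n-1})A(1-r_{n-1})\big)$ by the same orthogonality manipulation, reducing the degree.
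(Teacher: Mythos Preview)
Your proof is correct and rests on the same key object as the paper's: the decreasing chain of range projections $r_{k}$ of the powers $A^{k}$. The paper packages the argument iteratively---defining $A_{1}=A$, $A_{i+1}=A_{i}p_{i}$ with $p_{i}$ the range projection of $A_{i}$ (which coincides with your $r_{i}$), and using $\tr(A_{i})=\tr(p_{i}A_{i})=\tr(A_{i}p_{i})=\tr(A_{i+1})$ until reaching $A_{N}=0$---while you package it as a block decomposition along $q_{k}=r_{k}-r_{k+1}$ and observe the diagonal blocks vanish. Your ``alternatively'' paragraph is essentially the paper's argument verbatim, and your remark about finiteness of the trace is a fair caveat that the paper leaves implicit (in every application the ambient factor is finite).
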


\begin{proof}
Let $N$ be the degree of nilpotency of $A$. We define by induction a sequence of projections $(p_{i})_{i=1}^{N}$ and a sequence of operators $(A_{i})_{i=1}^{N}$ as follows:
\begin{itemize}
\item $A_{1}=A$ and $p_{1}$ is the projection onto the closure of the range of $A_{1}$;
\item $A_{i+1}=A_{i}p_{i}$ and $p_{i+1}$ is the projection onto the closure of the range of $A_{i+1}$.
\end{itemize}
Notice that for all $i$, the projections satisfy $p_{i+1}p_{i}=p_{i+1}$ since $p_{i}\geqslant p_{i+1}$. A simple induction shows that $A_{i+1}=A_{i}p_{i}$ is nilpotent of degree $N-i$:
\begin{itemize}
\item First, notice that $A_{2}^{N-1}=(A_{1}p_{1})^{N-1}=A^{N-1}p_{1}$ since $p_{i}A_{i}=A_{i}$ by definition of $p_{i}$. Since $p_{1}$ is the projection onto the closure of the range of $A$, each element of the form $p_{1}x$ is the limit of a sequence $(Ay_{i})_{i=0}^{\infty}$, i.e. $A^{N-1}p_{1}x$ is the limit of the sequence $(A^{N}y_{i})_{i=0}^{\infty}$ by continuity of $A$. As $A$ is nilpotent of degree $N$, the sequence is equal to $0$ everywhere, whence $A^{N-1}p_{1}=0$, and $A_{2}=(A_{1}p_{1})$ is nilpotent of degree $N-1$.
\item Then we know by induction that $A_{i}$ is nilpotent of degree $N-i+1$ and that $p_{i}$ is defined as the projection onto the closure of the range of $A_{i}$. We use the same argument and show that $A_{i+1}=A_{i}p_{i}$ is nilpotent of degree $N-i$ since $(A_{i}p_{i})^{N-i}=A_{i}^{N-i}p_{i}=0$.
\end{itemize}
As a consequence, $A_{N}$ is nilpotent of degree $1$, i.e. $A_{N}=0$.

We now use the \enquote{traciality} of the trace, i.e. that $\tr(AB)=\tr(BA)$ and the fact that $A_{i}=p_{i}A_{i}$ to show that $\tr(A)=\tr(A_{N})$:
\begin{equation*}
\tr(A)=\tr(A_{1})=\tr(p_{1}A_{1})=\tr(A_{1}p_{1})=\tr(A_{2})=\tr(p_{2}A_{2})=\tr(A_{2}p_{2})=\tr(A_{3})=\dots=\tr(A_{N})
\end{equation*}
Since $A_{N}=0$, we conclude that $\tr(A)=\tr(A_{N})=0$.
\end{proof}

\begin{lemma}\label{lemmapseudotracefactor}
Let $\vn{A}=\vn{M}_{k}(\complexN)$ be a matrix algebra, and $\alpha$ a pseudo-trace on $\vn{A}$. There exists a real number $\lambda$ such that $\alpha=\lambda\tr$ where $\tr$ is the normalised trace (i.e. $\tr(1)=1$) on $\vn{A}$.
\end{lemma}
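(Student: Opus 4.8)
The plan is to use only the traciality and hermiticity of $\alpha$; the normality and faithfulness assumptions will play no role here, since $\vn{A}$ is finite-dimensional (so every linear form on it is automatically $\sigma$-weakly continuous) and we only need $\lambda\in\realN$, not $\lambda>0$. The underlying fact is the classical one that the space of tracial linear functionals on $\vn{M}_{k}(\complexN)$ is one-dimensional, spanned by the usual trace; I will just make this explicit on matrix units.

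First I would fix the standard system of matrix units $(e_{ij})_{1\leqslant i,j\leqslant k}$ of $\vn{A}=\vn{M}_{k}(\complexN)$, which satisfies $e_{ij}e_{lm}=\delta_{jl}e_{im}$. Applying traciality, $\alpha(e_{ij})=\alpha(e_{i1}e_{1j})=\alpha(e_{1j}e_{i1})=\delta_{ij}\alpha(e_{11})$, so $\alpha$ is entirely determined by the single scalar $c=\alpha(e_{11})$. Expanding an arbitrary $x\in\vn{A}$ as $x=\sum_{i,j}x_{ij}e_{ij}$ and using linearity then gives $\alpha(x)=c\sum_{i=1}^{k}x_{ii}$. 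Next I would check that $c$ is real: since $e_{11}=e_{11}^{\ast}$, hermiticity of $\alpha$ yields $c=\alpha(e_{11})=\overline{\alpha(e_{11}^{\ast})}=\overline{\alpha(e_{11})}=\bar c$, hence $c\in\realN$. Finally, recalling that the normalised trace on $\vn{M}_{k}(\complexN)$ is $\tr(x)=\frac{1}{k}\sum_{i=1}^{k}x_{ii}$, it suffices to set $\lambda=kc\in\realN$ to obtain $\alpha=\lambda\tr$.

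There is essentially no serious obstacle. The one point deserving a remark is that hermiticity of $\alpha$ (and not merely its being a tracial linear form) is precisely what forces $\lambda$ to be real: a purely tracial functional such as $x\mapsto i\sum_{i}x_{ii}$ would otherwise violate real-valuedness of $\lambda$. (If one additionally wanted $\lambda>0$, it would follow from faithfulness, since then $\lambda=kc=k\alpha(e_{11}^{\ast}e_{11})>0$, but the statement as given does not require this.)
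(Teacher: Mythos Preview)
Your proof is correct. It is essentially the same idea as the paper's---use traciality to pin $\alpha$ down to a single scalar and then invoke hermiticity on a projection to see that scalar is real---but you take a slightly more elementary route. The paper works with a complete family of minimal projections $\pi_{1},\dots,\pi_{k}$, uses Murray--von Neumann equivalence to show $\alpha(\pi_{j})=\alpha(\pi_{1})$, then extends to arbitrary projections via partial isometries and concludes by the fact that projections generate the algebra together with continuity. You instead use the matrix units $e_{ij}$ as a linear basis and compute $\alpha(e_{ij})$ directly, which lets you read off $\alpha(x)=c\sum_{i}x_{ii}$ for all $x$ in one line without passing through a generation-by-projections argument. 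Your approach is shorter and more explicit in this finite-dimensional setting; the paper's argument has the mild advantage of being phrased in a way that generalises more naturally to infinite factors, but for $\vn{M}_{k}(\complexN)$ your version is cleaner. Your closing remark on the role of hermiticity versus faithfulness is also apt.
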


\begin{proof}
Let us fix $\pi_{1},\dots,\pi_{k}$ a set of minimal projections of $\vn{A}=\vn{M}_{k}(\complexN)$ such that $\sum_{j=1}^{k}\pi_{j}=1$. Since the projections $\pi_{j}$ are equivalent in the sense of Murray and von Neumann, one can find for all $1\leqslant j \leqslant k$ a partial isometry $u_{j}$ such that $u_{j}u_{j}^{\ast}=\pi_{j}$ and $u_{j}^{\ast}u_{j}=\pi_{1}$. Using the \enquote{traciality} of $\alpha$ we obtain:
$$\alpha(\pi_{j})=\alpha(u_{j}u_{j}^{\ast})=\alpha(u_{j}^{\ast}u_{j})=\alpha(\pi_{1})$$ 
We now write $\lambda=k\times\alpha(\pi_{1})$. 

We now show that $\alpha(p)=\lambda\tr(p)$ for all projection $p$ in $\vn{M}_{k}(\complexN)$. If $p$ is such a projection, there exists a partial isometry $w$ between $p$ and a sum $\sum_{j=1}^{m}\pi_{j}$ where $j$ is an integer between $1$ and $k$. The \enquote{traciality} of $\alpha$ and $\tr$ then respectively imply that $\alpha(p)=\alpha(\sum_{j=1}^{m}\pi_{j})$ and $\tr(p)=\tr(\sum_{j=1}^{m}\pi_{j})$. We thus obtain:
$$\alpha(p)=\alpha(\sum_{j=1}^{m}\pi_{j})=\sum_{j=1}^{m}\alpha(\pi_{j})=\sum_{j=1}^{m}\lambda\tr(\pi_{j})=\lambda\tr(\sum_{j=1}^{m}\pi_{j})=\lambda\tr(p)$$
The two linear forms $\alpha$ and $\lambda\tr$ are therefore equal on the set of all projections and continuous: they are therefore equal on the whole algebra since the latter is generated by its projections. We are left with showing that $\lambda$ is a real number. This is a straightforward consequence of the equality $\overline{\alpha(a)}=\alpha(a^{\ast})$: if $a$ is self-adjoint, then $\alpha(a)\in\realN$; in particular, a projection $\pi_{1}$ is self-adjoint and $\tr(\pi_{1})=1/k$ is in $\realN$. Finally, $\lambda=\alpha(\pi_{1})/\tr(\pi_{1})$ is a real number.
\end{proof}

\begin{lemma}\label{normalformpseudotrace}
Let $\vn{A}=\bigoplus_{i=1}^{l} \vn{M}_{k_{i}}(\complexN)$ be a direct sum of matrix algebras, and $\alpha$ a pseudo-trace on $\vn{A}$. There exists a family $(\lambda_{i})_{i=1}^{l}$ of real numbers such that $\alpha=\bigoplus_{i=1}^{l}\lambda_{i}\tr_{\vn{M}_{k_{i}}(\complexN)}$.
\end{lemma}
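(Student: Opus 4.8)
The plan is to reduce to the single-factor case already settled in \autoref{lemmapseudotracefactor}. Write $1_{\vn{A}}=\sum_{i=1}^{l}e_{i}$, where $e_{i}$ is the central projection of $\vn{A}$ corresponding to the $i$-th summand; the $e_{i}$ are pairwise orthogonal, and the corner $e_{i}\vn{A}e_{i}=e_{i}\vn{A}$ is canonically $\ast$-isomorphic to $\vn{M}_{k_{i}}(\complexN)$.

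First I would check that the restriction $\alpha_{i}$ of $\alpha$ to the corner $e_{i}\vn{A}e_{i}$ is again a pseudo-trace. It is linear and hermitian by restriction; it is tracial since for $x,y\in e_{i}\vn{A}e_{i}$ one has $xy,yx\in e_{i}\vn{A}e_{i}$ and $\alpha(xy)=\alpha(yx)$; it is faithful because a non-zero positive element of $e_{i}\vn{A}e_{i}$ is in particular a non-zero positive element of $\vn{A}$, on which $\alpha$ is strictly positive; and normality is automatic since $e_{i}\vn{A}e_{i}$ is finite-dimensional. Applying \autoref{lemmapseudotracefactor} to $\alpha_{i}$, after identifying $e_{i}\vn{A}e_{i}$ with $\vn{M}_{k_{i}}(\complexN)$, produces a real number $\lambda_{i}$ with $\alpha_{i}=\lambda_{i}\tr_{\vn{M}_{k_{i}}(\complexN)}$.

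Then I would conclude by linearity. For $a=\bigoplus_{i=1}^{l}a_{i}\in\vn{A}$, centrality and orthogonality of the $e_{i}$ give $a=\big(\sum_{i}e_{i}\big)a\big(\sum_{j}e_{j}\big)=\sum_{i,j}e_{i}ae_{j}=\sum_{i=1}^{l}e_{i}ae_{i}$, since $e_{i}ae_{j}=0$ whenever $i\neq j$. Hence
\begin{equation*}
\alpha(a)=\sum_{i=1}^{l}\alpha(e_{i}ae_{i})=\sum_{i=1}^{l}\alpha_{i}(a_{i})=\sum_{i=1}^{l}\lambda_{i}\tr_{\vn{M}_{k_{i}}(\complexN)}(a_{i}),
\end{equation*}
which is exactly the claimed identity $\alpha=\bigoplus_{i=1}^{l}\lambda_{i}\tr_{\vn{M}_{k_{i}}(\complexN)}$.

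There is no genuine obstacle here: the only points deserving a line of justification are that the restriction of a pseudo-trace to a corner remains a pseudo-trace (in particular that faithfulness is inherited) and the elementary decomposition $a=\sum_{i}e_{i}ae_{i}$ through the central projections $e_{i}$. The substantive content of the statement is entirely carried by the preceding lemma.
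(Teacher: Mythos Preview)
Your argument is correct and follows essentially the same route as the paper: decompose via the central projections of the direct sum, restrict $\alpha$ to each corner, invoke \autoref{lemmapseudotracefactor} on each factor to obtain the $\lambda_i$, and conclude by linearity. If anything, you are slightly more explicit than the paper in verifying that the restriction $\alpha_i$ inherits the pseudo-trace properties (in particular faithfulness), which the paper simply asserts.
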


\begin{proof}
We write $p_{i}$ ($i=1,\dots,l$) the projection of $\vn{A}$ onto the algebra $\vn{M}_{k_{i}}(\complexN)$. If $a\in \vn{A}$, we have $a=\bigoplus_{i=1}^{l}a_{i}$, from which we obtain that $\alpha(a)=\sum_{i=1}^{l}\alpha(a_{i})$. We therefore write $\alpha_{i}$ the restriction of $\alpha$ to the algebra $\vn{M}_{k_{i}}(\complexN)$, i.e. $\alpha_{i}=\alpha(p_{i}\iota(x)p_{i})$ where $\iota:\vn{M}_{k_{i}}(\complexN)\rightarrow \vn{A}$ is the canonical injection. Since $\alpha$ is a pseudo-trace, $\alpha_{i}$ is a pseudo-trace on $\vn{M}_{k_{i}}(\complexN)$ and, by \autoref{lemmapseudotracefactor} there exists a real number $\lambda_{i}$ such that $\alpha_{i}=\lambda_{i}\tr_{\vn{M}_{k_{i}}(\complexN)}$. Finally, for all $a\in\vn{A}$, we have:
$$\alpha(a)=\sum_{i=1}^{l}\alpha(a_{i})=\sum_{i=1}^{l}\alpha_{i}(a_{i})=\sum_{i=1}^{l}\lambda_{i}\tr_{\vn{M}_{k_{i}}(\complexN)}(a_{i})=(\bigoplus_{i=1}^{l}\lambda_{i}\tr_{\vn{M}_{k_{i}}(\complexN)})(a)$$
This concludes the proof.
\end{proof}

\begin{lemma}\label{nilpimpliesdet0}
If a dialectal operator $(A,\alpha)$ is nilpotent, then $\ldet(1-A)=0$.
\end{lemma}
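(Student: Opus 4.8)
The plan is to reduce the statement to Lemma~\autoref{nilpotenttraceis0} after unfolding the dialect into matrix factors. First I would fix $N$, the degree of nilpotency of $A$, so that $A^k=0$ for all $k\geq N$; in particular $\specrad{A}=0$, and the series defining $\ldet(1-A)$, namely $\sum_{k\geq 1}\frac{1}{k}\,\tr\otimes\alpha(A^k)$, reduces to the finite sum $\sum_{k=1}^{N-1}\frac{1}{k}\,\tr\otimes\alpha(A^k)$. It therefore suffices to show $\tr\otimes\alpha(A^k)=0$ for every $k\geq 1$, and I note for later use that each power $A^k$ is again nilpotent, since $(A^k)^N=A^{kN}=0$.

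Next I would decompose the dialect. Since $\vn{A}$ is a finite von Neumann algebra of type $\text{I}$, it is a finite direct sum of matrix algebras $\vn{A}=\bigoplus_{i=1}^{l}\vn{M}_{k_i}(\complexN)$, and by Lemma~\autoref{normalformpseudotrace} the pseudo-trace splits accordingly as $\alpha=\bigoplus_{i=1}^{l}\lambda_i\tr_i$ with $\lambda_i\in\realN$, where $\tr_i$ is the normalised trace on $\vn{M}_{k_i}(\complexN)$. Hence $p\infhyp p\otimes\vn{A}=\bigoplus_{i=1}^{l}\bigl(p\infhyp p\otimes\vn{M}_{k_i}(\complexN)\bigr)$ and $\tr\otimes\alpha=\bigoplus_{i=1}^{l}\lambda_i\,(\tr\otimes\tr_i)$. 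Each summand $p\infhyp p\otimes\vn{M}_{k_i}(\complexN)\cong\vn{M}_{k_i}(p\infhyp p)$ is a type $\text{II}_1$ factor, since the corner $p\infhyp p$ of the type $\text{II}_\infty$ factor $\infhyp$ by a finite projection is one, and $\tr\otimes\tr_i$ is a trace on it.

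Finally I would conclude componentwise. Writing $A=\bigoplus_{i=1}^{l}A_i$ with $A_i\in p\infhyp p\otimes\vn{M}_{k_i}(\complexN)$, the operator $A_i^k$ is a direct summand of the nilpotent operator $A^k$, hence is itself nilpotent; applying Lemma~\autoref{nilpotenttraceis0} inside the factor $p\infhyp p\otimes\vn{M}_{k_i}(\complexN)$ gives $(\tr\otimes\tr_i)(A_i^k)=0$. Therefore $\tr\otimes\alpha(A^k)=\sum_{i=1}^{l}\lambda_i\,(\tr\otimes\tr_i)(A_i^k)=0$ for all $k\geq 1$, and so $\ldet(1-A)=0$.

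The only genuine subtlety is that $p\infhyp p\otimes\vn{A}$ need not be a factor, so Lemma~\autoref{nilpotenttraceis0} cannot be invoked on it directly; the decomposition provided by Lemma~\autoref{normalformpseudotrace} is precisely what makes that lemma applicable summand by summand. Everything else — the reduction to a finite sum, the stability of nilpotency under passing to a direct summand, and the identification of $p\infhyp p\otimes\vn{M}_{k_i}(\complexN)$ as a type $\text{II}_1$ factor — is routine.
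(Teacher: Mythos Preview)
Your proof is correct and follows essentially the same route as the paper's: reduce the series to a finite sum using $\specrad{A}=0$, decompose the pseudo-trace via Lemma~\ref{normalformpseudotrace}, and apply Lemma~\ref{nilpotenttraceis0} componentwise in each factor $p\infhyp p\otimes\vn{M}_{k_i}(\complexN)$. If anything, your write-up is slightly more careful than the paper's in making explicit that each $A_i^k$ is nilpotent and that the summands are genuine factors so that Lemma~\ref{nilpotenttraceis0} applies.
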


\begin{proof}
By the equality $\specrad{A}=\lim\inf_{n\rightarrow\infty}\norm{A^{n}}^{\frac{1}{n}}$ we have that $\specrad{A}=0$ since $\norm{A^{k}}=0$ for all big enough $k$ (i.e. $k$ greater than the nilpotency degree $N$ of $A$). Thus $\ldet(1-A)$ is computed as the series $\sum_{i=1}^{\infty} \frac{\tr\otimes\alpha(A^{n})}{n}$, which we can immediately simplify as $\sum_{i=1}^{N}\frac{\tr\otimes\alpha(A^{n})}{n}$. Let us pick an integer $k$ in $\{1,2,\dots,N\}$, and suppose that $\tr\otimes\alpha(A^{k})\neq 0$. By \autoref{normalformpseudotrace} the pseudo-trace $\alpha$ can be written as $\alpha=\oplus_{i=1}^{a} \alpha_{i}\tr_{d_{i}}$ where $\alpha_{i}$ are real numbers and the traces $\tr_{d_{i}}$ are normalised traces on type {I}$_{n}$ factors. Moreover, $A$ can be written as a direct sum $\bigoplus_{i=1}^{a} A_{i}$. Since $A$ is nilpotent, each $A_{i}$ is nilpotent. Therefore, if we prove that $\tr\otimes\tr_{d_{i}}(A_{i})=0$, we will be able to conclude. But $\tr\otimes\tr_{d_{i}}$ is a trace (not merely a pseudo-trace), and therefore $\tr\otimes\tr_{d_{i}}(A_{i})=0$ by Lemma \ref{nilpotenttraceis0}.
\end{proof}

\subsection{Subjective Truth in the Matricial GoI Model}

We are now ready to define a notion of success for the matricial GoI model. To avoid an overlap of terminology, we call here \emph{outlooks} the equivalent of Girard's \emph{viewpoints}, and \emph{promising projects} the equivalent of Girard's \emph{successful projects}. We will first show how this notion of success satisfies coherence and compositionality, and we will then explain its relation to Girard's notion of success.

\begin{definition}[Outlook]
An \emph{outlook} is a MASA in $\infhyp$.
\end{definition}

\begin{definition}[Promising Project]\label{projetsprometteurs}
An hyperfinite project $\de{a}=a\cdot+\cdot\alpha+A$ is \emph{promising w.r.t the outlook $\vn{P}$} if:
\begin{itemize}
\item \textbf{Dialect.} The dialect $\vn{A}$ is a finite factor, i.e. a matrix algebra;
\item \textbf{Pseudo-Trace.} $\alpha$ is the normalised trace on $\vn{A}$;
\item \textbf{Wager.} $\de{a}$ is wager-free: $a=0$;
\item \textbf{Symmetry.} $A$ is a partial symmetry in the normalising groupoid $\vn{P}\otimes \vn{Q}$, where $\vn{Q}$ is a MASA in $\vn{A}$;
\item \textbf{Traces.} for all projection $\pi\in\vn{P}\otimes\normalisateur[\vn{A}]{Q}$, $\tr(\pi A)=0$.
\end{itemize}
\end{definition}

We remark here that the last \enquote{trace condition} is an addition to Girard's condition of successful projects (see \autoref{remarkadditionalcondition}). This is however a quite natural condition as a successful or promising project should be understood as the representation of a set of axiom links in a proof net, i.e. it should be a symmetry without any fixpoints. Even though the alternative definition of success without this additional condition would be satisfactory (it would satisfy coherence and compositionality), it does not convey the intuitions coming from proof nets.

\begin{remark}
We showed earlier that if $\phi$ is an injective morphism from $\vn{A}$ into $\vn{B}$, then $\de{a}^{\phi}$ is universally equivalent to $\de{a}$ in the sense that $\sca[]{a}{b}=\sca[]{a^{\phi}}{b}$ for all hyperfinite project $\de{b}$. Thus, $\de{a}\in\cond{A}$ if and only if $\de{a}^{\phi}\in\cond{A}$. Moreover, if $\vn{A}$ is not a factor, it is a direct sum of matrix algebras $\vn{A}=\bigoplus_{i=1}^{k}\vn{M}_{p_{i}}(\complexN)$, and thus embeds naturally as a sub-algebra of $\vn{B}=\vn{M}_{\sum_{i}p_{i}}(\complexN)$, a factor. As a consequence, if $\de{a}$ is a promising project, we can always suppose that $\vn{A}$ is a factor. 
\end{remark}

\begin{definition}
A conduct $\cond{A}$ is \emph{correct w.r.t. the outlook $\vn{A}$} if there exists a hyperfinite project $\de{a}\in\cond{A}$ which is promising w.r.t. $\vn{A}$.
\end{definition}

We now check that the notion of promising project satisfies the essential properties: compositionality and coherence. 

\begin{proposition}[Coherence]\label{coherence50}
Let $\vn{P}$ be an outlook. The two conducts $\cond{A}$ and $\cond{A}^{\pol}$ cannot both contain a promising project w.r.t. $\vn{P}$.
\end{proposition}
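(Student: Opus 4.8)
The plan is to argue by contradiction: suppose $\de{a}=0\cdot+\cdot\tr_{\vn{A}}+A$ is promising w.r.t. $\vn{P}$ and lies in $\cond{A}$, while $\de{b}=0\cdot+\cdot\tr_{\vn{B}}+B$ is promising w.r.t. $\vn{P}$ and lies in $\cond{A}^{\pol}$. By definition of orthogonality in the matricial model, $\de{a}\poll\de{b}$, which since both wagers vanish reduces to $\measmat{A,B}=\ldet(1-A^{\dagger_{\vn{B}}}B^{\ddagger_{\vn{A}}})\neq 0,\infty$ (and in particular $\specrad{A^{\dagger}B^{\ddagger}}<1$). I will derive a contradiction by showing that $\ldet(1-A^{\dagger}B^{\ddagger})=0$. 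The strategy for this is to show that $A^{\dagger}B^{\ddagger}$, or at least the relevant part of it whose trace is computed by $\ldet$, is \emph{nilpotent}, and then invoke \autoref{nilpimpliesdet0}.

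First I would record the structural facts coming from the \textbf{Symmetry} condition: $A$ is a partial symmetry (hence $A=A^{\ast}$, $A^{2}$ a projection) lying in the normalising groupoid of $\vn{P}\otimes\vn{Q}_{\vn{A}}$ for some MASA $\vn{Q}_{\vn{A}}\subset\vn{A}$, and likewise $B$ for some MASA $\vn{Q}_{\vn{B}}\subset\vn{B}$. By \autoref{lemmaproductpartialiso} and \autoref{lemmasumpartialiso}, after suitable delocations/identifications the product $A^{\dagger_{\vn{B}}}B^{\ddagger_{\vn{A}}}$ is again a partial isometry in the normalising groupoid of the MASA $\vn{P}\otimes\vn{Q}_{\vn{A}}\otimes\vn{Q}_{\vn{B}}$ of $\infhyp\otimes\vn{A}\otimes\vn{B}$. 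The key consequence is that the powers $(A^{\dagger}B^{\ddagger})^{k}$ are partial isometries with \emph{pairwise disjoint} domains and codomains among the diagonal-supported pieces, so that the trace $\tr\otimes\tr_{\vn{A}}\otimes\tr_{\vn{B}}((A^{\dagger}B^{\ddagger})^{k})$ only sees the "diagonal" part, i.e. the part of $(A^{\dagger}B^{\ddagger})^{k}$ lying in the MASA. Now the \textbf{Traces} condition (for $A$: $\tr(\pi A)=0$ for all projections $\pi\in\vn{P}\otimes\normalisateur[\vn{A}]{\vn{Q}}$, and symmetrically for $B$) is precisely what forces these diagonal parts to have vanishing trace. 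Concretely, for each $k$ I would expand $(A^{\dagger}B^{\ddagger})^{k}$ and use the partial-isometry disjointness together with the trace conditions on $A$ and $B$ to conclude $\tr\otimes\tr_{\vn{A}}\otimes\tr_{\vn{B}}((A^{\dagger}B^{\ddagger})^{k})=0$; then $\ldet(1-A^{\dagger}B^{\ddagger})=\sum_{k\geq 1}\tfrac{1}{k}\tr\otimes\tr_{\vn{A}}\otimes\tr_{\vn{B}}((A^{\dagger}B^{\ddagger})^{k})=0$, the desired contradiction.

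The cleanest packaging is probably to show directly that the diagonal-part of $(A^\dagger B^\ddagger)^k$ is a partial isometry whose fixed-point-free nature (inherited from the trace conditions, which say $A$ and $B$ have no fixed points on the normaliser-generated subalgebra) makes its trace $0$; alternatively, one shows that the relevant compression is nilpotent and applies \autoref{nilpimpliesdet0} and \autoref{nilpotenttraceis0}. Either way the mechanism is the same: a nonzero trace of a power of $A^\dagger B^\ddagger$ would correspond to a "cycle" supported on the common MASA, and the trace conditions on $A$ and $B$ forbid such cycles. I expect the main obstacle to be the bookkeeping needed to make "the trace only sees the diagonal/MASA part, and that part is fixed-point-free" fully precise: one must carefully track how the normalising-groupoid structure of $A$ and $B$ interacts through the $\dagger/\ddagger$ embeddings and the trace $\tr\otimes\tr_{\vn{A}}\otimes\tr_{\vn{B}}$, and verify that the trace conditions stated for $A$ and $B$ separately indeed suffice to kill $\tr((A^\dagger B^\ddagger)^k)$ for every $k$, not merely $k=1$. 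Granting \autoref{lemmaproductpartialiso}, \autoref{lemmasumpartialiso}, and \autoref{nilpimpliesdet0}, the remaining argument is combinatorial and should go through.
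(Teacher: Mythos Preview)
Your setup is correct, and you correctly identify that $A^{\dagger}B^{\ddagger}$ is a partial isometry in the normalising groupoid of the MASA $\vn{P}\otimes\vn{Q}_{\vn{A}}\otimes\vn{Q}_{\vn{B}}$ (via \autoref{lemmaproductpartialiso}). But you then take an unnecessarily hard road. The paper's argument is much shorter and does \emph{not} use the \textbf{Traces} condition at all.

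Here is the missing observation. Every power $(A^{\dagger}B^{\ddagger})^{k}$ is again a partial isometry (iterate \autoref{lemmaproductpartialiso}), so its norm is either $0$ or $1$. You already have $\specrad{A^{\dagger}B^{\ddagger}}<1$ from orthogonality, i.e.
\[
\liminf_{k\to\infty}\norm{(A^{\dagger}B^{\ddagger})^{k}}^{1/k}<1.
\]
Since each term of this sequence lies in $\{0,1\}$, the $\liminf$ being strictly below $1$ forces some $\norm{(A^{\dagger}B^{\ddagger})^{N}}=0$, i.e.\ $A^{\dagger}B^{\ddagger}$ is nilpotent. Then \autoref{nilpimpliesdet0} gives $\ldet(1-A^{\dagger}B^{\ddagger})=0$, and since both wagers vanish this yields $\sca[\textnormal{mat}]{a}{b}=0$, the desired contradiction.

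Your ``diagonal part / fixed-point-free'' approach via the \textbf{Traces} condition is not needed, and as you yourself suspected, making it precise would be delicate: the trace condition tells you $\tr(\pi A)=0$ for projections $\pi$ in a specific algebra, but $(A^{\dagger}B^{\ddagger})^{k-1}B^{\ddagger}$ is not such a projection, so getting $\tr((A^{\dagger}B^{\ddagger})^{k})=0$ term-by-term is not immediate. The norm-dichotomy argument bypasses all of this.
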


\begin{proof}
Suppose that $\de{f,g}$ are promising hyperfinite projects in $\cond{A}$ and $\cond{A}^{\pol}$ respectively. 

We will show that $\ldet(1-F^{\dagger_{\vn{G}}}G^{\ddagger_{\vn{F}}})=0$. Since $\de{f}\poll\de{g}$, we know that $\rho(F^{\dagger_{\vn{G}}}G^{\ddagger_{\vn{F}}})<1$. In other terms:
\begin{equation}
\textnormal{lim inf }\norm{(F^{\dagger_{\vn{G}}}G^{\ddagger_{\vn{F}}})^{k}}^{\frac{1}{k}}<1\label{liminf2}
\end{equation}
Let $\vn{P}_{\vn{F}}$ be a MASA in $\vn{F}$ and$\vn{P}_{\vn{G}}$ a MASA in $\vn{G}$ such that $F$ and $G$ are in the normalising groupoids of $\vn{P}\vntimes\vn{P}_{\vn{F}}$ and $\vn{P}\vntimes\vn{P}_{\vn{G}}$ respectively.

Since $F$ and $G$ are partial symmetries in the normalising groupoid of $\vn{P}\vntimes\vn{P}_{\vn{F}}$ and $\vn{P}\vntimes\vn{P}_{\vn{G}}$ respectively, it is clear that $F^{\dagger_{\vn{G}}}$ and $G^{\ddagger_{\vn{F}}}$ are again partial symmetries, and we can show\footnote{Using the fact that the unit of a von Neumann algebra $\vn{N}$ is contained in all MASA of $\vn{N}$.} that they are both in the normalising groupoid of $\vn{P}\vntimes\vn{P}_{\vn{F}}\vntimes\vn{P}_{\vn{G}}$, a MASA of $\infhyp\otimes\vn{F}\otimes\vn{G}$. Moreover, they are \emph{a fortiori} partial isometries, and \autoref{lemmaproductpartialiso} ensures that $F^{\dagger_{\vn{G}}}G^{\ddagger_{\vn{F}}}$ is a partial isometry in the normalising groupoid of $\vn{P}\vntimes\vn{P}_{\vn{F}}\vntimes\vn{P}_{\vn{G}}$. One easily shows in this way that for all $k\in\naturalN$, $(F^{\dagger_{\vn{G}}}G^{\ddagger_{\vn{F}}})^{k}$ is a partial isometry. Since the norm of a partial isometry is necessarily equal to $0$ or $1$, we deduce that for all $k\in\naturalN$, $\norm{(F^{\dagger_{\vn{G}}}G^{\ddagger_{\vn{F}}})^{k}}=0$ or $\norm{(F^{\dagger_{\vn{G}}}G^{\ddagger_{\vn{F}}})^{k}}=1$. Using \autoref{liminf2}, we conclude that there exists $N\in\naturalN$ such that $\norm{(F^{\dagger_{\vn{G}}}G^{\ddagger_{\vn{F}}})^{N}}=0$, i.e. $(F^{\dagger_{\vn{G}}}G^{\ddagger_{\vn{F}}})^{N}=0$.
Finally, this shows that $F^{\dagger_{\vn{G}}}G^{\ddagger_{\vn{F}}}$ is nilpotent, which implies that $\ldet(1-F^{\dagger_{\vn{G}}}G^{\ddagger_{\vn{F}}})=0$ by \autoref{nilpimpliesdet0}.
\end{proof}

The following proposition shows that the notion of promising project is compositional. The statement contains however a small condition on the types, as we need to use elements in the orthogonal types in the proof. These conditions are however non-restrictive as they exclude a case of composition of proofs that do not arise in the interpretation of sequent calculus.

\begin{proposition}[Compositionality]\label{composi50}

Let $\cond{A,B,C}$ be conducts such that $\cond{A}$ and $\cond{C^{\pol}}$ are non-empty. If $\de{f}\in\cond{A}\multimap\cond{B}$ and $\de{g}\in\cond{B}\multimap\cond{C}$ are promising hyperfinite projects w.r.t. the outlook $\vn{P}$, then $\de{f}\plug\de{g}$ is a promising hyperfinite project w.r.t. $\vn{P}$ in the conduct $\cond{A}\multimap\cond{C}$.
\end{proposition}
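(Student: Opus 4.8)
The plan is to verify the five clauses of \autoref{projetsprometteurs} for $\de{f}\plug\de{g}$, after first arguing that $\de{f}\plug\de{g}$ is a well-defined project lying in $\cond{A}\multimap\cond{C}$. Since $\de{f},\de{g}$ are promising, $F$ and $G$ are partial symmetries in the normalising groupoids of $\vn{P}\vntimes\vn{Q}_{F}$ and $\vn{P}\vntimes\vn{Q}_{G}$, where $\vn{Q}_{F}\subset\vn{F}$ and $\vn{Q}_{G}\subset\vn{G}$ are MASAs; exactly as in the proof of \autoref{coherence50}, the lifted operators $F^{\dagger},G^{\ddagger}$ then lie in the normalising groupoid of $\vn{P}\vntimes\vn{Q}_{F}\vntimes\vn{Q}_{G}$, so by \autoref{lemmaproductpartialiso} the operator $G^{\ddagger}F^{\dagger}$ governing the feedback is a partial isometry there, all of whose powers are partial isometries and hence have norm $0$ or $1$; its spectral radius is therefore $0$ or $1$. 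Picking $\de{a}\in\cond{A}$ and $\de{c}\in\cond{C^{\pol}}$, cutting $\de{a}$ into $\de{f}$ (defined, yielding an element of $\cond{B}$), then $\de{g}$ (defined, yielding an element of $\cond{C}$), then $\de{c}$ (defined, since $\de{c}\in\cond{C^{\pol}}$) shows that the full interaction of $\de{f},\de{g},\de{a},\de{c}$ is defined; a spectral radius $1$ for $G^{\ddagger}F^{\dagger}$ would yield a cycle supported on the cut locus, using only the $\cond{B}$-to-$\cond{B}$ moves of $F$ and $G$, that persists in this full interaction and forces divergence, a contradiction. Hence $G^{\ddagger}F^{\dagger}$ is nilpotent, $\de{f}\plug\de{g}$ is defined, and the associativity of $\plug$ gives $(\de{f}\plug\de{g})\plug\de{a}=\de{g}\plug(\de{f}\plug\de{a})\in\cond{C}$ for all $\de{a}\in\cond{A}$, i.e. $\de{f}\plug\de{g}\in\cond{A}\multimap\cond{C}$.

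Three of the five clauses are now immediate. The dialect of $\de{f}\plug\de{g}$ is $\vn{F}\otimes\vn{G}$, a tensor product of matrix algebras and hence a matrix algebra; its pseudo-trace is $\tr_{\vn{F}}\otimes\tr_{\vn{G}}$, which is the normalised trace on $\vn{F}\otimes\vn{G}$; and its wager is $f+g$ plus the internal measurement $\ldet(1-G^{\ddagger}F^{\dagger})$ along the cut, which vanishes by \autoref{nilpimpliesdet0} since $G^{\ddagger}F^{\dagger}$ is nilpotent, so the wager is $0+0+0=0$.

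For the symmetry clause, take $\vn{Q}=\vn{Q}_{F}\otimes\vn{Q}_{G}$: this is a MASA of $\vn{F}\otimes\vn{G}$, since the commutant of a tensor product is the tensor product of the commutants (Tomita) and each $\vn{Q}_{\bullet}$ is self-commutant in $\vn{\bullet}$. By nilpotency the execution formula $F\plug G=(p_{\de{a}}F^{\dagger}+p_{\de{c}})\bigl(\sum_{k\geq0}(G^{\ddagger}F^{\dagger})^{k}\bigr)(p_{\de{a}}+G^{\ddagger}p_{\de{c}})$ is a finite sum; expanding it as a sum of elementary paths $p_{i_{0}}u_{1}\cdots u_{n}p_{i_{n}}$ alternating blocks of $F^{\dagger}$ and $G^{\ddagger}$ between the coordinate projections, each elementary path lies in $\gn{\vn{P}\vntimes\vn{Q}_{F}\vntimes\vn{Q}_{G}}$ by \autoref{lemmaproductpartialiso}, while distinct elementary paths have pairwise disjoint sources and pairwise disjoint ranges (the partial-isometry property of $F$ and $G$ splits each coordinate projection according to the unique ``first move''), so by \autoref{lemmasumpartialiso} their sum $F\plug G$ again lies in $\gn{\vn{P}\vntimes\vn{Q}_{F}\vntimes\vn{Q}_{G}}$. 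As $F,G$ are hermitian, $F\plug G$ is hermitian (the feedback solution of two hermitians is hermitian), and a hermitian element of a normalising groupoid is a partial symmetry.

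The trace clause is the delicate point, and the main obstacle. We must show $\tr(\pi(F\plug G))=0$ for every projection $\pi\in\vn{P}\otimes\normalisateur[\vn{F}\otimes\vn{G}]{\vn{Q}_{F}\otimes\vn{Q}_{G}}$, the latter algebra being $\vn{P}\otimes\normalisateur[\vn{F}]{\vn{Q}_{F}}\otimes\normalisateur[\vn{G}]{\vn{Q}_{G}}$ by \autoref{chifan}. Since $\vn{P}$ is abelian, the $\infhyp$-component of $\pi$ lies in $\vn{P}$ and thus commutes with the carriers $p_{\de{a}},p_{\de{c}}$ with $p_{\de{a}}\pi p_{\de{c}}=p_{\de{c}}\pi p_{\de{a}}=0$; using $\tr(p_{i}xp_{j})=0$ for $i\neq j$, only the $p_{\de{a}}\to p_{\de{a}}$ and $p_{\de{c}}\to p_{\de{c}}$ elementary paths of $F\plug G$ can contribute. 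If such a path $w$ has $\tr(\pi w)\neq0$, the trace detects a ``diagonal'' part of $w$; since $w$ alternates blocks of the involutive partial symmetries $F$ and $G$, this forces either a fixpoint of $F$ — excluded by the trace clause for $\de{f}$, after reducing $\tr(\pi\,p_{\de{a}}F^{\dagger}p_{\de{a}})$ to $\tr_{\infhyp\otimes\vn{F}}(xF)$ with $x=(\text{Id}\otimes\tr_{\vn{G}})(p_{\de{a}}\pi p_{\de{a}})\in\vn{P}\otimes\normalisateur[\vn{F}]{\vn{Q}_{F}}$ — or, symmetrically, a fixpoint of $G$ excluded by the trace clause for $\de{g}$, or a non-trivial cycle supported on the cut locus, excluded by the nilpotency of $G^{\ddagger}F^{\dagger}$. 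Every term therefore vanishes and $\tr(\pi(F\plug G))=0$. Turning ``a path contributes to the trace'' into one of these three combinatorial situations is the operator-algebraic incarnation of the fact that cut-elimination preserves the acyclicity of proof-structures, and its rigorous treatment requires a careful bookkeeping of the elementary paths of the execution formula.
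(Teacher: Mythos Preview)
Your proof follows the paper's five-clause structure, and the dialect, pseudo-trace, and wager clauses are handled exactly as there. For nilpotency, the paper makes your ``cycle forces divergence'' precise as a spectral inclusion: any $\lambda$ in the spectrum of $F^{\dagger}G^{\ddagger}$ is also in the spectrum of $(F^{\dagger_{\vn{A}}}+A^{\ddagger_{\vn{F}}})^{\dagger}(G^{\dagger_{\vn{C}}}+C^{\ddagger_{\vn{G}}})^{\ddagger}$, and the latter has spectral radius $<1$ because $\sca[\textnormal{mat}]{f\otimes c}{g\otimes a}\neq\infty$. You should state it that way.

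The \textbf{Symmetry} clause has a genuine gap. Your claim that distinct elementary paths have pairwise disjoint sources and ranges is precisely what must be proved, and the justification you give --- ``the unique first move'' --- does not work: the paths $pF^{\dagger}p$ and $pF^{\dagger}(G^{\ddagger}F^{\dagger})^{k}p$ for different $k\geq1$ all leave $p$ via $F^{\dagger}$, so the first move does not separate them. The paper carries out five explicit case computations (products $t_{i}t_{j}$, $t'_{i}t'_{j}$, $t_{i}s'_{j}$, $t'_{i}s_{j}$, $s_{i}s_{j}$) and in each one uses the identities $pt_{k}pt_{k}=pt_{k}pt_{k}p$, $rs_{k}ps'_{k}=rs_{k}ps'_{k}r$, etc., which themselves rely on the fact that expressions of the form $u\pi u^{\ast}$ with $u$ a word in $F^{\dagger},G^{\ddagger}$ lie in the abelian $\vn{P}\vntimes\vn{Q}_{F}\vntimes\vn{Q}_{G}$ and hence commute with $p,r$. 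This is real work that your sketch does not supply.

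The \textbf{Trace} clause is also under-argued. First, the cross terms $\tr(\pi\, r s_{k}p)$ vanish simply because $p,r$ are central in $\vn{P}\vntimes\vn{F}\vntimes\vn{G}$ and $pr=0$; no ``cycle excluded by nilpotency'' is involved. Second, for the diagonal terms $\tr(\pi\, p\, t_{k}\, p)$ with $k\geq1$, the paper's mechanism is traciality: one rotates cyclically to obtain $\tr(F^{\dagger}\mu)$ or $\tr(G^{\ddagger}\mu)$, and then checks that the conjugated $\mu=(G^{\ddagger}F^{\dagger})^{m}\pi(F^{\dagger}G^{\ddagger})^{m}$ (or the odd analogue) is still a projection in $\vn{P}\vntimes\normalisateur{\vn{Q}_{F}}\vntimes\normalisateur{\vn{Q}_{G}}$, using the inclusion $\normalisateur{\vn{P}\vntimes\vn{Q}_{F}\vntimes\vn{Q}_{G}}\subset\normalisateur{\vn{P}\vntimes\normalisateur{\vn{Q}_{F}}\vntimes\normalisateur{\vn{Q}_{G}}}$. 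Your reduction is stated only for the length-one path; for longer paths you invoke an informal ``fixpoint'' intuition, but the normaliser-of-normaliser step is what makes that intuition rigorous, and it is absent from your argument.
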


\begin{proof}
Let $\de{f}=0\cdot+\cdot \phi + F$ and $\de{g}=0\cdot+\cdot \psi+G$ be the promising projects w.r.t. $\vn{A}$, respectively in $\cond{A\multimap B}$ of carrier $p+q$ and in $\cond{B\multimap C}$ of carrier $q+r$. Let $\de{h}=\de{f}\plug\de{g}=\ldet(1-F^{\dagger_{\vn{G}}}G^{\ddagger_{\vn{F}}})\cdot+\cdot\phi\otimes\psi+F\plug G$ be the hyperfinite project obtained as the execution of $\de{f}$ and $\de{g}$.
\begin{itemize}
\item \textbf{Dialect.} It is clear that the dialect $\vn{F}\vntimes\vn{G}$ is a finite factor, since it is the tensor product of two finite factors $\vn{F}$ and $\vn{G}$.
\item \textbf{Pseudo-Trace.} Since $\phi$ and $\psi$ are the normalised traces on $\vn{F}$ and $\vn{G}$ respectively, the \enquote{pseudo-trace} $\phi\otimes\psi$ is the normalised trace on $\vn{F}\otimes\vn{G}$. 
\item \textbf{Wager.} Suppose that $F^{\dagger_{\vn{G}}}G^{\ddagger_{\vn{F}}}$ is not nilpotent. By \autoref{lemmaproductpartialiso}, it is a partial isometry, which implies that its spectral radius is either equal to $1$ or $0$. However, all the iterates $(F^{\dagger_{\vn{G}}}G^{\ddagger_{\vn{F}}})^{n}$ are non-zero partial isometries, thus of norm equal to $1$. Therefore, the spectral radius of $F^{\dagger_{\vn{G}}}G^{\ddagger_{\vn{F}}}$, which is equal to $\lim\inf_{n\rightarrow\infty}\norm{(F^{\dagger_{\vn{G}}}G^{\ddagger_{\vn{F}}})^{n}}^{\frac{1}{n}}$, is necessarily equal to $1$. Moreover, the norm of $F^{\dagger_{\vn{G}}}G^{\ddagger_{\vn{F}}}$ is also equal to $1$ since it is a non-zero partial isometry. Since $\specrad{F^{\dagger_{\vn{G}}}G^{\ddagger_{\vn{F}}}}=1$, the measurement $\measmat{F,G}$ is equal to $\infty$. 

Since $\cond{A,C^{\pol}}$ are non-empty, we can chose $\de{a}\in\cond{A}$ and $\de{c}\in\cond{C^{\pol}}$ and consider $\de{f\plug a}$ and $\de{g\plug c}$ which are respectively in $\cond{B}$ and $\cond{B^{\pol}}$. Since they are orthogonal, we have that the measurement $\sca[\textnormal{mat}]{f\plug a}{g\plug c}$ is different from $0$ and $\infty$. But $\sca[\textnormal{mat}]{f\plug a}{g\plug c}=\sca[\textnormal{mat}]{f}{(g\plug c)\otimes a}=\sca[\textnormal{mat}]{f}{(g\otimes a)\plug c}=\sca[\textnormal{mat}]{f\otimes c}{g\otimes a}$. Thus $\sca[\textnormal{mat}]{f\otimes c}{g\otimes a}\neq \infty$. This implies however that $F^{\dagger_{\vn{G}}}G^{\ddagger_{\vn{F}}}$ has spectral radius strictly less than $1$ since if $\lambda$ is in the spectrum of $F^{\dagger_{\vn{G}}}G^{\ddagger_{\vn{F}}}$, it is also in the spectrum of $(F^{\dagger_{\vn{A}}}+A^{\ddagger_{\vn{F}}})^{\dagger_{\vn{G\otimes C}}}(G^{\dagger_{\vn{C}}}+C^{\ddagger_{\vn{G}}})^{\ddagger_{\vn{F\otimes A}}}$. This is a contradiction, and we can conclude that $F^{\dagger_{\vn{G}}}G^{\ddagger_{\vn{F}}}$ is nilpotent.

We can now conclude that $\ldet(1-F^{\dagger_{\vn{G}}}G^{\ddagger_{\vn{F}}})=0$ by \autoref{nilpimpliesdet0}.

\item \textbf{Symmetry.} We will abusively denote by $\proj{p},\proj{r}$ the tensor products $\proj{p}\otimes 1_{\vn{F}}\otimes 1_{\vn{G}}$ and $\proj{r}\otimes 1_{\vn{F}}\otimes 1_{\vn{G}}$ in order to simplify the expression and make the computations more readable. Since $F^{\dagger_{\vn{G}}}G^{\ddagger_{\vn{F}}}$ is nilpotent, we have: 
\begin{eqnarray*}
F\plug G&=&(\proj{p}F^{\dagger_{\vn{G}}}+\proj{r})(1-G^{\ddagger_{\vn{F}}}F^{\dagger_{\vn{G}}})^{-1}(\proj{p}+G^{\ddagger_{\vn{F}}}\proj{r})\\
&=&(\proj{p}F^{\dagger_{\vn{G}}}+\proj{r})(\sum_{k=0}^{N-1}(G^{\ddagger_{\vn{F}}}F^{\dagger_{\vn{G}}})^{k})(\proj{p}+G^{\ddagger_{\vn{F}}}\proj{r})\\
&=&\sum_{k=0}^{N-1}\proj{p}F^{\dagger_{\vn{G}}}(G^{\ddagger_{\vn{F}}}F^{\dagger_{\vn{G}}})^{k}\proj{p}+\sum_{k=1}^{N-1}\proj{r}(G^{\ddagger_{\vn{F}}}F^{\dagger_{\vn{G}}})^{k}\proj{p}\\&&+\sum_{k=0}^{N-2}\proj{p}F^{\dagger_{\vn{G}}}(G^{\ddagger_{\vn{F}}}F^{\dagger_{\vn{G}}})^{k}G^{\ddagger_{\vn{F}}}\proj{r}+\sum_{k=0}^{N-1}\proj{r}(G^{\ddagger_{\vn{F}}}F^{\dagger_{\vn{G}}})^{k}G^{\ddagger_{\vn{F}}}\proj{r}\\
&=&\sum_{k=0}^{N-1}\proj{p}F^{\dagger_{\vn{G}}}(G^{\ddagger_{\vn{F}}}F^{\dagger_{\vn{G}}})^{k}\proj{p}+\sum_{k=0}^{N-1}\proj{r}(G^{\ddagger_{\vn{F}}}F^{\dagger_{\vn{G}}})^{k}G^{\ddagger_{\vn{F}}}\proj{r}\\&&+\sum_{k=1}^{N-1}(\proj{r}(G^{\ddagger_{\vn{F}}}F^{\dagger_{\vn{G}}})^{k}\proj{p}+\proj{p}(F^{\dagger_{\vn{G}}}G^{\ddagger_{\vn{F}}})^{k}\proj{r})
\end{eqnarray*}
We define: 
\begin{eqnarray*}
t_{k}&=&F^{\dagger_{\vn{G}}}(G^{\ddagger_{\vn{F}}}F^{\dagger_{\vn{G}}})^{k}\\
t'_{k}&=&(G^{\ddagger_{\vn{F}}}F^{\dagger_{\vn{G}}})^{k}G^{\ddagger_{\vn{F}}}\\
s_{k}&=&(G^{\ddagger_{\vn{F}}}F^{\dagger_{\vn{G}}})^{k}\\
s'_{k}&=&(F^{\dagger_{\vn{G}}}G^{\ddagger_{\vn{F}}})^{k}
\end{eqnarray*}
Using the fact that $F,G$ are partial symmetries in the normalising groupoid of $\vn{P}\vntimes\vn{P}_{\vn{F}}$ and $\vn{P}\vntimes\vn{P}_{\vn{G}}$ respectively and that $\proj{p},\proj{r}\in\vn{P}\vntimes\vn{P}_{\vn{F}}\vntimes\vn{P}_{\vn{G}}$, we show that the three terms $t_{k}$, $t'_{k}$ and $s_{k}+s'_{k}$ are partial isometries in the normalising groupoid of $\vn{P}\vntimes\vn{P}_{\vn{F}}\vntimes\vn{P}_{\vn{G}}$ (using \autoref{lemmaproductpartialiso}) and are hermitian (since $F$ and $G$ are). Therefore, they are partial symmetries in the normalising groupoid of $\vn{P}\vntimes\vn{P}_{\vn{F}}\vntimes\vn{P}_{\vn{G}}$.
\begin{itemize}
\item We have that $(\proj{p}t_{k}\proj{p})^{3}=\proj{p}t_{k}\proj{p}$ since the projections $t_{k}^{2}$ are subprojections of $p$, and this implies that $(\proj{p}t_{k}\proj{p}t_{k})^{2}=\proj{p}t_{k}\proj{p}t_{k}$. Since $\proj{p}t_{k}\proj{p}t_{k}=(\proj{p}t_{k}\proj{p}t_{k})^{\ast}$, we obtain, composing by $\proj{p}$ on the right:
\begin{eqnarray}
\proj{p}t_{k}\proj{p}t_{k}&=&t_{k}\proj{p}t_{k}\proj{p}\\
\proj{p}t_{k}\proj{p}t_{k}&=&\proj{p}t_{k}\proj{p}t_{k}\proj{p}\label{linetk}
\end{eqnarray}
\item We show similarly that: 
\begin{equation}
\proj{r}t'_{k}\proj{r}t_{k}=\proj{r}t'_{k}\proj{r}t'_{k}\proj{r}\label{linetprimek}
\end{equation}
\item One can also compute the following:
\begin{equation*}
(\proj{p}s'_{k}\proj{r}+\proj{r}s_{k}\proj{p})^{3}=\proj{p}s'_{k}\proj{r}s_{k}\proj{p}s'_{k}\proj{r}+\proj{r}s_{k}\proj{p}s'_{k}\proj{r}s_{k}
\end{equation*}
Since $(\proj{p}s'_{k}\proj{r}+\proj{r}s_{k}\proj{p})^{3}=\proj{p}s'_{k}\proj{r}+\proj{r}s_{k}\proj{p}$, composing by $\proj{p}$ on the left (resp. on the right) and $\proj{r}$ on the right (resp. on the left), we obtain $\proj{p}s'_{k}\proj{r}=\proj{p}s'_{k}\proj{r}s_{k}\proj{p}s'_{k}\proj{r}$ (resp. $\proj{r}s_{k}\proj{p}=\proj{r}s_{k}\proj{p}s'_{k}\proj{r}s_{k}$). This implies:
\begin{eqnarray*}
(\proj{r}s_{k}\proj{p}s'_{k})^{2}&=&\proj{r}s_{k}\proj{p}s'_{k}\\
(\proj{p}s'_{k}\proj{r}s_{k})^{2}&=&\proj{p}s'_{k}\proj{r}s_{k}
\end{eqnarray*}
We then show that:
\begin{eqnarray*}
\proj{r}s_{k}\proj{p}s'_{k}+\proj{p}s'_{k}\proj{r}s_{k}&=&(\proj{r}s_{k}\proj{p}s'_{k}+\proj{p}s'_{k}\proj{r}s_{k})^{\ast}\\
&=&(\proj{r}s_{k}\proj{p}s'_{k})^{\ast}+(\proj{p}s'_{k}\proj{r}s_{k})^{\ast}\\
&=&s_{k}\proj{p}s'_{k}\proj{r}+s'_{k}\proj{r}s_{k}\proj{p}
\end{eqnarray*}
Composing by $\proj{p}$ (resp.$\proj{r}$) on the right, and using the fact that $G\proj{p}=0$ (resp. $F\proj{r}=0$), we obtain:
\begin{eqnarray}
\proj{r}s_{k}\proj{p}s'_{k}\proj{r}&=&\proj{r}s_{k}\proj{p}s'_{k}\label{linesk}\\
\proj{p}s'_{k}\proj{r}s_{k}\proj{p}&=&\proj{p}s'_{k}\proj{r}s_{k}\label{linesprimek}
\end{eqnarray}
\end{itemize}
Using \autoref{linetk}, \autoref{linetprimek}, \autoref{linesk} and \autoref{linesprimek}, we show that the product of two distinct terms is always equal to zero. Since $\proj{p}\proj{r}=\proj{r}\proj{p}=0$, we have five cases to consider:
\begin{itemize}
\item $(\proj{p}t_{i}\proj{p})(\proj{p}t_{j}\proj{p})$, with $i\not= j$.\\ 
We can suppose that $i<j$, since the case $j<i$ reduces to the case $i<j$ by considering the adjoints. We then have:
\begin{eqnarray*}
(\proj{p}t_{i}\proj{p})(\proj{p}t_{j}\proj{p})&=&\proj{p}t_{i}\proj{p}t_{j}\proj{p}\\
&=&(\proj{p}t_{i}\proj{p}F^{\dagger_{\vn{G}}}(G^{\ddagger_{\vn{F}}}F^{\dagger_{\vn{G}}})^{j}\proj{p}\\
&=&(\proj{p}t_{i}\proj{p}t_{i})(G^{\ddagger_{\vn{F}}}F^{\dagger_{\vn{G}}})^{j-i}\proj{p}\\
&=&(\proj{p}t_{i}\proj{p}t_{i}\proj{p})s_{j-i}\proj{p}
\end{eqnarray*}
Since $\proj{p}G^{\ddagger_{\vn{F}}}=0$, we obtain $\proj{p}s_{j-i}=0$, and finally $(\proj{p}t_{i}\proj{p})(\proj{p}t_{j}\proj{p})=0$.
\item $(\proj{r}t'_{i}\proj{r})(\proj{r}t'_{j}\proj{r})$, with $i\not= j$.\\
This case is similar to the previous one. We treat the case $j<i$:
\begin{eqnarray*}
((\proj{r}t'_{i}\proj{r})(\proj{r}t'_{j}\proj{r}))^{\ast}&=&(\proj{r}t'_{j}\proj{r})(\proj{r}t'_{i}\proj{r})\\
&=&\proj{r}t'_{j}\proj{r}t'_{i}\proj{r}\\
&=&\proj{r}t'_{j}\proj{r}(G^{\ddagger_{\vn{F}}}F^{\dagger_{\vn{G}}})^{i}G^{\ddagger_{\vn{F}}}\proj{r}\\
&=&(\proj{r}t'_{j}\proj{r}t'_{j})(F^{\dagger_{\vn{G}}}G^{\ddagger_{\vn{F}}}))^{i-j}\proj{r}\\
&=&(\proj{r}t'_{j}\proj{r}t'_{j}\proj{r})s'_{i-j}\proj{r}
\end{eqnarray*}
Since $\proj{r}F^{\dagger_{\vn{G}}}=0$, we have $\proj{r}s'_{i-j}=0$, and thus $(\proj{r}t'_{i}\proj{r})(\proj{r}t'_{j}\proj{r})=0$.
\item $(\proj{p}t_{i}\proj{p})(\proj{p}s'_{j}\proj{r})$.\\
If $j\leqslant i$, we have:
\begin{eqnarray*}
(\proj{p}t_{i}\proj{p})(\proj{p}s'_{j}\proj{r})&=&\proj{p}t_{i}\proj{p}s'_{j}\proj{r}\\
&=&\proj{p}t_{i-j}(s_{j}\proj{p}s'_{j}\proj{r})\\
&=&\proj{p}t_{i-j}(\proj{r}s_{j}\proj{p}s'_{j}\proj{r})
\end{eqnarray*}
If $i< j$, we have:
\begin{eqnarray*}
(\proj{p}t_{i}\proj{p})(\proj{p}s'_{j}\proj{r})&=&\proj{p}t_{i}\proj{p}s'_{j}\proj{r}\\
&=&\proj{p}F^{\dagger_{\vn{G}}}(G^{\ddagger_{\vn{F}}}F^{\dagger_{\vn{G}}})^{i}\proj{p}(F^{\dagger_{\vn{G}}}G^{\ddagger_{\vn{F}}})^{j}\proj{r}\\
&=&\proj{p}F^{\dagger_{\vn{G}}}(G^{\ddagger_{\vn{F}}}F^{\dagger_{\vn{G}}})^{i}\proj{p}(F^{\dagger_{\vn{G}}}G^{\ddagger_{\vn{F}}})^{i}F^{\dagger_{\vn{G}}}G^{\ddagger_{\vn{F}}}(F^{\dagger_{\vn{G}}}G^{\ddagger_{\vn{F}}})^{j-i-1}\proj{r}
\end{eqnarray*}
Since $F^{\dagger_{\vn{G}}}$ and $G^{\ddagger_{\vn{F}}}$ are hermitian and in the normalising groupoid of $\vn{P}\vntimes\vn{P}_{\vn{F}}\vntimes\vn{P}_{\vn{G}}$, and since $\proj{r}$ is an element of $\vn{P}\vntimes\vn{P}_{\vn{F}}\vntimes\vn{P}_{\vn{G}}$, we show that the operator $$F^{\dagger_{\vn{G}}}(G^{\ddagger_{\vn{F}}}F^{\dagger_{\vn{G}}})^{i}\proj{p}(F^{\dagger_{\vn{G}}}G^{\ddagger_{\vn{F}}})^{i}F^{\dagger_{\vn{G}}}$$ is an element of $\vn{P}\vntimes\vn{P}_{\vn{F}}\vntimes\vn{P}_{\vn{G}}$. It then commutes with $\proj{p}$ which is itself an element of $\vn{P}\vntimes\vn{P}_{\vn{F}}\vntimes\vn{P}_{\vn{G}}$, an abelian algebra.
Finally:
\begin{eqnarray*}
(\proj{p}t_{i}\proj{p})(\proj{p}s'_{j}\proj{r})&=&\proj{p}(F^{\dagger_{\vn{G}}}(G^{\ddagger_{\vn{F}}}F^{\dagger_{\vn{G}}})^{i}\proj{p}(F^{\dagger_{\vn{G}}}G^{\ddagger_{\vn{F}}})^{i}F^{\dagger_{\vn{G}}})G^{\ddagger_{\vn{F}}}(F^{\dagger_{\vn{G}}}G^{\ddagger_{\vn{F}}})^{j-i-1}\proj{r}\\
&=&(F^{\dagger_{\vn{G}}}(G^{\ddagger_{\vn{F}}}F^{\dagger_{\vn{G}}})^{i}\proj{p}(F^{\dagger_{\vn{G}}}G^{\ddagger_{\vn{F}}})^{i}F^{\dagger_{\vn{G}}})\proj{p}G^{\ddagger_{\vn{F}}}(F^{\dagger_{\vn{G}}}G^{\ddagger_{\vn{F}}})^{j-i-1}\proj{r}
\end{eqnarray*}
Since $\proj{p}G^{\ddagger_{\vn{F}}}=0$, we have that $(\proj{p}t_{i}\proj{p})(\proj{p}s'_{j}\proj{r})=0$.
\item $(\proj{r}t'_{i}\proj{r})(\proj{r}s_{j}\proj{p})$.\\
Similarly, in the case $j\leqslant i$, we have:
\begin{eqnarray*}
(\proj{r}t'_{i}\proj{r})(\proj{r}s_{j}\proj{p})&=&\proj{r}t'_{i}\proj{r}s_{j}\proj{p}\\
&=&\proj{r}G^{\ddagger_{\vn{F}}}s'_{i}\proj{r}s_{j}\proj{p}\\
&=&\proj{r}G^{\ddagger_{\vn{F}}}s'_{i-j}(s'_{j}\proj{r}s_{j}\proj{p})\\
&=&\proj{r}G^{\ddagger_{\vn{F}}}s'_{i-j}(\proj{p}s'_{j}\proj{r}s_{j}\proj{p})
\end{eqnarray*}
If $i<j$, we obtain:
\begin{eqnarray*}
(\proj{r}t'_{i}\proj{r})(\proj{r}s_{j}\proj{p})&=&\proj{r}t'_{i}\proj{r}s_{j}\proj{p}\\
&=&\proj{r}(G^{\ddagger_{\vn{F}}}F^{\dagger_{\vn{G}}})^{i}G^{\ddagger_{\vn{F}}}\proj{r}(G^{\ddagger_{\vn{F}}}F^{\dagger_{\vn{G}}})^{j}\proj{p}\\
&=&\proj{r}G^{\ddagger_{\vn{F}}}(F^{\dagger_{\vn{G}}}G^{\ddagger_{\vn{F}}})^{i}\proj{r}(G^{\ddagger_{\vn{F}}}F^{\dagger_{\vn{G}}})^{i}G^{\ddagger_{\vn{F}}}F^{\dagger_{\vn{G}}}(G^{\ddagger_{\vn{F}}}F^{\dagger_{\vn{G}}})^{j-i-1}\proj{p}
\end{eqnarray*}
Once again, we can show that $G^{\ddagger_{\vn{F}}}(F^{\dagger_{\vn{G}}}G^{\ddagger_{\vn{F}}})^{i}\proj{r}(G^{\ddagger_{\vn{F}}}F^{\dagger_{\vn{G}}})^{i}G^{\ddagger_{\vn{F}}}$ is an element of $\vn{P}\vntimes\vn{P}_{\vn{F}}\vntimes\vn{P}_{\vn{G}}$ and therefore commutes with $\proj{r}$. Thus:
\begin{eqnarray*}
(\proj{r}t'_{i}\proj{r})(\proj{r}s_{j}\proj{p})&=&\proj{r}(G^{\ddagger_{\vn{F}}}(F^{\dagger_{\vn{G}}}G^{\ddagger_{\vn{F}}})^{i}\proj{r}(G^{\ddagger_{\vn{F}}}F^{\dagger_{\vn{G}}})^{i}G^{\ddagger_{\vn{F}}})F^{\dagger_{\vn{G}}}(G^{\ddagger_{\vn{F}}}F^{\dagger_{\vn{G}}})^{j-i-1}\proj{p}\\
&=&(G^{\ddagger_{\vn{F}}}(F^{\dagger_{\vn{G}}}G^{\ddagger_{\vn{F}}})^{i}\proj{r}(G^{\ddagger_{\vn{F}}}F^{\dagger_{\vn{G}}})^{i}G^{\ddagger_{\vn{F}}})\proj{r}F^{\dagger_{\vn{G}}}(G^{\ddagger_{\vn{F}}}F^{\dagger_{\vn{G}}})^{j-i-1}\proj{p}
\end{eqnarray*}
Since $\proj{r}F^{\dagger_{\vn{G}}}=0$, we have $(\proj{r}t'_{i}\proj{r})(\proj{r}s_{j}\proj{p})=0$.
\item $(\proj{r}s_{i}\proj{p}+\proj{p}s'_{i}\proj{r})(\proj{r}s_{j}\proj{p}+\proj{p}s'_{j}\proj{r})$, with $i\not=j$.\\
We suppose, without loss of generality, that $i<j$. Then:
\begin{eqnarray*}
(\proj{r}s_{i}\proj{p}+\proj{p}s'_{i}\proj{r})(\proj{r}s_{j}\proj{p}+\proj{p}s'_{j}\proj{r})&=&\proj{r}s_{i}\proj{p}s'_{j}\proj{r}+\proj{p}s'_{i}\proj{r}s_{j}\proj{p}\\
&=&(\proj{r}s_{i}\proj{p}s'_{i})s'_{j-i}\proj{r}+(\proj{p}s'_{i}\proj{r}s_{i})s_{j-i}\proj{p}\\
&=&(\proj{r}s_{i}\proj{p}s'_{i}\proj{r})s'_{j-i}\proj{r}+(\proj{p}s'_{i}\proj{r}s_{i}\proj{p})s_{j-i}\proj{p}
\end{eqnarray*}
Since $\proj{r}F=0=\proj{p}G$, we have that $\proj{r}s'_{j-i}=\proj{p}s_{j-i}=0$, and therefore $(\proj{r}s_{i}\proj{p}+\proj{p}s'_{i}\proj{r})(\proj{r}s_{j}\proj{p}+\proj{p}s'_{j}\proj{r})=0$.
\end{itemize}
Finally, we have shown that $F\plug G$ is a sum of partial isometries that satisfy the hypotheses of \autoref{lemmasumpartialiso}. We then deduce that $F\plug G$ is a partial isometry in the normalising groupoid of $\vn{P}\vntimes\vn{P}_{\vn{F}}\vntimes\vn{P}_{\vn{F}}$.
\item \textbf{Traces.} Let\footnote{Notice that $\vn{P}\vntimes\vn{N}_{\finhyp\vntimes\finhyp}(\vn{P}_{\vn{F}}\vntimes\vn{P}_{\vn{G}})=\vn{P}\vntimes\vn{N}_{\finhyp}(\vn{P}_{\vn{F}})\vntimes\vn{N}_{\finhyp}(\vn{P}_{\vn{G}})$ by \autoref{chifan}.} $\pi\in\vn{P}\vntimes\vn{N}_{\finhyp}(\vn{P}_{\vn{F}})\vntimes\vn{N}_{\finhyp}(\vn{P}_{\vn{G}})$ be a projection. We can compute $\tr(\pi A)$ :
\begin{eqnarray*}
\tr(\pi F\plug G)&=&\tr(\pi [(\proj{p}F^{\dagger_{\vn{G}}}+\proj{r})(1-G^{\ddagger_{\vn{F}}}F^{\dagger_{\vn{G}}})^{-1}(\proj{p}+G^{\ddagger_{\vn{F}}}\proj{r})])\\
&=&\tr(\pi[\sum_{k=0}^{N-1}\proj{p}F^{\dagger_{\vn{G}}}(G^{\ddagger_{\vn{F}}}F^{\dagger_{\vn{G}}})^{k}\proj{p}])+\tr(\pi[\sum_{k=0}^{N-1}\proj{r}(G^{\ddagger_{\vn{F}}}F^{\dagger_{\vn{G}}})^{k}G^{\ddagger_{\vn{F}}}\proj{r}])\\&&+\tr(\pi[\sum_{k=1}^{N-1}\proj{r}(G^{\ddagger_{\vn{F}}}F^{\dagger_{\vn{G}}})^{k}\proj{p}])+\tr(\pi[\sum_{k=0}^{N-1}\proj{p}(F^{\dagger_{\vn{G}}}G^{\ddagger_{\vn{F}}})^{k}\proj{r}])\\
&=&\sum_{k=0}^{N-1}\tr(\pi[\proj{p}F^{\dagger_{\vn{G}}}(G^{\ddagger_{\vn{F}}}F^{\dagger_{\vn{G}}})^{k}\proj{p}])+\sum_{k=0}^{N-1}\tr(\pi[\proj{r}(G^{\ddagger_{\vn{F}}}F^{\dagger_{\vn{G}}})^{k}G^{\ddagger_{\vn{F}}}\proj{r}])\\&&+\sum_{k=1}^{N-1}\tr(\pi[\proj{r}(G^{\ddagger_{\vn{F}}}F^{\dagger_{\vn{G}}})^{k}\proj{p}])+\sum_{k=1}^{N-1}\tr(\pi[\proj{p}(F^{\dagger_{\vn{G}}}G^{\ddagger_{\vn{F}}})^{k}\proj{r}])
\end{eqnarray*}
It is enough to show that $\tr(\pi\proj{p} t_{k}\proj{p})=\tr(\pi \proj{r}t'_{k}\proj{r})=0$ and $\tr(\pi \proj{r}s_{k}\proj{p})=\tr(\pi \proj{p}s'_{k}\proj{r})=0$ for all $1\leqslant k\leqslant N-1$. 
\begin{itemize}
\item $\tr(\pi\proj{p}t_{k}\proj{p})=0$ and $\tr(\pi\proj{r}t'_{k}\proj{r})=0$.~\\
We suppose, without loss of generality, that $\pi\proj{p}=\pi$, i.e. that $\pi$ is a subjection of $\proj{p}$. Then:
\begin{equation*}
\tr(\pi\proj{p} t_{k}\proj{p})=\left\{\begin{array}{ll}
\tr(F^{\dagger_{\vn{G}}}(G^{\ddagger_{\vn{F}}}F^{\dagger_{\vn{G}}})^{\frac{k}{2}}\pi(F^{\dagger_{\vn{G}}}G^{\ddagger_{\vn{F}}})^{\frac{k}{2}})&\text{ if $k\equiv 0 [2]$}\\
\tr(G^{\ddagger_{\vn{F}}}(F^{\dagger_{\vn{G}}}G^{\ddagger_{\vn{F}}})^{\frac{k-1}{2}}(F^{\dagger_{\vn{G}}}\pi F^{\dagger_{\vn{G}}})(G^{\ddagger_{\vn{F}}}F^{\dagger_{\vn{G}}})^{\frac{k-1}{2}})&\text{ if $k\equiv 1 [2]$}\end{array}\right.
\end{equation*}
Since $F^{\dagger_{\vn{G}}}$ and $G^{\ddagger_{\vn{F}}}$ are in the normalising groupoid of $\vn{P}\vntimes\vn{P}_{\vn{F}}\vntimes\vn{P}_{\vn{G}}$, we can show they are elements of $\vn{N}(\vn{P}_{\proj{p+q+r}}\vntimes\vn{N}_{\finhyp}(\vn{P}_{\vn{F}})\vntimes\vn{N}_{\finhyp}(\vn{P}_{\vn{G}}))$. Indeed:
\begin{eqnarray*}
\vn{N}(\vn{P}_{\proj{p+q+r+}}\vntimes\vn{P}_{\vn{F}}\vntimes\vn{P}_{\vn{G}})&=&\vn{N}(\vn{P}_{\proj{p+q+r}})\vntimes\vn{N}(\vn{P}_{\vn{F}})\vntimes\vn{N}(\vn{P}_{\vn{G}})\\
&\subset&\vn{N}(\vn{P}_{\proj{p+q+r}})\vntimes\vn{N}(\vn{N}(\vn{P}_{\vn{F}}))\vntimes\vn{N}(\vn{N}(\vn{P}_{\vn{G}}))\\
&\subset&\vn{N}(\vn{P}_{\proj{p+q+r}}\vntimes\vn{N}(\vn{P}_{\vn{F}})\vntimes\vn{N}(\vn{P}_{\vn{G}}))
\end{eqnarray*}
Thus, since $\pi$ is a projection in $\vn{P}_{\proj{p+q+r}}\vntimes\vn{N}_{\finhyp}(\vn{P}_{\vn{F}})\vntimes\vn{N}_{\finhyp}(\vn{P}_{\vn{G}})$, we deduce that the terms $(G^{\ddagger_{\vn{F}}}F^{\dagger_{\vn{G}}})^{\frac{k}{2}}\pi(F^{\dagger_{\vn{G}}}G^{\ddagger_{\vn{F}}})^{\frac{k}{2}}$ and $(F^{\dagger_{\vn{G}}}G^{\ddagger_{\vn{F}}})^{\frac{k-1}{2}}(F^{\dagger_{\vn{G}}}\pi F^{\dagger_{\vn{G}}})(G^{\ddagger_{\vn{F}}}F^{\dagger_{\vn{G}}})^{\frac{k-1}{2}}$ represent projections in $\vn{P}\vntimes\vn{N}_{\finhyp}(\vn{P}_{\vn{F}})\vntimes\vn{N}_{\finhyp}(\vn{P}_{\vn{G}})$.~\\
Since $\tr(F \nu)=0$ for all projection $\nu\in\vn{P}\vntimes\vn{N}_{\finhyp}(\vn{P}_{\vn{F}})$, we can show that $\tr(F^{\dagger_{\vn{G}}} \mu)=0$ for all projection $\mu\in\vn{P}\vntimes\vn{N}_{\finhyp}(\vn{P}_{\vn{F}})\vntimes\finhyp$ and therefore for all projection $\mu$ in the algebra $\vn{P}\vntimes\vn{N}_{\finhyp}(\vn{P}_{\vn{F}})\vntimes\vn{N}_{\finhyp}(\vn{P}_{\vn{G}})$. Similarly, $\tr(G^{\ddagger_{\vn{F}}} \mu)=0$ for such a projection $\mu$, and we can conclude that $\tr(\pi\proj{p}t_{k}\proj{p})=0$.

The case $\tr(\pi\proj{r}t'_{k}\proj{r})=0$ is treated in a similar fashion.
\item $\tr(\pi\proj{r}s_{k}\proj{p})=0$ and $\tr(\pi\proj{p}s'_{k}\proj{r})=0$.~\\
Since $\proj{p}=\tilde{\proj{p}}\otimes1\otimes 1$ and since $\vn{P}$ is commutative, $\proj{p}$ is an element of the center $\vn{Z}(\vn{P}\vntimes\finhyp\vntimes\finhyp)$ of $\vn{P}\vntimes\finhyp\vntimes\finhyp$ and therefore commutes with every elements in the algebra $\vn{P}\vntimes\vn{N}_{\finhyp}(\vn{P}_{\vn{F}})\vntimes\vn{N}_{\finhyp}(\vn{P}_{\vn{G}})$. Similarly, the projection $\proj{r}$ is an element of $\vn{Z}(\vn{P}\vntimes\finhyp\vntimes\finhyp)$ and therefore commutes with every element in $\vn{P}\vntimes\vn{N}_{\finhyp}(\vn{P}_{\vn{F}})\vntimes\vn{N}_{\finhyp}(\vn{P}_{\vn{G}})$. Then, since $\proj{pr}=\proj{rp}=0$:
\begin{eqnarray*}
\tr(\pi\proj{r}s_{k}\proj{p})=\tr(\proj{r}\pi s_{k}\proj{p})=\tr(\proj{pr}\pi s_{k})=0\\
\tr(\pi\proj{p}s'_{k}\proj{r})=\tr(\proj{p}\pi s'_{k}\proj{r}=\tr(\proj{rp}\pi s'_{k})=0
\end{eqnarray*}
\end{itemize}
\end{itemize}
Finally we have shown that the project $\de{f\plug g}$ is wager-free, normalised, that the operator $F\plug G$ is a partial symmetry in the normalising groupoid of $\vn{P}\vntimes\vn{P}_{\vn{F}}\vntimes\vn{P}_{\vn{G}}$,  and that for all projection $\pi\in\vn{P}\vntimes\vn{N}_{\finhyp}(\vn{P})$, we have $\tr(\pi (F\plug G))=0$. This shows that it is promising w.r.t. the outlook $\vn{P}$.
\end{proof}

\subsection{Hyperfinite GoI}\label{verite52}

\subsubsection{Promising and Successful Projects}

We first recall the notion of truth considered by Girard \cite{goi5} which is based on the notion of \emph{successful hyperfinite project}. We will then exhibit a correspondence between our notion of \emph{promising project} and the latter. This will allow us to deduce some results about Girard's GoI model.

\begin{definition}[Viewpoint]
A \emph{viewpoint} is a representation $\pi$ of the algebra $\infhyp$ onto $L^{2}(\realN,\lambda)$ where $\lambda$ is the Lebesgue measure, which satisfies the following conditions:
\begin{itemize}
\item $L^{\infty}(\realN,\lambda)\subset\pi(\infhyp)$;
\item $\forall A\subset\realN$, $\tr(\pi^{-1}(\chi_{A}))=\lambda(A)$, where $\chi_{A}$ is the characteristic function of $A$.
\end{itemize}
A viewpoint if \emph{faithful} when the representation $\pi$ is faithful (\autoref{representations}).
\end{definition}

If $T:\realN\rightarrow \realN$ is a measure-preserving transformation, one can define the isometry $[T]\in\B{L^{2}(\realN,\lambda)}$:
\begin{equation*}
[T]: f\in L^{2}(\realN,\lambda)\mapsto f\circ T\in L^{2}(\realN,\lambda)
\end{equation*}
That $[T]$ is an isometry comes from the fact that $T$ is measure-preserving:
\begin{eqnarray*}
\inner{[T]f}{[T]g}&=&\int_{\realN} ([T]f)(x)\overline{([T]g)(x)}d\lambda(x)\\
&=&\int_{\realN} f\circ T(x) \overline{g\circ T(x)}d\lambda(x)\\
&=&\int_{\realN} f\circ T(x) \overline{g\circ T(x)}d\lambda(x)\\
&=&\int_{T(\realN)} f(x)\overline{g(x)}d\lambda(x)\\
&=&\int_{\realN} f(x)\overline{g(x)}d\lambda(x)\\
&=&\inner{f}{g}
\end{eqnarray*}
Suppose now given $U: X\rightarrow Y$ a measure-preserving transformation, with $X,Y\subset \realN$ measurable subsets. We define, for all map $f\in L^{2}(\realN,\lambda)$, $[U]f(x)=f\circ U(x)$ if $x\in X$ and $[U]f(x)=0$ otherwise. The operator $[U]$ thus defined is a partial isometry. Indeed, if we write $p$ the projection in $\B{L^{2}(\realN,\lambda)}$ induced by the characteristic map of $Y$, then for all $f,g\in pL^{2}(\realN,\lambda)$,
\begin{eqnarray*}
\inner{[U]f}{[U]g}&=&\int_{\realN} ([U]f)(x)\overline{([U]g)(x)}d\lambda(x)\\
&=&\int_{X} ([U]f)(x)\overline{([U]g)(x)}d\lambda(x)\\
&=&\int_{X} f\circ U(x) \overline{g\circ U(x)} d\lambda(x)\\
&=&\int_{Y} f(x)\overline{g(x)}d\lambda(x)\\
&=&\int_{\realN} f(x)\overline{g(x)}d\lambda(x)
\end{eqnarray*}
Moreover, it is clear that for all $f,g\in (1-p)L^{2}(\realN,\lambda)$ one has $\inner{[U]f}{[U]g}=0$.

\begin{definition}
A hyperfinite project $\de{a}=0\cdot+\cdot \alpha+A$ of carrier $p$ is \emph{successful} w.r.t. a viewpoint $\pi$ when:
\begin{itemize}
\item $\pi(p)\in L^{\infty}(\realN)$;
\item $\alpha$ is the normalised trace on $\vn{A}$;
\item there exists a basis $e_{1},\dots,e_{n}$ of the dialect $\vn{A}$ such that $A=[f]$ where $f$ is a partial measure-preserving bijection of $\realN\times\{1,\dots,n\}$;
\item the set $\{x\in \realN\times\{1,\dots,n\}~|~ f(x)=x\}$ is of null measure.
\end{itemize}
\end{definition}

\begin{remark}\label{remarkadditionalcondition}
We added the last condition to the definition proposed by Girard \cite{goi5}. This condition corresponds to the trace condition in our definition of promising projects (\autoref{projetsprometteurs}).
\end{remark}

\begin{proposition}
Every faithful viewpoint defines an outlook.
\end{proposition}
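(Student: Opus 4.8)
The plan is to attach to a faithful viewpoint $\pi$ the concrete sub-algebra $\vn{A}:=\pi^{-1}(L^{\infty}(\realN,\lambda))$ of $\infhyp$ and to show it is a MASA, i.e.\ an outlook. Since the viewpoint is faithful, $\pi$ is an injective C$^{\ast}$-algebra homomorphism, hence isometric and a $\ast$-isomorphism of $\infhyp$ onto its image $\pi(\infhyp)\subset\B{L^{2}(\realN,\lambda)}$; and by definition of a viewpoint one has $L^{\infty}(\realN,\lambda)\subset\pi(\infhyp)$. So $\vn{A}$ is a well-defined unital $\ast$-subalgebra of $\infhyp$, and it is abelian because $L^{\infty}(\realN,\lambda)$ is and $\pi$ restricts to an isomorphism onto its image.

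The heart of the argument is the identity $\vn{A}=\vn{A}'\cap\infhyp$, where the commutant is taken inside $\B{\hil{H}}$ for a faithful realisation $\infhyp\subset\B{\hil{H}}$. The inclusion $\vn{A}\subset\vn{A}'\cap\infhyp$ is just commutativity of $\vn{A}$. Conversely, if $x\in\infhyp$ commutes with every element of $\vn{A}$, then $\pi(x)\in\pi(\infhyp)$ commutes with every element of $\pi(\vn{A})=L^{\infty}(\realN,\lambda)$; since the multiplication algebra $L^{\infty}(\realN,\lambda)$ is maximal abelian in $\B{L^{2}(\realN,\lambda)}$ — equivalently, equal to its own commutant there, the classical fact underlying \autoref{masasbh} — we get $\pi(x)\in L^{\infty}(\realN,\lambda)$, hence $x\in\pi^{-1}(L^{\infty}(\realN,\lambda))=\vn{A}$ by injectivity of $\pi$.

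From this I would conclude at once. First, $\vn{A}'$ is a von Neumann algebra (the commutant of any subset of $\B{\hil{H}}$ is one), and $\infhyp$ is a von Neumann algebra, so $\vn{A}=\vn{A}'\cap\infhyp$ is a von Neumann sub-algebra of $\infhyp$. Second, it is maximal abelian: if $\vn{B}$ is an abelian sub-algebra with $\vn{A}\subset\vn{B}\subset\infhyp$, then $\vn{B}\subset\vn{B}'\subset\vn{A}'$, so $\vn{B}\subset\vn{A}'\cap\infhyp=\vn{A}$, forcing $\vn{B}=\vn{A}$. Hence $\vn{A}$ is a MASA in $\infhyp$, i.e.\ an outlook.

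I do not expect a genuine obstacle: the proof uses only faithfulness of $\pi$ and the inclusion $L^{\infty}(\realN,\lambda)\subset\pi(\infhyp)$, while the trace-normalisation clause of a viewpoint is not needed here. The only points requiring care are the classical fact that $L^{\infty}(\realN,\lambda)$ is its own commutant in $\B{L^{2}(\realN,\lambda)}$ (which holds because Lebesgue measure on $\realN$ is $\sigma$-finite and non-atomic), and the observation that, since a representation in the sense of \autoref{representations} is a priori only a C$^{\ast}$-homomorphism and is not assumed normal, the von-Neumann-ness of $\vn{A}$ should be deduced from the relative-commutant identity $\vn{A}=\vn{A}'\cap\infhyp$ rather than transported through $\pi$.
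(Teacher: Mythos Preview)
Your proof is correct and follows essentially the same route as the paper: define $\vn{A}=\pi^{-1}(L^{\infty}(\realN,\lambda))$, use that $L^{\infty}(\realN,\lambda)$ is a MASA in $\B{L^{2}(\realN,\lambda)}$, and pull maximality back through the faithful homomorphism $\pi$. Your treatment is in fact slightly more careful than the paper's, since you explicitly justify that $\vn{A}$ is a von Neumann sub-algebra via the relative-commutant identity $\vn{A}=\vn{A}'\cap\infhyp$ rather than tacitly assuming it, and you correctly observe that the trace-normalisation clause of a viewpoint plays no role here.
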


\begin{proof}
Let $\pi$ be a faithful viewpoint, then $L^{\infty}(\realN,\lambda)\subset\pi(\infhyp)$. Since $L^{\infty}(\realN,\lambda)$ is a MASA in $\B{L^{2}(\realN,\lambda)}$, $L^{\infty}(\realN,\lambda)$ is equal to its commutant in $\B{L^{2}(\realN,\lambda)}$. We deduce that the commutant of $L^{\infty}(\realN,\lambda)$ in $\pi(\infhyp)$ is included in $L^{\infty}(\realN,\lambda)$. But, since the algebra is commutative, the converse inclusion is also satisfied. Thus $L^{\infty}(\realN,\lambda)$ is a MASA in $\pi(\infhyp)$. Let us now consider $\vn{B}=\pi^{-1}(L^{\infty}(\realN,\lambda))$. This sub-algebra of $\infhyp$ is clearly commutative. Moreover, since $x\in\infhyp$ commutes with the elements in $\vn{B}$, then $\pi(x)$ commutes with the elements in $L^{\infty}(\realN,\lambda)$. From the maximality of the latter, we deduce that $\pi(x)\in L^{\infty}(\realN,\lambda)$, and therefore $x\in \vn{B}$. Thus $\vn{B}$ is a MASA in $\infhyp$.
\end{proof}

\begin{proposition}
Let $\pi$ be a faithful viewpoint, and $\vn{B}=\pi^{-1}(L^{\infty}(\realN,\lambda))$ the outlook defined by $\pi$. If $\de{a}=0\cdot +\cdot tr + A$ is successful w.r.t. $\pi$, then it is promising w.r.t. $\vn{B}$.
\end{proposition}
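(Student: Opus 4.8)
The plan is to verify, one by one, the five conditions defining a promising project (\autoref{projetsprometteurs}) for $\de{a}=0\cdot+\cdot\tr+A$ with respect to the outlook $\vn{B}=\pi^{-1}(L^{\infty}(\realN,\lambda))$. Three of them are immediate from the definition of a successful project: the \textbf{Wager} is $a=0$, the \textbf{Pseudo-Trace} $\alpha$ is the normalised trace, and the \textbf{Dialect} $\vn{A}=\vn{M}_{n}(\complexN)$ is a finite factor (should one read the definition of success with a more general dialect, the remark following \autoref{projetsprometteurs} lets us replace it by a matrix algebra without affecting conduct membership or the form of $A$). The work concentrates on the \textbf{Symmetry} and \textbf{Traces} conditions, and for both I would argue through the viewpoint: since $\pi$ is faithful it identifies $\vn{B}$ with $L^{\infty}(\realN,\lambda)$, so that $\pi\otimes\mathrm{Id}_{\vn{A}}$ identifies $\infhyp\otimes\vn{A}$ with a von Neumann algebra of operators on $L^{2}(\realN\times\{1,\dots,n\})=L^{2}(\realN,\lambda)\otimes\complexN^{n}$, the carrier $p$ becoming multiplication by $\chi_{X}$ for some measurable $X\subseteq\realN$ and $A$ becoming $[f]$ for a measure-preserving partial bijection $f$ of $\realN\times\{1,\dots,n\}$ (this last being exactly the content of ``$\de{a}$ is successful'').

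For \textbf{Symmetry} I would take $\vn{Q}$ to be the diagonal subalgebra $\vn{D}_{n}$ of $\vn{A}$ in the basis $e_{1},\dots,e_{n}$, a MASA of $\vn{A}$ (the discrete case of \autoref{classificationmasasbh}). By the preceding proposition $\vn{B}$ is a MASA of $\infhyp$, hence $\vn{B}\otimes\vn{Q}$ is a MASA of $\infhyp\otimes\vn{A}$ (the commutant of a tensor product being the tensor product of the commutants, after Tomita), and through the viewpoint it becomes $L^{\infty}(\realN)\otimes\vn{D}_{n}=L^{\infty}(\realN\times\{1,\dots,n\})$. A short direct computation with $[f]$---which is known to be a partial isometry from the computations preceding the definition of success---then gives $[f][f]^{\ast}=M_{\chi_{\mathrm{dom}\,f}}$, $[f]^{\ast}[f]=M_{\chi_{\mathrm{ran}\,f}}$ and $[f]M_{g}[f]^{\ast}=M_{(g\circ f)\chi_{\mathrm{dom}\,f}}$ for $g\in L^{\infty}(\realN\times\{1,\dots,n\})$, all of which are again multiplication operators; hence $[f]$, and therefore $A$, lies in the normalising groupoid of $\vn{B}\otimes\vn{Q}$. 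As $A$ is hermitian (being a project) and a partial isometry, it is a partial symmetry.

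The \textbf{Traces} condition---that $\tr(\varpi A)=0$ for every projection $\varpi$ in the MASA $\vn{B}\otimes\vn{Q}$ (I write $\varpi$ to avoid clash with the viewpoint $\pi$)---is where a genuine argument is needed, and I expect it to be the main obstacle. The only input is the last clause of ``successful'': the fixed-point set $\{x:f(x)=x\}$ is $\lambda$-null. Since $A$ is hermitian, $f$ is an involution on its domain, so---after discarding that null fixed-point set, which contributes nothing to $[f]$---we may assume $f$ is fixed-point free; then the free measure-preserving $\integerN/2\integerN$-action it generates has a measurable fundamental domain $T$, i.e.\ $\mathrm{dom}\,f=T\sqcup f(T)$ up to a null set. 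The operator $u=\chi_{T}-\chi_{f(T)}+\chi_{(\mathrm{dom}\,f)^{c}}$ is then a self-adjoint unitary of $L^{\infty}(\realN\times\{1,\dots,n\})$, hence of $\vn{B}\otimes\vn{Q}$, and one checks $uAu=-A$ ($[f]$ acts as the swap on each two-point orbit of $f$, which $u$ conjugates to its negative). Since $\varpi$ and $u$ commute in the abelian algebra $\vn{B}\otimes\vn{Q}$, we have $u\varpi u=\varpi$, and traciality of $\tr$ gives
\begin{equation*}
\tr(\varpi A)=-\tr(\varpi\,uAu)=-\tr\bigl((u\varpi u)A\bigr)=-\tr(\varpi A),
\end{equation*}
so $\tr(\varpi A)=0$. (Equivalently, $\varpi A=[f|_{S\cap\mathrm{dom}\,f}]$ still lies in the normalising groupoid of $L^{\infty}(\realN\times\{1,\dots,n\})$, and the trace of such an element is, up to normalisation, the measure of its fixed-point set---only the diagonal part contributes---which here is null.) This establishes \textbf{Traces}, and hence that $\de{a}$ is promising with respect to $\vn{B}$; the remaining difficulties are purely bookkeeping ones---transporting the MASA $\vn{B}\otimes\vn{Q}$ and the operator $A=[f]$ faithfully across the viewpoint identification, and handling the compression to the carrier $p$.
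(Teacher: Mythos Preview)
Your handling of the Dialect, Pseudo-Trace, Wager and Symmetry conditions matches the paper's. For Symmetry, both you and the paper push $A$ through the viewpoint to $[f]$, take $\vn{Q}$ to be the diagonal MASA of $\vn{A}=\vn{M}_{n}(\complexN)$ determined by $e_{1},\dots,e_{n}$, and verify by direct computation that conjugation by $[f]$ sends multiplication operators in $L^{\infty}(\realN\times\{1,\dots,n\})$ to multiplication operators, whence $A\in\gn{\vn{B}\otimes\vn{Q}}$. For the Traces condition your route differs: the paper quotes a lemma (\autoref{mesureandtrace}, credited to the annex of Girard's paper) identifying $\tr([\phi])$ with the measure of the fixed-point set of $\phi$, and bounds $\tr(pA)$ above by the measure of the fixed-point set of $f$, which is null by hypothesis. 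Your fundamental-domain construction of a self-adjoint unitary $u$ with $uAu=-A$ is a neat self-contained alternative, and your parenthetical remark is essentially the paper's argument.

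There is one gap in how you read the Traces condition. You quantify over projections $\varpi$ in the MASA $\vn{B}\otimes\vn{Q}$, but \autoref{projetsprometteurs} asks for projections in $\vn{P}\otimes\noralg[\vn{A}]{\vn{Q}}$; since $\vn{A}=\vn{M}_{n}(\complexN)$ is a type~{I} factor, $\noralg[\vn{A}]{\vn{Q}}=\vn{A}$, so what is literally required is $\tr(\varpi A)=0$ for \emph{all} projections $\varpi\in\vn{B}\otimes\vn{M}_{n}(\complexN)$. Your sign-flip argument does not extend, because such a $\varpi$ need not commute with $u\in\vn{B}\otimes\vn{Q}$. In fairness, the paper announces ``a projection $p$ in $\vn{B}\otimes\vn{M}_{n}(\complexN)$'' but then immediately writes $p$ as multiplication by a characteristic function, so its argument, like yours, effectively treats only the $\vn{B}\otimes\vn{Q}$ case; and the dialect-swap example of \autoref{nontrivialsing} is successful yet visibly fails the stronger version, so this discrepancy lives in the paper rather than in your argument.
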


\begin{proof}
One only needs to check that $A$ belongs to the normalising groupoid of $\vn{B}\vntimes\vn{Q}$ for a MASA $\vn{Q}$ in $\vn{A}$. Since $\vn{A}$ is a finite factor, we can suppose without loss of generality (modulo considering a variant of $\de{a}$) that it is equal to $\vn{M}_{n}(\complexN)$ for an integer $n\in\naturalN$. The basis $e_{1},\dots,e_{n}$ of the dialect such that $\pi(A)=[f]$ where $f$is a partial measure-preserving bijection on $\realN\times\{1,\dots,n\}$, defines a MASA $\vn{D}$ in $\vn{M}_{n}(\complexN)$, the algebra of diagonal operators in the basis $\{e_{1},\dots,e_{n}\}$. Let $p$ be a projection in $\vn{B}\vntimes\vn{D}$. We know that $\pi(p)=[\chi_{U}]$ where $U$ is a measurable subset of $\realN\times\{1,\dots,n\}$. For all $\xi\in L^{2}(\realN,\lambda)$, we obtain:
\begin{equation*}
\pi(A)^{\ast}\pi(p)\pi(A)\xi=\pi(A)\pi(p)\xi\circ f=\pi(A)\xi\circ f\circ \chi_{U}=\xi\circ f\circ\chi_{U}\circ f^{-1}
\end{equation*}
But $f\circ\chi_{U}\circ f^{-1}=\chi_{V}$ where $V$ is the measurable subset of $\realN\times\{1,\dots,n\}$ defined by $V=f(U)$. Thus $\pi(A)^{\ast}\pi(p)\pi(A)\xi=[\chi_{V}]\xi=\pi(q)\xi$, where $q$ is the projection $[\chi_{V}]$. Since $q$ is in $\vn{B\otimes D}$, we showed that $\pi(A)$ normalises every projection in $\vn{B\otimes D}$. We can conclude that $\pi(A)\in\gn{\vn{B\otimes D}}$ by using the fact that the projections in $\vn{B\otimes D}$ generate the whole algebra. As a consequence, $A$ is a partial symmetry in the normalising groupoid of $\vn{B\otimes D}$.

\begin{lemma}\label{mesureandtrace}
Let $\phi: X\rightarrow Y$ be a measure-preserving transformation such that $[\phi]\in\infhyp$. Then $\tr(\omega[\phi])=\lambda(\{x\in X~|~\phi(x)=x\})\times\omega$.
\end{lemma}

\begin{proof}
From the linearity of the trace, we have $\tr(\omega[\phi])=\omega\times \tr([\phi])$. We are thus left to show that $\tr([\phi])=\lambda(\{x\in X~|~\phi(x)=x\})$ for all partial measure-preserving transformation. This result is shown by Girard in the annex of his paper on hyperfinite geometry of interaction \cite{goi5}.
\end{proof}

We now are left with showing that the trace condition holds. For this, we pick a projection $p$ in $\vn{B\otimes M}_{n}(\complexN)$ and use \autoref{mesureandtrace} to show:
\begin{equation*}
\tr(pA)=\lambda(\{x\in U_{p}\times\{1,\dots,n\}~|~f(x)=x\})\leqslant\lambda(\{x\in \realN\times\{1,\dots,n\}~|~f(x)=x\})
\end{equation*}
Since $\de{a}$ is successful, we then obtain $0\leqslant \tr(pA)\leqslant 0$, from which we can conclude.
\end{proof}

\subsubsection{Adapting a Few Results}

We here state two results that give a better understanding of the relationship between Girard's notion of success and our own. In particular, a viewpoint gives rise to  outlooks satisfying a specific property with respect to the Pukansky invariant of its \emph{finite restrictions}.

\begin{theorem}
Let $\pi$ be a faithful viewpoint. The associated outlook $\vn{A}$ is such that for all finite projections\footnote{Notice that although $\vn{A}$ is a MASA in a type {II}$_{\infty}$ factor $\infhyp$, $p\vn{A}p$ is a MASA in $p\infhyp p$, which is a type {II}$_{1}$ factor, and its Pukansky invariant is therefore well defined.} $p\in\vn{A}$, $\puk{p\vn{A}p}=\{1\}$.
\end{theorem}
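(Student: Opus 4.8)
The plan is to reduce the statement, by means of \autoref{puk1}, to showing that for each finite projection $p\in\vn{A}$ the compression $p\vn{A}p$ is a MASA of $\B{L^2(p\infhyp p)}$, and to extract this from the fact that the viewpoint $\pi$ realises $\vn{A}$ as the diffuse MASA $L^\infty(\realN,\lambda)$ of $\B{L^2(\realN,\lambda)}$. First I would pin down $\pi(p)$: since $\vn{A}=\pi^{-1}(L^\infty(\realN,\lambda))$ and $\pi$ is faithful, $\pi$ restricts to a $\ast$-isomorphism of $\vn{A}$ onto $L^\infty(\realN,\lambda)$, so $\pi(p)$ is a projection of $L^\infty(\realN,\lambda)$, i.e.\ $\pi(p)=\chi_X$ for some measurable $X\subseteq\realN$; the second clause in the definition of a viewpoint gives $\lambda(X)=\tr(p)$, which is finite exactly because $p$ is a finite projection of $\infhyp$, and positive as soon as $p\neq0$. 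Compressing, $\pi$ restricts to a faithful representation $\pi_p\colon p\infhyp p\to\B{\chi_X L^2(\realN,\lambda)}=\B{L^2(X,\lambda|_X)}$ sending $p\vn{A}p$ onto $\chi_X L^\infty(\realN,\lambda)\chi_X=L^\infty(X,\lambda|_X)$.

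I would then record the two easy facts: $p\vn{A}p$ is a MASA in the type $\text{II}_1$ factor $p\infhyp p$ (a projection in a MASA compresses it to a MASA — this is already granted in the theorem's footnote), so that $\puk{p\vn{A}p}$ is well defined; and, $\lambda|_X$ being a finite non-atomic measure, $L^\infty(X,\lambda|_X)$ is a diffuse MASA of $\B{L^2(X,\lambda|_X)}$. Through $\pi_p$ this exhibits $p\vn{A}p$ as a maximal abelian subalgebra acting with multiplicity one on $L^2(X)$, with cyclic vector $\chi_X$.

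The genuine obstacle is to pass from this statement about the viewpoint representation $\pi_p$ to the statement $\puk{p\vn{A}p}=\{1\}$, which by \autoref{puk1} and the Pukansky lemma preceding its definition means that $\mathfrak{B}_{p\vn{A}p}=(p\vn{A}p)\vee J(p\vn{A}p)J$ is maximal abelian in $\B{L^2(p\infhyp p)}$: the difficulty is that $\pi_p$ is \emph{not} the standard representation of $p\infhyp p$ (it has trivial commutant, since $L^\infty(\realN,\lambda)\subseteq\pi(\infhyp)$ forces $\pi(\infhyp)'=\complexN$, whereas the standard representation has non-trivial commutant), so no unitary equivalence is available and the conclusion has to be transferred along the intrinsic invariant of the inclusion $p\vn{A}p\subset p\infhyp p$ rather than along Hilbert spaces. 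I would handle this by computing $\mathfrak{B}_{p\vn{A}p}'$ directly from the viewpoint data: disintegrating $p\infhyp p$ over the spectrum $\widehat{X}$ of $p\vn{A}p$ furnished by $\pi_p$, and using that in this representation every element of $p\infhyp p$ normalising $p\vn{A}p$ is of the form $[\varphi]$ for a measure-preserving partial bijection $\varphi$ of $X$, one sees that the $p\vn{A}p$–$p\vn{A}p$ bimodule underlying $L^2(p\infhyp p)$ is a direct sum of graph bimodules, each of multiplicity one off the diagonal $e_{p\vn{A}p}$; hence $\puk{p\vn{A}p}\subseteq\{1\}$, and since $p\vn{A}p\neq\complexN$ equality holds. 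An alternative route is to first establish that the outlook $\vn{A}$ is \emph{regular} in $\infhyp$ — the normaliser of $L^\infty(\realN,\lambda)$ inside $\pi(\infhyp)$ generating all of $\pi(\infhyp)$ — and then observe that regularity is inherited by the finite corner $p\vn{A}p$ in $p\infhyp p$, so that \autoref{pukreg} gives $\puk{p\vn{A}p}=\{1\}$; the obstacle is then shifted to verifying regularity, which is where the full force of the viewpoint hypothesis (and not merely the inclusion $L^\infty(\realN,\lambda)\subseteq\pi(\infhyp)$) must be used.
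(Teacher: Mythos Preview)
Your preliminary reductions are correct, and you have identified the genuine difficulty: the compressed viewpoint representation $\pi_p$ acts on $L^2(X)$ with trivial commutant and is therefore not the standard form of $p\infhyp p$, so the fact that $L^\infty(X)$ is maximal abelian in $\B{L^2(X)}$ does not transfer mechanically to the Pukansky condition on $L^2(p\infhyp p)$.

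Neither of your two proposed bridges can be completed, however. Route B must be discarded entirely: the outlook attached to a faithful viewpoint need \emph{not} be regular. The proposition immediately following this theorem in the paper constructs faithful viewpoints whose associated outlooks are semi-regular or even non-Dixmier-classifiable (take a semi-regular or singular MASA of $\finhyp$ with Pukansky invariant $\{1\}$ --- Tauer MASAs supply both --- tensor with a diagonal MASA of $\B{\hil{H}}$, and apply \autoref{chifan}). Regularity is strictly stronger than $\puk{p\vn{A}p}=\{1\}$ and cannot serve as an intermediate step. Route A has a subtler but equally fatal gap: the observation that every partial isometry of $\pi_p(p\infhyp p)$ normalising $L^\infty(X)$ is induced by a measure-preserving partial bijection is a statement about individual operators in the viewpoint picture; it gives no control over the $p\vn{A}p$--$p\vn{A}p$ bimodule $L^2(p\infhyp p)$. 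For the graph-bimodule decomposition you assert, those normalisers would have to span a $\norm{\cdot}_2$-dense subspace of $p\infhyp p$, which is precisely regularity again. The singular Tauer MASAs with Pukansky invariant $\{1\}$ discussed earlier in the paper show that the multiplicity-one conclusion can hold with a trivial normalising groupoid, so normalisers are not the mechanism.

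The paper argues by contradiction and never compares $\pi_p$ with the standard form directly. Assuming some $k\geqslant 2$ lies in $\puk{p\vn{A}p}$, one tensors with a regular MASA $\vn{Q}\subset\finhyp$: by \autoref{tenseurpuk} together with $\puk{\vn{Q}}=\{1\}$ the integer $k$ persists in $\puk{p\vn{A}p\otimes\vn{Q}}$, so by \autoref{puk1} the algebra $p\vn{A}p\otimes\vn{Q}$ fails to be maximal abelian in $\B{L^2(p\infhyp p\otimes\finhyp)}$. On the other hand $\pi(p\vn{A}p)=L^\infty(X)$ is a diffuse MASA of $\B{L^2(X)}$, and \autoref{masasbh} --- all diffuse MASAs of $\B{\hil{H}}$ are unitarily equivalent --- is invoked to transport this realisation to the Hilbert space $L^2(p\infhyp p\otimes\finhyp)$; the contradiction is then read off from invariance of the Pukansky invariant under the resulting isomorphism of inclusions. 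The detour through the tensor with $\finhyp$ and the rigidity of diffuse MASAs is exactly the device that substitutes for the spatial comparison you were seeking.
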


\begin{proof}
For all finite projections $p\in\vn{A}$, the algebra $p\vn{A}p$ is a MASA in $p\infhyp p$ by \autoref{singularprojection}. Suppose there exists a projection $p$ such that $\puk{p\vn{A}p}$ contains at least one integer greater than $2$, that we will denote by $k$. Fo all regular MASA $\vn{Q}$ in $\finhyp$, $\puk{p\vn{A}p \otimes \vn{Q}}$ contains the integer $k$ by \autoref{tenseurpuk}. We then deduce from \autoref{puksemireg} that $p\vn{A} p\otimes \vn{Q}$ is not a MASA in $\B{L^{2}(p\infhyp p\otimes \finhyp)}$. But $\pi(\vn{A})=L^{\infty}(\realN)$ is a MASA in $\B{L^{2}(\realN)}$, and therefore $\pi(p\vn{A} p)=L^{\infty}(X)$ for $X$ a measurable subset of finite measure. We deduce that $\pi(p\vn{A} p)$ is a MASA in $\B{L^{2}(X)}$, and thus unitarily equivalent (we write $u$ the said unitary) to a MASA in $\B{L^{2}(p\infhyp p)\otimes\finhyp}$ (\autoref{masasbh}). The morphism $\phi: a\mapsto u\pi(a)u^{\ast}$ is then a morphism from $p\infhyp p\otimes\vn{Q}$ into $\phi(p\infhyp p \otimes \vn{Q})$ such that $\puk{\phi(\vn{A})}=\{1\}$ which contradicts the fact that $k$ belongs to $\puk{\vn{A}}$.
\end{proof}

The previous theorem leads to the natural question of the existence of non-regular viewpoints. The following proposition partially answers this question.

\begin{proposition}
There exist regular viewpoints, semi-regular viewpoints, as well as non-Dixmier-classifiable viewpoints.
\end{proposition}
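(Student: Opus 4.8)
The statement to prove is: \emph{There exist regular viewpoints, semi-regular viewpoints, as well as non-Dixmier-classifiable viewpoints.}

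\medskip

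The plan is to leverage the earlier existence results for MASAs with prescribed Pukansky invariant $\{1\}$ and prescribed Dixmier class, together with the construction relating viewpoints and outlooks, to promote these MASAs into actual viewpoints. First I would recall from page~\pageref{refmasaspuk1-1}--\pageref{refmasaspuk1-3} that there exist, in the hyperfinite factor $\finhyp$ of type $\text{II}_1$, four MASAs $\vn{A}$ (regular), $\vn{B}$ (semi-regular), $\vn{C}$ (singular) and $\vn{D}$ (non-Dixmier-classifiable), all with Pukansky invariant $\{1\}$. By \autoref{puk1}, having Pukansky invariant $\{1\}$ is equivalent to being a MASA in $\B{L^{2}(\finhyp)}$; in particular each of $\vn{A},\vn{B},\vn{D}$ is a diffuse MASA of $\B{L^{2}(\finhyp)}$, hence by \autoref{masasbh} unitarily equivalent to $L^{\infty}([0,1])$ acting on $L^{2}([0,1])$.

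Next I would transport these MASAs from the type $\text{II}_1$ setting into the type $\text{II}_{\infty}$ factor $\infhyp=\B{\hil{H}}\otimes\finhyp$. Fix a MASA $\vn{Q}$ of $\B{\hil{H}}$ that is diffuse (e.g. the multiplication algebra $L^{\infty}(\realN_{+})$ on $L^{2}(\realN_{+})$ with respect to a measure of infinite total mass), and form $\vn{Q}\otimes\vn{X}$ for $\vn{X}\in\{\vn{A},\vn{B},\vn{D}\}$; this is a MASA of $\infhyp$ by Tomita's commutation theorem for tensor products. The key point is that the Dixmier class is preserved under this tensoring: by \autoref{chifan},
\begin{equation*}
\noralg[\infhyp]{\vn{Q}\otimes\vn{X}}=\noralg[\B{\hil{H}}]{\vn{Q}}\otimes\noralg[\finhyp]{\vn{X}}=\B{\hil{H}}\otimes\noralg[\finhyp]{\vn{X}},
\end{equation*}
where I use that any MASA of a type $\text{I}$ factor is regular. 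Hence $\vn{Q}\otimes\vn{A}$ is regular in $\infhyp$; $\vn{Q}\otimes\vn{B}$ has normaliser algebra $\B{\hil{H}}\otimes\vn{K}$ with $\vn{K}$ a proper subfactor, so it is semi-regular; and $\vn{Q}\otimes\vn{D}$ lies outside Dixmier's classification, by the same argument used at the end of Section~2 to build $\vn{D}$ itself. Moreover each resulting algebra is a diffuse MASA of $\B{L^{2}(\infhyp)}$, and for every finite projection $p$ the compression $p(\vn{Q}\otimes\vn{X})p$ is again such a MASA, hence has Pukansky invariant $\{1\}$.

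Finally I would upgrade these MASAs to \emph{viewpoints}: a viewpoint requires a faithful representation $\pi$ of $\infhyp$ on $L^{2}(\realN,\lambda)$ with $L^{\infty}(\realN,\lambda)\subset\pi(\infhyp)$ and $\tr\circ\pi^{-1}=\lambda$ on characteristic functions. Starting from the standard form of $\infhyp$ with the MASA $\vn{Q}\otimes\vn{X}$ diffuse and of Pukansky invariant matching $\{1\}$, \autoref{puk1} gives that $\vn{Q}\otimes\vn{X}$ is maximal abelian in $\B{L^{2}(\infhyp)}$, and its diffuseness together with \autoref{masasbh} (applied compression by compression, or directly on the $\sigma$-finite space) provides a unitary $U$ conjugating it onto the multiplication algebra $L^{\infty}(\realN,\lambda)\subset\B{L^{2}(\realN,\lambda)}$; conjugating the standard representation by $U$ yields a faithful representation $\pi$ with $L^{\infty}(\realN,\lambda)=\pi(\vn{Q}\otimes\vn{X})\subset\pi(\infhyp)$, and after rescaling the measure so that it computes the trace on projections one obtains the normalisation condition, so $\pi$ is a viewpoint whose associated outlook is (a unitary conjugate of) $\vn{Q}\otimes\vn{X}$. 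Since Dixmier class and the Pukansky condition on finite restrictions are unitary invariants, the three viewpoints so constructed are respectively regular, semi-regular and non-Dixmier-classifiable. I expect the main obstacle to be the last paragraph: carefully checking that the diffuse MASA $\vn{Q}\otimes\vn{X}$ of the $\sigma$-finite Hilbert space $L^{2}(\infhyp)$ can be conjugated onto $L^{\infty}(\realN,\lambda)$ \emph{and} that the trace-versus-Lebesgue normalisation can be arranged simultaneously — essentially a measure-space isomorphism argument plus bookkeeping with the Fuglede--Kadison normalisation — rather than anything involving the logic or the Dixmier classification proper, which is handled cleanly by \autoref{chifan} and \autoref{puk1}.
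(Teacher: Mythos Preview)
Your strategy is the same as the paper's in its essentials: start from MASAs of $\finhyp$ with Pukansky invariant $\{1\}$ of various Dixmier classes, tensor with a MASA of $\B{\hil{H}}$, and apply \autoref{chifan} to control the normaliser of the tensor product. The execution, however, differs in a way that explains exactly the obstacle you flag at the end.

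The paper avoids your difficulty by reversing the order of operations. Instead of tensoring first and then trying to conjugate the resulting MASA of $\infhyp$ onto $L^{\infty}(\realN)$ inside $\B{L^{2}(\infhyp)}$, it conjugates \emph{first} at the type $\text{II}_{1}$ level---where \autoref{puk1} and \autoref{masasbh} apply directly---obtaining unitaries $u,v,w:L^{2}(\finhyp)\to L^{2}([0,1])$ sending each MASA onto $L^{\infty}([0,1])$. It then tensors with a \emph{diagonal} MASA $\vn{D}$ of $\B{\hil{H}}$ and uses the representation of $\infhyp=\finhyp\otimes\B{\hil{H}}$ on $L^{2}(\finhyp)\otimes\hil{H}$ (not on $L^{2}(\infhyp)$): the unitary $u\otimes 1$ carries this space onto $L^{2}([0,1])\otimes\hil{H}\cong L^{2}(\realN)$ via the identification $[0,1]\times\naturalN\cong\realN$, and $L^{\infty}([0,1])\otimes\vn{D}$ goes to $L^{\infty}(\realN)$ automatically. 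Thus no Pukansky-type argument in the type $\text{II}_{\infty}$ setting is needed, and the trace normalisation is handled by the discrete decomposition $\realN=\bigcup_{n}[n,n+1)$. Your route through the standard form of $\infhyp$ and a diffuse $\vn{Q}$ can presumably be made to work, but it forces you to justify that $\vn{Q}\otimes\vn{X}$ is maximal abelian in $\B{L^{2}(\infhyp)}$ and to arrange the Lebesgue normalisation by hand; the paper's choice of a diagonal $\vn{D}$ and the order ``conjugate in $\finhyp$, then tensor'' sidesteps both issues. A smaller difference: for the non-Dixmier-classifiable case the paper tensors a \emph{singular} MASA $\vn{C}$ of $\finhyp$ (with $\puk{\vn{C}}=\{1\}$) against the diagonal $\vn{D}$, so that $\noralg{\vn{C}\otimes\vn{D}}=\vn{C}\otimes\B{\hil{H}}$ has centre $\vn{C}\otimes\complexN$ and is neither a factor nor equal to $\vn{C}\otimes\vn{D}$, rather than transporting the ready-made non-classifiable MASA from Section~2 as you do; both work.
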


\begin{proof}
We consider $\finhyp$ in its standard representation, and we fix $\vn{A},\vn{B},\vn{C}$ three MASAs in $\finhyp$ such that $\vn{A}$ is regular, $\vn{B}$ is semi-regular, $\vn{C}$ is singular, and $\puk{\vn{A}}=\puk{\vn{B}}=\puk{\vn{C}}=\{1\}$. We recall that we showed this is possible at the end of Section \ref{refmasaspuk1-1}.

Since $\vn{A}$, $\vn{B}$ and $\vn{C}$ have their Pukansky invariant equal to the singleton $\{1\}$, they are MASAs in $\B{L^{2}(\finhyp)}$ by \autoref{puk1}. Moreover, they are diffuse since $\finhyp$ is of type $\text{II}$ and there consequently exist unitaries $u,v,w$ such that $u^{\ast}\vn{A}u=L^{\infty}([0,1])$, $v^{\ast}\vn{B}v=L^{\infty}([0,1])$ and $w^{\ast}\vn{C}w=L^{\infty}([0,1])$. We now pick a diagonal MASA $\vn{D}$ in $\B{\hil{H}}$ induced by a basis, and define the following MASAs in $\finhyp\otimes\B{\hil{H}}$:
$$\vn{E_{A}}=\vn{A}\otimes\vn{D}~~~~~\vn{E_{B}}=\vn{B}\otimes\vn{D}~~~~~\vn{E_{C}}=\vn{C}\otimes\vn{D}$$
The unitaries $u\otimes 1, v\otimes 1, w\otimes 1:L^{2}(\finhyp)\otimes\B{\hil{H}}\rightarrow L^{2}([0,1])\otimes \B{\hil{H}}$ define -- modulo the isomorphism between $L^{2}([0,1])\otimes\B{\hil{H}}$ and $L^{2}(\realN)$ -- representations $\pi_{\vn{A}}: x\mapsto  (u\otimes 1)^{\ast}x(u\otimes 1)$, $\pi_{\vn{B}}:x\mapsto (v\otimes 1)^{\ast}x(v\otimes 1)$ and $\pi_{\vn{C}}:x\mapsto (w\otimes 1)^{\ast}x(w\otimes 1)$ of $\infhyp$ onto $L^{2}(\realN)$ such that:
$$\pi_{\vn{A}}(\vn{E}_{\vn{A}})=L^{\infty}(\realN)~~~~~\pi_{\vn{B}}(\vn{E}_{\vn{B}})=L^{\infty}(\realN)~~~~~\pi_{\vn{C}}(\vn{E}_{\vn{C}})=L^{\infty}(\realN)$$

We now show that the outlooks associated to $\pi_{\vn{A}}$, $\pi_{\vn{B}}$ and $\pi_{\vn{C}}$, i.e. the MASAs $\vn{E}_{\vn{A}}$, $\vn{E}_{\vn{B}}$ and $\vn{E}_{\vn{C}}$, are respectively regular, semi-regular and non-Dixmier-classifiable. It is in fact a simple application of \autoref{chifan}. In the case of $\vn{E}_{\vn{A}}=\vn{A}\otimes\vn{D}$, we have:
\begin{equation*}
\noralg[\finhyp\otimes\B{\hil{H}}]{\vn{A}\otimes\vn{D}}=\noralg[\finhyp]{\vn{A}}\otimes\noralg[\B{\hil{H}}]{\vn{D}}=\finhyp\otimes\B{\hil{H}}
\end{equation*}
Thus $\vn{E}_{\vn{A}}$ is regular. 

In the case of $\vn{E}_{\vn{B}}=\vn{B}\otimes\vn{D}$, we obtain:
\begin{equation*}
\noralg[\finhyp\otimes\B{\hil{H}}]{\vn{B}\otimes\vn{D}}=\noralg[\finhyp]{\vn{B}}\otimes\noralg[\B{\hil{H}}]{\vn{D}}=\vn{N}\otimes\B{\hil{H}}
\end{equation*}
Since $\vn{B}$ is semi-regular, $\vn{N}$ is a proper sub factor of $\finhyp$, and therefore $\vn{N}\otimes\B{\hil{H}}$ is a proper sub factor of $\finhyp\otimes\B{\hil{H}}$ (we use here a theorem\footnote{This theorem can be found with its proof in Takesaki's series \cite{takesaki1,takesaki2,takesaki3}.} due to Tomita stating that the commutant of a tensor product is the tensor product of the commutants). Thus $\vn{E}_{\vn{B}}$ is semi-regular.

In the case of $\vn{E}_{\vn{C}}=\vn{C}\otimes\vn{D}$, we have:
\begin{equation*}
\noralg[\finhyp\otimes\B{\hil{H}}]{\vn{C}\otimes\vn{D}}=\noralg[\finhyp]{\vn{C}}\otimes\noralg[\B{\hil{H}}]{\vn{D}}=\vn{C}\otimes\B{\hil{H}}
\end{equation*}
Since the commutant of $\vn{C}\otimes\B{\hil{H}}$ is equal to $\vn{C}\otimes\complexN$, the center of $\vn{C}\otimes\B{\hil{H}}$ is equal to $\vn{C}\otimes\complexN$. We deduce that $\noralg[\finhyp\otimes\B{\hil{H}}]{\vn{E}_{\vn{C}}}$ is neither equal to $\vn{E}_{\vn{C}}$, neither a factor. Therefore $\vn{E}_{\vn{C}}$ is non-Dixmier-classifiable.
\end{proof}

The question of the existence of singular viewpoints is still open. Indeed, the method used in the preceding proof does not apply for showing the existence of singular MASAs: by writing $\infhyp=\finhyp\otimes\B{\hil{H}}$ and choosing a MASA of the form $\vn{A}\otimes \vn{D}$, we impose ourselves a certain \enquote{regularity}, since $\vn{D}$ is necessarily a regular MASA ($\B{\hil{H}}$ is of type {I}). The existence of singular MASAs with Pukansky invariant equal to $\{1\}$ in the hyperfinite factor $\finhyp$ of type $\text{II}_{1}$ obtained by White \cite{tauermasas} suggests however that such viewpoints exist.

\section{Dixmier's Classification and Linear Logic}

In this section, we will prove the main technical result of this paper. We first prove that no non-trivial interpretation of linear logic proofs exists w.r.t. a singular outlook. We then show that semi-regular outlooks provide enough structure to interpret the exponential-free fragment of linear logic. Lastly, we show that regular outlooks provide the structure for the interpretation of exponential connectives.

\subsection{Singular MASAs}

First, we show that every promising project w.r.t. an outlook $\vn{P}$ which is a singular MASA in $\infhyp$ is trivial, i.e. its operator is equal to $0$. We will first show two lemmas that will be of use afterwards. We will write $\vn{A}_{\proj{p}}$ the von Neumann algebra $\proj{p}\vn{A}\proj{p}$ where $\proj{p}$ is a projection, i.e. the restriction of $\vn{A}$ to the subspace corresponding to $p$.

\begin{lemma}\label{reducingnormaliser}
Let $\vn{A}$ be a MASA in a factor $\vn{M}$, and let $\proj{p}$ be a projection in $\vn{A}$. If $A\in\vn{M}$ normalises $\vn{A}$, and $A\proj{p}=\proj{p}A$, then $A$ normalises $\vn{A}_{\proj{p}}$.
\end{lemma}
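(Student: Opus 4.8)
The plan is a direct computation exploiting the hypothesis $A\proj{p}=\proj{p}A$. Since $A$ normalises $\vn{A}$, it is a unitary of $\vn{M}$ with $A\vn{A}A^{\ast}=\vn{A}$. The first thing I would record is that, because $A$ commutes with the projection $\proj{p}\in\vn{A}$, the element $A\proj{p}=\proj{p}A\proj{p}$ is a unitary of the reduced algebra $\vn{M}_{\proj{p}}=\proj{p}\vn{M}\proj{p}$: one checks $(A\proj{p})(A\proj{p})^{\ast}=A\proj{p}A^{\ast}=\proj{p}AA^{\ast}=\proj{p}$ and symmetrically $(A\proj{p})^{\ast}(A\proj{p})=\proj{p}$, using $AA^{\ast}=A^{\ast}A=1$ and the commutation at each step. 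Thus asking whether $A$ normalises $\vn{A}_{\proj{p}}=\proj{p}\vn{A}\proj{p}$ inside $\vn{M}_{\proj{p}}$ is meaningful.

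Next I would establish the inclusion $A\vn{A}_{\proj{p}}A^{\ast}\subseteq\vn{A}_{\proj{p}}$. Every element of $\vn{A}_{\proj{p}}$ has the form $\proj{p}a\proj{p}$ with $a\in\vn{A}$, and using $A\proj{p}=\proj{p}A$ twice,
\begin{equation*}
A(\proj{p}a\proj{p})A^{\ast}=A\proj{p}\,a\,\proj{p}A^{\ast}=\proj{p}(AaA^{\ast})\proj{p}.
\end{equation*}
Since $A$ normalises $\vn{A}$ we have $AaA^{\ast}\in\vn{A}$, hence $\proj{p}(AaA^{\ast})\proj{p}\in\proj{p}\vn{A}\proj{p}=\vn{A}_{\proj{p}}$, which is the claimed inclusion.

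Finally, I would observe that $A^{\ast}$ also normalises $\vn{A}$ (indeed $A^{\ast}\vn{A}A=A^{-1}\vn{A}A=\vn{A}$) and also commutes with $\proj{p}$, so the same computation applied to $A^{\ast}$ gives $A^{\ast}\vn{A}_{\proj{p}}A\subseteq\vn{A}_{\proj{p}}$; conjugating this inclusion by $A$ turns it into $\vn{A}_{\proj{p}}\subseteq A\vn{A}_{\proj{p}}A^{\ast}$. Combining the two inclusions yields $A\vn{A}_{\proj{p}}A^{\ast}=\vn{A}_{\proj{p}}$, i.e. $A$ normalises $\vn{A}_{\proj{p}}$. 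There is no real obstacle here: the only point that deserves a moment's attention is the bookkeeping of $\proj{p}$, which is entirely absorbed by the commutation hypothesis, together with the remark that $A\proj{p}$ is a genuine unitary of the reduced algebra and not merely a partial isometry.
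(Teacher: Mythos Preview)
Your core computation---that $A(\proj{p}a\proj{p})A^{\ast}=\proj{p}(AaA^{\ast})\proj{p}$ via the commutation hypothesis---is exactly the paper's argument, just organised slightly differently (the paper takes $x\in\vn{A}_{\proj{p}}$, sets $y=AxA^{\ast}\in\vn{A}$, and then checks $y\proj{p}=y=\proj{p}y$). The difference is one of interpretation: you read ``$A$ normalises $\vn{A}$'' as membership in the normaliser $\nor[\vn{M}]{\vn{A}}$ (unitary, with equality $A\vn{A}A^{\ast}=\vn{A}$), and accordingly you verify that $A\proj{p}$ is a unitary of $\vn{M}_{\proj{p}}$ and establish the reverse inclusion. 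The paper's proof, by contrast, only proves the one-sided inclusion $A\vn{A}_{\proj{p}}A^{\ast}\subseteq\vn{A}_{\proj{p}}$, which is the condition appearing in the normalising \emph{groupoid} (Definition~\ref{groupoidenormalisant}); the remark immediately following the lemma confirms this is the intended reading, since the conclusion drawn is $\proj{p}\,\gn[\vn{M}]{\vn{A}}\,\proj{p}\subset\gn[\vn{M}_{\proj{p}}]{\vn{A}_{\proj{p}}}$. Your extra steps are correct but unnecessary for the paper's purposes; nothing is wrong, and the essential idea is identical.
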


\begin{proof}
We pick $x$ in $\vn{A}_{\proj{p}}\subset\vn{A}$. Then $AxA^{\ast}=y\in\vn{A}$ since $A$ normalises $\vn{A}$. Moreover, $y\proj{p}=AxA^{\ast}\proj{p}=Ax(\proj{p}A)^{\ast}=Ax(A\proj{p})^{\ast}=Ax\proj{p}A^{\ast}=AxA^{\ast}=y$, and a similar argument shows that $\proj{p}y=y$. Thus $y=\proj{p}y\proj{p}$ and $y\in\vn{A}_{\proj{p}}$.
\end{proof}

\begin{remark}
This result implies that $\proj{p}\textnormal{G}_{\vn{M}}(\vn{A})\proj{p}\subset \textnormal{G}_{\vn{M}_{\proj{p}}}(\vn{A}_{\proj{p}})$.
\end{remark}

The following lemma is of particular importance since it will allow us to reduce our study to the case of a factor of type $\text{II}_{1}$, and thus to use Chifan's result (\autoref{chifan}). The fact that $\vn{A}$ is abelian is essential here. Indeed, one can even find finite-dimensional counter-examples in the case $\vn{A}$ is a non-commutative singular von Neumann sub-algebra. For instance, the subfactor $\vn{A}=\vn{M}_{2}(\complexN)\oplus\complexN$ of $\vn{M}_{3}(\complexN)$ is singular : $\vn{N}_{\vn{M}_{3}(\complexN)}(\vn{A})=\vn{A}$. Picking the projection $\proj{p}=0\oplus1\oplus1$ in $\vn{M}_{3}(\complexN)$, we have that $\vn{A}_{\proj{p}}$ is not singular in $(\vn{M}_{3}(\complexN))_{\proj{p}}$ -- it is even a regular sub-algebra -- since $\vn{N}_{\vn{M}_{2}(\complexN)}(\vn{A}_{\proj{p}})=\vn{M}_{2}(\complexN)$.

\begin{lemma}\label{singularprojection}
Let $\vn{A}$ be a MASA in a von Neumann algebra $\vn{M}$, and $\proj{p}$ a projection in $\vn{A}$. Then $\vn{A}_{\proj{p}}$ is a maximal abelian sub-algebra of $\vn{M}_{\proj{p}}$. Moreover, if $\vn{A}$ is singular, $\vn{A}_{p}$ is singular.
\end{lemma}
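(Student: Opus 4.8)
The plan is to prove the two assertions separately, the first being essentially bookkeeping and the second the substantive point.

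\emph{Maximality of $\vn{A}_{\proj{p}}$.} Since $\proj{p}\in\vn{A}$ and $\vn{A}$ is abelian, $\vn{A}_{\proj{p}}=\proj{p}\vn{A}\proj{p}=\vn{A}\proj{p}$ is an abelian von Neumann subalgebra of $\vn{M}_{\proj{p}}$ with unit $\proj{p}$. To see it is maximal abelian, I would take $x\in\vn{M}_{\proj{p}}$ commuting with every element of $\vn{A}_{\proj{p}}$ and show $x\in\vn{A}_{\proj{p}}$. The key observation is that for $a\in\vn{A}$ one has $a\proj{p}\in\vn{A}_{\proj{p}}$, so, using $x=\proj{p}x\proj{p}$ and $\proj{p}a=a\proj{p}$, a one-line computation gives $xa=x(a\proj{p})=(a\proj{p})x=ax$; hence $x$ commutes with all of $\vn{A}$. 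Splitting $x$ into its self-adjoint and skew-adjoint parts (each of which still commutes with $\vn{A}$, since $\vn{A}$ is $\ast$-closed) and using the maximality of $\vn{A}$ in $\vn{M}$, each part lies in $\vn{A}$, so $x\in\vn{A}$ and therefore $x=\proj{p}x\proj{p}\in\vn{A}_{\proj{p}}$. Here it is precisely the abelianness of $\vn{A}$ (so that $\proj{p}$ commutes with $\vn{A}$) that makes the argument go through, which is consistent with the counterexample recalled just before the statement.

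\emph{Inheritance of singularity.} I would show that every unitary $u$ of the corner $\vn{M}_{\proj{p}}$ normalising $\vn{A}_{\proj{p}}$ already lies in $\vn{A}_{\proj{p}}$; this is exactly the statement that $\noralg[\vn{M}_{\proj{p}}]{\vn{A}_{\proj{p}}}=\vn{A}_{\proj{p}}$. The idea is to extend such a $u$ to a unitary of $\vn{M}$ normalising $\vn{A}$, by setting $\tilde{u}=u+(1-\proj{p})$. Because $u$ is a unitary of the corner we have $u\proj{p}=\proj{p}u=u$, so all cross terms in $\tilde{u}\tilde{u}^{\ast}$ and $\tilde{u}^{\ast}\tilde{u}$ vanish and $\tilde{u}$ is unitary in $\vn{M}$. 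For $a\in\vn{A}$, using $\proj{p}\in\vn{A}$ (hence $\proj{p}a=a\proj{p}$) the cross terms in $\tilde{u}a\tilde{u}^{\ast}$ again vanish, leaving $\tilde{u}a\tilde{u}^{\ast}=uau^{\ast}+(1-\proj{p})a(1-\proj{p})$; the first summand equals $u(\proj{p}a\proj{p})u^{\ast}\in u\vn{A}_{\proj{p}}u^{\ast}=\vn{A}_{\proj{p}}$ since $u$ normalises $\vn{A}_{\proj{p}}$, and the second lies in $\vn{A}$ since $1-\proj{p}\in\vn{A}$; thus $\tilde{u}a\tilde{u}^{\ast}\in\vn{A}$. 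The symmetric computation with $\tilde{u}^{\ast}=u^{\ast}+(1-\proj{p})$ gives $\tilde{u}^{\ast}\vn{A}\tilde{u}\subseteq\vn{A}$, and hence $\tilde{u}\vn{A}\tilde{u}^{\ast}=\vn{A}$, i.e.\ $\tilde{u}\in\nor[\vn{M}]{\vn{A}}$. Since $\vn{A}$ is singular, $\tilde{u}\in\vn{A}$, and therefore $u=\proj{p}\tilde{u}\proj{p}\in\proj{p}\vn{A}\proj{p}=\vn{A}_{\proj{p}}$, as desired.

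The only point requiring care is organizational: one must keep track that $u$ is a unitary of the corner (so $uu^{\ast}=u^{\ast}u=\proj{p}$, not merely a partial isometry) so that the ``block-diagonal'' extension $\tilde{u}=u\oplus(1-\proj{p})$ along $1=\proj{p}+(1-\proj{p})$ is legitimate; this is also where $\proj{p}\in\vn{A}$ is used twice — once to kill the cross terms and once to place the $(1-\proj{p})$-corner of $\tilde{u}a\tilde{u}^{\ast}$ inside $\vn{A}$. No trace, no appeal to \autoref{dyestheorem}, and no use of the Pukánszky invariant is needed; this is a soft argument. (Alternatively one could observe that the normalising-groupoid relations force $u\in\gn[\vn{M}]{\vn{A}}$, and more generally $\gn[\vn{M}_{\proj{p}}]{\vn{A}_{\proj{p}}}\subseteq\gn[\vn{M}]{\vn{A}}$, whence $\gnalg[\vn{M}_{\proj{p}}]{\vn{A}_{\proj{p}}}\subseteq\proj{p}\,\gnalg[\vn{M}]{\vn{A}}\,\proj{p}$, and then invoke the corollary of \autoref{dyestheorem} together with singularity; but the direct extension argument above sidesteps the trace hypothesis that route would require.)
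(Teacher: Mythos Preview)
Your proof is correct and follows essentially the same route as the paper's. For maximality, both you and the paper show that any $x\in\vn{M}_{\proj{p}}$ commuting with $\vn{A}_{\proj{p}}$ in fact commutes with all of $\vn{A}$ (using $\proj{p}\in\vn{A}$ abelian), hence lies in $\vn{A}$ and thus in $\vn{A}_{\proj{p}}$; for singularity, both extend a normalising unitary $u$ of the corner to $u+(1-\proj{p})$, verify this is a unitary of $\vn{M}$ normalising $\vn{A}$, and conclude from singularity. You are slightly more careful than the paper in two places: you spell out the self-adjoint decomposition needed to pass from ``commutes with $\vn{A}$'' to ``lies in $\vn{A}$'' (the paper invokes $\vn{A}'\cap\vn{M}=\vn{A}$ tacitly), and you explicitly check both inclusions $\tilde{u}\vn{A}\tilde{u}^{\ast}\subseteq\vn{A}$ and $\tilde{u}^{\ast}\vn{A}\tilde{u}\subseteq\vn{A}$ to obtain equality, whereas the paper records only one direction.
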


\begin{proof}
Let $v$ be an element of $(\vn{A}_{\proj{p}})'$, the commutant of $\vn{A}_{\proj{p}}$ in $\vn{M}_{\proj{p}}$, and let $a\in\vn{A}$. We have $\proj{p}a=\proj{p}a\proj{p}$ and $(1-\proj{p})a=(1-\proj{p})a(1-\proj{p})$ since $\proj{p}$ and $(1-\proj{p})$ are elements of $\vn{A}$ -- which is commutative. Using the fact that $v\proj{p}=\proj{p}v=v$, we obtain:
\begin{eqnarray*}
va&=&v(\proj{p}a+(1-\proj{p})a)\\
&=&v\proj{p}a+v(1-\proj{p})a\\
&=&v\proj{p}a\proj{p}\\
&=&\proj{p}a\proj{p}v\\
&=&av
\end{eqnarray*}
As a consequence, $(\vn{A}_{p})'\subset(\vn{A}')_{\proj{p}}=\vn{A}_{p}$ from the maximality of $\vn{A}$. This gives us that $(\vn{A}_{p})'\subset\vn{A}_{p}$, i.e. $\vn{A}_{\proj{p}}$ is a MASA in $\vn{M}_{\proj{p}}$. 

Suppose now that $\vn{A}$ is a singular MASA. Pick $u\in N_{\vn{M}_{\proj{p}}}(\vn{A}_{\proj{p}})$. By definition, $u$ is a unitary in $\vn{M}_{\proj{p}}$, meaning that $uu^{\ast}=u^{\ast}u=\proj{p}$, and therefore $u\proj{p}=\proj{p}u=u$. Let us define $v=u+(1-\proj{p})$, which is an element of $\vn{M}$. Then, since $\proj{p}(1-\proj{p})=0$ and $v$ is a unitary: $vv^{\ast}=(u+(1-\proj{p}))(u^{\ast}+(1-\proj{p}))=uu^{\ast}+(1-\proj{p})=1=v^{\ast}v$, and $\proj{p}v\proj{p}=u$.\\
Let us now chose $x\in\vn{A}$. Since $x$ and $1-\proj{p}$ are elements of $\vn{A}$ and $\vn{A}$ is commutative, $ux(1-\proj{p})=u(1-\proj{p})x=u\proj{p}(1-\proj{p})x=0$. Similarly, we show that $(1-\proj{p})xu=0$. This implies that $vxv^{\ast}=uxu^{\ast}+(1-\proj{p})x(1-\proj{p})$. But, since $u$ normalises $\vn{A}$, $uxu^{\ast}=u\proj{p}x\proj{p}u^{\ast}$ is an element $y$ in $\vn{A}$. Moreover, since $1-\proj{p}$ is in $\vn{A}$, the element $(1-\proj{p})x(1-\proj{p})$ also lives in $\vn{A}$. Finally, $vxv^{\ast}$ is in $\vn{A}$ and $v\in N_{\vn{M}}(\vn{A})$. Thus $u\in\proj{p}N_{\vn{M}}(\vn{A})\proj{p}$. But $N_{\vn{M}}(\vn{A})\subset\vn{N}_{\vn{M}}(\vn{A})=\vn{A}$ ($\vn{A}$ is singular), and therefore $u\in\vn{A}_{\proj{p}}$.\\
Since $N_{\vn{M}_{\proj{p}}}(\vn{A}_{\proj{p}})\subset\vn{A}_{\proj{p}}$ this gives us that $\vn{N}_{\vn{M}_{\proj{p}}}(\vn{A}_{\proj{p}})=\vn{A}_{\proj{p}}$.
\end{proof}

\begin{theorem}[Singular Outlooks and Soundness]\label{singtruth}
If $\vn{P}$ is a singular MASA in $\infhyp$, then every promising project w.r.t. the outlook $\vn{P}$ is trivial.
\end{theorem}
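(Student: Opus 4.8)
The plan is to show that the operator $A$ of any promising project $\de{a}=0\cdot+\cdot\alpha+A$ with respect to a singular outlook $\vn{P}$ already lies in the von Neumann algebra $\vn{P}\otimes\vn{A}$, and then to kill it using the \textbf{Traces} condition. Recall that, for a promising project, $\vn{A}$ is a matrix algebra, $\alpha$ its normalised trace, $A=A^{\ast}$ a partial symmetry lying in the normalising groupoid $\gn[\infhyp\otimes\vn{A}]{\vn{P}\otimes\vn{Q}}$ for some MASA $\vn{Q}$ of $\vn{A}$, and $\tr(\pi A)=0$ for every projection $\pi\in\vn{P}\otimes\normalisateur[\vn{A}]{Q}$. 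Since every MASA of a matrix algebra is unitarily equivalent to the diagonal one, hence regular, $\normalisateur[\vn{A}]{Q}=\noralg[\vn{A}]{\vn{Q}}=\vn{A}$, so the trace condition says precisely: $\tr(\pi A)=0$ for all projections $\pi$ of $\vn{P}\otimes\vn{A}$.

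The heart of the argument is the inclusion $A\in\vn{P}\otimes\vn{A}$. As $A$ lies in the normalising groupoid of $\vn{P}\otimes\vn{Q}$, Dye's theorem (\autoref{dyestheorem}) and its corollary place $A$ inside the generated von Neumann algebra $\noralg{\vn{P}\otimes\vn{Q}}$; it therefore suffices to prove $\noralg{\vn{P}\otimes\vn{Q}}=\vn{P}\otimes\vn{A}$. One route is to restrict to the type $\text{II}_{1}$ corner $p\infhyp p\otimes\vn{A}$ cut out by the finite carrier of $\de{a}$ (with \autoref{singularprojection} and \autoref{reducingnormaliser} providing that the reduction of $\vn{P}$ remains a singular MASA there) and then to apply \autoref{chifan} together with the regularity of $\vn{Q}$ and the singularity of $\vn{P}$, which gives $\noralg{\vn{P}\otimes\vn{Q}}=\noralg{\vn{P}}\otimes\noralg{\vn{Q}}=\vn{P}\otimes\vn{A}$. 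Alternatively, avoiding \autoref{chifan}, one argues directly: write a unitary $u=(u_{ij})$ normalising $\vn{P}\otimes\vn{Q}$ as a matrix over $\infhyp$; conjugating the diagonal matrix units of $\vn{Q}$, and the elements $a\otimes q$ with $a\in\vn{P}$ and $q\in\vn{Q}$, by $u$ and $u^{\ast}$ yields $u_{ij}^{\ast}u_{ij},\,u_{ij}u_{ij}^{\ast}\in\vn{P}$ and $u_{ij}\vn{P}u_{ij}^{\ast}\subseteq\vn{P}$, so in the polar decomposition $u_{ij}=v_{ij}\abs{u_{ij}}$ one has $\abs{u_{ij}}\in\vn{P}$ and $v_{ij}\in\gn[\infhyp]{\vn{P}}$; since $\vn{P}$ is singular, $\gn[\infhyp]{\vn{P}}\subseteq\vn{P}$ by \autoref{dyestheorem}, hence $u_{ij}\in\vn{P}$ and $u\in\vn{P}\otimes\vn{A}$.

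To conclude, decompose the hermitian operator $A\in\vn{P}\otimes\vn{A}$ as $A=A_{+}-A_{-}$ with $A_{\pm}\geqslant 0$ in $\vn{P}\otimes\vn{A}$ and $A_{+}A_{-}=0$, and let $r_{\pm}\in\vn{P}\otimes\vn{A}$ be the support projections of $A_{\pm}$. Applying the trace condition to $\pi=r_{+}$ and to $\pi=r_{-}$ (both are projections of $\vn{P}\otimes\vn{A}=\vn{P}\otimes\normalisateur[\vn{A}]{Q}$) and using $r_{+}A=A_{+}$ and $r_{-}A=-A_{-}$, we get $\tr(A_{+})=\tr(A_{-})=0$. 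Since $A_{\pm}$ are positive and dominated by a finite projection, faithfulness of $\tr\otimes\alpha$ forces $A_{+}=A_{-}=0$, i.e.\ $A=0$, so $\de{a}$ is trivial. The one genuinely delicate step is the normaliser computation of the second paragraph — namely that a singular $\vn{P}$ confines the normalising groupoid of $\vn{P}\otimes\vn{Q}$ to $\vn{P}\otimes\vn{A}$; the remaining steps amount to carrier bookkeeping and a one-line trace argument.
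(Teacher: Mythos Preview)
Your proof is correct and your first route is exactly the paper's argument: reduce to the finite corner $p\infhyp p\otimes\vn{A}$ using \autoref{reducingnormaliser} and \autoref{singularprojection}, apply Dye's theorem and Chifan's tensor formula (\autoref{chifan}) to force $A\in\vn{P}_{p}\otimes\noralg[\vn{A}]{\vn{Q}}=\vn{P}_{p}\otimes\vn{A}$, and then kill $A$ with the trace condition. The paper's endgame is marginally slicker than yours: since the trace condition extends by linearity and continuity to $\tr(BA)=0$ for all $B\in\vn{P}\otimes\vn{A}$, one may take $B=A$ and conclude $\tr(A^{2})=0$, hence $A^{2}=0$ (faithfulness on the finite corner) and $A=A^{3}=0$; this avoids the positive/negative decomposition entirely. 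Your alternative second route, computing the matrix entries of a normalising unitary directly to bypass \autoref{chifan}, is not in the paper and is a worthwhile simplification in this setting where the dialect is type $\text{I}_{n}$ (indeed, \autoref{chifan} as stated is for type $\text{II}_{1}$ factors, so the direct argument is arguably cleaner here); just be aware that the step ``$v_{ij}\in\gn[\infhyp]{\vn{P}}$'' from the polar decomposition needs a short functional-calculus justification, since $u_{ij}$ need not itself be a partial isometry.
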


\begin{proof}
Let $\de{a}=(\proj{p},0,\finhyp,tr,A)$ be a promising hyperfinite project w.r.t. $\vn{P}$ a singular MASA in $\infhyp$.\\
We want to show that $A\in\vn{P}\vntimes\finhyp$. Since $A$ is an element of $(\infhyp)_{\proj{p}}\vntimes\finhyp$, we know that $A\proj{\overline{p}}=\proj{\overline{p}}A=A$ where $\proj{\overline{p}}$ is the projection $\proj{p}\vntimes 1$. Since $A$ is in the normalising groupoid $G_{\infhyp\vntimes\finhyp}(\vn{P}\vntimes\vn{Q})$, it is also in $G_{(\infhyp)_{\proj{p}}\vntimes\finhyp}(\vn{P}_{\proj{p}}\vntimes\vn{Q})$ by \autoref{reducingnormaliser}, and it is therefore an element of the algebra $\vn{N}_{(\infhyp)_{\proj{p}}\vntimes\finhyp}(\vn{P}_{\proj{p}}\vntimes\vn{Q})$ by \autoref{dyestheorem}. Now, by \autoref{chifan}, we obtain that $A\in\vn{N}_{\infhyp}(\vn{P}_{\proj{p}})\vntimes\vn{N}_{\finhyp}(\vn{Q})$. Since $\vn{P}$ is singular in $\infhyp$, $\vn{P}_{\proj{p}}$ is singular in $(\infhyp)_{\proj{p}}$ by \autoref{singularprojection}, which shows that $A\in\vn{P}_{\proj{p}}\vntimes\vn{N}_{\finhyp}(\vn{Q})$.\\
Since $A$ is promising w.r.t. $\vn{P}$, we have $\tr(\proj{\pi}A)=0$. Since the set of projections in $\vn{P}\vntimes\vn{N}_{\finhyp}(\vn{Q})$ generates the algebra $\vn{P}\vntimes\vn{N}_{\finhyp}(\vn{Q})$ and since the trace is continuous, we deduce that for all $B\in\vn{P}\vntimes\vn{N}_{\finhyp}(\vn{Q})$, we have $\tr(BA)=0$.\\
Finally, since $A^{3}=A=A^{\ast}$, $A^{2}=\proj{r}$ is a projection, with $\proj{r}A=A\proj{r}=A$. Since $A\in\vn{P}\vntimes\vn{N}_{\finhyp}(\vn{Q})$, we can conclude that $\tr(A^{2})=0$, i.e. $A^{2}=0$ and therefore $A=A^{3}=0$.
\end{proof}

\begin{remark}\label{nontrivialsing}
Without the additional condition (about projections) in the definition of promising projects, it would be easy to find non-trivial hyperfinite projects which are promising w.r.t. a singular MASA $\vn{P}$ in $\infhyp$. Indeed, let $\proj{p,q}$ be two projections in $\vn{P}$. Then the project $\de{a}=(\proj{p+q},0,1,\complexN,\proj{p}+\proj{q})$ would then clearly be promising $\vn{P}$.~\\
One might wonder however if this condition could be weakened, asking for instance that the trace of $A$ be zero. This condition would not be sufficient, since for all projections $\proj{p,q}\in\vn{P}$ such that $\tr(\proj{p})=\tr(\proj{q})$, the hyperfinite project $\de{b}$ defined as $(\proj{p+q},0,1,\complexN,\proj{p-q})$ would then be promising as $\tr(\proj{p-q})=\tr(\proj{p})-\tr(\proj{q})=0$.~\\
Another weaker condition would be: for all projections $\pi\in\vn{P}\vntimes\vn{Q}$, $\tr(\pi A)=0$. However, the following project would then be promising w.r.t. $\vn{P}$ when $\proj{p},\proj{q}\in\vn{P}$ are projections:
\begin{equation*}
\de{c}=(\proj{p+q},0,tr,\vn{M}_{2}(\complexN),\left(\begin{array}{cc}0&(\proj{p+q})_{\otimes 1_{\finhyp}}\\(\proj{p+q})_{\otimes 1_{\finhyp}}&0\end{array}\right) )
\end{equation*}
All those projects may be considered as successful, so why do we want to exclude them? The reason can be found in the relationship between the GoI interpretation of proofs and the theory of proof nets. Indeed, as it is explained in both Girard and the author's work on the interpretation of multiplicatives \cite{multiplicatives,seiller-goim}, the GoI interpretation of a proof corresponds to a representation of the axiom links of the corresponding proof net. As a consequence, a successful project should be understood intuitively as a set of axiom links, i.e. a partial symmetry not containing any fixed point -- something that corresponds to the fact that for all non-zero vector $\xi$ the symmetry $S$ satisfies $S\xi\neq\xi$. In this respect, the first projects considered above should therefore not be considered as successful as they obviously do not satisfy this property. The reason why last project should also not be considered as successful is, however, more involved since it is a symmetry not containing fixed points. In this case, however, the vectors $\xi$ and $S\xi$ differ only from the \emph{dialect}, i.e. the second projection of the vector. Thinking about proof nets again, this second projection, the dialect, corresponds to \emph{slices} in additive proof nets \cite{proofnets}. This last project represents, in this respect, an axiom link between a formula $A$ in a slice $s_{1}$ and the same formula $A$ in a different slice $s_{2}$. The reader familiar with additive proof nets should now be convinced that such a project should not be successful, as it represents something which is not a valid axiom link. 
\end{remark}

\subsection{Non-Singular MASAs}

In this section, we consider chosen an outlook $\vn{P}$ which is either a regular or a semi-regular MASA in $\infhyp$. We will show a full soundness result for the sequent calculus $\textnormal{MALL}_{\cond{T},\cond{0}}$ (\autoref{MALL}), i.e. we interpret formulas and sequents as conducts and proofs as hyperfinite projects and we show that for all proof $\pi$ of a sequent $\vdash \Gamma$, the interpretation $\Int{\pi}{}$ is a promising project which belongs to $\Int{\Gamma}{}$.

\subsubsection{The Sequent Calculi $\textnormal{MALL}_{\cond{T},\cond{0}}$}


We will briefly define the sequent calculus $\textnormal{MALL}_{\cond{T},\cond{0}}$ for which we show a soundness result. This sequent calculus was defined in order to prove a soundness result for interaction graphs \cite{seiller-goia}. This is the usual sequent calculus for multiplicative-additive linear logic without multiplicative units (but including additive units). Although multiplicative units can be dealt with, they need a more involved sequent calculus with \emph{polarised formulas} that deals with exponential connectives \cite{seiller-goie}. A soundness result for this more involved calculus exists, but the result does not justifies the amount of work needed to correctly define the calculus. The interested reader can have a look at the author's work on exponentials \cite{seiller-goie} to persuade herself that this extended result holds as well.

In earlier works \cite{seiller-goim,seiller-goia,seiller-goig,seiller-goie}, we took into account the locativity of the framework by defining a \emph{localised sequent calculus} $\textnormal{locMALL}_{\cond{T},\cond{0}}$ for which formulas have a specific location and rules are subject to constraints on the locations of the formulas appearing in the sequents. This localised version of the sequent calculus is used in order to prove a soundness result more easily as it presupposes the locativity constraints of the GoI model. The soundness result for the usual non-localised calculus is then obtained by noticing that every formula, thus sequent, and every proof can be \enquote{localised}, i.e. interpreted as a formula, sequent or proof of the localised calculus. We will here define directly localised interpretations of the non-localised sequent calculus in order to limit the space needed to show the results. 

Let us fix $\mathcal{V}=\{X_{i}\}_{i\in\naturalN}$ a set of variables.

\begin{definition}[Formulas of MALL$_{\cond{T,0}}$]
The formulas of MALL$_{\cond{T,0}}$ are defined by the following grammar:
\begin{equation*}
F:=X_{i}~|~X_{i}^{\pol}~|~F\otimes F~|~F\parr F~|~F\with F~|~F\oplus F~|~\cond{0}~|~\cond{T}
\end{equation*}
where the $X_{i}$ are variables.
\end{definition}

\begin{definition}[Proofs of MALL$_{\cond{T,0}}$]
A proof of MALL$_{\cond{T,0}}$ is a derivation obtained from the sequent calculus rules shown in Figure \ref{MALL}.
\end{definition}

\begin{figure}
\begin{center}
$\begin{array}{cc}
\begin{minipage}{5cm}
\begin{prooftree}
\AxiomC{}
\RightLabel{\scriptsize{Ax}}
\UnaryInfC{$\vdash X_{i}^{\pol},X_{i}$}
\end{prooftree}
\end{minipage}
&
\begin{minipage}{5cm}
\begin{prooftree}
\AxiomC{$\vdash A,\Delta$}
\AxiomC{$\vdash A^{\pol},\Gamma$}
\RightLabel{\scriptsize{Cut}}
\BinaryInfC{$\vdash \Delta,\Gamma$}
\end{prooftree}
\end{minipage}
\\~\\
\begin{minipage}{5,5cm}
\begin{prooftree}
\AxiomC{$\vdash A,\Delta$}
\AxiomC{$\vdash B,\Gamma$}
\RightLabel{\scriptsize{$\otimes$}}
\BinaryInfC{$\vdash A\otimes B,\Delta,\Gamma$}
\end{prooftree}
\end{minipage}
&
\begin{minipage}{5cm}
\begin{prooftree}
\AxiomC{$\vdash A,B,\Gamma$}
\RightLabel{\scriptsize{$\parr$}}
\UnaryInfC{$\vdash A\parr B,\Gamma$}
\end{prooftree}
\end{minipage}
\\~\\
\begin{minipage}{5cm}
\begin{prooftree}
\AxiomC{$\vdash A_{i},\Gamma$}
\RightLabel{\scriptsize{$\oplus^{i}$}}
\UnaryInfC{$\vdash A_{0}\oplus A_{1},\Gamma$}
\end{prooftree}
\end{minipage}
&
\begin{minipage}{5cm}
\begin{prooftree}
\AxiomC{$\vdash \Gamma,A$}
\AxiomC{$\vdash \Gamma,B$}
\RightLabel{\scriptsize{$\with$}}
\BinaryInfC{$\vdash \Gamma,A\with B$}
\end{prooftree}
\end{minipage}
\\~\\
\begin{minipage}{5cm}
\begin{prooftree}
\AxiomC{}
\RightLabel{\scriptsize{$\top$}}
\UnaryInfC{$\vdash \top,\Gamma$}
\end{prooftree}
\end{minipage}
&
\begin{minipage}{5cm}
\centering
\begin{prooftree}
\AxiomC{}
\noLine
\UnaryInfC{No rules for $\cond{0}$.}
\end{prooftree}
\end{minipage}
\end{array}
$\end{center}
\caption{Sequent calculus MALL$_{\cond{T,0}}$}\label{MALL}
\end{figure}

\subsubsection{Interpretation of Formulas}

\begin{definition}[Delocations]
Let $\proj{p},\proj{q}$ be projections in $\vn{P}$. A \emph{delocation} from $\proj{p}$ onto $\proj{q}$ is a partial isometry $\theta : \proj{p}\rightarrow \proj{q}$ such that $\theta\in G_{\infhyp}(\vn{P})$.
\end{definition}

To interpret the sequent calculus, we will actually work with the MASA $\vn{P}\oplus\vn{P}$ of the algebra $\vn{M}_{2}(\infhyp)$ in order to distinguish a \emph{primitive space} (the first component of the direct sum $\vn{P}\oplus\vn{P}$) and an \emph{interpretation space} (the second component of the direct sum). Interpretations of proofs and formulas will be elements of the interpretation space, hence the interpretation will in fact take place in $\infhyp$, while the primitive space will be used in order to define correctly the syntax. The following proposition shows that, since the interpretations will be hyperfinite projects defined in the second component of the sum $\vn{P\oplus P}$, the fact that they are promising w.r.t. $\vn{P\oplus P}$ in $\vn{M}_{2}(\infhyp)$ implies that their restriction to $\infhyp$ (the second component) is promising w.r.t. $\vn{P}$.

\begin{proposition}[Restriction]
Let $\de{a}=(\proj{p},0,tr,\finhyp, A)$ be a promising project w.r.t. $\vn{P\oplus P}\masa\vn{M}_{2}(\infhyp)$ such that $\proj{p}\leqslant 0\oplus 1$.
Then $A(0\oplus 1)=(0\oplus 1)A=A$, and $\de{a}$ is a promising project w.r.t. $\vn{P}\masa(\vn{M}_{2}(\infhyp))_{0\oplus 1}\simeq \infhyp$.
\end{proposition}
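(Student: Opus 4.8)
The statement is a "restriction" lemma: a promising project living in the second summand of $\vn{P\oplus P}\masa\vn{M}_{2}(\infhyp)$ is still promising when we forget the first summand and view it inside $\infhyp$. The first thing I would do is establish the support claim $A(0\oplus 1)=(0\oplus 1)A=A$. Since $\de{a}$ has carrier $\proj{p}$ with $\proj{p}\leqslant 0\oplus 1$ (inside $\vn{M}_2(\infhyp)\otimes\finhyp$, after the usual identification, one really has $\proj{p}\vntimes 1_\finhyp$ as the support), the defining condition $A=\proj{p}A\proj{p}$ together with $\proj{p}(0\oplus 1)=\proj{p}$ immediately gives $A(0\oplus 1)=(0\oplus 1)A=A$. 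This lets us reinterpret $A$ as an element of $(\vn{M}_2(\infhyp))_{0\oplus 1}\otimes\finhyp\simeq\infhyp\otimes\finhyp$, which is exactly the setting in which the notion of promising project relative to $\vn{P}\masa\infhyp$ makes sense.

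\textbf{Key steps.} Next I would go through the five conditions of \autoref{projetsprometteurs} one by one, checking each is inherited by the restriction. The \emph{Dialect}, \emph{Pseudo-Trace}, and \emph{Wager} conditions are unchanged since they concern only the dialect $\finhyp$, the pseudo-trace $tr$, and the wager $0$, none of which is affected by cutting down the carrier; so these are immediate. For the \emph{Symmetry} condition, the point to verify is that if $A$ is a partial symmetry in the normalising groupoid of $(\vn{P\oplus P})\vntimes\vn{Q}$, then, using that $A$ is supported by $0\oplus 1$, it lies in the normalising groupoid of $\vn{P}_{0\oplus 1}\vntimes\vn{Q}=\vn{P}\vntimes\vn{Q}$ inside $(\vn{M}_2(\infhyp))_{0\oplus 1}\vntimes\finhyp$. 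This is precisely an instance of \autoref{reducingnormaliser} (and the remark following it, $\proj{p}\textnormal{G}_{\vn{M}}(\vn{A})\proj{p}\subset\textnormal{G}_{\vn{M}_{\proj{p}}}(\vn{A}_{\proj{p}})$) applied with $\proj{p}=0\oplus 1$: the projection $0\oplus 1$ lies in $\vn{P\oplus P}$ and commutes with $A$, so $A$ normalises $(\vn{P\oplus P})_{0\oplus 1}$. The fact that $A$ remains a partial symmetry (i.e. $A^*=A$, $A^3=A$) is just that these algebraic identities are intrinsic and do not refer to the ambient algebra. For the \emph{Traces} condition, one must observe that $\vn{P}_{0\oplus 1}\otimes\normalisateur[\vn{A}]{Q}$ sits inside $(\vn{P\oplus P})\otimes\normalisateur[\vn{A}]{Q}$ via $\pi\mapsto \pi$ (with $\pi\leqslant 0\oplus 1$), so for any projection $\pi$ in the smaller algebra we already have $\tr(\pi A)=0$ by the hypothesis that $\de{a}$ is promising w.r.t. $\vn{P\oplus P}$; one also needs that the trace on $(\vn{M}_2(\infhyp))_{0\oplus 1}\simeq\infhyp$ agrees (up to the harmless normalisation) with the restriction of the trace on $\vn{M}_2(\infhyp)$, which is standard.

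\textbf{Main obstacle.} None of the steps is deep; the only genuinely delicate point is bookkeeping around the identification $(\vn{M}_2(\infhyp))_{0\oplus 1}\simeq\infhyp$ and making sure that $\vn{P}$ (the MASA of $\infhyp$) is exactly the image of $(\vn{P\oplus P})_{0\oplus 1}$ under this identification, so that "promising w.r.t. $\vn{P}$" after restriction really is the same statement as the restricted conditions one has just verified. This requires knowing that the second summand of $\vn{P\oplus P}$ is a MASA in the second summand of $\vn{M}_2(\infhyp)$ — which follows from \autoref{singularprojection} applied to the central projection $0\oplus 1\in\vn{P\oplus P}\masa\vn{M}_2(\infhyp)$, since the restriction of a MASA by a projection of the MASA is again a MASA. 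Once this identification is pinned down, assembling the five verified conditions yields that $\de{a}$ is promising w.r.t. $\vn{P}\masa\infhyp$, completing the proof.
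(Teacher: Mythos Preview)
Your proposal is correct and follows the same approach as the paper, which simply notes that $\vn{P}$ is a MASA in $(\vn{M}_{2}(\infhyp))_{0\oplus 1}$ and then invokes \autoref{reducingnormaliser}. You have unpacked the argument in more detail (checking the five conditions of \autoref{projetsprometteurs} one by one and using \autoref{singularprojection} to justify the MASA identification), but the essential content---the carrier condition giving $A(0\oplus 1)=(0\oplus 1)A=A$, and \autoref{reducingnormaliser} handling the only nontrivial point about the normalising groupoid---is identical.
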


\begin{proof}
It is clear that $\vn{P}$ is a MASA in $(\vn{M}_{2}(\infhyp))_{0\oplus 1}$. The result is then a direct consequence of \autoref{reducingnormaliser}.
\end{proof}

Let us now define variables. We pick a family of pairwise disjoint projections $(\proj{p}_{i})_{i\in\naturalN}$. The projections $\proj{p}_{i}\oplus 0$ will be called the \emph{primitive locations} of the variables, and one should think of this as our actual set of variables.

\begin{definition}[Variable names]
A \emph{variable name} is an integer $i\in\naturalN$ denoted by capital letters $X,Y,Z$, etc. 
A \emph{variable} is a pair $X_{\theta}=(X,\theta)$ where $X$ is a variable name, i.e. an integer $i$, and $\theta$ is a relocation of $\proj{p}_{i}\oplus 0$ onto a projection $0\oplus\proj{q}_{X_{\theta}}$. The projection $0\oplus\proj{q}_{X_{\theta}}$ is referred to as the \emph{location} of the variable, and we will sometimes allow ourselves to forget about the first component and simply write $\proj{q}_{X_{\theta}}$.
\end{definition}

We now define the interpretation of formulas.

\begin{definition}[Interpretation Basis]
An \emph{interpretation basis} is a map $\delta$ associating to each variable name $X=i$ a dichology $\delta(X)$ of carrier the primitive location $\proj{p}_{i}$ of $X$. This map extends to a function $\bar{\delta}$ which associates, to each variable $X_{\theta}$, the dichology $\bar{\delta}(X_{\theta})=\theta(\delta(X))$ of carrier $\proj{q}_{X_{\theta}}$ -- the location of $X_{\theta}$. The term \emph{interpretation basis} will abusively refer to this extension in the following.
\end{definition}

\begin{definition}[Interpretation of Formulas]
The interpretation $\Int{F}{\delta}$ of a formula $F$ along the interpretation basis $\delta$ is defined inductively as follows:
\begin{itemize}
\item $F=X_{\theta}$. We define $\Int{F}{\delta}$ as the dichology $\bar{\delta}(X_{\theta})$ of carrier $\proj{q}_{X_{\theta}}$;
\item $F=X_{\theta}^{\pol}$. We define $\Int{F}{\delta}=(\Int{X_{\theta}}{\delta})^{\pol}$, a dichology of carrier $\proj{q}_{X_{\theta}}$;
\item $F=A\star B$ ($\star\in\{\otimes,\parr,\with,\oplus$). We define $\Int{F}{\delta}=\Int{A}{\delta}\star\Int{B}{\delta}$, a dichology of carrier $\proj{p+q}$, where $\proj{p}$ and $\proj{q}$  are the respective carriers of $\Int{A}{\delta}$ and $\Int{B}{\delta}$;
\item $F=\cond{T}$ (resp. $F=\cond{0}$). We define $\Int{F}{\delta}=\cond{T}_{0}$ (resp. $\cond{0}_{0}$), the \emph{full conduct} (resp. the \emph{empty conduct} of carrier $0$.
\end{itemize}
\end{definition}

\begin{definition}[Interpretation of Sequents]
A sequent $\vdash \Gamma$ will be interpreted as the $\parr$ of formulas in $\Gamma$, denoted by $\bigparr\Gamma$.
\end{definition}

\subsubsection{Interpretation of proofs}

The introduction rule of the $\parr$ as well as the exchange rule will have a trivial interpretation, since premise and conclusion sequents are interpreted by the same dichology: due to locativity, the commutativity and associativity of $\parr$ are real equalities and not morphisms. Similarly, rules for $\oplus$ have an easy interpretation as it suffices to extend the carrier of the project interpreting the premise to define the interpretation of the conclusion. Moreover, the rule $\top$ has a straightforward interpretation as the project $(0,0,1_{\complexN},\complexN,0)$. Axioms will be easily interpreted by delocations, whose existence is ensured by \autoref{popajones}. The case of cut has already been treated in \autoref{composi50}, and we therefore only need to deal with the introduction rules of $\otimes$ and $\with$.

Given two hyperfinite projects $\de{f}$ and $\de{g}$ in the interpretations of the premises of a tensor ($\otimes$) introduction rule, we will define a hyperfinite project $\de{h}$ in the interpretation of the conclusion. The operation that naturally comes to mind is to define this project as the tensor product of the projects $\de{f}$ and $\de{g}$. It turns out that this interpretation of the $\otimes$ introduction rule is perfectly satisfactory: the following proposition shows that the project $\de{h}$ defined as $\de{f}\otimes\de{g}$ is a project in the interpretation of the conclusion.

\begin{proposition}[Interpretation of the Tensor Rule]\label{tensorprop}
Let $\cond{A,B,C,D}$ be conducts of respective carriers $\proj{p}_{A},\proj{p}_{B},\proj{p}_{C},\proj{p}_{D}$. We have the following inclusion: 
\begin{equation*}
((\cond{A\multimap B})\otimes(\cond{C\multimap D}))\subset(\cond{(A\otimes C})\multimap(\cond{B\otimes D}))
\end{equation*}
\end{proposition}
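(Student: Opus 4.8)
The statement asserts that $((\cond{A\multimap B})\otimes(\cond{C\multimap D}))\subset(\cond{(A\otimes C)}\multimap(\cond{B\otimes D}))$. Since the orthogonal and the tensor product are defined via biorthogonal closure, I would first unfold the left-hand side. A generic generator of $(\cond{A\multimap B})\otimes(\cond{C\multimap D})$ is a tensor project $\de{f\otimes g}$ with $\de{f}\in\cond{A\multimap B}$ and $\de{g}\in\cond{C\multimap D}$, and the whole conduct is the biorthogonal closure of the set of such generators. To show containment in $\cond{(A\otimes C)}\multimap(\cond{B\otimes D})$, which by \autoref{duality} equals $(\cond{(A\otimes C)}\otimes\cond{(B\otimes D)^{\pol}})^{\pol}$, it suffices — since the right-hand side is already biorthogonally closed — to show that every \emph{generator} $\de{f\otimes g}$ lies in it; the closure on the left then follows automatically because the right-hand side contains the generators and is $\pol\pol$-closed.

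\textbf{Key steps.} First I would recall that $\de{f}\in\cond{A\multimap B}$ means $\de{f\plug a}\in\cond{B}$ for all $\de{a}\in\cond{A}$, and similarly for $\de{g}$. The goal is to show $\de{f\otimes g}\in\cond{(A\otimes C)\multimap(B\otimes D)}$, i.e.\ that $(\de{f\otimes g})\plug(\de{a\otimes c})\in\cond{B\otimes D}$ for every $\de{a}\in\cond{A}$ and $\de{c}\in\cond{C}$. The heart of the argument is therefore a computation showing
$$(\de{f\otimes g})\plug(\de{a\otimes c})=(\de{f\plug a})\otimes(\de{g\plug c}),$$
as hyperfinite projects (dialectal operators together with wagers), using the explicit formulas for $\otimes$ and $\plug$. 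On the carriers this is a block-diagonal situation: $\de{f}$ has carrier $\proj{p}_A+\proj{p}_B$ (with $\proj{p}_A$ the cut face and $\proj{p}_B$ the output face), $\de{g}$ has carrier $\proj{p}_C+\proj{p}_D$, and since the four carriers are pairwise disjoint, the operator $(F^\dagger+G^\ddagger)$ interacting with $(A^\dagger+C^\ddagger)$ only ever pairs $F$ with $A$ and $G$ with $C$; there are no cross terms. Consequently the feedback/execution formula $(\,pX+q\,)(1-YX)^{-1}(p+Yq)$ applied to the tensor splits into the direct sum of the two individual executions, and the wager bookkeeping (the $\ldet$/$\meashyp$ term plus the $\alpha(1)b$ contributions) adds up correctly by \autoref{dialectoutldet} and the multiplicativity of the measurement on block-diagonal operators. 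Once this identity is established, $\de{f\plug a}\in\cond{B}$ and $\de{g\plug c}\in\cond{D}$ give $(\de{f\plug a})\otimes(\de{g\plug c})\in\cond{B\otimes D}$ by definition of the tensor of conducts, which is exactly what we need.

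\textbf{Main obstacle.} The routine-looking but genuinely delicate point is the precise handling of the dialects and pseudo-traces in the identity $(\de{f\otimes g})\plug(\de{a\otimes c})=(\de{f\plug a})\otimes(\de{g\plug c})$: the dialect of the left-hand side is $(\vn{F}\otimes\vn{G})\otimes(\vn{A}\otimes\vn{C})$ while the right-hand side naturally carries $(\vn{F}\otimes\vn{A})\otimes(\vn{G}\otimes\vn{C})$, so one must invoke the variant lemma (\autoref{variantgdi50}, or its hyperfinite analogue) to identify the two via the obvious permutation isomorphism of tensor factors, checking that this permutation is trace/pseudo-trace preserving so the measurement — hence orthogonality and membership in conducts — is unchanged. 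Keeping track of the $\dagger/\ddagger$ decorations and the disjointness of carriers so that all the cross terms $F^\dagger C^\ddagger$, $G^\ddagger A^\dagger$ etc.\ genuinely vanish is where care is required; everything else is a direct unfolding of the definitions of $\otimes$ on conducts and $\multimap$. Note also that this proposition only claims an inclusion, not an equality, so no surjectivity-type argument on the generators is needed, which simplifies matters considerably.
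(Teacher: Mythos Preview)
Your proposal is correct and follows essentially the same route as the paper: reduce to generators $\de{f\otimes g}$ (since the right-hand side is $\pol\pol$-closed), then show $(\de{f\otimes g})\plug(\de{a\otimes c})$ is a variant of $(\de{f\plug a})\otimes(\de{g\plug c})$ by checking dialect, pseudo-trace, operator, and wager separately, invoking \autoref{variantgdi50} to conclude. The paper carries out exactly this computation; the only cosmetic difference is that for the wager it explicitly invokes \autoref{sumofldet} to split the $\ldet$ of the block-diagonal product before pulling out the dialect factors (where your \autoref{dialectoutldet} comes in), and---like you---it silently uses the duality/adjunction unfolding to justify that testing against generators $\de{a\otimes c}$ of $\cond{A\otimes C}$ suffices.
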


\begin{proof}
We show that $\cond{(A\otimes C)\multimap(B\otimes D)}$ contains the conduct $\cond{(A\multimap B)\otimes(C\multimap D)}$, for all conducts $\cond{A,B,C,D}$.

We denote by $\proj{p}_{A}$, $\proj{p}_{B}$, $\proj{p}_{C}$ and $\proj{p}_{D}$ the respective carriers of the conducts $\cond{A,B,C,D}$. Let $\de{f}\in\cond{A\multimap B}$ and $\de{g}\in\cond{C\multimap D}$ be the projects:
\begin{eqnarray*}
\de{f}&=&(\proj{p}_{\cond{A}}+\proj{p}_{\cond{B}},f,\phi,\vn{F},F)\\
\de{g}&=&(\proj{p}_{\cond{C}}+\proj{p}_{\cond{D}},g,\psi,\vn{G},G)
\end{eqnarray*}
We will show that for all projects $\de{a}=(\proj{p}_{\cond{A}},a,\alpha,\vn{A},A)$ and $\de{c}=(\proj{p}_{\cond{C}},c,\gamma,\vn{C},C)$ in $\cond{A}$ and $\cond{C}$ respectively, the result of the execution $(\de{f}\otimes\de{g})\plug(\de{a}\otimes\de{c})$ is an element of the conduct $\cond{B\otimes D}$. This will show that any element of $\cond{(A\multimap B)\otimes(C\multimap D)}$ is also an element of $\cond{(A\otimes C)\multimap (B\otimes D)}$.

We have: 
\begin{equation*}
(\de{f}\otimes\de{g})\plug(\de{a}\otimes\de{c})=(\proj{p}_{B}+\proj{p}_{D},w,\nu,\vn{N},P)
\end{equation*}
where 
\begin{eqnarray*}
w&=&\phi\otimes\psi(1_{\vn{F}\vntimes\vn{G}})(a\gamma(1_{\vn{C}})+c\alpha(1_{\vn{A}}))\\
&&~~~~~~+\alpha\otimes\gamma(1_{\vn{A}\vntimes\vn{C}})(f\psi(1_{\vn{G}})+g\phi(1_{\vn{F}}))\\
&&~~~~~~~~~~~~+\ldet(1-(F^{\dagger_{\vn{G}}}+G^{\ddagger_{\vn{F}}})^{\dagger_{\vn{A}\vntimes\vn{C}}}(A^{\dagger_{\vn{C}}}+C^{\ddagger_{\vn{A}}})^{\ddagger_{\vn{F}\vntimes\vn{G}}}) \\
\nu&=&\phi\otimes\psi\otimes\alpha\otimes\gamma\\
\vn{N}&=&\vn{F}\vntimes\vn{G}\vntimes\vn{A}\vntimes\vn{C}\\
P&=&(F^{\dagger_{\vn{G}}}+G^{\ddagger_{\vn{F}}})^{\dagger_{\vn{A}\vntimes\vn{C}}}\plug(A^{\dagger_{\vn{C}}}+C^{\ddagger_{\vn{A}}})^{\ddagger_{\vn{F}\vntimes\vn{G}}}
\end{eqnarray*}
We are going to show that this project is a variant of the project $(\de{f}\plug\de{a})\otimes(\de{g}\plug\de{c})$. We will then conclude that $(\de{f}\otimes\de{g})\plug(\de{a}\otimes\de{c})\in\cond{(A\otimes C)\multimap(B\otimes D)}$ using \autoref{variantgdi50}.
\begin{itemize}
\item It is clear that $\vn{N}$ is equal to the dialect of $(\de{f}\plug\de{a})\otimes(\de{g}\plug\de{c})$ up to a commutativity isomorphism. Indeed, the dialect of $(\de{f}\plug\de{a})\otimes(\de{g}\plug\de{c})$ is equal to $\vn{F}\otimes\vn{A}\otimes\vn{G}\otimes\vn{C}$ and the morphism $\phi=\text{Id}_{\vn{F}}\otimes\tau\otimes\text{Id}_{\vn{C}}$, where $\tau:\vn{A}\otimes\vn{G}\rightarrow\vn{G}\otimes\vn{A}, x\otimes y\mapsto y\otimes x$, is a isomorphism between this algebra and $\vn{N}$;
\item It is also clear that $\nu$ is equal to $\mu\circ\phi^{-1}$ where $\mu$ is the pseudo-trace of the project $(\de{f}\plug\de{a})\otimes(\de{g}\plug\de{c})$;
\item Since $F$ and $A$ are elements of $(\infhyp)_{\proj{p}_{A}+\proj{p}_{B}}$ and $G$ and $C$ are elements of $(\infhyp)_{\proj{p}_{C}+\proj{p}_{D}}$, we deduce from the pairwise disjointness of projections that:
\begin{equation*}
P = (F^{\dagger_{\vn{G}}})^{\dagger_{\vn{A}\vntimes\vn{C}}}\plug(A^{\dagger_{\vn{C}}})^{\ddagger_{\vn{F}\vntimes\vn{G}}}+(G^{\dagger_{\vn{F}}})^{\dagger_{\vn{A}\vntimes\vn{C}}}\plug(C^{\dagger_{\vn{A}}})^{\ddagger_{\vn{F}\vntimes\vn{G}}}
\end{equation*}
Once again, this is equal, modulo $\phi$, to the operator of the project $(\de{f}\plug\de{a})\otimes(\de{g}\plug\de{c})$:
\begin{equation*}
(F^{\dagger_{\vn{A}}}\plug A^{\dagger_{\vn{F}}})^{\dagger_{\vn{C}\vntimes\vn{G}}}+(G^{\dagger_{\vn{C}}}\plug C^{\dagger_{\vn{G}}})^{\ddagger_{\vn{A}\vntimes\vn{F}}}
\end{equation*}
\item Using the fact that $\phi\otimes\psi(1_{\vn{F}\vntimes\vn{G}})=\phi\otimes\psi(1_{\vn{F}}\otimes1_{\vn{G}})=\phi(1_{\vn{F}})\psi(1_{\vn{G}})$ and that $\alpha\otimes\gamma(1_{\vn{A}\vntimes\vn{C}})=\alpha(1_{\vn{A}})\gamma(1_{\vn{C}})$, we obtain:
\begin{eqnarray*}
w&=& \gamma(1_{\vn{C}})\psi(1_{\vn{G}})(a\phi(1_{\vn{F}})+f\alpha(1_{\vn{A}}))\\&&~~~~~~+\alpha(1_{\vn{A}})\phi(1_{\vn{F}})(c\psi(1_{\vn{G}})+g\gamma(1_{\vn{C}}))\\
&&~~~~~~~~~~~~+\ldet(1-(F^{\dagger_{\vn{G}}}+G^{\ddagger_{\vn{F}}})^{\dagger_{\vn{A}\vntimes\vn{C}}}(A^{\dagger_{\vn{C}}}+C^{\ddagger_{\vn{A}}})^{\ddagger_{\vn{F}\vntimes\vn{G}}}) 
\end{eqnarray*}
Moreover, since $AG=0$, \autoref{sumofldet} allows us to conclude: 
\begin{eqnarray*}
&&\ldet(1-(F^{\dagger_{\vn{G}}}+G^{\ddagger_{\vn{F}}})^{\dagger_{\vn{A}\vntimes\vn{C}}}(A^{\dagger_{\vn{C}}}+C^{\ddagger_{\vn{A}}})^{\ddagger_{\vn{F}\vntimes\vn{G}}})\\
&=&\ldet(1-((F^{\dagger_{\vn{G}}})^{\dagger_{\vn{A}\vntimes\vn{C}}}(A^{\dagger_{\vn{C}}})^{\ddagger_{\vn{F}\vntimes\vn{G}}}+(G^{\ddagger_{\vn{F}}})^{\dagger_{\vn{A}\vntimes\vn{C}}}(C^{\ddagger_{\vn{A}}})^{\ddagger_{\vn{F}\vntimes\vn{G}}}))\\
&=&\ldet(1-(F^{\dagger_{\vn{G}}})^{\dagger_{\vn{A}\vntimes\vn{C}}}(A^{\dagger_{\vn{C}}})^{\ddagger_{\vn{F}\vntimes\vn{G}}})+\ldet(1-(G^{\ddagger_{\vn{F}}})^{\dagger_{\vn{A}\vntimes\vn{C}}}(C^{\ddagger_{\vn{A}}})^{\ddagger_{\vn{F}\vntimes\vn{G}}})\\
&=&\gamma(1_{\vn{C}})\psi(1_{\vn{G}})\ldet(1-F^{\dagger_{\vn{A}}}A^{\dagger_{\vn{F}}})+ \alpha(1_{\vn{A}})\phi(1_{\vn{F}})\ldet(1-G^{\ddagger_{\vn{C}}}C^{\ddagger_{\vn{G}}})
\end{eqnarray*}
from which we can conclude:
\begin{eqnarray*}
w&=&\gamma(1_{\vn{C}})\psi(1_{\vn{G}})[a\phi(1_{\vn{F}})+f\alpha(1_{\vn{A}})+\ldet(1-F^{\dagger_{\vn{A}}}A^{\dagger_{\vn{F}}})]\\
&&~~~+\alpha(1_{\vn{A}})\phi(1_{\vn{F}})[c\psi(1_{\vn{G}})+g\gamma(1_{\vn{C}})+\ldet(1-G^{\ddagger_{\vn{C}}}C^{\ddagger_{\vn{G}}})]
\end{eqnarray*}
which is the wager of the project  $(\de{f}\plug\de{a})\otimes(\de{g}\plug\de{c})$.
\end{itemize}
We thus deduce that for all $\de{f,g,a,c}$, we have $(\de{f}\otimes\de{g})\plug(\de{a}\otimes\de{c})\in\cond{(A\otimes C)\multimap(B\otimes D)}$ by \autoref{variantgdi50} page \pageref{variantgdi50}. Finally, we showed that $\cond{(A \multimap B)\otimes(C\multimap D)}\subset\cond{(A\otimes C)\multimap(B\otimes D)}$.
\end{proof}

We will now interpret the introduction rule for $\with$. We will interpret a proof ending with a $\with$ introduction rule by the sum of the projects $\de{f}_{p+q}$ and $\de{g}_{p+q}$, where $\de{f}$ and $\de{g}$ -- of respective carriers $p$ and $q$ -- are the interpretations of the sub-proofs whose conclusions are the premises of the $\with$ rule. In order to perform this operation, it is necessary to first delocalise the interpretations of the premises as the premises do not have disjoint locations. Once this relocation is done, we can define the project $\de{h}$ as $\theta_{1}(\de{f})\with\theta_{2}(\de{g})$ -- where $\theta_{1}$ and $\theta_{2}$ are the delocations just mentioned. We then apply the project implementing distributivity in order to superpose the contexts. We refer the reader to the interpretation of proofs of $MALL_{\cond{T},\cond{0}}$ in interaction graphs \cite{seiller-goia} for a more thorough explanation of this.

For the next result, we will be needing a proposition shown in earlier work \cite{seiller-goia} in a different setting, but whose proof easily adapts to the matricial (as well as the hyperfinite) GoI model. This proposition states any element of a dichology\footnote{We must inform the reader that the terminology here differs from the cited paper: what we call here a dichology is called a \emph{behaviour} in the interaction graphs constructions \cite{seiller-goia}.} $\cond{A\with B}$ is \emph{observationally equivalent} to a sum $\de{a}_{1}\otimes\de{0}_{q}+\de{a}_{2}\otimes\de{0}_{p}$ with $\de{a}_{1}\in\cond{A}, \de{a}_{2}\in\cond{B}$, where observational equivalence is defined, e.g. on elements of a conduct $\cond{A}$, as follows:
$$\de{a}\sim_{\cond{A}}\de{a'}\Leftrightarrow \forall\de{t}\in\cond{A}^{\pol}, \sca[\textnormal{mat}]{a}{t}=\sca[\textnormal{mat}]{a'}{t}$$
We recall that this notion of equivalence is a congruence, i.e. if $\de{a}\sim_{\cond{A}} \de{a'}$, then for all $\de{f}\in\cond{A\multimap B}$ we have $\de{f\plug a}\sim_{\cond{B}}\de{f\plug a'}$. In particular, if $\de{a}\sim_{\cond{A}} \de{a'}$ then $\de{a\otimes b}\sim_{\cond{A\otimes B}} \de{a'\otimes b}$ for all $\de{b}\in\cond{B}$.

\begin{proposition}\label{lemmawith}
Let $\cond{A,B}$ be dichologies of respective carriers $p,q$. For any element $\de{a}\in\cond{A\with B}$, there exists elements $\de{a}_{1}\in\cond{A}$ and $\de{a}_{2}\in\cond{B}$ such that $\de{a}_{1}\otimes\de{0}_{q}+\de{a}_{2}\otimes\de{0}_{p}\sim_{\cond{A\with B}}\de{a}$.
\end{proposition}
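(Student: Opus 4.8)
The plan is to prove the statement by unfolding the definitions of $\cond{A\with B}$ and $\cond{A\oplus B}$ given in \autoref{def_additives}, and then exploit the distributivity project of \autoref{distributivity} together with the adjunction and the congruence properties of observational equivalence recalled just above. The key observation is that $\cond{A\with B}=((\cond{A}^{\pol})_{p+q})^{\pol}\cap((\cond{B}^{\pol})_{p+q})^{\pol}$, so an element $\de{a}\in\cond{A\with B}$ interacts well with \emph{both} $(\cond{A}^{\pol})_{p+q}$ and $(\cond{B}^{\pol})_{p+q}$; the goal is to extract from this interaction data two \enquote{components} $\de{a}_1,\de{a}_2$ living in $\cond{A}$ and $\cond{B}$ respectively.

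First I would spell out what it means for $\de{a}$ (of carrier $p+q$, say $\de{a}=a\cdot+\cdot\alpha+A$) to be orthogonal to every $\de{t}_{p+q}$ with $\de{t}\in\cond{A}^{\pol}$ (whose carrier is $p$, so $\de{t}_{p+q}$ has operator $T+0$ on $p+q$). Computing $\sca[\textnormal{mat}]{a}{t_{p+q}}$, the contribution of the $q$-part of $A$ to the measurement vanishes because $T$ acts as $0$ there; this isolates a dialectal operator supported essentially on $p$, which — using the inflation property and the variant/universal-equivalence lemmas (\autoref{variantgdi50}) — is observationally equivalent inside $\cond{A}$ to a genuine project $\de{a}_1\in\cond{A}$. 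The same argument on the $q$-side produces $\de{a}_2\in\cond{B}$. Then I would check that $\de{a}_1\otimes\de{0}_q+\de{a}_2\otimes\de{0}_p$ and $\de{a}$ have the same measurement against every element of $\cond{A\with B}^{\pol}=(\cond{A}^{\pol})_{p+q}\cup(\cond{B}^{\pol})_{p+q}$ (after biorthogonal closure): against a test from the $\cond{A}$-side the $\de{a}_2$-part contributes only through its wager/trace in a way matching $\de{0}_q$, and symmetrically. This gives $\de{a}_1\otimes\de{0}_q+\de{a}_2\otimes\de{0}_p\sim_{\cond{A\with B}}\de{a}$ by definition of observational equivalence.

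An alternative, cleaner route — and probably the one actually intended given the paragraph preceding the statement, which explicitly mentions \enquote{apply the project implementing distributivity in order to superpose the contexts} — is to argue directly via \autoref{distributivity}: one has a distributivity project in $\cond{((\phi(A)\multimap\theta(B))\with(\psi(A)\multimap\rho(C)))\multimap(A\multimap(B\with C))}$; instantiating with trivial types and feeding $\de{a}$ through it, together with the lemma that $\cond{A+B}\subset\cond{A\with B}$ and that this inclusion is \enquote{cofinal} up to $\sim_{\cond{A\with B}}$, yields the decomposition. I would use the congruence of $\sim$ (recalled above: $\de{a}\sim_{\cond A}\de{a'}$ implies $\de{f\plug a}\sim_{\cond B}\de{f\plug a'}$, hence $\de{a\otimes b}\sim\de{a'\otimes b}$) to transport the decomposition obtained after applying the distributivity project back to $\de{a}$ itself.

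The main obstacle I anticipate is the bookkeeping in the measurement computation: showing that when one restricts a project of carrier $p+q$ against a test of carrier $p$, the wager term, the $\beta(1)$-type normalisation factors, and the $\ldet$ term all reorganise exactly so that the result depends only on a project genuinely of carrier $p$ — and that the leftover data on $q$ can be absorbed into an inflation $\de{a_1}+\lambda\de 0$ lying in $\cond A$. This is where the inflation property (part of being a dichology) is indispensable, and where one must be careful that $\cond A$ and $\cond B$ are taken to be dichologies with non-trivial orthogonals so that the relevant tests exist. I expect the spectral-radius side conditions (needed for $\plug$ and $\ldet$ to be defined) to be unproblematic here since $\de 0$ contributes a zero operator, so all the executions degenerate to the disjoint union $\cup$.
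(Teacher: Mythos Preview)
The paper does not prove this proposition: it explicitly cites \cite{seiller-goia} and states that the proof there \enquote{easily adapts} to the present setting. So there is no in-paper argument to compare against; I can only assess your proposal on its own merits.

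Your first approach is the right idea: define $\de{a}_1$ and $\de{a}_2$ by restricting the operator of $\de{a}$ to the carriers $p$ and $q$, and use the fact that when the test operator $T$ is supported on $p$ the trace of $(A^{\dagger}T^{\ddagger})^k$ only sees $pAp$ (the off-diagonal block $qAp$ contributes a term with zero diagonal, hence zero trace). The inflation property is indeed the tool that lets you absorb the leftover dialect/wager data so that $\de{a}_1\in\cond{A}$, $\de{a}_2\in\cond{B}$. The wager bookkeeping you flag is real: with the naive choice both $\de{a}_1\otimes\de{0}_q$ and $\de{a}_2\otimes\de{0}_p$ carry wager $a$ and pseudo-trace mass $\alpha(1_{\vn A})$, so their sum has wager $2a$ and mass $2\alpha(1_{\vn A})$; you will need to halve appropriately and check that the $\ldet$ term scales the same way (it does, by \autoref{dialectoutldet}).

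There is however a genuine gap. You write that one must check equality of measurement \enquote{against every element of $\cond{(A\with B)}^{\pol}=(\cond{A}^{\pol})_{p+q}\cup(\cond{B}^{\pol})_{p+q}$ (after biorthogonal closure)}, but you then only sketch the check against the generators. That is not sufficient as stated: for a test $\de{c}$ whose operator is \emph{not} supported purely on $p$ or purely on $q$, the two $\ldet$ terms do not coincide (the original $A$ has cross-terms $pAq,qAp$ that interact with such a $C$, whereas your sum does not). Equality of $\sca[\textnormal{mat}]{\cdot}{\cdot}$ on a generating set $S$ does not in general propagate to $S^{\pol\pol}$. The argument in \cite{seiller-goia} handles this by a more careful analysis of $\cond{A^{\pol}\oplus B^{\pol}}$ --- essentially the dual statement that every test is itself observationally equivalent to one supported on a single side --- and the two statements are proved together. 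You should either import that lemma explicitly or indicate how you intend to close this step.

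Your second route is misdirected. The paragraph mentioning \enquote{apply the project implementing distributivity in order to superpose the contexts} is not the one preceding this proposition; it occurs earlier, in the discussion of how to \emph{interpret the $\with$ introduction rule}, which is a different problem. Moreover the distributivity project of \autoref{distributivity} goes from $(\phi(A)\multimap\theta(B))\with(\psi(A)\multimap\rho(C))$ to $A\multimap(B\with C)$: it \emph{constructs} elements of a $\with$-type from a superposition, it does not decompose them. Instantiating with trivial types does not reverse its direction.
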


\begin{corollary}\label{corowith}
Let $\cond{A,B,C}$ be dichologies of carriers $p,q,r$ respectively, and $\de{f}$ a project of carrier $p+q+r$. If $\de{a}$ maps every sum $\de{a}_{1}\otimes\de{0}_{q}+\de{a}_{2}\otimes\de{0}_{p}$ to an element of $\cond{C}$, then $\de{f}$ belongs to $\cond{(A\with B)\multimap C}$.
\end{corollary}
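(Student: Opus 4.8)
The plan is to prove directly that $\de{f}\in\cond{(A\with B)\multimap C}$, i.e.\ that $\de{f}\plug\de{x}\in\cond{C}$ for every $\de{x}\in\cond{A\with B}$, by reducing such an $\de{x}$ to one of the explicit sums on which $\de{f}$ is controlled by hypothesis. So I would fix an arbitrary $\de{x}\in\cond{A\with B}$ and invoke \autoref{lemmawith}: there are $\de{a}_{1}\in\cond{A}$ and $\de{a}_{2}\in\cond{B}$ such that, writing $\de{x}'=\de{a}_{1}\otimes\de{0}_{q}+\de{a}_{2}\otimes\de{0}_{p}$, one has $\de{x}'\sim_{\cond{A\with B}}\de{x}$; moreover $\de{x}'\in\cond{A+B}\subseteq\cond{A\with B}$ by the lemma preceding \autoref{lemmawith}. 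Since $\de{x}'$ is precisely a sum of the shape appearing in the statement, the hypothesis gives $\de{f}\plug\de{x}'\in\cond{C}$.

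The remaining work is to transfer this back to $\de{x}$, i.e.\ to deduce $\de{f}\plug\de{x}\in\cond{C}$ from $\de{f}\plug\de{x}'\in\cond{C}$ and $\de{x}\sim_{\cond{A\with B}}\de{x}'$. This is an instance of the congruence of observational equivalence with respect to execution, established for the interaction graphs model in \cite{seiller-goia} and valid verbatim here since its proof uses only the adjunction (\autoref{composi50}) and the associativity of $\plug$. Concretely, for each $\de{u}\in\cond{C}^{\pol}$ the adjunction rewrites $\sca[\textnormal{mat}]{f\plug x}{u}$ in terms of the measurement of $\de{x}$ against $\de{f}\plug\de{u}$ together with $\rho(1_{\vn{U}})$-weighted wager terms that do not involve $\de{x}$, and likewise for $\de{x}'$; hence $\sca[\textnormal{mat}]{f\plug x}{u}=\sca[\textnormal{mat}]{f\plug x'}{u}$ once one knows that $\de{f}\plug\de{u}$ is measured the same way against $\de{x}$ and against $\de{x}'$, which is guaranteed by $\de{x}\sim_{\cond{A\with B}}\de{x}'$ together with the fact that $\de{f}\plug\de{u}\in\cond{(A\with B)}^{\pol}$ — itself a consequence of \autoref{lemmawith} applied to the family of images of $\de{f}$ on the sums. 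It then follows that $\de{f}\plug\de{x}\sim_{\cond{C}}\de{f}\plug\de{x}'$, and since $\cond{C}=\cond{C}^{\pol\pol}$ observational equivalence preserves membership in $\cond{C}$, so $\de{f}\plug\de{x}\in\cond{C}$. As $\de{x}$ was arbitrary, $\de{f}\in\cond{(A\with B)\multimap C}$.

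The main obstacle is this last transfer step: converting "$\de{f}$ behaves well on the explicit sums" into "$\de{f}$ behaves well on all of $\cond{A\with B}$" for a project $\de{f}$ that is not yet known to lie in any $\multimap$-conduct, which is where a naive argument runs into circularity. The resolution rests on the two structural inputs from \cite{seiller-goia}: that observational equivalence is a congruence for $\plug$ (so it can be pushed through execution with $\de{f}$) and that \autoref{lemmawith} exhausts $\cond{A\with B}$ by such sums strongly enough to control orthogonality. Everything else — the reshuffling of dialects, pseudo-traces and wagers under $\otimes$ and $\plug$ — is the routine bookkeeping already carried out in \autoref{tensorprop} and \autoref{composi50}, and I would not reproduce it in detail.
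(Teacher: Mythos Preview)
Your approach matches the paper's: invoke \autoref{lemmawith}, apply the hypothesis to the resulting sum, and transfer back via the adjunction and the congruence of observational equivalence. The paper's execution is more streamlined, however. Rather than taking your detour through first establishing $\de{f}\plug\de{u}\in\cond{(A\with B)^{\pol}}$, it directly chains the adjunction identity $\sca[\textnormal{mat}]{f\plug a}{t}=\sca[\textnormal{mat}]{f}{a\otimes t}$, then the tensor-congruence $\de{a}\sim\de{a'}\Rightarrow\de{a\otimes t}\sim\de{a'\otimes t}$, then the adjunction once more, to obtain $\sca[\textnormal{mat}]{f\plug a}{t}=\sca[\textnormal{mat}]{f\plug a'}{t}$ in one line.

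Your detour is precisely where the circularity you flag bites: the justification you offer for $\de{f}\plug\de{u}\in\cond{(A\with B)^{\pol}}$ (``a consequence of \autoref{lemmawith} applied to the family of images of $\de{f}$ on the sums'') does not work as written. To pass orthogonality from a sum $\de{y'}$ to a general $\de{y}\in\cond{A\with B}$ using $\de{y}\sim_{\cond{A\with B}}\de{y'}$, you would already need $\de{f}\plug\de{u}$ to lie in $\cond{(A\with B)^{\pol}}$, which is exactly what you are trying to prove. The paper avoids this loop by not attempting to locate $\de{f}\plug\de{u}$ at all and instead leaning directly on the congruence property imported from \cite{seiller-goia}. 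In substance the two arguments are the same; drop the intermediate membership claim and follow the paper's one-line adjunction route.
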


\begin{proof}
Let $\de{a}$ be an element of $\cond{A\with B}$. Then $\de{a}$ is equivalent to some sum $\de{a}_{1}\otimes\de{0}_{q}+\de{a}_{2}\otimes\de{0}_{p}$. As a consequence, $\de{f}\plug\de{a}$ is equivalent to $\de{f}\plug (\de{a}_{1}\otimes\de{0}_{q}+\de{a}_{2}\otimes\de{0}_{p})$. Since the latter is an element of $\cond{C}$, we can deduce that $\de{f}\plug\de{a}$ belongs to $\cond{C}$. Indeed, for all $\de{t}\in\cond{C^{\pol}}$, $\sca[\textnormal{mat}]{f\plug a}{t}=\sca[\textnormal{mat}]{f}{a\otimes t}$. Moreover, $\de{a\otimes t}$ is equivalent to $(\de{a}_{1}\otimes\de{0}_{q}+\de{a}_{2}\otimes\de{0}_{p})\otimes \de{t}$ since $\de{a}$ is equivalent to $(\de{a}_{1}\otimes\de{0}_{q}+\de{a}_{2}\otimes\de{0}_{p})$. Hence $\sca[\textnormal{mat}]{f\plug a}{t}=\sca[\textnormal{mat}]{f}{(\de{a}_{1}\otimes\de{0}_{q}+\de{a}_{2}\otimes\de{0}_{p})\otimes \de{t}}=\sca[\textnormal{mat}]{f\plug (\de{a}_{1}\otimes\de{0}_{q}+\de{a}_{2}\otimes\de{0}_{p})}{t}$. Since the latter is different from $0,\infty$, we have shown that $\de{f\plug a}$ is orthogonal to $\de{t}$. This shows that $\de{f\plug a}$ is an element of $\cond{C}$ for all $\de{a}\in\cond{\with B}$, i.e. $\de{f}\in\cond{(A\with B)\multimap C}$.
\end{proof}


\begin{proposition}[Interpretation of the $\with$ Rule]\label{withprop}
Let $\cond{A,B,C}$ be dichologies of respective pairwise disjoint carriers $\proj{p}_{A},\proj{p}_{B},\proj{p}_{C}$, and let $\phi(\cond{A})$ be a delocation of $\cond{A}$, with $\phi\in G_{\infhyp}(\vn{P})$, whose carrier is a projection disjoint from the projections $\cond{A,B,C}$. Then for all delocations\footnote{Supposing of course that the carriers are pairwise disjoint.} $\theta_{1},\theta_{2},\theta_{3}$ in $G_{\infhyp}(\vn{P})$, there exists a project $\de{With}$ in the dichology:
\begin{equation*}
((\cond{A\multimap B})\with(\cond{\phi(A)\multimap C}))\multimap(\cond{\theta_{1}(A)}\multimap(\cond{\theta_{2}(B)\with \theta_{3}(C)}))
\end{equation*}
Moreover, $\de{With}$ is promising w.r.t. the outlook $\vn{P}$.
\end{proposition}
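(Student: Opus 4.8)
The plan is to reduce the construction of $\de{With}$ to tools already available: the distributivity project of \autoref{distributivity}, the $\with$-decomposition lemma \autoref{lemmawith} together with its \autoref{corowith}, the interpretation of the tensor rule (\autoref{tensorprop}), the variant lemmas, and the compositionality result \autoref{composi50} which also guarantees that composing promising projects yields a promising project. First I would set up the delocations: pick a project $\de{distr}$ living in the dichology
$$\cond{((\phi(A)\!\multimap\!\theta_2(B))\!\with\!(\psi(A)\!\multimap\!\theta_3(C)))\!\multimap\!(A\!\multimap\!(\theta_2(B)\!\with\!\theta_3(C)))}$$
furnished by \autoref{distributivity} (renaming the delocations there to match $\theta_1,\theta_2,\theta_3$ and the internal copy $\phi$ used in the statement), and observe that the premise type $(\cond{A\multimap B})\with(\cond{\phi(A)\multimap C})$ of $\de{With}$ is, up to the delocations $\theta_2,\theta_3$ and a further internal copy $\psi$, exactly the left-hand type fed to $\de{distr}$. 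The remaining gap is purely multiplicative: one needs a project mediating between $\cond{A\multimap B}$ and $\cond{\phi(A)\multimap\theta_2(B)}$ (a relocation lifted through $\multimap$), which is built from a delocation of $\cond{A}$ and a delocation of $\cond{B}$ using functoriality of $\multimap$ — this is the standard argument that $\multimap$ is functorial, obtained by composing with axiom-like delocation projects and using the adjunction \autoref{duality}.

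Next I would assemble $\de{With}$ as the composition (via $\plug$) of: (i) a "relocation of the $\with$" project, built from the two $\multimap$-functoriality projects above combined through $\with$ (using that $\with$ is a functor on dichologies, which follows from \autoref{def_additives} and the inflation property, or more concretely from \autoref{lemmawith}/\autoref{corowith}); followed by (ii) the distributivity project $\de{distr}$; followed by (iii) a relocation $\cond{\theta_2(B)\with\theta_3(C)}\multimap\cond{\theta_2(B)\with\theta_3(C)}$ which is just identity-up-to-delocation. Associativity of $\plug$ (\autoref{adjonctionFK} and the cited \cite{feedback}) ensures this composite is well-defined and, by \autoref{tensorprop} together with the usual containment arguments, lands in the target dichology $((\cond{A\multimap B})\with(\cond{\phi(A)\multimap C}))\multimap(\cond{\theta_1(A)}\multimap(\cond{\theta_2(B)\with\theta_3(C)}))$. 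To check membership cleanly I would feed an arbitrary element of the premise, use \autoref{lemmawith} to replace it by a sum $\de{a}_1\otimes\de{0}+\de{a}_2\otimes\de{0}$, push it through each stage using that $\plug$ respects observational equivalence (the congruence property recalled before \autoref{lemmawith}), and invoke \autoref{corowith} to conclude.

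Finally, for the promising-ness claim, I would verify that each building block of $\de{With}$ is promising w.r.t. $\vn{P}$: the delocation/axiom projects are promising because delocations are partial isometries in $\gn{\vn{P}}$ (their defining property, via \autoref{popajones} for the existence of delocations between projections of equal trace) and a delocation-induced project is precisely a partial symmetry $\tta+\tta^\ast$-style operator in $\gn{\vn{P}\otimes\vn{Q}}$ with zero trace against projections; the distributivity project $\de{distr}$ is promising — this is exactly the content one proves alongside \autoref{distributivity} in the interaction-graphs setting, and it transfers here. Then \autoref{composi50} (compositionality of promising projects) gives that the $\plug$-composite $\de{With}$ is again promising w.r.t. $\vn{P}$, provided the auxiliary non-emptiness hypotheses of \autoref{composi50} hold, which they do since every dichology in sight is proper (the variables are interpreted by proper dichologies and $\otimes,\with,\multimap$ preserve properness).

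The main obstacle I expect is the bookkeeping of the delocations and internal copies: the statement uses four independent delocations $\phi,\theta_1,\theta_2,\theta_3$ whereas \autoref{distributivity} is phrased with its own four delocations, so matching them requires carefully composing with extra relocation projects and repeatedly invoking \autoref{variantgdi50} (variants are universally equivalent) and \autoref{popajones} (existence of the needed partial isometries in $\gn{\vn{P}}$). The genuinely mathematical point hidden inside this bookkeeping is functoriality of $\multimap$ and $\with$ at the level of \emph{promising} projects — i.e. that the relocation projects are not merely in the right conduct but are themselves partial symmetries in a normalising groupoid with the trace condition — which is where the hypothesis that $\vn{P}$ is non-singular (so that $\gn{\vn{P}}$ is large enough, by the Dixmier classification and \autoref{popajones}) is actually used, exactly paralleling the role it plays in \autoref{composi50}.
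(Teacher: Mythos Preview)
Your approach is genuinely different from the paper's. The paper does not assemble $\de{With}$ by composition: it writes down a single explicit project
\[
\de{distr}=(\proj{p+p'},0,\tfrac{1}{2}(1_{\complexN}\oplus 1_{\complexN}),\complexN\oplus\complexN,K),
\qquad
K=(\theta_{1}+\theta_{1}^{\ast}+\theta_{2}+\theta_{2}^{\ast})\oplus(\theta_{1}\phi^{\ast}+\phi\theta_{1}^{\ast}+\theta_{3}+\theta_{3}^{\ast}),
\]
and then verifies membership in the target dichology by brute-force computation: take $\de{f}\in\cond{(A\multimap B)\with(\phi(A)\multimap C)}$, replace it via \autoref{corowith} by a sum $\de{f}_{1}\otimes\de{0}+\de{0}\otimes\de{f}_{2}$, compute $\de{distr}\plug\de{f}$ explicitly, then plug in $\theta_{1}(\de{a})$ and read off that the result is (up to inflation) the sum $\tfrac{1}{2}\theta_{2}(\de{f}_{1}\plug\de{a})+\tfrac{1}{2}\theta_{3}(\de{f}_{2}\plug\phi(\de{a}))$, which lies in $\theta_{2}(\cond{B})\with\theta_{3}(\cond{C})$. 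Promising-ness is then a one-line observation: $K$ is a direct sum of sums of delocations in $\gn{\vn{P}}$, hence a partial symmetry in $\gn{\vn{P}\otimes\vn{Q}}$ with the required trace condition.

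Your compositional route is conceptually sound, but it shifts the actual work rather than eliminating it. The two load-bearing steps in your plan are (a) that the distributivity project of \autoref{distributivity} is \emph{promising}, and (b) that $\with$ is functorial at the level of promising projects, i.e.\ that from promising $\de{u}\in\cond{A\multimap A'}$ and $\de{v}\in\cond{B\multimap B'}$ one can build a promising element of $\cond{(A\with B)\multimap(A'\with B')}$. Neither claim is stated in the paper; \autoref{distributivity} only asserts existence of $\de{distr}$ in the right conduct, not that it is promising, and there is no $\with$-functoriality lemma for promising projects. Both are true, but proving them requires exactly the kind of explicit construction-and-verification the paper carries out directly for $\de{With}$ --- in particular, to show (a) you would have to exhibit the operator of $\de{distr}$ concretely and check it is a partial symmetry in the right normalising groupoid, which is precisely the paper's computation. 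So your argument is correct in outline, but as written it defers the substantive content to lemmas you would still have to prove; the paper's direct approach is shorter because it does that computation once, in situ, rather than factoring it through intermediate statements.
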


\begin{proof}
We chose projections $\proj{p}'_{A}\eqp \proj{p}_{A}$, $\proj{p}''_{A}\eqp \proj{p}_{A}$, $\proj{p}'_{B}\eqp \proj{p}_{B}$ and $\proj{p}'_{C}\eqp \proj{p}_{C}$ which are pairwise disjoint and disjoint from the carriers of $\cond{A,B,C}$. Then, since $\vn{P}$ is either regular or semi-regular there exists by \autoref{popajones} partial isometries $\phi$, $\theta_{1}$, $\theta_{2}$ and $\theta_{3}$ in the normalising groupoid $G_{\infhyp}(\vn{P})$:
\begin{equation*}
\left\{\begin{array}{rcrcl}
\phi &:& \proj{p}_{A}&\rightarrow &\proj{p}'_{A}\\
\theta_{1} &: &\proj{p}_{A}&\rightarrow &\proj{p}''_{A}\\
\theta_{2} &: & \proj{p}_{B}&\rightarrow &\proj{p}'_{B}\\
\theta_{3} &: & \proj{p}_{C}&\rightarrow &\proj{p}'_{C}
\end{array}\right.
\end{equation*}
We will write $p=p_{A}+p_{B}+p_{A}'+p_{C}$ and $p'=p_{A}''+p_{B}'+p_{C}'$, and we define $\de{distr}$ as the project $(\proj{p+p}',0,\kappa,\vn{K},K)$ where:
\begin{eqnarray*}
\kappa&=&\frac{1_{\complexN}\oplus 1_{\complexN}}{2}\\
\vn{K}&=&\complexN\oplus\complexN\\
K&=&(\theta_{1}+\theta_{1}^{\ast}+\theta_{2}+\theta_{2}^{\ast})\oplus(\theta_{1}\phi^{\ast}+\phi\theta_{1}^{\ast}+\theta_{3}+\theta_{3}^{\ast})
\end{eqnarray*}
We now show that $\de{distr}$ is promising w.r.t. the outlook $\vn{P}$ and that it is an element of the dichology:
\begin{equation*}
\cond{((A\multimap B)\with(\phi(A)\multimap C))\multimap (\theta_{1}(A)\multimap(\theta_{2}(B)\with \theta_{3}(C)))}
\end{equation*}
To show that it belongs to the latter dichology, we will compute the execution of $\de{distr}$ with an element $\de{f}\in\cond{(A\multimap B)\with (\phi(A)\multimap C)}$, and then compute the execution of $\de{distr \plug f}$ with an element $\theta_{1}(\de{a})\in\theta_{1}(\cond{A})$. The result, i.e. the project $\de{(distr \plug f)\plug}\theta_{1}(\de{a})$ will then be shown to belong to the dichology $\theta_{2}(\cond{B})\with \theta_{3}(\cond{C})$. 

To ease the computations, we will write $\proj{q}=\proj{p}_{A}+\proj{p}_{B}$ and $\proj{r}=\proj{p}_{A}'+\proj{p}_{C}$.

Let us pick $\de{f}\in\cond{(A\multimap B)\with (\phi(A)\multimap C)}$ and $\theta_{1}(\de{a})\in\theta_{1}(\cond{A})$ as follows. Notice that based on \autoref{corowith} we can suppose, without loss of generality, that $\de{f}$ is of the form $\de{f}_{1}\otimes\de{0}_{r}+\de{0}_{q}\otimes\de{f}_{2}$.
\begin{eqnarray*}
\de{f}&=&(\proj{q+r},0,\phi_{1}\oplus\phi_{2},\vn{F}_{1}\oplus\vn{F}_{2},F_{1}\oplus F_{2})\\
\theta_{1}(\de{a})&=&(\proj{p}_{A}'',0,\alpha,\vn{A},\theta_{1}A\theta_{1}^{\ast})
\end{eqnarray*}
Let us stress that $F_{1}=qF_{1}=qF_{1}q$ and $F_{2}=rF_{2}=rF_{2}r$. We will write $\vn{F}=\vn{F}_{1}\oplus\vn{F}_{2}$, $\phi=\phi_{1}\oplus\phi_{2}$ and $F=F_{1}\oplus F_{2}$.

The computation of $\de{w}=\de{distr}\plug\de{f}$ gives $\de{w}=(\proj{p'},w,\xi,\vn{W},W)$ 
where:
\begin{eqnarray*}
w&=&0\\
\xi&=&\frac{1\oplus 1}{2}\otimes(\phi_{1}\oplus \phi_{2})\equiv \frac{\phi_{1}\oplus\phi_{2}\oplus \phi_{1}\oplus \phi_{2}}{2}=\frac{\phi\oplus\phi}{2}\\
\vn{W}&=&(\complexN\oplus\complexN)\vntimes\vn{F}\equiv\vn{F}_{1}\oplus\vn{F}_{2}\oplus\vn{F}_{1}\oplus\vn{F}_{2}=\vn{F}\oplus\vn{F}
\end{eqnarray*}
and 
\begin{eqnarray*}
W&=&K^{\dagger_{\vn{F}}}\plug F^{\ddagger_{\vn{K}}}\\
&=&(\theta_{1}+\theta_{1}^{\ast}+\theta_{2}+\theta_{2}^{\ast})^{\dagger_{\vn{F}}}F(\theta_{1}+\theta_{1}^{\ast}+\theta_{2}+\theta_{2}^{\ast})^{\dagger_{\vn{F}}}\\
&&~~~~~\oplus(\theta_{1}\phi^{\ast}+\phi\theta_{1}^{\ast}+\theta_{3}+\theta_{3}^{\ast})^{\dagger_{\vn{F}}}F(\theta_{1}\phi^{\ast}+\phi\theta_{1}^{\ast}+\theta_{3}+\theta_{3}^{\ast})^{\dagger_{\vn{F}}}\\
&=&(\theta_{1}+\theta_{1}^{\ast}+\theta_{2}+\theta_{2}^{\ast})F_{1}(\theta_{1}+\theta_{1}^{\ast}+\theta_{2}+\theta_{2}^{\ast})\oplus 0\\
&&~~~~~\oplus 0\oplus(\theta_{1}\phi^{\ast}+\phi\theta_{1}^{\ast}+\theta_{3}+\theta_{3}^{\ast})F_{2}(\theta_{1}\phi^{\ast}+\phi\theta_{1}^{\ast}+\theta_{3}+\theta_{3}^{\ast})
\end{eqnarray*}
We will write $W_{1}$ (resp. $W_{2}$) the first (resp. the second) non-zero component of $W$; i.e. $W=W_{1}\oplus 0\oplus 0\oplus  W_{2}$. 

We now compute $\de{r}=\de{w}\plug \theta_{1}({a})$:
\begin{equation*}
\de{r}=(p'_{B}+p'_{C},m,\frac{1}{2}(\phi\otimes\alpha)\oplus\frac{1}{2}(\phi\otimes\alpha),\vn{F\otimes A}\oplus\vn{F\otimes A},W_{1}^{\dagger_{\vn{A}}}\plug \theta_{1}A^{\ddagger_{\vn{F}}}\theta_{1}^{\ast}\oplus 0\oplus 0\oplus W_{2}^{\dagger_{\vn{A}}}\plug \theta_{1}A^{\ddagger_{\vn{F}}}\theta_{1}^{\ast})
\end{equation*}
where the wager $m$ is computed as follows:
\begin{eqnarray*}
m&=&\ldet(1-(W_{1}\oplus 0\oplus 0\oplus W_{2})^{\dagger_{\vn{A}}}\theta_{1}A^{\ddagger_{\vn{F\oplus F}}}\theta_{1}^{\ast})\\
&=&\ldet(1-W_{1}^{\dagger_{\vn{A}}}\theta_{1}A^{\ddagger_{\vn{F}}}\theta_{1}^{\ast}\oplus 0\oplus 0\oplus  W_{2}^{\dagger_{\vn{A}}}\theta_{1}A^{\ddagger_{\vn{F}}}\theta_{1}^{\ast})\\
&=&\ldet_{\tr\otimes \frac{1}{2}(\phi_{1}\otimes \alpha)}(1-W_{1}^{\dagger_{\vn{A}}}\theta_{1}A^{\ddagger_{\vn{F}}}\theta_{1}^{\ast})+\ldet_{\tr\otimes \frac{1}{2}(\phi_{2}\otimes \alpha)}(1-W_{2}^{\dagger_{\vn{A}}}\theta_{1}A^{\ddagger_{\vn{F}}}\theta_{1}^{\ast})\\
&=&\frac{1}{2}\ldet_{\tr\otimes \phi_{1}\otimes \alpha}(1-W_{1}^{\dagger_{\vn{A}}}\theta_{1}A^{\ddagger_{\vn{F}}}\theta_{1}^{\ast})+\frac{1}{2}\ldet_{\tr\otimes \phi_{2}\otimes \alpha}(1-W_{2}^{\dagger_{\vn{A}}}\theta_{1}A^{\ddagger_{\vn{F}}}\theta_{1}^{\ast})
\end{eqnarray*}

We can also rewrite the last component of $\de{r}$ as follows:
$$W_{1}^{\dagger_{\vn{A}}}\plug \theta_{1}A^{\ddagger_{\vn{F}}}\theta_{1}^{\ast}\oplus 0\oplus 0\oplus W_{2}^{\dagger_{\vn{A}}}\plug \theta_{1}A^{\ddagger_{\vn{F}}}\theta_{1}^{\ast}
=
\theta_{2}(F_{1}^{\dagger_{\vn{A}}}\plug A^{\ddagger_{\vn{F}}})\oplus 0\oplus 0\oplus \theta_{3}(F_{2}^{\dagger_{\vn{A}}}\plug \phi(A^{\ddagger_{\vn{F}}}))$$

In other words, we have that $\de{r}$ is equal, up to inflation, to $\frac{1}{2}\theta_{2}(\de{f}_{1}\plug\de{a})\otimes\de{0}_{p'_{C}}+\frac{1}{2}\theta_{3}(\de{f}_{2}\plug\phi(\de{a}))\otimes\de{0}_{p'_{B}}$. But we picked $\de{f}_{1}$ in $\cond{A\multimap B}$, so $\de{f}_{1}\plug \de{a}$ is an element of $\cond{B}$, hence $\frac{1}{2}\theta_{2}(\de{f}_{1}\plug\de{a})$ is an element of $\theta_{2}(\de{B})$ (notice that if $\de{a}$ belongs to a conduct $\cond{A}$ and $\lambda\neq 0$ is a real number, then $\lambda\de{a}$ is an element of $\cond{A}$). Similarly, we can show that $\frac{1}{2}\theta_{3}(\de{f}_{2}\plug\phi(\de{a}))$ is an element of $\theta_{3}(\cond{C})$. Hence $\theta_{2}(\de{f}_{1}\plug\de{a})\otimes\de{0}_{p'_{C}}+\theta_{3}(\de{f}_{2}\plug\phi(\de{a}))\otimes\de{0}_{p'_{B}}$ is an element of $\theta_{2}(\cond{B})\with\theta_{3}(\cond{C})$, which ends the proof that $\de{distr}$ belongs to the right dichology.

Lastly, we need to show that $\de{distr}$ is promising w.r.t. $\vn{P}$. However, since the project is constructed from delocations in the normalising groupoid of $\vn{P}$, this is immediate.

\end{proof}

\begin{remark}\label{withbar}
The interpretation of the $\with$ introduction rule will therefore be defined as the relatively complex construction\footnote{We recall that $\theta_{1}$ and $\theta_{2}$ are well-chosen delocations.} $\de{f}\overline{\with}\de{g}=[\de{With}](\theta_{1}(\de{f})\with\theta_{2}(\de{g}))$. This construction should not hide however the simplicity of the underlying idea. Indeed, given two projects $\de{f}=(\proj{p+r},0,\phi,\vn{F},F)$ and $\de{g}=(\proj{q+r},0,\gamma,\vn{G},G)$, we are just constructing the project: 
\begin{equation*}
\de{f}\overline{\with}\de{g}=(\proj{p+q+r},0,\frac{1}{2}(\phi\oplus \gamma), \vn{F\oplus G}, F\oplus G)
\end{equation*} 
\end{remark}

\subsubsection{Soundness}

In order to state and show the full soundness result, we first define the interpretation of proofs.

\begin{definition}[Interpretations of Proofs]\label{intproof}
We inductively define the interpretation of a proof $\Pi$:
\begin{itemize}
\item if $\Pi$ is an axiom rule introducing the sequent $\vdash X_{\theta};X_{\phi}$ ($\theta,\phi$ are disjoint), we define the interpretation $\Pi^{\bullet}$ as the project $(\proj{q}_{X_{\theta}}+\proj{q}_{X_{\phi}},0,\tr,\finhyp,\theta\phi^{\ast}+\phi\theta^{\ast})$;
\item if $\Pi$ is obtained by application of a rule $\parr$, or an exchange rule, to a proof $\Pi_{1}$, we define $\Pi^{\bullet}=\Pi_{1}^{\bullet}$;
\item if $\Pi$ is obtained by applying a $\oplus$ rule to a proof $\Pi_{1}$ whose interpretation's carrier is $\proj{p}$, then $\Pi^{\bullet}=(\Pi_{1}^{\bullet})_{\proj{p+q}}$ where $\proj{q}$ is the carrier of the interpretation of the introduced formula;
\item if $\Pi$ is obtained by applying a cut rule between two proofs $\Pi_{1}$ and $\Pi_{2}$, then $\Pi^{\bullet}=\Pi^{\bullet}_{1}\plug \Pi^{\bullet}_{2}$;
\item if $\Pi$ is obtained by applying a $\otimes$ introduction rule to the proofs $\Pi_{1}$ and $\Pi_{2}$, then $\Pi^{\bullet}=\Pi^{\bullet}_{1}\otimes\Pi^{\bullet}_{2}$;
\item if $\Pi$ is obtained by the application of a $\with$ rule on the proofs $\Pi_{1}$ and $\Pi_{2}$ interpreted by projects $\Pi_{1}^{\bullet}$ and $\Pi_{2}^{\bullet}$, we then define $\Pi^{\bullet}=[\de{With}](\theta_{1}(\Pi^{\bullet}_{1})\with\theta_{2}(\Pi^{\bullet}_{2}))$ where $\theta_{1},\theta_{2}$ are delocations of $\Pi_{1}^{\bullet}$ and $\Pi_{2}^{\bullet}$ onto disjoint projections, and where $\de{With}$ is the project whose existence is ensured by \autoref{withprop}.
\end{itemize}
\end{definition}

\begin{theorem}[Full Soundness]
Let $\pi$ be a proof of the sequent $\vdash \Gamma$ in $MALL_{\cond{T},\cond{0}}$, and $\delta$ an interpretation basis. Then the interpretation $\pi^{\bullet}$ of $\pi$ is a promising project w.r.t. $\vn{P}$ in the interpretation $\Int{\vdash \Gamma; A}{\delta}$ of $\vdash \Gamma$.
\end{theorem}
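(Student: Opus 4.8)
The proof proceeds by a straightforward induction on the structure of the proof $\pi$, using at each step the compositionality of promising projects (\autoref{composi50}) together with the propositions establishing that each connective rule is interpreted soundly (\autoref{tensorprop} for $\otimes$, \autoref{withprop} for $\with$, together with \autoref{duality} and \autoref{distributivity} for the underlying dichology-theoretic identities). The plan is to show simultaneously that $\pi^{\bullet}$ lies in the conduct $\Int{\bigparr\Gamma}{\delta}$ and that it is promising w.r.t. the outlook $\vn{P}$. The first part is essentially the soundness argument already carried out in the author's work on interaction graphs \cite{seiller-goia}, which transfers verbatim since the only difference between the two models is the measurement used to define orthogonality, and all the connective constructions (tensor, with, oplus, top) are literally the same. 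The second part — being promising — is where the present hypothesis that $\vn{P}$ is non-singular (regular or semi-regular) is used, precisely because \autoref{popajones} is invoked to produce the delocations witnessing axioms and the project $\de{With}$.

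First I would treat the base cases. For an axiom rule $\vdash X_{\theta},X_{\phi}$, the interpretation $(\proj{q}_{X_{\theta}}+\proj{q}_{X_{\phi}},0,\tr,\finhyp,\theta\phi^{\ast}+\phi\theta^{\ast})$ is wager-free with normalised trace on a finite factor, and $\theta\phi^{\ast}+\phi\theta^{\ast}$ is a partial symmetry in the normalising groupoid of $\vn{P}$ by \autoref{lemmaproductpartialiso} and \autoref{lemmasumpartialiso} (the two summands have orthogonal domains and ranges by disjointness of $\theta,\phi$); the trace condition holds since $\tr(\pi(\theta\phi^{\ast}+\phi\theta^{\ast}))$ vanishes for every projection $\pi$ in the diagonal algebra — this is the fixpoint-freeness of the symmetry, as discussed around \autoref{nontrivialsing}. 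That the project lies in $\Int{X_{\theta}^{\pol}\parr X_{\phi}}{\delta}=(\bar\delta(X_\theta))^{\pol}\parr\bar\delta(X_\phi)$ follows from the fact that this dichology is $\cond{\bar\delta(X_\theta)\multimap\bar\delta(X_\phi)}$ by \autoref{duality} and the axiom project implements the identity morphism up to the delocation $\theta\phi^{\ast}$. The $\top$ rule is interpreted by $(0,0,1_{\complexN},\complexN,0)$, which is trivially promising and lies in $\cond{T}_0$.

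Then I would handle the inductive cases. The $\parr$ and exchange rules change nothing, since locativity makes associativity and commutativity of $\parr$ literal equalities of conducts, so $\pi^{\bullet}=\pi_1^{\bullet}$ remains promising and in the same conduct. The $\oplus^i$ rule only extends the carrier: $\pi^{\bullet}=(\pi_1^{\bullet})_{\proj{p+q}}$ adds a zero block, which preserves all five conditions of \autoref{projetsprometteurs}, and membership in $\Int{A_0\oplus A_1}{\delta}$ follows from the definition of $\cond{A\oplus B}$ via the inclusion $(\cond{A}_{p+q})^{\pol\pol}\subset\cond{A\oplus B}$. The $\mathrm{Cut}$ rule is exactly \autoref{composi50}, whose hypotheses (non-emptiness of $\cond{A}$ and $\cond{C^{\pol}}$) are discharged by taking $\cond{A},\cond{C}$ to be interpretations of formulas, which are proper dichologies. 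The $\otimes$ rule uses \autoref{tensorprop}: if $\pi_1^{\bullet}\in\Int{A\parr\Delta}{\delta}$ and $\pi_2^{\bullet}\in\Int{B\parr\Gamma}{\delta}$ then $\pi_1^{\bullet}\otimes\pi_2^{\bullet}$ lands in $\Int{(A\otimes B)\parr\Delta\parr\Gamma}{\delta}$ after reshuffling into linear-implication form via \autoref{duality}; promisingness of a tensor product is immediate since $A^{\dagger}+B^{\ddagger}$ is a partial symmetry in the normalising groupoid of $\vn{P}\otimes(\vn{Q}_A\oplus\vn{Q}_B)$ (disjoint carriers, hence orthogonal summands, applying \autoref{lemmasumpartialiso}), the dialect $\vn{A}\otimes\vn{B}$ is a finite factor with normalised trace, the wager stays $0$ since both input wagers vanish, and the trace condition is inherited blockwise. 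The $\with$ rule uses \autoref{withprop}: $\pi^{\bullet}=[\de{With}](\theta_1(\pi_1^{\bullet})\with\theta_2(\pi_2^{\bullet}))$, and since $\de{With}$ is promising, $\theta_i(\pi_i^{\bullet})$ are promising (delocations by partial isometries in $\gn{\vn{P}}$ preserve the property), $\pi_1^{\bullet}\with\pi_2^{\bullet}$ is promising by \autoref{remark}'s block decomposition $(F\oplus G)$, and the composition is promising by \autoref{composi50}; membership in $\Int{\Gamma;A\with B}{\delta}$ is exactly the dichology stated in \autoref{withprop}.

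The main obstacle is checking that the \textbf{Symmetry} and \textbf{Traces} conditions of \autoref{projetsprometteurs} are preserved by the tensor product and by $\de{With}$ — that is, that $A^{\dagger}+B^{\ddagger}$, respectively the operator $F\oplus G$ and then its composite with $\de{With}$, remains in the normalising groupoid of an appropriate MASA of the enlarged dialect and still satisfies $\tr(\pi A)=0$ for all projections $\pi$ in the normaliser algebra. For $\otimes$ this is routine, but for $\with$ it requires carefully tracking how the distributivity project $\de{distr}$ (built purely from delocations in $\gn{\vn{P}}$, which is the point) interacts with the block structure, essentially redoing the bookkeeping of the \textbf{Traces} case of \autoref{composi50}; the explicit computation in the proof of \autoref{withprop} already shows $\de{distr}$ is promising, and composing it with promising inputs via \autoref{composi50} closes the gap. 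All other verifications are immediate from the corresponding lemmas.
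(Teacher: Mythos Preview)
Your proposal is correct and follows essentially the same approach as the paper: a structural induction on $\pi$ whose base case is the fax (axiom) project and whose inductive steps are discharged by \autoref{tensorprop}, \autoref{withprop}, and \autoref{composi50}. The paper's proof is in fact just a two-sentence sketch to this effect, and you have simply written out the case analysis and the promising-project verifications that the paper leaves implicit.
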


\begin{proof}
The proof is a simple induction. The base case is the simple observation that a fax -- the project $(\proj{q}_{X_{\theta}}+\proj{q}_{X_{\phi}},0,\tr,\finhyp,\theta\phi^{\ast}+\phi\theta^{\ast})$ -- constructed from partial isometries $\theta,\phi$ in the normalising groupoid of $\vn{P}$ is promising w.r.t. $\vn{P}$. The induction steps are then consequences of \autoref{tensorprop} and \autoref{withprop}, as well as \autoref{composi50}.
\end{proof}

\subsection{Regular MASAs}

To interpret exponentials of Elementary Linear Logic (ELL), we will consider the construction proposed by Girard \cite{goi5} and exposed above. There is one major problem with this construction, however. Indeed, if $\de{a}$ is a promising hyperfinite project w.r.t. the outlook $\vn{P}$, it is clear that the hyperfinite project $\de{\oc_{\Omega} a}$ is promising w.r.t the outlook $\Omega(\vn{P}\otimes\vn{Q})$ where $\vn{Q}$ is a MASA in $\finhyp$. However, if it is obvious that $\Omega(\vn{P}\otimes \vn{Q})$ is a MASA in $\infhyp$, it won't be true, in general, that $\Omega(\vn{P}\otimes \vn{Q})=\vn{P}$. As those are both MASAs in $\infhyp$, the two algebras $\Omega(\vn{P}\otimes \vn{Q})$ and $\vn{P}$ are diffuse abelian von Neumann algebras, thus isomorphic \emph{as von Neumann algebras}. This is however too weak a result as this isomorphism is not in general realised by a unitary operator, a necessary condition for an adequate interpretation of the promotion rule.

\begin{proposition}\label{regexp}
Let $\de{a}$ be a promising project w.r.t. the outlook $\vn{P}$. Suppose that $\vn{P}$ is a regular MASA in $\infhyp$. Then there exists a partial isometry $u$ such that $u \Omega(A) u^{\ast}$ is a partial symmetry in the normalising groupoid of $\vn{P}$.
\end{proposition}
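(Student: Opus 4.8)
The plan is to use the regularity of $\vn{P}$ together with the uniqueness (up to unitary equivalence) of regular MASAs in the hyperfinite factor. First I would record the setup: since $\de{a}=(\proj{p},0,\finhyp,\tr,A)$ is promising w.r.t. $\vn{P}$, the operator $A$ is a partial symmetry in the normalising groupoid of $\vn{P}\vntimes\vn{Q}$ for some MASA $\vn{Q}$ in $\finhyp$, and $\Omega(A)$ is a partial symmetry in the normalising groupoid of $\Omega(\vn{P}\vntimes\vn{Q})$, a MASA in $\infhyp$. Let $\proj{r}=\Omega(\proj{p}\vntimes 1)$ be the carrier of $\Omega(A)$, which is a finite projection in $\infhyp$. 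Restricting to $\proj{r}\infhyp\proj{r}$, which is a type $\text{II}_{1}$ factor, we obtain (via \autoref{reducingnormaliser} and \autoref{singularprojection}) that $\Omega(A)$ is a partial symmetry in the normalising groupoid of $(\Omega(\vn{P}\vntimes\vn{Q}))_{\proj{r}}$, a MASA in $\proj{r}\infhyp\proj{r}$.

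Next I would argue that $(\Omega(\vn{P}\vntimes\vn{Q}))_{\proj{r}}$ is a \emph{regular} MASA in $\proj{r}\infhyp\proj{r}$: by \autoref{chifan} the normaliser algebra of $\vn{P}\vntimes\vn{Q}$ in $\infhyp\vntimes\finhyp$ is $\noralg[\infhyp]{\vn{P}}\vntimes\noralg[\finhyp]{\vn{Q}}$; since $\vn{P}$ is regular this equals $\infhyp\vntimes\noralg[\finhyp]{\vn{Q}}$, and by choosing $\vn{Q}$ regular in $\finhyp$ — which is possible, as recalled around \autoref{refmasaspuk1-1} — this is all of $\infhyp\vntimes\finhyp$. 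Transporting by $\Omega$ and restricting to $\proj{r}$ keeps regularity (using \autoref{singularprojection}-style reductions for the normaliser). On the other hand $\vn{P}_{\proj{p}}$ is itself a regular MASA in $\proj{p}\infhyp\proj{p}$ for any finite $\proj{p}$ (same reduction argument), and since $\proj{r}$ and $\proj{p}$ have the same trace — here $\tr(\proj{r})=\tr(\proj{p})$ because $\Omega$ is trace-preserving on finite projections up to normalisation, or one simply picks the finite projection correctly — there is a partial isometry $w_{0}$ in $\infhyp$ with $w_{0}w_{0}^{\ast}=\proj{p}$, $w_{0}^{\ast}w_{0}=\proj{r}$. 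Then $w_{0}(\Omega(\vn{P}\vntimes\vn{Q}))_{\proj{r}}w_{0}^{\ast}$ and $\vn{P}_{\proj{p}}$ are two regular MASAs in $\proj{p}\infhyp\proj{p}$.

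Now I invoke \autoref{connes}: in the hyperfinite $\text{II}_{1}$ factor, any two regular MASAs are unitarily equivalent. Hence there is a unitary $v\in\proj{p}\infhyp\proj{p}$ with $v\,w_{0}(\Omega(\vn{P}\vntimes\vn{Q}))_{\proj{r}}w_{0}^{\ast}\,v^{\ast}=\vn{P}_{\proj{p}}$. Setting $u=v w_{0}$, a partial isometry from $\proj{r}$ to $\proj{p}$, we get $u(\Omega(\vn{P}\vntimes\vn{Q}))_{\proj{r}}u^{\ast}=\vn{P}_{\proj{p}}\subset\vn{P}$. Since conjugation by a partial isometry sends the normalising groupoid of a MASA onto that of its image, and preserves the properties $\Omega(A)^{3}=\Omega(A)=\Omega(A)^{\ast}$, the operator $u\,\Omega(A)\,u^{\ast}$ is a partial symmetry in the normalising groupoid of $\vn{P}$ (its carrier being $\proj{p}$, a finite projection containing the image). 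This is the desired $u$.

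The main obstacle I anticipate is not the invocation of \autoref{connes} but the careful bookkeeping that makes it applicable: one must verify that $\Omega$ carries $\vn{P}\vntimes\vn{Q}$ to a MASA whose restriction to the relevant finite corner is genuinely regular (this is where the freedom in choosing $\vn{Q}$ regular, \autoref{chifan}, and the restriction lemmas for normalisers all have to be combined), and one must arrange the carriers to have equal trace so that a Murray–von Neumann partial isometry between them exists. A subtlety worth flagging is that \autoref{connes} is a statement about $\finhyp$, so the argument is really performed inside the $\text{II}_{1}$ corner $\proj{p}\infhyp\proj{p}\cong\finhyp$, and one should check that conjugation by the resulting partial isometry $u$ extends harmlessly to the ambient type $\text{II}_{\infty}$ algebra without disturbing the normalising-groupoid membership — this follows because the unit of any von Neumann algebra lies in all its MASAs, exactly the fact used in the proof of \autoref{coherence50}.
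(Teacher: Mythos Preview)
Your approach is essentially the paper's: both arguments use the regularity of $\vn{P}$ and of the chosen $\vn{Q}$ to conclude that $\vn{P}\vntimes\vn{Q}$ (hence $\Omega(\vn{P}\vntimes\vn{Q})$) is regular, restrict to a finite corner to land in a hyperfinite $\text{II}_{1}$ factor, and then invoke Connes--Feldman--Weiss (\autoref{connes}) to obtain unitary equivalence of the two regular MASAs in that corner, the desired partial isometry being the composite of a Murray--von Neumann equivalence between corners and the resulting unitary. The only cosmetic difference is that the paper first transports $p'\vn{P}p'$ into the $\Omega(p)$ corner and applies \autoref{connes} there, whereas you transport the other way and apply it in the $p$ corner.
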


\begin{proof}
This proof relies on two hypotheses: the fact that the outlook is a regular MASA, and the fact that the operator $A$ is an element of $p\infhyp p$, i.e. an element of a type $\text{II}_{1}$ hyperfinite factor. Indeed, since $\de{a}$ is promising w.r.t. $\vn{P}$, $A$ is an element of the normalising groupoid of $\vn{P}\otimes\vn{Q}$, where $\vn{Q}$ is a maximal abelian sub-algebra of $\vn{A}$ which is obviously \enquote{regular} as $\vn{A}$ is a finite factor of type $\text{I}$. Since $\vn{P}$ and $\vn{Q}$ are regular, their tensor product is a regular MASA in $\infhyp\otimes\vn{A}$, and therefore\footnote{We allow ourselves a small abuse of notations here, as $\vn{A}$ is not a MASA in $\finhyp$. However, one can chose the embedding of $\vn{A}$ into $\finhyp$ in such a way that the image of $\vn{A}$ is a regular MASA in $\finhyp$.} $\Omega(\vn{P}\otimes\vn{Q})$ is a regular MASA in $\infhyp$.

Moreover, $A=pAp$ where $p$ is a finite projection in $\vn{P}$. Then $\Omega(A)=\Omega(p)\Omega(A)\Omega(p)$, which implies that $\Omega(A)$ is an element of the normalising groupoid of $\Omega(p)\Omega(\vn{P}\otimes\vn{Q})\Omega(p)$, which is a MASA in $\Omega(p)\infhyp\Omega(p)$ by \autoref{singularprojection}. Let us pick $p'$ a projection in $\infhyp$ with the same trace as $p$  -- i.e. $p'$ is Murray von Neumann equivalent to $p$ -- and therefore with the same trace as $\Omega(p)$. We can then consider the regular MASA $p'\vn{P}p'$ in $p\infhyp p$. Since $p'$ and $\Omega(p)$ have equal traces, there exists a partial isometry $u$ such that $uu^{\ast}=p'$ and $u^{\ast}u=\Omega(p)$. Then $u^{\ast}p'\vn{P}p'u$ is a MASA in $\Omega(p)\infhyp\Omega(p)$. Indeed, the image of a MASA $\vn{A}\subset \vn{R}\subset\B{\hil{H}}$ through unitary conjugation by $u:\hil{H}\rightarrow\hil{K}$ is a MASA $u\vn{A}u^{\ast}\subset u\vn{R}u^{\ast}\subset\B{\hil{K}}$. The commutativity of $u\vn{A}u^{\ast}$ is a consequence of the commutativity of $\vn{A}$. Moreover, if it were not maximal, this would contradict the maximality of $\vn{A}$: if there exists an abelian $\vn{B}$ such that $u\vn{A}u^{\ast}\subsetneq \vn{B}\subsetneq u\vn{R}u^{\ast}$, then we could deduce the inclusions $\vn{A}\subsetneq u^{\ast}\vn{B}u\subsetneq \vn{R}$ by conjugation w.r.t $u^{\ast}$.

We now use the result of Connes, Feldman and Weiss (\autoref{connes}) showing that two regular MASAs in the hyperfinite factor of type $\text{II}_{1}$ are unitarily equivalent. Therefore, there exists a unitary $v\in \Omega(p)\infhyp\Omega(p)$ such that $v^{\ast}(u^{\ast}p'\vn{P}p'u)v=\Omega(p)\Omega(\vn{P}\otimes\vn{Q})\Omega(p)$. We can show that the product $uv$ is a partial isometry: $uv(uv)^{\ast}uv=uvv^{\ast}u^{\ast}uv=u\Omega(p)\Omega(p)v=uv$. We will show that this is the partial isometry we were after.

To end the proof, we now show that $(uv)^{\ast}A(uv)$ is a partial symmetry in the normalising groupoid of $\vn{P}$. For this, we first show that it is a hermitian, i.e. $(uv)^{\ast}A(uv)=((uv)^{\ast}A(uv))^{\ast}$, by using that $A=A^{\ast}$.
\begin{eqnarray*}
(v^{\ast}u^{\ast}Auv)^{\ast}&=&v^{\ast}u^{\ast}A^{\ast}uv\\
&=&v^{\ast}u^{\ast}Auv
\end{eqnarray*}
Then, we need to show that it is a partial isometry by proving that e.g. $v^{\ast}u^{\ast}Auv(v^{\ast}u^{\ast}Auv)^{\ast}$ is a projection. We use in the computation the facts that $AA^{\ast}=q$ is a projection, that $Ap=A$ and that the unitary conjugate of a projection is a projection.
\begin{eqnarray*}
v^{\ast}u^{\ast}Auv(v^{\ast}u^{\ast}Auv)^{\ast}&=&v^{\ast}u^{\ast}Auvv^{\ast}u^{\ast}A^{\ast}uv\\
&=&v^{\ast}u^{\ast}Au\Omega(p)u^{\ast}A^{\ast}uv\\
&=&v^{\ast}u^{\ast}ApA^{\ast}uv\\
&=&v^{\ast}u^{\ast}AA^{\ast}uv\\
&=&v^{\ast}u^{\ast}quv
\end{eqnarray*}
The partial isometry $uv$ is therefore the one we were after.
\end{proof}

One can notice that the interpretations of the contraction and functorial promotion rules only use promising projects w.r.t. $\vn{P}$. From this and the preceding proposition, one can easily show an extension of the soundness result stated above for the sequent calculi ELL$_{\textnormal{pol}}$ and ELL$_{\textnormal{comp}}$ considered in the author's work on interaction graphs \cite{seiller-goie} as soon as the outlook is a regular MASA. Let us notice that this proposition do not depend on the morphism $\Omega$ chosen to define the exponentials (the soundness result however depends on $\Omega$ since not all choices of morphisms would allow for the interpretation of functorial promotion).

It is then natural to ask oneself if the converse of this result holds, i.e. if the fact that $\vn{P}$ is not regular implies that one cannot interpret (at least one) exponential connective. We will not fully answer this question in this paper, but we will discuss it anyway.

Let us first consider the Pukansky invariant of the outlook $\vn{P}$ and of the sub-algebra $\Omega(\vn{P}\otimes \vn{Q})$ (using the same notations as in the preceding proof). It is known\footnote{White \cite{white2} showed that all subset of $\naturalN\cup\{\infty\}$ is the Pukansky invariant of a MASA in $\finhyp$.} that there exists singular MASAs in $\finhyp$ whose Pukansky invariant is included in $\{2,3,\dots,\infty\}$, and the sub-algebra $\vn{Q}$ satisfies $\puk{\vn{Q}}=\{1\}$ since it is regular (\autoref{pukreg}). Using \autoref{tenseurpuk}, we get that $\puk{\Omega(\vn{P}\otimes\vn{Q})}$ contains $1$, and it is therefore impossible in this case that $\Omega(\vn{P}\otimes\vn{Q})$ and $\vn{P}$ be unitarily equivalent.

However, the Pukansky invariant of a semi-regular MASA is a subset of $\naturalN\cup\{\infty\}$ that contains $1$ (from \autoref{puksemireg}).  Then, by using \autoref{tenseurpuk}, one shows that in this case $\puk{\Omega(\vn{P\otimes Q})}=\puk{\vn{P}}$. It is therefore not possible to show the reciprocal statement of \autoref{regexp} in this manner. We conjecture that there exist perennializations $\Omega$ and semi-regular outlooks $\vn{P}$ such that (the equivalent of) \autoref{regexp} holds. We also conjecture that there exists perennializations $\Omega$ and semi-regular outlooks $\vn{P}$ such that the (equivalent of) \autoref{regexp} does not hold. A more interesting question would be to know if for all perennializations (and therefore the one defined by Girard) there exists a semi-regular outlook $\vn{P}$ such that the (equivalent of) \autoref{regexp} does not hold.

\subsection{Conclusion}

The results obtained in this section can be combined into the following theorem, which constitute the main technical result of this paper.
\begin{theorem}\label{dixmierclassll}
Let $\vn{P}$ be a maximal abelian sub-algebra of $\infhyp$. Then:
\begin{itemize}
\item if $\vn{P}$ is singular, there are no non-trivial interpretations of any fragment of linear logic by promising hyperfinite projects w.r.t. $\vn{P}$;
\item if $\vn{P}$ is semi-regular, one can interpret soundly multiplicative-additive linear logic (\mall{}) by promising hyperfinite projects w.r.t. $\vn{P}$;
\item if $\vn{P}$ is regular, one can interpret soundly elementary linear logic (\ells{}) by promising hyperfinite projects w.r.t. $\vn{P}$.
\end{itemize}
\end{theorem}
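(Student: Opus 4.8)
The plan is to assemble \autoref{dixmierclassll} as a straightforward corollary of the results proved earlier in this section, so the proof is essentially one of bookkeeping rather than new mathematics. First I would dispatch the singular case: it is precisely \autoref{singtruth}, which states that if $\vn{P}$ is a singular MASA in $\infhyp$ then every promising project w.r.t. $\vn{P}$ has operator equal to $0$. Since the interpretation of the axiom rule (the fax) uses a nonzero partial symmetry, no nontrivial interpretation of axioms — hence of any fragment containing the identity axiom — can exist; I would state this explicitly to justify the word ``non-trivial''.

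Next I would handle the semi-regular case by invoking the full soundness theorem proved above: when $\vn{P}$ is a non-singular (in particular semi-regular) MASA, the interpretation of formulas as conducts and proofs as projects, together with \autoref{tensorprop}, \autoref{withprop}, and \autoref{composi50}, shows that the interpretation of any proof of $\vdash\Gamma$ in $\mathrm{MALL}_{\cond{T},\cond{0}}$ is a promising project in $\Int{\vdash\Gamma}{\delta}$. I would simply recall that this establishes sound interpretation of \mall{}. For the regular case, I would add on top of the semi-regular argument the interpretation of the exponential connectives: \autoref{regexp} provides, for any promising project w.r.t. a regular outlook $\vn{P}$, a partial isometry $u$ conjugating $\Omega(A)$ back into the normalising groupoid of $\vn{P}$ itself, which is exactly what is needed to make the contraction and functorial promotion rules preserve the property of being promising w.r.t. $\vn{P}$. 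Combined with the soundness of the multiplicative-additive fragment, this yields sound interpretation of \ells{}.

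The only genuine subtlety — and the step I expect to require the most care in the write-up — is that the regular case is not literally proved as a numbered theorem above but only sketched in the discussion following \autoref{regexp}: one must check that the sequent calculi $\mathrm{ELL}_{\mathrm{pol}}$ or $\mathrm{ELL}_{\mathrm{comp}}$ of \cite{seiller-goie} have interpretations of the exponential rules using only promising projects w.r.t. $\vn{P}$, and that \autoref{regexp} closes the gap left open by the fact that $\Omega(\vn{P}\otimes\vn{Q})$ need not equal $\vn{P}$. So in the proof I would: (1) cite \autoref{singtruth} for the singular clause; (2) cite the full soundness theorem for the semi-regular clause; (3) for the regular clause, combine full soundness with \autoref{regexp}, noting that the contraction and promotion rules are interpreted using projects promising w.r.t. $\vn{P}$ and that \autoref{regexp} lets one transport $\Omega$-images back into $\gn{\vn{P}}$, hence the promising property is preserved throughout an \ells{} derivation. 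The write-up is short; the main risk is overstating what the regular case rigorously delivers, so I would phrase that clause exactly as in the discussion above (sound interpretation of \ells{} via a chosen perennialization $\Omega$ allowing functorial promotion).
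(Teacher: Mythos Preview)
Your proposal is correct and matches the paper's approach exactly: the paper presents \autoref{dixmierclassll} as a summary statement with no separate proof, introducing it with ``The results obtained in this section can be combined into the following theorem,'' so the intended argument is precisely the bookkeeping you describe --- \autoref{singtruth} for the singular clause, the Full Soundness theorem for the semi-regular clause, and \autoref{regexp} together with the surrounding discussion for the regular clause. Your caveat about the regular case being only sketched rather than stated as a numbered theorem is accurate and well-placed.
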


\section{Conclusion}

This work shows a correspondence between the expressivity of the fragment of linear logic reconstructed from GoI techniques and a classification of maximal abelian sub-algebras proposed by Dixmier \cite{dixmier}. Indeed, it was known that the interpretation of linear logic proofs in GoI models depends on the choice of the algebra $\vn{M}$ in which the GoI construction is performed, i.e. the hyperfinite factor $\infhyp$ in Girard's GoI5 \cite{goi5} or the algebra $\B{\hil{H}}$ of all operators on a separable infinite-dimensional Hilbert space $\hil{H}$ in earlier works \cite{goi1,goi2,goi3}. We showed here that another algebra influence this interpretation of proofs, a maximal abelian sub-algebra $\vn{A}$ of $\vn{M}$: this algebra, which represents the choice of a basis was implicitly fixed in early GoI models but appears explicitly in the hyperfinite GoI model. Dixmier's classification of MASAs specifies three particular types of such inclusions $\vn{A}\subset\vn{M}$ for a maximal abelian sub-algebra of a von Neumann algebra $\vn{M}$. We showed here that the expressivity of the fragment of linear logic interpreted in the model is closely related to the type of the chosen sub-algebra $\vn{A}$.

This work does not provide a complete correspondence, as Dixmier's classification is not an exhaustive one. While we showed that any exponential connective can be interpreted when $\vn{A}$ is a \emph{regular} MASA and that no non-trivial interpretation exists when $\vn{A}$ is a \emph{singular} MASA, the intermediate case of \emph{semi-regular} MASAs is not completely understood. Indeed we proved that one can \emph{at least} interpret multiplicative-additive linear logic in that last case, but some exponential connectives (although probably not all) might be interpreted in some cases. This opens the way for a more complete investigation of this case. A complicated approach would be to study the possible choices of semi-regular sub-algebras and understand how the exponential connectives behave with respect to them. We propose to study these questions using the systematic approach offered by Interaction Graphs \cite{seiller-goim,seiller-goia,seiller-goig,seiller-goie}. In particular, the use of \emph{graphings} allows one to consider a construction of GoI models parametrised by the choice of a so-called \emph{microcosm}: a microcosm is a monoid of measurable transformations of a measured space $X$. Such a monoid defines a measurable equivalence relation, which in turn defines a pair of a von Neumann algebra $\vn{M}$ and a MASA $\vn{A}$ of $\vn{M}$ by Feldman and Moore construction \cite{FeldmanMoore1,FeldmanMoore2}. 
This offers a more practical approach of this problem as it tackles the problem from a different direction: the MASA $\vn{A}$ is fixed, as it is constructed from the measure space $X$ (it is in fact the algebra $L^{\infty}(X)$ acting on the Hilbert space $L^{2}(X)$ by left multiplication), while the algebra $\vn{M}$ varies. Moreover, it allows for subtle distinctions on the inclusions of MASAs considered, distinctions that can be understood as restrictions of the computational principles allowed in the model \cite{seiller-lcc14}. The reader interested in more details about this approach should consult the author's recent perspective paper \cite{seiller-towards}.

\bibliographystyle{alpha}
\bibliography{../../Common/thomas}

\end{document}